\journal{arXiv}
\newtheorem{thm}{Theorem}[section]
\newtheorem{prop}[thm]{Proposition}
\newtheorem{lemma}[thm]{Lemma}
\newtheorem{cor}[thm]{Corollary}
\theoremstyle{definition}
\newtheorem{remark}[thm]{Remark}
\newcommand{\set}[1]{\{ #1 \}}
\newcommand{\meet}{\mathbin{\wedge}}
\newcommand{\join}{\mathbin{\vee}}
\newcommand{\alg}[1]{\langle #1 \rangle}
\newcommand{\pair}[1]{\langle #1 \rangle}
\newcommand{\con}{\mathrm{Con}}
\newcommand{\rot}[1]{{{#1}^r}}
\renewcommand{\phi}{\varphi}
\renewcommand{\epsilon}{\varepsilon}
\newcommand{\hm}{\mathbb{H}}
\newcommand{\iso}{\mathbb{I}}
\newcommand{\pu}{\mathbb{P}_\mathrm{U}}
\newcommand{\sub}{\mathbb{S}}
\newcommand{\hspu}{\mathbb{HSP}_\mathrm{U}}
\newcommand{\vr}{\mathbb{V}}
\newcommand{\m}[1]{ {\bf #1}}
\newcommand{\V}{\mathsf{V}}
\newcommand{\W}{\mathsf{W}}
\newcommand{\K}{\mathsf{K}}
\newcommand{\Vfsi}{\mathsf{V}_{\mathrm{FSI}}}
\newcommand{\Vfc}{\mathsf{V}_{\mathrm{fc}}}
\newcommand{\Wfc}{\mathsf{W}_{\mathrm{fc}}}
\newcommand{\cls}[1]{\mathsf{#1}}
\newcommand{\wajs}{\mathrm{Wajs}}
\newcommand{\luk}{\mathrm{\textup{Ł}}}
\newcommand{\basic}{\mathrm{Basic}}
\newcommand{\basicfc}{\mathrm{Basic}_\mathrm{fc}}
\newcommand{\bvarint}[1]{\mathcal{I}(#1)}
\newcommand{\BHAP}{\mathcal{BH}_{\mathrm{AP}}}
\newcommand{\BL}{\mathsf{BL}}
\newcommand{\BH}{\mathsf{BH}}
\providecommand*{\leftmodels}{%
  \mathrel{%
    \mathpalette\@leftmodels\models
  }%
}
\newcommand*{\@leftmodels}[2]{%
  \reflectbox{$\m@th#1#2$}%
}
\newcommand{\rad}{\mathrm{Rad}}
\newcommand{\Rad}{\mathbf{Rad}}
\newcommand{\Sm}[1]{\pmb{\textup{\L}}_{#1}}
\newcommand{\Smw}[1]{\pmb{\textup{\L}}_{{#1},\omega}}
\newcommand{\Cn}[1]{\m{W}_{#1}}
\newcommand{\Cnw}[1]{\m{W}_{{#1},\omega}}
\newcommand{\Cw}{\m{Z}}
\newcommand{\WH}{\cls{WH}}
\newcommand{\MV}{\cls{MV}}
\newcommand{\wuz}{$[\Cn{n}]\cup [ \Cw]$}
\newcommand{\wsuz}{$[\Cn{n}^\ast]\cup [ \Cw]$}
\newcommand{\wsuzs}{$[ \Cn{n}^\ast]\cup [ \Cw^\ast]$}
\newcommand{\wszs}{$[ \Cn{n}^\ast \Cw^\ast]$}
\newcommand{\wzd}{$[ (\Cn{n} \Cw)^\ast]$}
\newcommand{\zws}{$[\Cw \Cn{n}^\ast]$}
\newcommand{\zsws}{$[ \Cw^\ast \Cn{n}^\ast]$}
\newcommand{\wsz}{$[\Cn{n}^\ast \Cw]$}
\newcommand{\wz}{$[ \Cn{n} \Cw]$}
\newcommand{\wzs}{$[ \Cn{n} \Cw^\ast]$}
\newcommand{\zw}{$[ \Cw \Cn{n}]$}
\newcommand{\zsw}{$[ \Cw^\ast \Cn{n}]$}
\newcommand{\wuzs}{$[\Cn{n}]\cup [  \Cw^\ast]$}
\newcommand{\mwuz}{[\Cn{n}]\cup [ \Cw]}
\newcommand{\mwsuz}{[\Cn{n}^\ast]\cup [ \Cw]}
\newcommand{\mwsuzs}{[ \Cn{n}^\ast]\cup [ \Cw^\ast]}
\newcommand{\mwszs}{[ \Cn{n}^\ast \Cw^\ast]}
\newcommand{\mwzd}{[ (\Cn{n} \Cw)^\ast]}
\newcommand{\mzws}{[\Cw \Cn{n}^\ast]}
\newcommand{\mzsws}{[ \Cw^\ast \Cn{n}^\ast]}
\newcommand{\mwsz}{[\Cn{n}^\ast \Cw]}
\newcommand{\mwz}{[ \Cn{n} \Cw]}
\newcommand{\mwzs}{[ \Cn{n} \Cw^\ast]}
\newcommand{\mzw}{[ \Cw \Cn{n}]}
\newcommand{\mzsw}{[ \Cw^\ast \Cn{n}]}
\newcommand{\mwuzs}{[\Cn{n}]\cup [  \Cw^\ast]}
\begin{document}

\begin{frontmatter}

%% Title, authors and addresses

%% use the tnoteref command within \title for footnotes;
%% use the tnotetext command for theassociated footnote;
%% use the fnref command within \author or \address for footnotes;
%% use the fntext command for theassociated footnote;
%% use the corref command within \author for corresponding author footnotes;
%% use the cortext command for theassociated footnote;
%% use the ead command for the email address,
%% and the form \ead[url] for the home page:
%% \title{Title\tnoteref{label1}}
%% \tnotetext[label1]{}
%% \author{Name\corref{cor1}\fnref{label2}}
%% \ead{email address}
%% \ead[url]{home page}
%% \fntext[label2]{}
%% \cortext[cor1]{}
%% \affiliation{organization={},
%%             addressline={},
%%             city={},
%%             postcode={},
%%             state={},
%%             country={}}
%% \fntext[label3]{}

\title{Interpolation in H\'{a}jek's Basic Logic}

%% use optional labels to link authors explicitly to addresses:
 \author[label1]{Wesley Fussner}
 \affiliation[label1]{organization={Institute of Computer Science, Czech Academy of Sciences},
             country={Czechia}}

 \author[label2]{Simon Santschi\fnref{fn2}}
 \affiliation[label2]{organization={Mathematical Institute, University of Bern},
             country={Switzerland}}
\fntext[fn1]{Supported by Swiss National Science Foundation (SNSF) grant No. 200021\textunderscore 215157.}

%%
%% \affiliation[label2]{organization={},
%%             addressline={},
%%             city={},
%%             postcode={},
%%             state={},
%%             country={}}

%\author{}

%\affiliation{organization={},%Department and Organization
%            addressline={}, 
%            city={},
%            postcode={}, 
%            state={},
%            country={}}

\begin{abstract}
%% Text of abstract
We exhaustively classify varieties of BL-algebras with the amalgamation property, showing that there are only countably many of them and solving an open problem of Montagna. As a consequence of this classification, we obtain a complete description of which axiomatic extensions of H\'{a}jek's basic fuzzy logic $\m{BL}$ have the deductive interpolation property. Along the way, we provide similar classifications of varieties of basic hoops with the amalgamation property and axiomatic extensions of the negation-free fragment of $\m{BL}$ with the deductive interpolation property.
\end{abstract}

%%Graphical abstract
%\begin{graphicalabstract}
%\includegraphics{grabs}
%\end{graphicalabstract}

%%Research highlights
%\begin{highlights}
%\item Research highlight 1
%\item Research highlight 2
%\end{highlights}

\begin{keyword}
%% keywords here, in the form: keyword \sep keyword

amalgamation property \sep deductive interpolation \sep BL-algebras \sep basic logic \sep basic hoops \sep substructural logics 
%\sep interpolation  \sep (strong) amalgamation property \sep Beth definability \sep surjective epimorphisms 

%substructural logics \sep residuated lattices \sep semilinear residuated lattices \sep interpolation \sep amalgamation property \sep strong amalgamation property \sep Beth definability \sep surjective epimorphisms \sep congruence extension property \sep deduction theorems

%% PACS codes here, in the form: \PACS code \sep code

%% MSC codes here, in the form: \MSC code \sep code
%% or \MSC[2008] code \sep code (2000 is the default)

\MSC[2020] 03G25 \sep 03B47 \sep 03C40 \sep 06F05

\end{keyword}

\end{frontmatter}

%\linenumbers

%\end{document}

\section{Introduction}

H\'{a}jek introduced the system of basic logic $\m{BL}$ in the late 1990s as a mathematical foundation for theretofore existing approaches to fuzzy inference. Basic logic's fundamental importance to fuzzy reasoning was cemented shortly after its introduction, when in \cite{CEGT2000} it was shown that $\m{BL}$ is exactly the logic of continuous triangular norms: The theorems of $\m{BL}$ coincide with the formulas validated in every algebra of the form $\alg{[0,1],\cdot,\to, 0, 1}$, where $[0,1]$ is the real unit interval, $\cdot$ is a continuous triangular norm on $[0,1]$, and $\to$ is its residuum.\footnote{The connectives $\meet$, $\join$ are definable from $\cdot$ and $\to$, so the result also holds for expanded language including these.} In the intervening two and a half decades, $\m{BL}$ has been the subject of an impressive array of studies and, thanks to these, we now know the answers to most of the fundamental logical questions one might ask in this vicinity. For instance, the problem of recognizing theorems in $\m{BL}$ was shown in \cite{BHMV2001} to be coNP-complete (hence decidable) and effective proof search methods were given in \cite{BM2008}. On the semantic side, the structure of algebraic models of $\m{BL}$ (viz. \emph{BL-algebras}) has been described in extensive detail (see, e.g., \cite{Agliano2003,AgBova2010}) and transparent relational semantics for $\m{BL}$ and its extensions were given in \cite{Fussner2022,ABM2009}. Among other things, semantic work has yielded G\"{o}del-style modal translations for $\m{BL}$ and its fragments \cite{FZ2021}, detailed descriptions of the lattice of axiomatic extensions of $\m{BL}$ \cite{Agliano2019}, and a complete classification of axiomatic extensions of $\m{BL}$ with the deductive Beth definability property \cite{Montagna2006}. It is fair to say that our understanding of $\m{BL}$ is now quite mature.

One problem that has stubbornly resisted resolution, however, is that of obtaining a complete grasp of interpolation in $\m{BL}$ and its extensions. \cite[Theorem~6.1]{Montagna2006} shows that there are only three consistent axiomatic extensions $\m{L}$ of $\m{BL}$ with the \emph{Craig interpolation property}:
\begin{quote}
Whenever $\varphi\to\psi$ is a theorem of $\m{L}$, there exists a formula $\chi$, whose variables appear in both $\varphi$ and $\psi$, such that both $\varphi\to\chi$ and $\chi\to\psi$ are theorems of $\m{L}$.
\end{quote}
On the other hand, among substructural logics with exchange---of which \m{BL} is an example---the Craig interpolation property is, in general, weaker than the \emph{deductive interpolation property}. The latter may be formulated as follows, where we use $\vdash_\m{L}$ to denote the consequence relation of the logic $\m{L}$:
\begin{quote}
Whenever $\varphi\vdash_\m{L}\psi$, there exists a formula $\chi$, whose variables appear in both $\varphi$ and $\psi$, such that both $\varphi\vdash_\m{L}\chi$ and $\chi\vdash_\m{L}\psi$.
\end{quote}
The Craig interpolation property is equivalent to the deductive interpolation property among superintuitionistic logics, and the Craig interpolation property implies the deductive interpolation property for any axiomatic extension of $\m{BL}$. However, there are many extension of $\m{BL}$ that have deductive interpolation but not Craig interpolation.

Montagna was the first to make a significant investigation of deductive interpolation for $\m{BL}$ and its extensions. His study \cite{Montagna2006} used universal-algebraic methods, exploiting the fact that an axiomatic extension of $\m{BL}$ has the deductive interpolation property if and only if the corresponding equational class of BL-algebras modeling it has the amalgamation property. Among other results, \cite{Montagna2006} showed that $\m{BL}$ itself---as well as many of its most salient extensions---have the deductive interpolation property, but that uncountably many axiomatic extensions of $\m{BL}$ fail to have deductive interpolation. Montagna was unable to characterize the extensions of $\m{BL}$ with deductive interpolation, however, and identified this as an important open problem for $\m{BL}$ at \cite[p.~178]{Montagna2006}.

Since the publication of \cite{Montagna2006}, there have been quite a few studies of deductive interpolation in extensions of $\m{BL}$. Coronesi, Marchioni, and Montagna applied methods from quantifier elimination to study amalgamation in BL-algebras in \cite{CMM11}. Later, going back to more purely algebraic methods, Aguzzoli and Bianchi in \cite{AB21} offered a partial classification of varieties of BL-algebras that have the amalgamation property. However, this partial classification was limited by certain technical finiteness hypotheses on the varieties considered. Subsequently, building on another approach articulated in \cite{Fussner2023}, Aguzzoli and Bianchi in \cite{AB23} sharpened their partial classification from \cite{AB21} but could not eliminate the finiteness hypotheses inherent in \cite{AB21}. None of the aforementioned studies have provided a complete classification of the axiomatic extensions of $\m{BL}$ with the deductive interpolation property. 

The present paper gives just such a classification, providing a solution to Montagna's problem posed at \cite[p.~178]{Montagna2006}. Like many of the previously cited studies, our work is algebraic and exploits the link between the deductive interpolation property and the amalgamation property. Also like many of the aforementioned studies, we heavily rely on the decomposition of arbitrary totally ordered BL-algebras as ordinal sums of the $0$-free subreducts of MV-algebras, often called Wajsberg hoops \cite{Agliano2003}. In fact, much of our analysis will focus on basic hoops, the $0$-free subreducts of BL-algebras, and we will also provide a complete classification for varieties of these that have the amalgamation property.

Our approach departs from the previous attempts at classifying extensions $\m{BL}$ with the deductive interpolation property in two important respects, however. First, we introduce a streamlined condition for studying the amalgamation property---called the \emph{essential amalgamation property}---that is tailored to varieties generated by totally ordered algebras, of which BL-algebras and basic hoops are examples. Second, we give a new nomenclature for varieties of basic hoops that is inspired by regular expressions, and which proves itself to be a valuable organizational tool for identifying varieties with the amalgamation property. By applying these two new ideas in tandem, we obtain a classification of which varieties of basic hoops have the amalgamation property and, in turn, use our classification for basic hoops to obtain an exhaustive description of which varieties of BL-algebras have the amalgamation property. Via the link between axiomatic extensions of $\m{BL}$ and varieties of BL-algebras, this provides a complete description of which axiomatic extensions of $\m{BL}$ have the deductive interpolation property. This description turns out to be unexpectedly tangible: Concretely, we show that the poset of varieties of basic hoops with the amalgamation property may be written as the disjoint union of countably many finite intervals, each of which we explicitly describe. A similar---albeit somewhat more complicated---description is given for BL-algebras. In particular, this shows that there are only countably many varieties of BL-algebras with the amalgamation property.

The paper proceeds as follows. In Section~\ref{sec:preliminaries}, we lay out the background on BL-algebras that we will need throughout the paper. In particular, we recall the definitions of several algebraic constructions---notably the ordinal sum and disconnected rotation---that we will invoke repeatedly later on. Then, in Section~\ref{sec:essential}, we introduce a new equivalent formulation of the amalgamation property that reduces the study of amalgamation in varieties to more tractable generating algebras. The presentation of the amalgamation property that we obtain there is very well adapted to studying amalgamation in varieties generated by totally ordered algebras, and we anticipate that it will have many applications elsewhere. In Section~\ref{sec:hoops} we provide an exhaustive description of the varieties of basic hoops with the amalgamation property. Although necessarily technical, our description is nevertheless transparent: We show that the poset of varieties of basic hoops with the amalgamation property may be partitioned into a countably infinite family of finite intervals, describing these explicitly. As a consequence, we obtain that there are only countably many varieties of basic hoops with the amalgamation property. In Section~\ref{sec:BL-algebras}, we use our classification of varieties of basic hoops with the amalgamation property---along with the well-known classification of varieties of MV-algebras with amalgamation \cite{DiNola2000}---to obtain a similar classification for varieties of BL-algebras. This classification is the main result of the paper, and in particular entails that there are only countably many varieties of BL-algebras that have the amalgamation property. Subsequently, in Section~\ref{sec:logic}, we apply the results of Section~\ref{sec:hoops} and \ref{sec:BL-algebras} to obtain complete classifications of axiomatic extensions of $\m{BL}$ with the deductive interpolation property, as well as a similar classification for the negation-free fragment of $\m{BL}$. Finally, in Section~\ref{sec:conclusion} we give some concluding remarks, situating these results among other recent progress on interpolation in substructural logics.

\section{Preliminaries}\label{sec:preliminaries}
A \emph{commutative residuated lattice} is an algebra $\alg{A,\meet,\join,\cdot,\to,1}$, where $\alg{A,\cdot,1}$ is a commutative monoid, $\alg{A,\meet,\join}$ is a lattice, and for all $x,y,z\in A$,
\[
x\cdot y\leq z \iff x\leq y\to z,
\]
where $\leq$ is the lattice ordered defined by $x\leq y$ if and only if $x\meet y = x$. In the event that $1$ is the greatest element of a commutative residuated lattice $\m{A}$, we say that $\m{A}$ is \emph{integral}. If, in addition, $\m{A}$ has a least element $0$, we say that $\m{A}$ is \emph{bounded}. Sometimes we will designate this least element in the signature, and it is important in this study to distinguish between the cases when we do so and the cases when we do not. In the event that the least element of an integral commutative residuated lattice $\m{A}$ is designated in the signature, we say that $\m{A}$ has \emph{designated bounds}. As usual, we will abbreviate $x\cdot y$ by $xy$ and, in order to further simplify out notation, assume that $\cdot$ has priority over $\to$, which in turn has priority over $\meet$ and $\join$. For further information on commutative residuated lattices, see \cite{GJKO07,MPT23}.

We will adopt standard notation for general algebraic notations, referring, in particular, to the closure of a class $\K$ under isomorphic images, homomorphic images, subalgebras, direct products, and ultraproducts, respectively, by $\iso{(\K)}$, $\hm{(\K)}$, $\sub{(\K)}$, $\mathbb{P}{(\K)}$, and $\pu{(\K)}$. The variety generated by a class $\K$ of algebras is denoted by $\vr{(\K)}$. An algebra $\m{A}$ is said to be (finitely) subdirectly irreducible if whenever the least congruence $\Delta$ of $\m{A}$ is an intersection of a (finite) collection of congruences $\{\Theta_i\}_{i\in I}$, then $\Delta = \Theta_i$ for some $i\in I$. We denote the class of finitely subdirectly irreducible members of a class $\K$ by $\K_{\mathrm{FSI}}$.

Commutative residuated lattices comprise an arithmetical variety (i.e., congruence distributive and congruence permutable) with the congruence extension property, and same holds for commutative residuated lattices with designated bounds. Because all of these algebras are congruence distributive, J\'onsson's Lemma holds for them: If $\K$ is a class of commutative residuated lattices (with or without designated bounds), then the subdirectly irreducible members of $\vr(\K)$ are contained in $\hspu(\K)$. Additionally, if $\K$ is a class of totally ordered commutative residuated lattices, then the class of finitely subdirectly irreducibles members of $\vr{(\K)}$ is exactly the class of totally ordered members of $\vr{(\K)}$, which coincides with $\hspu(\K)$.

We will have occasion to refer to several special varieties of commutative residuated lattices (with or without designated bounds) in the sequel:
\begin{itemize}
\item Most importantly, a \emph{BL-algebra} is an integral commutative residuated lattice with designated bounds that satisfies the identities $x(x\to y) \approx x\meet y$ and $(x\to y)\join (y\to x) \approx 1$. We denote the variety of all BL-algebras by $\BL$.
\item An \emph{MV-algebra} is an integral commutative residuated lattice with designated bounds that satisfies the equation $(x\to y) \to y \approx x \join y$. We denote the variety of all MV-algebras by $\MV$. Note that every MV-algebra is a BL-algebra.
\item A \emph{basic hoop} is an integral commutative residuated lattice that satisfies $x(x\to y) \approx x\meet y$ and $(x\to y)\join (y\to x) \approx 1$. We denote the variety of all basic hoops by $\BH$.
\item A \emph{Wajsberg hoop} is an integral commutative residuated lattice that satisfies the equation $(x\to y) \to y \approx x \join y$. We denote the variety of all Wajsberg hoops by $\WH$.
\item A \emph{cancellative hoop} is a Wajsberg hoop that satisfies the $x\to (xy) \approx y$, or, equivalently, the quasiequation $xz\approx yz\Rightarrow x\approx y$.
\item An \emph{abelian $\ell$-group} is a commutative residuated lattice that satisfies the identity $x(x\to 1) \approx 1$, or, equivalently, a commutative residuated lattice where each element has an inverse.
\end{itemize}
All of the aforementioned varieties are \emph{semilinear} in the sense that they are generated by their totally ordered members. Every bounded Wajsberg hoop is the $0$-free reduct of an MV-algebra and, moreover, a Wajsberg hoop is either bounded or else is a cancellative hoop (see \cite{Agliano2002}). The only bounded cancellative hoop is the trivial one. Further, every basic hoop is a $0$-free subreduct of a BL-algebra.

For an abelian $\ell$-group $\m{G}$ with an element $u\geq 0$, we can define the MV-algebra $\m{\Gamma}( \m{G},u) = \alg{[0,u], \meet, \join,\cdot, \to ,u ,0} $ as follows:
\begin{itemize}
\item $[0,u] = \set{x \in G \mid 0\leq x \leq u}$,
\item the meet and join are just the restriction of the meet and join of $\m{G}$ to $[0,u]$,
\item for $a,b \in [0,u]$, $a\cdot b = (a+b - u) \join 0$,
\item for $a,b \in [0,u]$, $a \to b = (u + b -a) \meet u$.
\end{itemize}

Any totally ordered MV-algebra is isomorphic to an algebra of the form $\m{\Gamma}( \m{G},u) $ for a totally ordered abelian group $\m{G}$ with strong unit $u$. In particular any totally ordered bounded Wajsberg chain is isomorphic to the $0$-free reduct of such an algebra. See \cite{Cignoli2000} for details.

We will denote the standard MV-algebra on the unit interval by $[0,1]_{\MV}$, i.e., $[0,1]_{\MV} = \m{\Gamma}( \mathbb{R},1)$; the Wajsberg hoop comprising its $0$-free reduct will be denoted by $[0,1]_\WH$. For $m \in \mathbb{N}$ we define
\[
\Sm{m} = \m{\Gamma}(\mathbb{Z},m) \text{ and } \Smw{m} = \m{\Gamma}(\mathbb{Z}\times \mathbb{Z}, \pair{m,0}),
\]
where $\mathbb{Z}$ is ordered by the standard order and  $\mathbb{Z}\times \mathbb{Z}$ is lexicographically ordered from the left. We denote by $\Cn{m}$ and $\Cnw{m}$ the $0$-free reducts of $\Sm{m}$ and $\Smw{m}$, respectively. For both $\Sm{1}$ and $\Cn{1}$ we will also write $\m{2}$ when the context is clear. Note that $\Sm{m}$ is a homomorphic image of $\Smw{m}$, and hence $\Sm{m} \in \vr(\Smw{m})$. Similarly, $\Cn{m} \in \vr(\Cnw{m})$.

Given any commutative residuated lattice $\m{A} = \alg{A,\meet,\join,\cdot,\to,1}$, the \emph{negative elements} of $\m{A}$ are the members of the set
\[
A^- = \{a\in A \mid a\leq 1\}.
\]
The negative elements of any commutative residuated lattice are closed under the lattice operations and multiplication, and the \emph{negative cone} of $\m{A}$ is the residuated lattice $\m{A}^- = \{A^-,\meet,\join,\cdot,\to^-,1\}$, where $a\to^-b := (a\to b)\meet 1$. When $\m{A}$ is an abelian $\ell$-group, $\m{A}^-$ is a cancellative hoop. The negative cone of the integers $\mathbb{Z}$, considered as an abelian $\ell$-group, is especially important in the sequel. We denote this cancellative hoop by $\Cw$.

The lattices of subvarieties of $\MV$ and $\WH$ have been completely described in terms generating algebras of the constituent varieties, as summarized in the following propositions.

\begin{prop}[{\cite[Theorem 4.11]{Komori1981}}]\label{p:MV-char}
Let $\V$ be a proper subvariety of $\MV$. Then it is of one of the following two forms:
\begin{align*}
&\vr(\Sm{m_1},\dots, \Sm{m_r}) \quad (r\geq 1), \\
&\vr(\Sm{m_1},\dots, \Sm{m_r},\Smw{t_1},\dots, \Smw{t_s}) \quad (s\geq 1).
\end{align*}
\end{prop}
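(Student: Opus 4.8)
The plan is to reduce the classification to the totally ordered members of the subvariety and then to pin down which chains can occur, using the representation of MV-chains as intervals in abelian $\ell$-groups. Since $\MV$ is semilinear and congruence distributive, J\'onsson's Lemma gives that for any generating class $\K$ the finitely subdirectly irreducible members of $\vr(\K)$ coincide with its totally ordered members, namely $\hspu(\K)$. Hence a subvariety $\V$ is completely determined by the class of MV-chains it contains, and it suffices to (i) describe all MV-chains lying in a proper subvariety, and (ii) show that only finitely many of the distinguished chains $\Sm{m_i}$ and $\Smw{t_j}$ are needed to generate them.

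For step (i) I would use that every MV-chain is isomorphic to $\m{\Gamma}(\m{G},u)$ for a totally ordered abelian group $\m{G}$ with strong unit $u$, and analyze such a chain $C$ through its radical $\rad(C)$ (the set of infinitesimals, equivalently the intersection of its maximal ideals). The quotient $C/\rad(C)$ is a simple MV-chain, hence embeds into $[0,1]_{\MV}$. The first key observation is that any \emph{infinite} simple MV-chain already generates all of $\MV$: since $[0,1]_{\MV}$ generates $\MV$ and is approximated arbitrarily well by its finite \L{}ukasiewicz subalgebras, an infinite simple chain cannot lie in a proper subvariety. Consequently, when $\V$ is proper every simple chain in $\V$ is finite, and the finite simple MV-chains are precisely the $\Sm{m}$. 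This settles the case of trivial radical and produces the first form $\vr(\Sm{m_1},\dots,\Sm{m_r})$.

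When the radical is nontrivial, the idea is to show that the chain is captured by the lexicographic chains $\Smw{t} = \m{\Gamma}(\mathbb{Z}\times\mathbb{Z},\pair{t,0})$: these realize exactly the situation of a finite \L{}ukasiewicz quotient $\Sm{t}$ sitting atop a single nontrivial cancellative layer of infinitesimals, and $\Sm{t}$ is itself a homomorphic image of $\Smw{t}$, so $\Sm{t}\in\vr(\Smw{t})$. The technical heart is then a quantitative argument: because $\V$ is proper it validates some equation failing in $[0,1]_{\MV}$, and by evaluating this equation in $\m{\Gamma}(\m{G},u)$ one simultaneously bounds the orders $m$ of the admissible simple quotients and the parameters $t$ governing the infinitesimal part. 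This yields finiteness of the generating list, giving the second form $\vr(\Sm{m_1},\dots,\Sm{m_r},\Smw{t_1},\dots,\Smw{t_s})$ precisely when some chain of $\V$ carries infinitesimals.

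The main obstacle I anticipate is exactly this last quantitative step: verifying that the chains $\Smw{t}$ generate \emph{every} MV-chain with nontrivial radical that lies in $\V$, and that the failing equation genuinely forces the generating set to be finite. This demands a delicate analysis of the interaction between the simple quotient and the infinitesimal layer---controlling how many Archimedean components and which torsion orders a single equation can tolerate---and it constitutes the substance of Komori's theorem. By contrast, the reduction to chains and the treatment of the semisimple case are comparatively routine given semilinearity, J\'onsson's Lemma, and the $\ell$-group representation already available to us.
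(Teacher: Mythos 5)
First, a point of comparison: the paper does not prove this proposition at all---it is imported verbatim as Komori's theorem \cite{Komori1981}---so your proposal can only be measured against the standard proof in the literature, whose skeleton (reduction to chains via semilinearity and J\'onsson's Lemma, representation $\m{\Gamma}(\m{G},u)$, case split on $\rad$) you reproduce correctly. The problem is that what you present is a plan rather than a proof, and the two steps you defer are not technicalities but the entire content of the theorem, as you yourself concede. Concretely, for a chain $\m{C}\in\V$ with $\rad(\m{C})\neq\{1\}$ and finite rank $t$ you need \emph{both} inclusions $\m{C}\in\vr(\Smw{t})$ and $\Smw{t}\in\vr(\m{C})$ (the second is indispensable: without it the generators $\Smw{t_j}$ you list need not lie in $\V$, so $\V$ would only be contained in, not equal to, the displayed variety). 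Neither inclusion follows from the description "a finite \L{}ukasiewicz quotient atop a cancellative layer": the radical of $\m{C}$ is an arbitrary nontrivial cancellative chain, not a copy of $\mathbb{Z}^-$, and to exchange it for the $\mathbb{Z}$-layer of $\Smw{t}$ one needs (a) an order-compatible splitting of the unital $\ell$-group as a lexicographic product of $\mathbb{Z}$ by the maximal convex subgroup, and (b) the model-theoretic fact that any two nontrivial totally ordered abelian groups embed into ultrapowers of each other---precisely the mechanism behind Lemma~\ref{l:ISPU-cancellative} in the paper. Nothing in your sketch supplies (a) or (b).

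The second gap is the finiteness of the generating list. Your proposal to "evaluate the failing equation in $\m{\Gamma}(\m{G},u)$ and bound $m$ and $t$ simultaneously" is the right instinct but is never executed, and as stated it is not even clear what quantity is being bounded. The clean argument is via continuity and density: if an equation fails in $[0,1]_{\MV}$, it fails at some point of $[0,1]^k$, and since the finite grids underlying $\Sm{m}$ come within distance $1/m$ of any such point while term operations are continuous, the equation fails in $\Sm{m}$ for all sufficiently large $m$; hence a proper $\V$ contains $\Sm{m}$ for only finitely many $m$, and since $\Sm{t}\in\vr(\Smw{t})$, also $\Smw{t}\in\V$ for only finitely many $t$. (This same density argument, not merely the citation of \cite{Cignoli2000}, is what justifies your earlier claim that an infinite simple chain generates $\MV$.) So the verdict: correct architecture, honest self-assessment, but the two lemmas that constitute Komori's theorem---the reduction of finite-rank chains with radical to $\Smw{t}$ in both directions, and the boundedness of the admissible parameters---are asserted rather than proved, so the proposal does not establish the statement.
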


\begin{prop}[{\cite[Theorem 2.5]{Agliano2002}}]\label{p:WH-char}
Let $\cls{V}$ be a proper subvariety of $\WH$, then it is of one of the following three forms:
\begin{align*}
&\vr(\Cn{m_1},\dots, \Cn{m_r}) \quad (r\geq 1), \\
&\vr(\Cn{m_1},\dots, \Cn{m_r}, \Cw) \quad (r\geq 0), \\
&\vr(\Cn{m_1},\dots, \Cn{m_r},\Cnw{t_1},\dots, \Cnw{t_s}) \quad (s\geq 1).
\end{align*}
\end{prop}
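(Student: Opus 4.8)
The plan is to transfer Komori's classification of MV-varieties (Proposition~\ref{p:MV-char}) to the hoop setting, organizing the argument around the dichotomy recalled above: every Wajsberg hoop is either bounded---hence the $0$-free reduct of a (uniquely determined) MV-algebra---or cancellative. Since $\WH$ is semilinear, any subvariety $\V$ is generated by its totally ordered members, and by J\'onsson's Lemma the finitely subdirectly irreducible (equivalently, totally ordered) members of $\V$ lie in $\hspu$ of any generating class of chains. It therefore suffices to understand the Wajsberg chains in $\V$, each of which is either a cancellative chain (the negative cone of a totally ordered abelian group) or a bounded chain (the $0$-free reduct of a totally ordered MV-algebra $\m{\Gamma}(\m{G},u)$).

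First I would dispatch the cancellative contribution. The variety of cancellative hoops has no proper nontrivial subvarieties---mirroring the fact that all nontrivial abelian $\ell$-groups generate the same variety---so every nontrivial cancellative chain generates $\vr(\Cw)$. Hence if $\V$ contains any nontrivial cancellative chain, then $\Cw\in\V$, and no further information about its cancellative members is needed.

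Next I would treat the bounded members via Komori. Associating to each bounded chain of $\V$ its unique MV-expansion, let $\V^0$ denote the MV-variety these chains generate. The crux is that $\V^0$ is a \emph{proper} subvariety of $\MV$: since $\WH=\vr([0,1]_\WH)$ (which holds because each finite chain $\Cn{m}$ lies in $\vr([0,1]_\WH)$, and $\Cw$ arises as a subalgebra of a nonprincipal ultraproduct of the $\Cn{m}$), the equality $\V^0=\MV$ would force $[0,1]_\WH\in\V$---using that taking $0$-free reducts is compatible with $\hm,\sub,\pr,\pu$---and hence $\V=\WH$, contrary to properness. With $\V^0$ proper, Proposition~\ref{p:MV-char} supplies finitely many generators among the $\Sm{m_i}$ and $\Smw{t_j}$; passing to $0$-free reducts yields generators $\Cn{m_i}$ and $\Cnw{t_j}$ for the bounded part of $\V$. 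Assembling this with the cancellative analysis, and using $\Cn{m}\in\vr(\Cnw{m})$ together with $\Cw\in\vr(\Cnw{t})$ (so that any infinitesimal generator already contributes the cancellative hoop), I would sort $\V$ into exactly the three listed forms according to whether it contains no infinite chain (Form~1), contains a cancellative chain but only finite bounded chains (Form~2), or contains a bounded infinitesimal chain $\Cnw{t}$ (Form~3).

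The main obstacle is making the reduct correspondence precise enough to transfer properness and generation cleanly between $\MV$ and $\WH$. Concretely, one must verify that $\WH=\vr([0,1]_\WH)$, that forgetting the designated bottom element commutes appropriately with the class operators $\hm,\sub,\pr,\pu$ so that generating sets pass correctly between the two signatures, and that the bounded and cancellative parts do not combine to produce a generator lying outside the three forms. The inclusion $\Cw\in\vr(\Cnw{t})$ is exactly what rules out a spurious fourth form in which a standalone $\Cw$ and some $\Cnw{t}$ would both be required.
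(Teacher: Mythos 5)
The paper itself gives no proof of this proposition: it is imported verbatim from the cited reference (Theorem 2.5 of \cite{Agliano2002}), so there is no internal argument to compare yours against. Judged on its own merits, your reconstruction is correct, and it is the natural (and, as far as I can tell, essentially the literature's) route: split the chains of a proper subvariety $\V$ of $\WH$ into bounded and cancellative ones, dispatch the cancellative part by the fact that every nontrivial cancellative Wajsberg chain generates $\vr(\Cw)$, and transfer the bounded part to Komori's classification (Proposition~\ref{p:MV-char}) via the reduct correspondence. Moreover, every verification obligation you flag is genuine but routine, and each is covered by material already in this paper: the uniqueness of the variety generated by a nontrivial cancellative chain is Lemma~\ref{l:ISPU-cancellative}; the absorption $\Cw \in \vr(\Cnw{t})$, which as you note is exactly what kills the spurious fourth form, follows since $\Rad(\Cnw{t})$ is a nontrivial cancellative subuniverse of $\Cnw{t}$, so Lemma~\ref{l:ISPU-cancellative} gives $\Cw \in \iso\sub\pu(\Cnw{t}) \subseteq \vr(\Cnw{t})$; and the operator compatibility you need only goes in one direction ($0$-free reducts of $\hm$-, $\sub$-, $\pr$-constructions over MV-algebras are such constructions over the $0$-free reducts), which suffices for \emph{both} inclusions you need --- to place every bounded chain of $\V$ in the hoop variety generated by the reducts $\Cn{m_i}$, $\Cnw{t_j}$ of the Komori generators of $\V^0$, and, conversely, to place those reducts inside $\V$ itself. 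That converse step (the generators really lie in $\V$, not merely the other way around) is the one place your write-up is terse, but it follows by applying the same one-directional commutation to $\V^0$ viewed as generated by the MV-expansions of the bounded chains of $\V$. Two minor remarks: properness of $\V^0$ indeed hinges on $\WH = \vr([0,1]_\WH)$, which the paper asserts and you correctly justify via $\Cw \in \sub\pu(\{\Cn{m}\}_{m})$; and the trivial variety is a harmless edge case of Form (1), exactly as in Komori's statement.
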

In conjunction with the fact that $\MV = \vr{([0,1]_{\MV})}$ and $\WH = \vr{([0,1]_{\WH}})$, this shows that every subvariety of $\MV$ and $\WH$ is generated by finitely many totally ordered algebras.

For a bounded Wajsberg chain $\m{A}$, the \emph{radical} $\rad(\m{A})$ of $\m{A}$ is the intersection of all maximal, proper deductive filters of $\m{A}$ (see, e.g., \cite{Fussner2019,Agliano2002}). The radical of a bounded Wajsberg chain $\m{A}$ coincides with the set of all elements $a\in A$ such that $a^n\neq 0$ for any $n\in\mathbb{N}$. Note that $\rad(\m{A})$ is a filter and a cancellative subuniverse of $\m{A}$. We denote the corresponding algebra by $\Rad(\m{A})$. 

\begin{lemma}[See {\cite[Theorem 3.5.1]{Cignoli2000}}]\label{l:radical}
For a non-trivial bounded Wajsberg chain (totally ordered MV-algebra) $\m{A}$ the following are equivalent:
\begin{enumerate}[label = \textup{(\arabic*)}]
\item $\m{A}$ is simple.
\item $\rad(\m{A}) = \{1 \}$ .
\item $\m{A}$ is a subalgebra of the standard algebra $[0,1]_{\WH}$ ($[0,1]_\MV$).
\end{enumerate}
\end{lemma}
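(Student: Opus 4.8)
The plan is to prove the equivalences by combining the filter--congruence correspondence for MV-algebras with the Chang--Mundici group representation and Hölder's theorem. I would first dispatch the equivalence of (1) and (2), which is essentially order-theoretic, then treat (2) $\Leftrightarrow$ (3), where the substantive content lies.

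First I would establish (1) $\Leftrightarrow$ (2). Since MV-algebras are arithmetical with the congruence extension property, the lattice $\con(\m{A})$ is isomorphic to the lattice of deductive filters of $\m{A}$, so $\m{A}$ is simple precisely when its only deductive filters are $\{1\}$ and $A$. If $\m{A}$ is simple, then $\{1\}$ is the unique maximal proper filter, whence $\rad(\m{A}) = \{1\}$. Conversely, suppose $\rad(\m{A}) = \{1\}$ and let $F$ be a proper deductive filter with $F \neq \{1\}$. Choose $a \in F$ with $a < 1$. By the characterization of the radical recalled above, $a \notin \rad(\m{A})$ yields some $n\in\mathbb{N}$ with $a^n = 0$; but $F$ is closed under $\cdot$, so $0 = a^n \in F$, and upward closure forces $F = A$, contradicting properness. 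Hence $\{1\}$ and $A$ are the only deductive filters, so $\m{A}$ is simple.

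Next, (3) $\Rightarrow$ (2) is a direct computation in the standard algebra. As an MV-algebra $\m{A}$ contains the designated constant $0$, and viewing it as a subalgebra of $[0,1]_\MV = \m{\Gamma}(\mathbb{R},1)$, every $a \in A$ with $a < 1$ satisfies $a^n = (na - (n-1))\join 0$, which equals $0$ once $n$ is large enough. Thus no element below $1$ lies in the radical, giving $\rad(\m{A}) = \{1\}$. For the Wajsberg hoop reading via $[0,1]_\WH$, the least element of a nontrivial bounded subalgebra is an idempotent distinct from $1$, hence equals $0$, and the same computation applies.

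The main work is (2) $\Rightarrow$ (3). By the representation recalled above, $\m{A} \cong \m{\Gamma}(\m{G},u)$ for a totally ordered abelian group $\m{G}$ with strong unit $u$. The congruences of $\m{\Gamma}(\m{G},u)$ correspond bijectively to the convex subgroups of $\m{G}$, so by the already established equivalence (1) $\Leftrightarrow$ (2) the simplicity of $\m{A}$ translates into $\m{G}$ having no proper nontrivial convex subgroup, i.e., $\m{G}$ is archimedean. Hölder's theorem then furnishes an embedding of $\m{G}$ into $\alg{\mathbb{R},+,\leq}$ as an ordered group; since $u > 0$ maps to a positive real, rescaling this embedding so that $u \mapsto 1$ yields an embedding restricting to an MV-embedding $\m{\Gamma}(\m{G},u) \hookrightarrow \m{\Gamma}(\mathbb{R},1) = [0,1]_\MV$, and passing to $0$-free reducts gives the statement for $[0,1]_\WH$. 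I expect the crux to be the identification of simplicity with the archimedean property through the correspondence between congruences and convex subgroups, together with the correct normalization of the strong unit when invoking Hölder's theorem.
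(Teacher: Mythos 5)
Your proof is correct. Note that the paper does not prove this lemma at all---it imports it by citation from Cignoli--D'Ottaviano--Mundici (Theorem~3.5.1)---and your route is essentially the standard argument underlying that citation: the filter--congruence correspondence together with the power characterization of the radical ($a \in \rad(\m{A})$ iff $a^n \neq 0$ for all $n$) for (1)$\Leftrightarrow$(2), and the Chang--Mundici representation $\m{A} \cong \m{\Gamma}(\m{G},u)$, the bijection between congruences and convex subgroups, the identification of simplicity with the Archimedean property, and H\"{o}lder's theorem for (2)$\Rightarrow$(3). One small inaccuracy worth repairing: the bijection between congruences and deductive filters is not a consequence of arithmeticity and the congruence extension property, as you claim; it is a separate standard fact about hoops, residuated lattices, and MV-algebras (these classes are filter-determined, via $\theta \mapsto \{a : (a,1)\in\theta\}$). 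This does not affect the substance of the argument, and your handling of the two signatures---in particular the observation that the least element of a non-trivial subalgebra of $[0,1]_{\WH}$ is a non-unit idempotent and hence equals $0$, and that designating the constant $0$ changes neither congruences nor filters---is exactly the care the two readings of the statement require.
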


Apart from the ordinal sum, we will have need of another construction of totally ordered integral commutative residuated lattices, namely the \emph{disconnected rotation} (first introduced in \cite{Jenei2003}). Suppose that $\m{A} = \alg{A,\meet,\join,\cdot}$ is a totally ordered integral commutative residuated lattice. We define an algebra on $\rot{A} = (\{1\}\times A)\cup (\{0\}\times A)$ by setting
\[     (i,x)\join_\rot{A} (j,y) = \begin{cases}
        (1,x\join y) & \text{for } i=j=1\\
        (0,x\meet y) & \text{for } i=j=0\\
        (1,y) & \text{for } i<j
        \end{cases}
\]
\[     (i,x)\meet_\rot{A} (j,y) = \begin{cases}
        (1,x\meet y) & \text{for } i=j=1\\
        (0,x\join y) & \text{for } i=j=0\\
        (0,x) & \text{for } i<j
        \end{cases}
\]
\[     (i,x)\cdot_\rot{A} (j,y) = \begin{cases}
        (1,x\cdot y) & \text{for } i=j=1\\
        (0,1) & \text{for } i=j=0\\
        (0,y\to x) & \text{for } i<j
        \end{cases}
\]
\[     (i,x)\to_\rot{A} (j,y) = \begin{cases}
        (1,x\to y) & \text{for } i=j=1\\
        (1,y\to x) & \text{for } i=j=0\\
        (0,x\cdot y) & \text{for } j<i\\
        (1,1) & \text{for } i<j
        \end{cases}
\]
The algebra $\rot{\m{A}} = \alg{\rot{A},\meet_\rot{A},\join_\rot{A},\cdot_\rot{A},\to_\rot{A},(0,1),(1,1))}$ is called the \emph{disconnected rotation} of $\m{A}$.

\begin{lemma}[See \cite{AB21}]\label{l:rotation}
For any cancellative Wajsberg chain $\m{A}$:
\begin{enumerate}[label = \textup{(\roman*)}]
\item $\rot{\m{A}}$ is a totally ordered MV-algebra.
\item $\m{A}$ embeds into the $0$-free reduct of $\rot{\m{A}}$. 
\item For any totally ordered MV-algebra $\m{B}$, any embedding  $f\colon A \to B$ into the $0$-free reduct of $\m{B}$ extends to an embedding $\rot{f} \colon \rot{\m{A}} \to \m{B}$.
\end{enumerate}
\end{lemma}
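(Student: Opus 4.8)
The plan is to reduce everything to the standard representation of cancellative hoops as negative cones: since $\m{A}$ is a cancellative Wajsberg chain, it is isomorphic to the negative cone $\m{G}^-$ of a totally ordered abelian group $\m{G}$. Writing the elements of $A$ additively as the group elements $x\leq 0$, I would prove (i) by exhibiting the map
\[
\phi\colon \rot{\m{A}} \to \m{\Gamma}(\mathbb{Z}\times\m{G},\pair{1,0}), \qquad \phi(1,x)=\pair{1,x},\quad \phi(0,x)=\pair{0,-x},
\]
where $\mathbb{Z}\times\m{G}$ is ordered lexicographically from the left. One checks that $\phi$ is a bijection onto the interval $[\pair{0,0},\pair{1,0}]$, carrying the top copy onto the elements $\pair{1,g}$ with $g\leq 0$ and the bottom copy onto the $\pair{0,g}$ with $g\geq 0$; the order on the bottom copy of $\rot{\m{A}}$ is reversed relative to $\m{A}$, and this is exactly cancelled by the sign change $x\mapsto -x$, so that $\phi$ is an order isomorphism. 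Verifying that $\phi$ intertwines the operation tables of $\rot{\m{A}}$ with the $\m{\Gamma}$-operations is then a routine case analysis over the four index-pairs. Since $\pair{1,0}$ is a strong unit of $\mathbb{Z}\times\m{G}$, the codomain is a totally ordered MV-algebra, giving (i). (Alternatively one verifies the MV-axioms for $\rot{\m{A}}$ directly; the point of assuming $\m{A}$ cancellative is precisely that it makes residuation and the involution law hold across the two copies.)

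Part (ii) is immediate: the assignment $x\mapsto (1,x)$ is injective, and the $i=j=1$ clauses of the four operation tables show that it preserves $\meet,\join,\cdot,\to$ and sends $1$ to $(1,1)$. Hence it embeds $\m{A}$ into the $0$-free reduct of $\rot{\m{A}}$.

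For (iii), writing $\neg b := b\to 0$ for the negation of the MV-algebra $\m{B}$, I would set
\[
\rot{f}(1,x)=f(x), \qquad \rot{f}(0,x)=\neg f(x),
\]
so that $\rot{f}$ extends $f$ along the top copy and sends the bottom copy to the negations of the top images. That $\rot{f}$ is a homomorphism is again a case analysis over the index-pairs: the cases $i=j$ use that $f$ is a hoop embedding together with the involution law $\neg\neg b=b$, while each mixed case collapses to a standard MV-identity in $\m{B}$; for instance the clause $(0,x)\cdot_\rot{A}(1,y)=(0,y\to x)$ corresponds to $\neg(f(y)\to f(x))=\neg f(x)\cdot f(y)$, an instance of $\neg(a\to b)=a\cdot\neg b$. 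The designated bounds are preserved, since $\rot{f}(1,1)=f(1)=1$ and $\rot{f}(0,1)=\neg f(1)=\neg 1=0$.

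The step I expect to be the real crux is injectivity of $\rot{f}$, and this is where cancellativity re-enters. Injectivity on each copy separately is clear, so what must be shown is that the top images and the bottom images are disjoint, i.e. $f(x)\neq\neg f(y)$ for all $x,y$. Assuming $\m{A}$ nontrivial, I would first note that $0\notin\im f$: if $f(a)=0$ then $f(xa)=f(x)\cdot 0=0=f(a)$ for every $x$, whence $xa=a=1\cdot a$ and cancellativity forces $A=\{1\}$. Given this, $f(x)=\neg f(y)$ would yield $f(xy)=f(x)\cdot f(y)=\neg f(y)\cdot f(y)=0\in\im f$, a contradiction. (The degenerate case $A=\{1\}$ gives $\rot{\m{A}}\cong\m{2}$, for which $\rot{f}$ is plainly an embedding.) I expect the operation-table bookkeeping in (i) and in the homomorphism part of (iii) to be the most laborious portion, but each case reduces mechanically to an identity of $\m{G}^-$ or of the MV-algebra $\m{B}$.
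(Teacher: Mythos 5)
The paper itself does not prove this lemma: it is quoted from \cite{AB21}, so your proposal can only be compared with the standard argument rather than with an in-paper proof. Parts (i) and (ii) of your proposal are correct: the isomorphism $\rot{\m{A}}\cong\m{\Gamma}(\mathbb{Z}\times\m{G},\pair{1,0})$, with $\m{A}\cong\m{G}^{-}$ and the lexicographic order, is exactly the right realization (for $\m{G}=\mathbb{Z}$ it recovers the paper's $\rot{\Cw}\cong\Smw{1}$), the sign twist on the bottom copy is the correct bookkeeping, and (ii) is indeed immediate from the $i=j=1$ clauses.

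The gap is in part (iii), and it sits in the homomorphism verification, not in injectivity. It is not true that every case reduces to a hoop computation, the involution law, or a standard MV-identity. Concretely: for $(0,x)\cdot_{\rot{A}}(0,y)=(0,1)$ you must show $\neg f(x)\cdot\neg f(y)=0$; for $(0,x)\to_{\rot{A}}(1,y)=(1,1)$ you must show $\neg f(x)\to f(y)=1$; and for the mixed meets and joins you must show $\neg f(x)\meet f(y)=\neg f(x)$. All three amount to the single non-equational fact that $\neg f(x)\leq f(y)$ for all $x,y\in A$, i.e.\ that the image of the bottom copy of $\rot{\m{A}}$ really lies below the image of the top copy inside $\m{B}$. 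This is not an MV-identity and it fails for hoop embeddings in general: the identity map on the $0$-free reduct of $[0,1]_{\MV}$ is a Wajsberg-hoop embedding, yet $\neg(0.3)\cdot\neg(0.3)=0.4\neq 0$ there. So cancellativity of $\m{A}$ is needed already to make $\rot{f}$ a homomorphism, not merely to make it injective; a reader executing your ``routine case analysis'' as described would get stuck at precisely these cases.

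Fortunately, the repair uses an observation you already make (but deploy only for injectivity): for nontrivial $\m{A}$, cancellativity gives $0\notin\im f$. If $\neg f(x)\not\leq f(y)$ for some $x,y\in A$, then, $\m{B}$ being a chain, $f(y)\leq\neg f(x)$, whence $f(xy)=f(x)\cdot f(y)=0\in\im f$, a contradiction. So the needed order fact follows in one line from your own lemma, and with it both the problematic homomorphism cases and the disjointness of the two copies are settled at once. You should restructure (iii) accordingly: first prove $0\notin\im f$, deduce $\neg f(x)\leq f(y)$ for all $x,y$, and only then run the case analysis. A final pedantic point: in the degenerate case $A=\{1\}$, the claim that $\rot{f}\colon\m{2}\to\m{B}$ is an embedding requires $\m{B}$ to be nontrivial, so that hypothesis (or a convention excluding trivial MV-algebras) should be made explicit.
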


\begin{lemma}[{\cite[Theorem 6.4]{Agliano2003}}]\label{l:ISPU-cancellative}
If $\m{A}$ is a non-trivial  cancellative Wajberg chain, then $\iso\sub\pu(\m{A}) = \iso\sub\pu(\Cw) = \hspu(\Cw)$.
\end{lemma}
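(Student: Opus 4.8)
The plan is to prove that for a non-trivial cancellative Wajsberg chain $\m{A}$, we have $\iso\sub\pu(\m{A}) = \iso\sub\pu(\Cw) = \hspu(\Cw)$, where $\Cw$ is the negative cone of $\mathbb{Z}$. The strategy rests on two observations: first, that cancellative hoops have a very rigid structure as totally ordered algebras; second, that $\Cw$ is in a suitable sense the ``smallest'' non-trivial cancellative chain, so that closing under subalgebras, ultrapowers, and isomorphisms collapses everything to a single generated class.

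First I would establish the right-hand equality $\iso\sub\pu(\Cw) = \hspu(\Cw)$. Since $\Cw$ is a totally ordered algebra and cancellative hoops form a semilinear variety, Jónsson's Lemma (as recalled in the preliminaries) tells us that the finitely subdirectly irreducible members of $\vr(\Cw)$ are exactly its totally ordered members and coincide with $\hspu(\Cw)$. The key point is that $\Cw$ has no non-trivial homomorphic images as a chain: a cancellative hoop is simple-like in that its only congruences are trivial among the chains arising here, so the $\hm$ operator contributes nothing new beyond isomorphism, reducing $\hspu(\Cw)$ to $\iso\sub\pu(\Cw)$. I would verify that every chain in $\hspu(\Cw)$ is cancellative and embeds appropriately, making the operators $\hm$ and $\sub$ interchangeable in this setting.

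Next I would prove the left-hand equality $\iso\sub\pu(\m{A}) = \iso\sub\pu(\Cw)$. The inclusion $\iso\sub\pu(\Cw) \subseteq \iso\sub\pu(\m{A})$ should follow by showing $\Cw \in \iso\sub\pu(\m{A})$: since $\m{A}$ is a non-trivial cancellative chain, it is (the negative cone of, or a subhoop of) a totally ordered abelian group, and $\Cw = \mathbb{Z}^-$ embeds into an ultrapower of $\m{A}$ by picking out a suitable discrete sequence of elements below $1$, generating a copy of the negative integers. For the reverse inclusion $\iso\sub\pu(\m{A}) \subseteq \iso\sub\pu(\Cw)$, I would argue that $\m{A}$ itself lies in $\iso\sub\pu(\Cw)$: any cancellative Wajsberg chain embeds into an ultrapower of $\Cw$, exploiting that $\Cw$ is elementarily dense enough among cancellative chains and that $\sub\pu$ is essentially idempotent on classes of chains. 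Combining both inclusions and using that $\iso\sub\pu$ applied twice collapses (i.e., $\iso\sub\pu\iso\sub\pu = \iso\sub\pu$ on such classes) yields the desired equality.

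The main obstacle I anticipate is the embedding $\m{A} \in \iso\sub\pu(\Cw)$, that is, showing that \emph{every} non-trivial cancellative Wajsberg chain---which may be densely ordered, like $[0,1]_\WH$ or the negative cone of $\mathbb{Q}$ or $\mathbb{R}$---embeds into an ultrapower of the discretely ordered $\Cw$. This requires a genuinely model-theoretic argument: one must realize dense totally ordered cancellative structure inside an ultrapower of a discrete one, presumably by encoding elements of $\m{A}$ as equivalence classes of sequences of negative integers whose ratios approximate the relevant group elements. I would expect to lean on the classification of totally ordered abelian groups and the fact that, up to elementary equivalence relevant to the hoop signature, all non-trivial cancellative chains generate the same quasivariety. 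Handling the residuation operation $\to$ correctly under this encoding, and confirming that the truncated subtraction structure of $\Cw$ suffices to capture $\m{A}$'s operations in the ultrapower, will be the delicate part.
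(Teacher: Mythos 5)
The paper itself gives no proof of this lemma: it is imported wholesale from Agliano--Montagna (the bracketed citation to \cite[Theorem 6.4]{Agliano2003}), so there is no in-paper argument to compare against. Judged on its own terms, your proposal has the right architecture---show $\Cw\in\iso\sub\pu(\m{A})$, show $\m{A}\in\iso\sub\pu(\Cw)$, then absorb $\hm$---but the decisive step is left unproved, and the sketches you offer for it and for the $\hm$-absorption would not survive being made precise.

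The genuine gap is the inclusion $\m{A}\in\iso\sub\pu(\Cw)$, which you yourself flag as the obstacle. Your proposed mechanism---encoding elements of $\m{A}$ by sequences of negative integers ``whose ratios approximate the relevant group elements''---fails on two counts. First, the natural approximation map (e.g. $r\mapsto(\lfloor rn\rfloor)_n$) is a homomorphism only up to bounded error, not on the nose, so it does not define an embedding into the ultrapower. Second, for non-Archimedean cancellative chains (e.g. the negative cone of $\mathbb{Z}\times\mathbb{Z}$ ordered lexicographically) ratios carry no information; there one needs genuinely infinite elements of the ultrapower, not approximation. The standard repair is different: every cancellative Wajsberg chain is the negative cone of a totally ordered abelian group; all non-trivial totally ordered abelian groups satisfy the same universal sentences (via the divisible hull together with completeness of the theory of divisible ordered abelian groups, or via a finite-separation argument producing, for any finite configuration in a totally ordered $\mathbb{Z}^n$, an order-respecting homomorphism to $\mathbb{Z}$); and satisfying $\mathrm{Th}_\forall(\Cw)$ is equivalent to embedding into an ultrapower of $\Cw$. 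Your fallback---that all non-trivial cancellative chains ``generate the same quasivariety''---is essentially the statement being proved, so leaning on it is circular. Separately, your justification of $\iso\sub\pu(\Cw)=\hspu(\Cw)$ is wrong as stated: simplicity of $\Cw$ is beside the point, because $\hm$ in $\hspu$ acts on all of $\sub\pu(\Cw)$, and nonprincipal ultrapowers of $\Cw$ are far from simple (the elements at finite distance from $1$ form a proper non-trivial congruence filter). The correct route is that cancellative hoops form a variety (axiomatized by $x\to xy\approx y$), so homomorphic images of cancellative chains are again cancellative chains or trivial, and these are returned to $\iso\sub\pu(\Cw)$ by the very embedding step you left open---so this equality also hinges on the missing argument. (Your remaining step, $\Cw\in\iso\sub\pu(\m{A})$, is fine, and is in fact easier than you make it: by cancellativity the subalgebra of $\m{A}$ generated by any $a<1$ is already isomorphic to $\Cw$, so no ultrapower is needed there.)
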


\begin{cor}\label{c:ISPU-rotation}
If $\m{A}$ is a totally ordered MV-algebra with $\rad(\m{A}) \neq \{1\}$  and $\m{B}$ a totally ordered cancellative  hoop, then $\rot{\m{B}} \in \iso\sub\pu(\m{A})$. 
\end{cor}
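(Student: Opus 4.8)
The plan is to reduce the claim to the lifting property of disconnected rotations recorded in Lemma~\ref{l:rotation}(iii). Observe first that for any index set $I$ and ultrafilter $\U$ the ultrapower $\m{A}^I/\U$ is again a totally ordered MV-algebra, since being an MV-algebra is equational and being a chain is first-order, so both are preserved under ultraproducts; moreover $\m{A}^I/\U\in\pu(\m{A})\subseteq\iso\sub\pu(\m{A})$. Thus it suffices to produce such an $I$ and $\U$ together with an embedding of $\m{B}$ into the $0$-free reduct of $\m{A}^I/\U$. Granting this, Lemma~\ref{l:rotation}(iii)---applied with the cancellative Wajsberg chain $\m{B}$ in the role of its $\m{A}$ and the totally ordered MV-algebra $\m{A}^I/\U$ in the role of its $\m{B}$---extends the embedding to an embedding $\rot{\m{B}}\hookrightarrow \m{A}^I/\U$, whence $\rot{\m{B}}\in\iso\sub(\m{A}^I/\U)\subseteq\iso\sub\pu(\m{A})$, as desired.

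To produce the required embedding I would pass through the radical. Since $\rad(\m{A})\neq\{1\}$, the algebra $\Rad(\m{A})$ is a non-trivial cancellative Wajsberg chain, and---being a cancellative subuniverse of $\m{A}$ that omits $0$---it is a subalgebra of the $0$-free reduct of $\m{A}$. If $\m{B}$ is trivial then $\rot{\m{B}}\cong\m{2}$ embeds into $\m{A}$ directly (as the subalgebra $\{0,1\}$), so we may assume $\m{B}$ is non-trivial. Then both $\m{B}$ and $\Rad(\m{A})$ are non-trivial cancellative Wajsberg chains, and Lemma~\ref{l:ISPU-cancellative} gives
\[
\iso\sub\pu(\m{B}) = \hspu(\Cw) = \iso\sub\pu(\Rad(\m{A})).
\]
In particular $\m{B}\in\iso\sub\pu(\Rad(\m{A}))$, so I may fix $I$, $\U$, and an embedding $\m{B}\hookrightarrow \Rad(\m{A})^I/\U$.

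It remains to transport this into the world of $\m{A}$, which is the one step requiring care. Because ultraproducts commute with reducts and preserve the subalgebra relation, the inclusion of $\Rad(\m{A})$ into the $0$-free reduct of $\m{A}$ yields an embedding of $\Rad(\m{A})^I/\U$ into the $0$-free reduct of $\m{A}^I/\U$. Composing this with the embedding of the previous paragraph exhibits $\m{B}$ as a subalgebra of the $0$-free reduct of $\m{A}^I/\U$, exactly as the first paragraph required, completing the argument. I expect the main (though modest) obstacle to be exactly this transfer: confirming that $\Rad(\m{A})$ is genuinely a $0$-free subalgebra of $\m{A}$---i.e.\ that the radical is closed under the residuum as well as under the monoid and lattice operations---together with the routine but essential bookkeeping that the $0$-free reduct and the ultrapower may be interchanged. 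Everything else is an assembly of Lemmas~\ref{l:radical}, \ref{l:rotation}, and \ref{l:ISPU-cancellative}.
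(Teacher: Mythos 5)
Your proposal is correct and follows essentially the same route as the paper, whose proof is simply ``Immediate from Lemma~\ref{l:rotation} and Lemma~\ref{l:ISPU-cancellative}'': pass to the non-trivial cancellative Wajsberg chain $\Rad(\m{A})$, use Lemma~\ref{l:ISPU-cancellative} to embed $\m{B}$ into an ultrapower of it (hence into the $0$-free reduct of an ultrapower of $\m{A}$), and then apply Lemma~\ref{l:rotation}(iii) to extend this to an embedding of $\rot{\m{B}}$. The details you flag as needing care (the radical being a $0$-free subalgebra, and ultrapowers commuting with reducts) are indeed the right ones to check, and the first is already asserted in the paper's preliminaries.
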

\begin{proof}
Immediate from Lemma~\ref{l:rotation} and Lemma~\ref{l:ISPU-cancellative}.
\end{proof}

Now let $\alg{I,\leq}$ be a totally ordered set and $(\m{A}_i)_{i\in I}$ be an indexed family of totally ordered integral commutative residuated lattices with $A_i\cap A_j=\{1\}$ for $i\neq j$. We define the \emph{ordinal sum} $\bigoplus_{i\in I} \m{A}_i$ of the indexed family to be the algebra defined on $\m{A}=\bigcup_{i\in I} A_i$ such that:
\begin{itemize}
\item The order $\leq_{\m{A}}$ on $\m{A}$ extends the order on $\m{A}_i$ for each $i\in I$, and for $i<j$ and $x\in A_i\setminus \{1\}$, $y\in A_j\setminus\{1\}$, $x<y$.
\item  For each $x,y\in A$,
\[     x\cdot y = \begin{cases}
        x\cdot^{\m{A}_i} y & \text{for } x,y\in A_i\\
        y & \text{for } x\in A_i\text{ and }y\in A_j\setminus\{1\}\text{ with } j<i\\
        x & \text{for } x\in A_i\setminus\{1\}\text{ and }y\in A_j\text{ with } i<j
        \end{cases}
  \]
\item For each $x,y\in A$, 
\[ x\to y = \begin{cases}
        x\cdot^{\m{A}_i} y & \text{for } x,y\in A_i\\
        y & \text{for } x\in A_i\text{ and }y\in A_j\setminus\{1\}\text{ with } j<i\\
        1 & \text{for } x\in A_i\setminus\{1\}\text{ and }y\in A_j\text{ with } i<j
        \end{cases}
  \]
\end{itemize}

We will also consider ordinal sums $\m{A}=\bigoplus_{i\in I} A_i$, where $I$ has a least element $i_0$ and $\m{A}_{i_0}$ is non-trivial and has designated bounds. In this case, the ordinal sum is understood to have designated bounds with the least element from $\m{A}_{i_0}$. If $I = \{m,\dots, n\}$ for some $m,n\in \mathbb{N}$ and $m <n$ with the standard order, then we write $\bigoplus_{i=m}^n \m{A}_i$ for the ordinal sum $\bigoplus_{i\in I} \m{A}_i$ and we also write $\m{A}_1 \oplus \m{A}_2$ for binary ordinal sums, noting that the operation $\oplus$ is associative. Finally if $\K_i$ is a class of totally ordered integral commutative residuated lattices for each $i\in I$, then we denote by $\bigoplus_{i\in I} \K_i$ the class of ordinal sums $\bigoplus_{i\in I} \m{A}_i$ with $\m{A}_i \in \K_i$ for each $i\in I$.

\begin{lemma}[\cite{Agliano2003}]
\hfill
\begin{enumerate}[label = \textup{(\roman*)}]
\item For every totally ordered set  $\alg{I,\leq}$ and indexed family $\{\m{A}_i\}_{i\in I}$ of totally ordered basic hoops, the ordinal sum $\m{A}=\bigoplus_{i\in I} A_i$ is a totally ordered basic hoop.
\item For every totally ordered set $\alg{I,\leq}$ with minimum $i_0$,  totally ordered BL-algebra $\m{A}_{i_0}$, and indexed family $\{\m{A}_i\}_{i\in I{\setminus}\{i_0\}}$ of totally ordered basic hoops, the ordinal sum  $\bigoplus_{i\in I} \m{A}_i$ is a totally ordered BL-algebra.
\end{enumerate}
\end{lemma}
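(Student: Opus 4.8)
The plan is to reduce both parts to a single core verification: that the ordinal sum $\m{A} = \bigoplus_{i\in I}\m{A}_i$ is a totally ordered integral commutative residuated lattice satisfying divisibility $x(x\to y)\approx x\meet y$. Prelinearity then comes for free, since on any totally ordered integral commutative residuated lattice one of $x\to y$, $y\to x$ equals $1$: if $x\leq y$ then $1\cdot x = x\leq y$, so by residuation $1\leq x\to y$, whence $x\to y = 1$ by integrality. Thus $(x\to y)\join(y\to x)\approx 1$ holds automatically, and (i) follows. For part (ii), once (i) is in hand I would additionally observe that the designated least element $0$ of the bottom component $\m{A}_{i_0}$ is the least element of the whole sum—every element of a higher component that is not $1$ lies strictly above every non-top element of $A_{i_0}$, in particular above $0$—so that $\m{A}$ is bounded with designated bounds. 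Since the BL-algebra identities are exactly integrality together with divisibility and prelinearity, none of which mention $0$, and since the $0$-free reducts of the components are basic hoops covered by (i), the structure is a BL-algebra.

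For the core verification I would proceed by a systematic case analysis keyed to the relative position in $I$ of the components containing the elements in question, in each case either reducing to the corresponding property of a single $\m{A}_i$ or invoking the cross-component absorption rules. First I would check that the prescribed order is a total order and that $\alg{A,\meet,\join}$ is therefore automatically a lattice. Next, that $\alg{A,\cdot,1}$ is a commutative monoid: commutativity is visible from the symmetry of the defining clauses, $1$ is both the identity and the top element (so $\m{A}$ is integral), and associativity follows from the absorption behaviour. Here the key observation is that in an integral structure a product of two non-top elements of one component $\m{A}_i$ is again non-top (if $ab = 1$ with $a,b\leq 1$ then $ab\leq a$ forces $a = 1$, and likewise $b = 1$), so the product of any finite family of elements equals the product, computed in the lowest component $\m{A}_{i_*}$ containing a non-identity factor, of exactly those factors lying in $A_{i_*}$, a description manifestly independent of bracketing.

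I would then verify residuation, $xy\leq z\iff x\leq y\to z$, by cases on the relative positions of the components of $x$, $y$, and $z$. When all three lie in a common $A_i$ this is residuation in $\m{A}_i$; the remaining cases collapse under the absorption rules to a comparison that is either trivially true or reduces to one already holding in a single component. Finally I would establish divisibility $x(x\to y) = x\meet y$, again by cases: when $x,y\in A_i$ it is divisibility in $\m{A}_i$ together with the fact that meets agree within a component; when $x$ lies strictly below $y$'s component we have $x\to y = 1$ and $x\cdot 1 = x = x\meet y$; and when $y$ lies strictly below $x$'s component we have $x\to y = y$ and $x\cdot y = y = x\meet y$ by the absorption rule, with the boundary cases $x = 1$ or $y = 1$ handled directly.

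The main obstacle I anticipate is purely organizational: the bookkeeping in the residuation and associativity arguments, where elements may occupy three distinct components and where the shared top element $1$, which belongs to every $A_i$, must be treated consistently with the clauses that are stated only for non-top elements. Isolating the absorption principle---``the lowest non-identity component dominates''---and verifying once and for all that products of non-top elements stay non-top is what keeps this case analysis from proliferating, so I would prove that principle as a preliminary observation and then let it drive the remaining cases.
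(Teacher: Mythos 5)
The paper never proves this lemma---it is imported verbatim from \cite{Agliano2003}---so there is no in-paper argument to compare against, and your proof must stand on its own. It does: the reduction of both parts to a single verification (that the ordinal sum of totally ordered integral commutative residuated lattices is again one, and satisfies divisibility) is sound, because prelinearity really is automatic in any totally ordered integral residuated structure, exactly as you argue. The absorption principle you isolate---products of non-top elements within a component remain non-top, hence any finite product equals the product, in the lowest component containing a non-identity factor, of the factors lying there---is the right organizing device: it gives a bracketing-independent description of products (so associativity), and it is what makes the residuation and divisibility case analyses collapse to either a trivial comparison or a single-component instance; the representative mixed cases all check out as you claim. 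Two small remarks. First, in part (ii) your observation that the designated $0$ of $\m{A}_{i_0}$ is the least element of the whole sum requires $\m{A}_{i_0}$ to be non-trivial (otherwise $0=1$ sits at the top); this is covered by the paper's standing convention that an ordinal sum whose first summand has designated bounds has that summand non-trivial, but it is worth flagging. Second, you silently (and correctly) read the first clause of the paper's displayed definition of $\to$ in the ordinal sum as $x \to^{\m{A}_i} y$; as printed it reads $x \cdot^{\m{A}_i} y$, which is a typo.
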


The following lemma summarizes the well-known decomposition of totally ordered BL-algebras and totally ordered basic hoops by the ordinal sum construction. It is fundamental to the work in subsequent sections.

\begin{lemma}[\cite{Agliano2003}]\label{lem:Agl}
\hfill
\begin{enumerate}[label = \textup{(\roman*)}]
\item For every totally ordered basic hoop $\m{A}$ there exists a unique (up to isomorphism) totally ordered set $\alg{I,\leq}$ and unique (up to isomorphism) Wajsberg chains $\m{A}_i$, $i\in I$, such that $\m{A} \cong \bigoplus_{i\in I} \m{A}_i$.

\item For every totally ordered BL-algebra $\m{A}$ there exists a unique (up to isomorphism) totally ordered set $\alg{I,\leq}$ with minimum $i_0$, a unique (up to isomorphism) totally ordered MV-algebra $\m{A}_{i_0}$, and  unique (up to isomorphism) Wajsberg chains $\m{A}_i$, $i\in I{\setminus}\{i_0\}$, such that $\m{A} \cong \bigoplus_{i\in I} \m{A}_i$.

\item For every totally ordered BL-algebra $\m{A}$ there exists a  a unique (up to isomorphism) totally ordered MV-algebra $\m{A}_0$ and a unique (up to isomorphism) totally ordered basic hoop $\m{A}_1$ such that $\m{A} \cong \m{A}_0 \oplus \m{A}_i$.
\end{enumerate}
\end{lemma}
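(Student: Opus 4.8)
The plan is to establish (i) and then obtain (ii) and (iii) as consequences, since the substance of the lemma lies in the basic-hoop decomposition. Call a nontrivial totally ordered basic hoop \emph{ordinally irreducible} if it is not isomorphic to $\m{C}\oplus\m{D}$ for nontrivial totally ordered basic hoops $\m{C},\m{D}$. Part (i) then splits into two assertions: (A) every totally ordered basic hoop is, uniquely up to isomorphism, an ordinal sum of ordinally irreducible totally ordered basic hoops; and (B) the ordinally irreducible totally ordered basic hoops are exactly the nontrivial Wajsberg chains.

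For (A), I would recover the summands intrinsically. On $A\setminus\{1\}$ define
\[
a\sim b \quad:\Longleftrightarrow\quad (a\join b)\to(a\meet b) \;>\; a\meet b .
\]
This relation is reflexive (as $x\to x=1>x$ for $x<1$) and symmetric by construction. First I would show that its classes are \emph{convex}: antitonicity of $\to$ in the first argument gives, for $a\le c\le b$ with $a\sim b$, that $c\to a\ge b\to a> a$, so $a\sim c$. Granting transitivity, I would set $I:=(A\setminus\{1\})/{\sim}$ with the order induced by $\le$ (well defined by convexity) and let $\m{A}_i$ be the class $B_i$ together with the shared top $1$. Checking the operation tables of $\bigoplus_{i\in I}\m{A}_i$ against those of $\m{A}$ — in particular that for $a\in B_i$, $b\in B_j$ with $i<j$ one has $a\, b=a$, $a\to b=1$, and $b\to a=a$ — would then yield $\m{A}\cong\bigoplus_{i\in I}\m{A}_i$, with each $\m{A}_i$ ordinally irreducible because $\sim$ identifies all of its non-top elements. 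Uniqueness in (A) follows once one observes that $\sim$ is definable from the operations and that, in \emph{any} ordinal-sum decomposition into ordinally irreducible pieces, two non-top elements are $\sim$-related if and only if they lie in the same summand; hence every such decomposition induces exactly the partition into $\sim$-classes.

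The crux, and the step I expect to be the main obstacle, is twofold and uses the full strength of the basic-hoop identities (divisibility $x(x\to y)\approx x\meet y$ and prelinearity $(x\to y)\join(y\to x)\approx 1$). First, transitivity of $\sim$ in the cases where the middle element is $\le$-least is not formal and must be extracted from divisibility: concretely, one must rule out that $b\to a=a$ while $a\le c$ and $b\to c>c$, which amounts to showing that domination ``$b\to x=x$'' is inherited downward. Second, and more importantly, one must prove assertion (B): that an ordinally irreducible totally ordered basic hoop satisfies the Wajsberg identity $(x\to y)\to y\approx x\join y$. Here I would distinguish whether the algebra has a least element; if not, I would use divisibility and irreducibility to force cancellativity and then invoke H\"{o}lder's theorem to realize it as a cancellative Wajsberg chain, and if so, I would show the least element is idempotent and that boundedness together with divisibility forces the MV-identity, identifying it as the $0$-free reduct of a totally ordered MV-algebra. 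The converse direction of (B) — that Wajsberg chains are ordinally irreducible — is comparatively routine, since any nontrivial ordinal decomposition of a Wajsberg chain would produce a proper idempotent or contradict the Wajsberg identity.

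Finally, (ii) and (iii) follow by applying (i) to the $0$-free reduct of a totally ordered BL-algebra $\m{A}$. The designated least element $0$ is the global minimum, so it lies in the least $\sim$-class, forcing $\langle I,\le\rangle$ to have a minimum $i_0$ and $\m{A}_{i_0}$ to be bounded; a bounded Wajsberg chain with its least element designated is precisely a totally ordered MV-algebra, which gives (ii). For (iii), put $\m{A}_0:=\m{A}_{i_0}$ and $\m{A}_1:=\bigoplus_{i\in I\setminus\{i_0\}}\m{A}_i$, a totally ordered basic hoop by the ordinal-sum construction recalled above; then $\m{A}\cong\m{A}_0\oplus\m{A}_1$, and uniqueness is inherited from (ii).
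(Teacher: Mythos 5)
You should first note that the paper does not prove this lemma at all: it is imported wholesale from Aglian\`{o}--Montagna \cite{Agliano2003}, so your attempt is a reconstruction of that paper's main decomposition theorem rather than of anything argued here. Your outline does follow the known strategy (recover the summands via a definable component relation, then identify the sum-indecomposable pieces), and several of the verifications you gesture at do go through: convexity, the cross-component operation tables (e.g.\ $b\to a=a$ and divisibility giving $ba=b(b\to a)=a\meet b=a$), and closure of a $\sim$-class under the operations are all short computations. But the two steps you yourself flag as the crux are precisely the ones you never close. Transitivity of $\sim$ in the case where the middle element is $\le$-least is the engine of part (A): without it there is no partition, hence no ordinal sum and no uniqueness statement. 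Writing ``granting transitivity'' concedes exactly the nontrivial core where divisibility and prelinearity must be used in earnest; it is not a formal consequence of the definitions, and no argument is supplied.

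The gap in part (B) is more serious, because the sketch would fail if executed as written. In the unbounded case you propose to ``force cancellativity and then invoke H\"{o}lder's theorem'', but H\"{o}lder's theorem applies only to Archimedean structures, and sum-indecomposable unbounded basic hoop chains need not be Archimedean: the negative cone of $\mathbb{Z}\times_{\mathrm{lex}}\mathbb{Z}$ is a cancellative, sum-indecomposable chain about which H\"{o}lder says nothing. The correct route is to embed a cancellative totally ordered hoop into the negative cone of its group of quotients (an Ore-type construction, or Blok--Ferreirim's theorem that cancellative hoops satisfy the Wajsberg identity); and even before that, extracting cancellativity from sum-indecomposability is itself a substantial argument that you leave entirely unaddressed. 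In the bounded case, ``boundedness together with divisibility forces the MV-identity'' is false as stated---the three-element G\"{o}del chain $\m{2}\oplus\m{2}$ is bounded and divisible but not Wajsberg---so indecomposability must enter that argument in an essential way you do not specify. As it stands, the proposal is a reasonable roadmap with the difficult terrain merely marked, not a proof.
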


It is implicit in the `uniqueness' part of the previous lemma that if ${\m A}$ is a totally ordered MV-algebra or Wajsberg hoop, then ${\m A}$ is \emph{sum-indecomposable} in the sense that if ${\m A}$ is not isomorphic to any non-trivial ordinal sum. Hence, totally ordered MV-algebras and Wajsberg hoops are sum-irreducible `building blocks' of totally ordered basic hoops and BL-algebras. The sum-irreducible components in ordinal sums may be detected in a quite tangible fashion, as attested in the following lemma.

\begin{lemma}[\cite{Agliano2003}]\label{l:component-equation}
Let $\m{A} = \bigoplus_{i\in I} \m{A}_i$  be a basic hoop such that $\m{A}_i$ is a Wajsberg chain for each $i\in I$. Then for all $a,b \in A$, $(a \to b) \to b = (b \to a ) \to a$ if and only if $a,b \in A_i$ for some $i\in I$.
\end{lemma}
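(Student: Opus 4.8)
The plan is to prove the two directions separately, using the Wajsberg identity $(x\to y)\to y \approx x\join y$ for the ``if'' direction and the explicit operation clauses of the ordinal sum for the contrapositive of the ``only if'' direction. The statement and the equation $(a\to b)\to b = (b\to a)\to a$ are both symmetric under interchanging $a$ and $b$, which I will exploit to halve the case analysis.

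For the ``if'' direction, suppose $a,b\in A_i$ for some $i\in I$. Since $A_i$ is a subuniverse of $\m{A}$ on which $\cdot$, $\to$, $\meet$, and $\join$ restrict to the corresponding operations of $\m{A}_i$ (the multiplication and residual clauses give this directly, and the lattice operations agree because the induced order is just the order of $\m{A}_i$), every element arising in the evaluation of $(a\to b)\to b$ and of $(b\to a)\to a$ stays inside $A_i$ and is computed as in $\m{A}_i$. As $\m{A}_i$ is a Wajsberg chain, it satisfies $(x\to y)\to y \approx x\join y$, so $(a\to b)\to b = a\join b = b\join a = (b\to a)\to a$, giving the desired equality.

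For the converse I argue contrapositively: if no single component contains both $a$ and $b$, the two terms differ. Because the identity $1$ lies in \emph{every} component $A_i$, the failure of the right-hand condition forces $a\neq 1$, $b\neq 1$, and in fact $a\in A_i\setminus\{1\}$ and $b\in A_j\setminus\{1\}$ for distinct indices $i\neq j$; by the symmetry noted above I may assume $i<j$. Reading off the ordinal-sum clauses for $\to$, the relation $i<j$ with $a\in A_i\setminus\{1\}$ yields $a\to b = 1$, whence $(a\to b)\to b = 1\to b = b$, while dually $b\to a = a$, whence $(b\to a)\to a = a\to a = 1$. Since $b\neq 1$, I conclude $(a\to b)\to b = b \neq 1 = (b\to a)\to a$, as required.

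The computations themselves are routine; the only points demanding care are bookkeeping ones, and I do not expect a genuine obstacle. The first is handling the shared identity correctly: because $1$ belongs to all components, the right-hand condition can fail only when both $a$ and $b$ are \emph{non-identity} elements sitting in genuinely different components, and isolating this as the sole nontrivial case is the crux of the reduction. The second is confirming that the operations restricted to a single component coincide with those of the corresponding Wajsberg chain, so that the Wajsberg identity may legitimately be applied. With these two observations in place, the main content is simply the evaluation of the four iterated implications via the ordinal-sum tables.
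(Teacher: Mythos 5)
Your proof is correct. Note that the paper itself gives no proof of this lemma---it is quoted from Agli\`ano--Montagna \cite{Agliano2003}---so there is nothing to compare against; your argument (the Wajsberg identity $(x\to y)\to y\approx x\join y$ inside a single component for the ``if'' direction, and the ordinal-sum clauses for $\to$ yielding $(a\to b)\to b=b\neq 1=(b\to a)\to a$ when $a,b$ lie strictly in distinct components) is exactly the standard one, and your handling of the shared identity $1$ is the right way to make the contrapositive reduction airtight.
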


If the unique-up-to-isomorphism totally ordered set $\alg{I,\leq}$ mentioned in Lemma~\ref{lem:Agl} is finite, we say that $\m{A}$ \emph{has finite index}. For any class $\K$ of basic hoops (or BL-algebras), let $\K_\mathrm{fc}$ be the class of totally ordered algebras in $\K$ with finite index. Note that $\K_\mathrm{fc}$ contains the class of finitely generated members of $\K$. It is well known that any variety is generated by its finitely generated members, so for any variety $\V$ of basic hoops (or BL-algebras) it holds that $\V=\vr(\Vfc)$. Moreover if $\W$ is another variety of basic hoops (or BL-algebras), then $(\V\join \W)_\mathrm{fc} = \Vfc \cup \Wfc$.

If we say that a totally ordered basic hoop (or BL-algebra) $\m{A} = \bigoplus_{i=0}^n \m{A}_i$ is of index $n+1$ then we always tacitly assume that each $\m{A}_i$ is a non-trivial totally ordered Wajsberg hoop (or MV-algebra).

The following technical lemmas provide needed information regarding the interaction between the class operators $\iso$, $\hm$, $\sub$, and $\pu$ and finite ordinal sums.

\begin{lemma}\label{l:BL-ordsum-embed}
\hfill
\begin{enumerate}[label = \textup{(\roman*)}]
\item  Let $\m{A} = \bigoplus_{i=1}^n \m{A}_i$ and $\m{B} = \bigoplus_{j=1}^m \m{B}_j$ be totally ordered basic hoops of index $n$ and $m$ respectively. A map $\phi \colon A  \to B$ is an embedding if and only if there exists an order-embedding $f\colon \{1,\dots, n\} \to \{1, \dots, m \}$ such that for each $i \in \{1,\dots, n \}$ the map $\phi$ restricts to an embedding from $\m{A}_i$ to $\m{B}_{f(i)}$. 

\item  Let $\m{A} = \bigoplus_{i=0}^n \m{A}_i$ and $\m{B} = \bigoplus_{j=0}^m \m{B}_j$ be totally ordered BL-algebras of index $n+1$ and $m+1$, respectively. A map $\phi \colon A  \to B$ is an embedding if and only if there exists an order-embedding $f\colon \{0,\dots, n\} \to \{0, \dots, m \}$, such that $f(0) = 0$ and for each $i \in \{0,\dots, n \}$ the map $\phi$ restricts to an embedding from $\m{A}_i$ to $\m{B}_{f(i)}$. 
\end{enumerate}
\end{lemma}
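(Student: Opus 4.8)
The plan is to prove both biconditionals by leaning on Lemma~\ref{l:component-equation}, which detects the ordinal-sum components of a basic hoop via the term equation $(a\to b)\to b=(b\to a)\to a$. Since embeddings both preserve and, being injective, \emph{reflect} such term equations, this equation transfers between $\m{A}$ and $\m{B}$ along any embedding $\phi$, and this is what links the combinatorics of the index map $f$ to the algebra of $\phi$. For part (ii) I would first pass to the $0$-free reducts, apply part (i), and then recover the constraint $f(0)=0$ from the fact that BL-algebra embeddings preserve the designated bound $0$.

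For the backward direction of (i), suppose $f$ is an order-embedding and $\phi$ restricts to an embedding $\phi_i\colon\m{A}_i\to\m{B}_{f(i)}$ for each $i$. Injectivity of $\phi$ is immediate: each $\phi_i$ is injective, $\phi$ preserves $1$, and since $f$ is injective the images $\phi(A_i\setminus\{1\})$ lie in pairwise distinct components $B_{f(i)}\setminus\{1\}$, which are pairwise disjoint. To see that $\phi$ is a homomorphism I would run through the defining clauses of the ordinal-sum operations. For $x,y$ in a common component $A_i$, all operations are computed inside $\m{A}_i$, so preservation follows from $\phi_i$ being a homomorphism together with the fact that its images stay in the single component $B_{f(i)}$. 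For $x\in A_i\setminus\{1\}$ and $y\in A_j$ with $i<j$ (and symmetrically), the ordinal-sum clauses give outputs depending only on the relative order of $i$ and $j$ and on the elements themselves; since $f$ is order-preserving we have $f(i)<f(j)$ and $\phi(x)\in B_{f(i)}\setminus\{1\}$, so the corresponding clause in $\m{B}$ returns the matching value. Thus $\phi$ preserves $\cdot$, $\to$, $\meet$, $\join$, and $1$.

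For the forward direction of (i), assume $\phi$ is an embedding. The heart of the argument is to show that $\phi$ maps each component into a single component of $\m{B}$. Given $a,b\in A_i$, the equation $(a\to b)\to b=(b\to a)\to a$ holds in $\m{A}$ by Lemma~\ref{l:component-equation}; applying the homomorphism $\phi$ shows the same equation holds for $\phi(a),\phi(b)$ in $\m{B}$, so by Lemma~\ref{l:component-equation} again $\phi(a)$ and $\phi(b)$ lie in a common component of $\m{B}$. Since $\m{A}_i$ is non-trivial and $\phi$ is injective and fixes $1$, the set $\phi(A_i\setminus\{1\})$ consists of non-$1$ elements all lying in a single component, which I call $B_{f(i)}$; this defines $f$. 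Exactly as in the backward direction, the restriction $\phi|_{A_i}$ is then an embedding $\m{A}_i\to\m{B}_{f(i)}$. It remains to check that $f$ is an order-embedding. Injectivity of $f$ uses reflection of equations: if $f(i)=f(j)$ with $i<j$, pick non-$1$ elements $a\in A_i$, $a'\in A_j$; their images lie in the same component $B_{f(i)}$ and so satisfy the component equation, and reflecting this equation through $\phi$ forces $a,a'$ into a common component of $\m{A}$, contradicting $i\neq j$. Order-preservation follows because $\phi$ preserves $\leq$ (as $\leq$ is defined from $\meet$): for $i<j$ and non-$1$ $a\in A_i$, $a'\in A_j$ we have $a<a'$, hence $\phi(a)<\phi(a')$, which combined with injectivity of $f$ rules out $f(j)<f(i)$ and leaves $f(i)<f(j)$. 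As a strictly order-preserving map between finite chains, $f$ is an order-embedding.

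Finally, for part (ii) I would apply part (i) to the basic-hoop $0$-free reducts of $\m{A}$ and $\m{B}$, obtaining $f$ and the component embeddings; since $\phi$ additionally preserves $0$, the restriction $\phi|_{A_0}$ preserves $0$ and is thus an MV-algebra embedding, and because $0\in A_0\setminus\{1\}$ maps to $0\in B_0\setminus\{1\}$ we get $f(0)=0$. Conversely, given such an $f$ with $f(0)=0$ together with component embeddings ($\phi|_{A_0}$ an MV-embedding), part (i) shows $\phi$ is a basic-hoop embedding of the reducts, and preservation of $0$ by $\phi|_{A_0}$ upgrades it to a BL-algebra embedding. The main obstacle is the forward direction of (i)---specifically, the simultaneous use of preservation \emph{and} reflection of the component equation of Lemma~\ref{l:component-equation} to guarantee both that $f$ is well-defined and that it is injective; once this component-recognition machinery is in place, the order and homomorphism bookkeeping is routine.
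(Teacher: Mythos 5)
Your proof is correct. There is, in fact, nothing in the paper to compare it against: the authors state this lemma without proof or citation (unlike the neighbouring lemmas attributed to Agliano--Montagna), treating it as routine, so your write-up fills a gap left to the reader. Your strategy---using Lemma~\ref{l:component-equation} to detect components, exploiting that the injective homomorphism $\phi$ both preserves and reflects that equation in order to define $f$ and prove it injective, deducing order-preservation of $f$ from order-preservation of $\phi$, and reducing (ii) to (i) via the $0$-free reducts together with $\phi(0)=0$---is precisely the argument the paper's toolkit suggests, and every step checks out against the ordinal-sum clauses. The only point stated a bit elliptically is the passage from ``any two elements of $\phi(A_i\setminus\{1\})$ share a component of $\m{B}$'' to ``all of $\phi(A_i\setminus\{1\})$ lies in a single component'': this requires the (trivial but worth saying) observation that a non-$1$ element of $\m{B}$ lies in a \emph{unique} component, since distinct components intersect only in $\{1\}$, which is what makes the sharing relation transitive on non-$1$ elements.
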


\begin{lemma}[\cite{Agliano2003}]\label{l:ISPU-ordinalsum}
Let $\m{A}$ be a totally ordered basic hoop (or BL-chain) with $\m{A} = \bigoplus_{i=0}^n \m{A}_i$. Then 
\begin{enumerate}[label = \textup{(\roman*)}]
\item $\iso\sub\pu(\bigoplus_{i=0}^n \m{A}_i) = \iso( \bigoplus_{i=0}^n \sub\pu( \m{A}_i))$.
\item $\hm(\bigoplus_{i=0}^n \m{A}_i) = \bigcup_{i=0}^n (( \bigoplus_{j=0}^{i-1} \m{A}_j) \oplus \hm(\m{A}_i))$.
\end{enumerate}
\end{lemma}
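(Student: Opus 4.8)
The plan is to prove both clauses by reducing assertions about the whole ordinal sum to assertions about its finitely many summands, the central tool being that in a finite ordinal sum of Wajsberg chains the partition into summands is first-order definable. We may assume each $\m{A}_i$ is nontrivial (trivial summands are harmless in ordinal sums). By Lemma~\ref{l:component-equation}, the relation ``$a$ and $b$ lie in a common summand'' is defined on elements $\neq 1$ by the equation $(a\to b)\to b = (b\to a)\to a$, and the induced order on summands is recovered from $\leq$. Since $\m{A}=\bigoplus_{i=0}^n\m{A}_i$ has exactly $n+1$ summands, this is captured by a first-order sentence; hence by {\L}o\'{s}'s theorem any ultrapower $\m{A}^{I}/\mathcal{U}$ again has exactly $n+1$ summands, and (``being in the $i$-th summand'' being the first-order condition ``exactly $i$ classes lie strictly below mine'') the $i$-th summand consists of those $[f]$ with $f(j)\in A_i$ for $\mathcal{U}$-almost all $j$. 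Reading off the operations summand-by-summand identifies this with $\m{A}_i^{I}/\mathcal{U}$ and yields the distributivity law
\[
\Bigl(\bigoplus_{i=0}^n \m{A}_i\Bigr)^{I}\!\!/\mathcal{U} \;\cong\; \bigoplus_{i=0}^n \bigl(\m{A}_i^{I}/\mathcal{U}\bigr),
\]
which underlies all of clause~(i).

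For the inclusion $\subseteq$ of clause~(i), take $\m{B}\leq(\bigoplus_i\m{A}_i)^{I}/\mathcal{U}\cong\bigoplus_i\m{C}_i$ with $\m{C}_i:=\m{A}_i^{I}/\mathcal{U}\in\pu(\m{A}_i)$. Then $\m{B}$ is a totally ordered basic hoop whose summand relation is the restriction of the ambient one, so $\m{B}$ decomposes as $\bigoplus_{i=0}^n\m{B}_i$ with $\m{B}_i:=\m{B}\cap C_i\in\sub(\m{C}_i)\subseteq\sub\pu(\m{A}_i)$ (allowing trivial summands $\{1\}$, which lie in $\sub\pu(\m{A}_i)$ as $\m{A}_i$ is nontrivial). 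For the reverse inclusion I would, given $\m{B}_i\leq\m{A}_i^{I_i}/\mathcal{U}_i$, pass to a common ultrafilter: on $I=\prod_i I_i$ take the product ultrafilter $\mathcal{U}=\mathcal{U}_0\otimes\cdots\otimes\mathcal{U}_n$, whose $i$-th marginal along the projection $\pi_i$ is $\mathcal{U}_i$; then $[g]_{\mathcal{U}_i}\mapsto[g\circ\pi_i]_{\mathcal{U}}$ embeds $\m{A}_i^{I_i}/\mathcal{U}_i$ into $\m{A}_i^{I}/\mathcal{U}$, so that $\m{B}_i\hookrightarrow\m{A}_i^{I}/\mathcal{U}$ for all $i$ simultaneously. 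Assembling these componentwise embeddings via Lemma~\ref{l:BL-ordsum-embed} (with $f$ the identity) and applying the distributivity law gives $\bigoplus_i\m{B}_i\hookrightarrow\bigoplus_i(\m{A}_i^{I}/\mathcal{U})\cong(\bigoplus_i\m{A}_i)^{I}/\mathcal{U}$, as required.

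For clause~(ii) I would use the congruence--filter correspondence available for these integral commutative residuated lattices: homomorphic images of $\m{A}$ correspond to deductive filters $F$. First classify the filters of a finite ordinal sum. If $k$ is the least index with $F\cap(A_k\setminus\{1\})\neq\emptyset$, then upward closure forces $\bigcup_{j>k}A_j\subseteq F$, minimality forces $F\cap A_j=\{1\}$ for $j<k$, and $F_k:=F\cap A_k$ is a deductive filter of $\m{A}_k$; thus $F=F_k\cup\bigcup_{j>k}A_j$. Computing $\m{A}/\theta_F$ then shows the summands $\m{A}_j$ with $j<k$ survive unchanged, all summands with $j>k$ collapse to the top $1$, and $\m{A}_k$ is replaced by $\m{A}_k/\theta_{F_k}\in\hm(\m{A}_k)$, so $\m{A}/\theta_F\cong(\bigoplus_{j=0}^{k-1}\m{A}_j)\oplus(\m{A}_k/\theta_{F_k})$, of the asserted form with $i=k$. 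Conversely every $(\bigoplus_{j=0}^{i-1}\m{A}_j)\oplus\m{C}$ with $\m{C}\in\hm(\m{A}_i)$ arises by writing $\m{C}=\m{A}_i/\theta_{F_i}$ and taking $F=F_i\cup\bigcup_{j>i}A_j$. (For BL-chains the bottom summand $\m{A}_0$ carries the designated bound; this needs no change, since collapsing occurs only at or above the critical index.)

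The main obstacle is confined to clause~(i): everything hinges on the fact that $\pu$ distributes over finite ordinal sums (the displayed isomorphism) and on securing a single ultrafilter that simultaneously realizes the finitely many $\m{B}_i$ as subalgebras of ultrapowers of the $\m{A}_i$. Both are handled by the definability of summands together with the product-ultrafilter construction. Clause~(ii) is then a routine, if slightly tedious, analysis of the deductive filters and quotients of the ordinal sum.
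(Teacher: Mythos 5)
The first thing to note is that the paper gives no proof of this lemma at all: it is quoted, with the citation \cite{Agliano2003}, as a known result. So there is no ``paper proof'' to compare against, and your attempt has to be judged on its own merits. On those merits it is correct. For clause~(i), both key ingredients work: the distributivity of ultrapowers over finite ordinal sums, and the product-ultrafilter device for realizing the finitely many algebras $\m{B}_i \in \sub\pu(\m{A}_i)$, which a priori live over different ultrafilters $\mathcal{U}_i$, inside ultrapowers over one common ultrafilter. The Fubini product $\mathcal{U}_0\otimes\cdots\otimes\mathcal{U}_n$ does have $\mathcal{U}_i$ as its $i$-th marginal, and $[g]_{\mathcal{U}_i}\mapsto[g\circ\pi_i]_{\mathcal{U}}$ is a well-defined injective homomorphism, so assembling the componentwise embeddings (only the easy direction of Lemma~\ref{l:BL-ordsum-embed} is needed) gives the reverse inclusion; the forward inclusion via $\m{B}_i := \m{B}\cap C_i$ is also fine. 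For clause~(ii), the classification of deductive filters of a finite ordinal sum and the computation of the corresponding quotients are carried out correctly, resting on the standard congruence--filter correspondence for integral commutative residuated lattices; the only unmentioned edge case, $F=\{1\}$, is covered by the term $i=n$ with $\m{A}_n\in\hm(\m{A}_n)$.

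One caveat is worth flagging. Your proof of the distributivity isomorphism goes through Lemma~\ref{l:component-equation} and {\L}o\'{s}'s theorem, which forces the summands $\m{A}_i$ to be Wajsberg chains (that hypothesis is essential for the first-order definability of the summand relation), and the argument is slightly compressed there: {\L}o\'{s} gives you $n+1$ equivalence classes, and one must still check (by transferring further first-order sentences) that these classes are convex, stacked, closed under the operations, and interact as in an ordinal sum before calling them ``summands.'' The lemma as stated does not restrict the $\m{A}_i$ to Wajsberg chains, and the paper does occasionally apply it with a general totally ordered basic hoop as a summand (e.g., in Lemma~\ref{l:ordsum-part-embed}(iii)); your version suffices because a finite-index summand can always be refined into its Wajsberg components, but it is both simpler and more general to verify the isomorphism $(\bigoplus_{i}\m{A}_i)^{I}/\mathcal{U}\cong\bigoplus_i(\m{A}_i^{I}/\mathcal{U})$ directly: the ultrafilter decides the finite partition $\{j : f(j)\in A_i\setminus\{1\}\}$, so each $[f]$ concentrates on a single summand, and the operations are read off pointwise. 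This is exactly the paper's Lemma~\ref{l:ultraprod-distrib-ordsum} (also only cited, not proved), which you could have invoked outright instead of reconstructing it in the restricted Wajsberg setting.
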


\begin{lemma}[\cite{Agliano2003}]\label{l:ultraprod-distrib-ordsum}
Let $X$ be a set, $n\in \mathbb{N}$, and for $k \in X$ let $\m{A}_0^k,\dots, \m{A}_n^k$ be totally ordered basic hoops. Then for every ultrafilter $U$ over $X$, 
\[
\prod_{k\in X} (\m{A}_0^k \oplus \dots \oplus \m{A}_n^k)/U \cong (\prod_{k\in X} \m{A}_0^k/U) \oplus \dots \oplus (\prod_{k\in X} \m{A}_n^k/U).
\]
\end{lemma}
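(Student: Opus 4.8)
The plan is to exhibit an explicit isomorphism $\Phi$ from the ordinal sum of ultraproducts to the ultraproduct of ordinal sums and to verify directly that it is a bijective homomorphism. Write $\m{A}^k = \m{A}_0^k \oplus \dots \oplus \m{A}_n^k$, so that the underlying set of each $\m{A}^k$ is $\bigcup_{i=0}^n A_i^k$ with the top elements identified, and set $\m{B}_i = \prod_{k\in X}\m{A}_i^k / U$. The underlying set of the codomain $\bigoplus_{i=0}^n \m{B}_i$ is $\bigcup_{i=0}^n B_i$, again with all top elements glued to a single $1$. Since the top of each component $\m{A}_i^k$ coincides with the top of $\m{A}^k$, the constant map $k \mapsto 1$ represents the top of every $\m{B}_i$ as well as the top of $\prod_{k} \m{A}^k/U$. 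I would define $\Phi$ to send this common top to the top of $\prod_k \m{A}^k/U$, and to send a non-top element $[g]_U$ of $\m{B}_i$ --- so $g(k)\in A_i^k$ for all $k$ and $\{k : g(k)\neq 1\}\in U$ --- to the class $[g]_U$ computed in $\prod_k\m{A}^k/U$, now viewing $g(k)\in A_i^k \subseteq A^k$.

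The decisive observation, used throughout, is the following partition property. For any $f\in\prod_k A^k$, the $n+2$ sets $S_i(f) = \{k : f(k)\in A_i^k\setminus\{1\}\}$ (for $0\le i\le n$) together with $\{k : f(k)=1\}$ partition $X$, so since $U$ is an ultrafilter exactly one of them lies in $U$. This assigns to each class $[f]_U$ a well-defined \emph{component} (either the top, or some $i$), unchanged under modification of $f$ on a $U$-null set.

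To see that $\Phi$ is a well-defined bijection: well-definedness on each $\m{B}_i$ is immediate, and the gluing of tops is respected by the remark above. For surjectivity, given $[f]_U$ I apply the partition property: if $\{k:f(k)=1\}\in U$ then $[f]_U$ is the top; otherwise $S_i(f)\in U$ for a unique $i$, and altering $f$ off $S_i(f)$ to take the value $1$ there (a $U$-null change) produces $g$ with $g(k)\in A_i^k$ for all $k$ and $[g]_U=[f]_U$, so that $[f]_U$ is the $\Phi$-image of the non-top element $[g]_U$ of $\m{B}_i$. For injectivity, two classes from the same $\m{B}_i$ are separated because $\Phi$ acts as the identity on representatives there; and for non-top classes $[g]_U\in\m{B}_i$, $[h]_U\in\m{B}_j$ with $i<j$, we have $g(k)\in A_i^k\setminus\{1\}$ and $h(k)\in A_j^k\setminus\{1\}$ on a set in $U$, whence $g(k) < h(k)$ in $A^k$ there by the definition of the order on the ordinal sum, so $[g]_U\neq [h]_U$.

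Finally I would check that $\Phi$ is a homomorphism by a case analysis on the components of the two arguments. Both the operations of $\bigoplus_i\m{B}_i$ and those of $\prod_k\m{A}^k/U$ are defined by the same pointwise case split according to which components the arguments inhabit; the only point to watch is that the relevant case must hold simultaneously for $U$-almost all $k$, which is guaranteed because the finitely many sets witnessing the components of the arguments lie in $U$ and a finite intersection of $U$-sets is again in $U$. For instance, for non-top $[g]_U\in\m{B}_i$, $[h]_U\in\m{B}_j$ with $i<j$, the product in the ordinal sum is $[g]_U$, while on the $U$-set where $g(k)\in A_i^k\setminus\{1\}$ and $h(k)\in A_j^k$ one has $g(k)\cdot^{A^k} h(k) = g(k)$, so the images agree; the same pattern settles $\meet$, $\join$, and $\to$, and the within-component case reduces to the operations of $\m{B}_i$. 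I expect this homomorphism verification --- in particular the bookkeeping required to align every case of every operation across the two constructions while tracking the shared top element --- to be the only laborious part, the bijection being comparatively immediate.
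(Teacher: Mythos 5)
Your proposal is correct, but note that the paper itself offers no proof to compare against: Lemma~\ref{l:ultraprod-distrib-ordsum} is cited from \cite{Agliano2003} and used as a black box. Your direct construction does establish the statement. The key points all check out: the partition property (an ultrafilter contains exactly one cell of the finite partition of $X$ into $\{k : f(k)=1\}$ and the sets $S_i(f)$, which are pairwise disjoint because $A_i^k \cap A_j^k = \{1\}$) gives each class a well-defined component; surjectivity follows by truncating a representative to $1$ off its component set, which is a $U$-null change; injectivity across components follows since representatives differ on a $U$-set; and the homomorphism verification works because every clause in the ordinal-sum case split is applied pointwise in $U$-almost every coordinate, with finite intersections of $U$-sets again in $U$. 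The one bookkeeping point you rightly flag --- that the tops of the ultraproducts $\m{B}_i$ are formally distinct objects and must be glued --- is harmless since the ordinal sum is defined on algebras meeting only in $1$ and the lemma asserts an isomorphism, not equality. Two remarks for perspective: first, your argument uses nothing about the summands beyond their being totally ordered integral residuated lattices sharing a common top, so it works in exactly the stated generality (arbitrary totally ordered basic hoops, not just Wajsberg chains, so one could not shortcut via the component-defining equation of Lemma~\ref{l:component-equation}); second, a slicker packaging of the same idea is to expand each $\m{A}^k$ by unary predicates $P_i$ naming the components, observe that the ordinal-sum clauses and the covering/intersection conditions are first-order, and invoke \L{}o\'s's theorem --- your hands-on case analysis is the unwound version of that argument and is no less rigorous.
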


\begin{lemma}\label{l:ordsum-component-var}
Let $\V$ be a variety of Wajsberg hoops and $\K = \{ \bigoplus_{i\in I} \m{A}_i \mid \m{A}_i \in \Vfsi \}$. Then $\hspu(\K) = \K$.
\end{lemma}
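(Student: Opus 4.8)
The plan is to prove the nontrivial inclusion $\hspu(\K)\subseteq\K$ (the reverse being immediate, since $\iso(\K)\subseteq\pu(\K)\subseteq\hspu(\K)$), and for this it suffices to show that $\K$ is closed, up to isomorphism, under each of $\pu$, $\sub$, and $\hm$ separately; composing these and using monotonicity of the operators then yields $\hspu(\K)=\hm\sub\pu(\K)\subseteq\K$. Throughout I would use the following reformulation of membership in $\K$: since $\WH$ is semilinear, $\Vfsi$ is exactly the class of Wajsberg chains in $\V$, so by the uniqueness of ordinal decompositions (Lemma~\ref{lem:Agl}(i)) a totally ordered basic hoop lies in $\K$ if and only if every Wajsberg component of its decomposition lies in $\V$. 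The crucial technical input is Lemma~\ref{l:component-equation}: the relation ``$x$ and $y$ lie in a common component'' is definable by the equation $(x\to y)\to y = (y\to x)\to x$, so it is absolute (preserved and reflected) across subalgebras and preserved across homomorphisms, and each component is a subalgebra on which the operations agree with those of the whole sum.

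For closure under $\sub$ and $\pu$ I would argue that $\K$ is a universal class. Consider the first-order theory $T$ consisting of the basic-hoop identities, the universal sentence expressing totality of the order, and, for each equation $s\approx t$ valid in $\V$, the universal sentence $\sigma_{s,t}$ asserting that $s\approx t$ holds of every tuple whose entries pairwise lie in a common component (or equal $1$); this last condition is first-order by Lemma~\ref{l:component-equation}. Because a term evaluated on arguments from a single component stays in that component and agrees with the component's operations, a totally ordered basic hoop satisfies all the $\sigma_{s,t}$ precisely when each of its components satisfies the equational theory of $\V$, i.e.\ precisely when it belongs to $\K$. Thus $\K$ is the class of models of the universal theory $T$, and universal classes are closed under $\iso$, $\sub$, and $\pu$ (substructures, and, by {\L}o\'s's theorem, ultraproducts preserve universal sentences). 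This disposes of $\sub$ and $\pu$ uniformly, for arbitrary (possibly infinite) index sets; one could alternatively derive these cases from Lemmas~\ref{l:BL-ordsum-embed}, \ref{l:ISPU-ordinalsum}(i), and \ref{l:ultraprod-distrib-ordsum}, but only after restricting to finite index.

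The main obstacle is closure under $\hm$, since universal sentences are not preserved by homomorphic images and the previous argument breaks down; here I would instead reduce to finite index and then pass to a limit. Any $\m{A}=\bigoplus_{i\in I}\m{A}_i\in\K$ is the directed union of the subalgebras $\bigoplus_{i\in I_0}\m{A}_i$ with $I_0\subseteq I$ finite (these are genuine subalgebras, as the cross-component operations depend only on the relative order of indices), and each such finite-index sub-sum again lies in $\K$. Given a surjection $h\colon \m{A}\to\m{D}$, we then have $\m{D}=\bigcup_{I_0}h(\bigoplus_{i\in I_0}\m{A}_i)$, a directed union. For the finite-index pieces, Lemma~\ref{l:ISPU-ordinalsum}(ii) shows that every homomorphic image of $\bigoplus_{i=0}^n\m{A}_i$ has the form $(\bigoplus_{j=0}^{i-1}\m{A}_j)\oplus\m{C}$ with $\m{C}\in\hm(\m{A}_i)$; since $\m{A}_i\in\Vfsi\subseteq\V$, since $\V$ is closed under $\hm$, and since homomorphic images of Wajsberg chains are again Wajsberg chains, each such $\m{C}$ is a Wajsberg chain in $\V$, whence the whole image lies in $\K$.

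It remains to check that a directed union $\m{D}=\bigcup_\alpha \m{C}_\alpha$ of members of $\K$ again lies in $\K$. As a directed union of totally ordered basic hoops, $\m{D}$ is a totally ordered basic hoop, so it has a decomposition $\m{D}=\bigoplus_{l}\m{D}_l$; I would show each $\m{D}_l\in\V$. Fix a component $\m{D}_l$ and finitely many of its elements. By directedness they lie in a single $\m{C}_\alpha$, hence in the subalgebra $\m{D}_l\cap\m{C}_\alpha$; by absoluteness of the component relation the non-top elements of $\m{D}_l\cap\m{C}_\alpha$ lie pairwise in a common component of $\m{C}_\alpha$, so $\m{D}_l\cap\m{C}_\alpha$ embeds into a single component of $\m{C}_\alpha$, which lies in $\V$. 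As $\V$ is a variety, $\m{D}_l\cap\m{C}_\alpha\in\V$, so the chosen elements satisfy every equation of $\V$; since they were arbitrary, $\m{D}_l\in\V$. Hence $\m{D}\in\K$, completing the treatment of $\hm$ and thus the proof. The delicate point throughout is the interplay between the globally defined ordinal-sum operations and the componentwise ones, which Lemma~\ref{l:component-equation} lets us control.
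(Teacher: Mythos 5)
Your proposal is correct and follows essentially the same route as the paper: the paper likewise axiomatizes $\K$ relative to the class of totally ordered basic hoops by quasiequations whose premises use the term $c(x,y)=(x\to y)\to y$ and Lemma~\ref{l:component-equation} (your universal sentences $\sigma_{s,t}$ are exactly these quasiequations), concluding $\iso\sub\pu(\K)=\K$. The only divergence is that the paper dismisses $\hm(\K)=\K$ as straightforward to check, whereas you supply a complete argument for it by writing a homomorphic image of an arbitrary member of $\K$ as a directed union of images of finite sub-sums and invoking Lemma~\ref{l:ISPU-ordinalsum}(ii); this is a correct filling-in of the detail the paper omits.
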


\begin{proof}
Let $\Sigma$ be an axiomatization of $\V$.  Define the term $c(x,y) = (x \to y) \to y$ and for each equation $\epsilon(x_1,\dots,x_n) \in \Sigma$ the quasiequation 
\[
\phi_\epsilon = \{ c(x_i,x_j) \approx c(x_i,x_j) \mid 1\leq i < j \leq n\} \Rightarrow \epsilon(x_1,\dots, x_n)
\]
and set $\Delta = \{\phi_\epsilon \mid \epsilon \in \Sigma \}$. Then it follows from Lemma~\ref{l:component-equation} that  $\K$ is axiomatized relative to the class of all totally ordered basic hoops by $\Delta$. Thus $\iso\sub\pu(\K) = \K$. Moreover, it is straightforward to check that also $\hm(\K) = \K$.
\end{proof}

\section{Essential extensions and amalgamation}\label{sec:essential}

\begin{figure}
\centering
\begin{tabular}{ccc}
\begin{tikzcd}
	& {\bf B} \\
	{\bf A} && {\bf D} \\
	& {\bf C}
	\arrow["{\phi_1}", hook, from=2-1, to=1-2]
	\arrow["{\psi_1}", dashed, hook, from=1-2, to=2-3]
	\arrow["{\phi_2}"', hook, from=2-1, to=3-2]
	\arrow["{\psi_2}"', dashed, hook, from=3-2, to=2-3]
\end{tikzcd}
& &
\begin{tikzcd}
	& {\bf B} \\
	{\bf A} && {\bf D} \\
	& {\bf C}
	\arrow["{\phi_1}", hook, from=2-1, to=1-2]
	\arrow["{\psi_1}", dashed, hook, from=1-2, to=2-3]
	\arrow["{\phi_2}"', hook, from=2-1, to=3-2]
	\arrow["{\psi_2}"', dashed,  from=3-2, to=2-3]
\end{tikzcd}
\\
(i) & & (ii)
\end{tabular}
\caption{Commutative diagrams for the amalgamation properties}
\label{fig:AP}
\end{figure}

Let $\K$ be any class of algebras. A \emph{span} in $\sf K$ is a pair of injective homomorphisms $\alg{\varphi_1\colon\mathbf{A}\to\m{B}, \phi_2\colon\mathbf{A}\to\mathbf{C}}$ where $\mathbf{A},\mathbf{B}, \mathbf{C} \in \mathsf{K}$. An \emph{amalgam} of a span $\alg{\varphi_1\colon\mathbf{A}\to\m{B}, \phi_2\colon\mathbf{A}\to\mathbf{C}}$ in $\sf K$ is a pair $\alg{\psi_1\colon\m{B}\to\m{D},\psi_2\colon\m{C}\to\m{D}}$ of embeddings where $\mathbf{D} \in \mathsf{K}$ and $\psi_1\circ \phi_1 = \psi_2 \circ \phi_2$, i.e., the diagram in Figure~\ref{fig:AP}(i) commutes. 

A \emph{one-sided amalgam} of a span $\alg{\varphi_1\colon\mathbf{A}\to\m{B}, \phi_2\colon\mathbf{A}\to\mathbf{C}}$ in $\sf K$ is a pair $\alg{\psi_1\colon\m{B}\to\m{D},\psi_2\colon\m{C}\to\m{D}}$ where $\mathbf{D} \in \mathsf{K}$, $\psi_1$ is an embedding, and $\psi_2$ is any homomorphism, such that $\psi_1\circ \phi_1 = \psi_2 \circ \phi_2$, i.e., the diagram in Figure~\ref{fig:AP}(ii) commutes. The class $\sf K$ has the \emph{amalgamation property} (or \emph{AP}) if every span in $\K$ has an amalgam in $\K$, and has the \emph{one-sided amalgamation property} (or \emph{one-sided AP}) if every span in $\K$ has a one-sided amalgam in $\K$.

\begin{lemma}[cf.~{\cite[Corollary~3.5]{Fussner2023}}]\label{lem:Fussner2023}
Let $\V$ be any variety with the congruence extension property such that $\sub(\Vfsi)=\Vfsi$. Then $\V$ has the amalgamation property if and only if (the class of finitely generated members of) $\Vfsi$ has the one-sided amalgamation property.
\end{lemma}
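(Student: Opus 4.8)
The plan is to prove the two implications separately, using two devices repeatedly. The first is a maximal-congruence lemma: if $\m{E}\in\V$ and $\m{B}\leq \m{E}$ with $\m{B}$ \textsf{FSI}, then there is a congruence $\theta$ of $\m{E}$ with $\theta\cap B^2=\Delta$ such that $\m{E}/\theta$ is \textsf{FSI}. I would prove this by taking (via Zorn's lemma) a $\theta$ maximal among congruences of $\m{E}$ satisfying $\theta\cap B^2=\Delta$; if $\m{E}/\theta$ were not \textsf{FSI}, lifting a witnessing pair $\alpha,\beta\supsetneq\theta$ with $\alpha\cap\beta=\theta$ would give $(\alpha\cap B^2)\cap(\beta\cap B^2)=\Delta$, contradicting the finite meet-irreducibility of $\Delta$ in $\con(\m{B})$ and maximality of $\theta$. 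The second device is the observation that a two-sided amalgam can be built from one-sided ones: a one-sided amalgam of a span and of its mirror, combined into $\m{D}\times\m{D}'$, yields an amalgam in which both legs embed, and $\m{D}\times\m{D}'\in\V$ because $\V$ is a variety.

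For the forward direction, suppose $\V$ has the amalgamation property and take a span $\langle\phi_1\colon\m{A}\to\m{B},\phi_2\colon\m{A}\to\m{C}\rangle$ with $\m{A},\m{B},\m{C}\in\Vfsi$ finitely generated. Amalgamating in $\V$ gives $\m{D}\in\V$ and embeddings $\psi_1\colon\m{B}\to\m{D}$, $\psi_2\colon\m{C}\to\m{D}$ with $\psi_1\phi_1=\psi_2\phi_2$. Applying the maximal-congruence lemma to $\psi_1(\m{B})\leq\m{D}$ produces an \textsf{FSI} quotient $\m{D}/\theta$ into which $\m{B}$ still embeds. The subalgebra $\m{E}$ of $\m{D}/\theta$ generated by the images of $\m{B}$ and $\m{C}$ is finitely generated and, since $\sub(\Vfsi)=\Vfsi$, lies in $\Vfsi$; it is the required one-sided amalgam.

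For the converse I would first upgrade the hypothesis to: every span in $\V$ has a one-sided amalgam in $\V$. Given $\langle\m{A}\hookrightarrow\m{B},\phi_2\colon\m{A}\to\m{C}\rangle$, write $\m{B}$ as a subdirect product of \textsf{FSI} algebras $\m{B}_k=\m{B}/\theta_k$ and set $\alpha_k=\theta_k\cap A^2$, so $\m{A}/\alpha_k\hookrightarrow\m{B}_k$. Let $\beta_k$ be the congruence of $\m{C}$ generated by $\phi_2(\alpha_k)$; the congruence extension property guarantees $\beta_k\cap\phi_2(A)^2=\phi_2(\alpha_k)$, whence the induced map $\m{A}/\alpha_k\to\m{C}/\beta_k$ is \emph{injective}. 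A second application of the maximal-congruence lemma yields an \textsf{FSI} quotient $\m{C}_k$ of $\m{C}/\beta_k$ into which $\m{A}/\alpha_k$ still embeds, so that $\langle\m{A}/\alpha_k\hookrightarrow\m{B}_k,\m{A}/\alpha_k\hookrightarrow\m{C}_k\rangle$ is a genuine span in $\Vfsi$. Each such span I would amalgamate one-sidedly into $\V$ by writing it as a directed union of finitely generated \textsf{FSI} subspans (their members are \textsf{FSI} by $\sub(\Vfsi)=\Vfsi$), applying the hypothesis to each, and gluing the outputs along an ultrafilter on the index poset containing all principal up-sets; the product over $k$ then assembles a one-sided amalgam $\m{D}=\prod_k\m{D}_k\in\V$ of the original span, with $\m{B}$ embedding since $\bigcap_k\theta_k=\Delta$. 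The product/mirror device finally promotes this to a two-sided amalgam, giving the amalgamation property.

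The main obstacle is the reduction of an arbitrary $\V$-span to spans of \textsf{FSI} algebras in the backward direction: after subdirectly decomposing $\m{B}$, the induced map out of $\m{A}/\alpha_k$ into the relevant quotient of $\m{C}$ need not be injective, so one does not automatically get a span at all. The congruence extension property is exactly the hypothesis that repairs this, and it is the linchpin of the argument. A secondary point to be careful about is that finitely subdirectly irreducible algebras are closed under neither products nor ultraproducts, so the gluing and product steps can only be guaranteed to land in $\V$ rather than in $\Vfsi$; this is harmless, since the concluding product/mirror step needs only that the ambient class be closed under products.
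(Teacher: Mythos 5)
Your proposal is correct. There is no internal proof to compare it against: the paper imports this lemma wholesale from the cited reference \cite[Corollary~3.5]{Fussner2023}, so the relevant benchmark is that external argument, and your reconstruction assembles essentially the same circle of ideas. Specifically: (a) your maximal-congruence lemma (Zorn, plus the observation that a congruence maximal with respect to $\theta\cap B^2=\Delta_B$ for $\m{B}$ FSI is finitely meet-irreducible) is the standard FSI strengthening of the paper's Lemma~\ref{l:essential-cong}; (b) the product-with-mirror device for upgrading one-sided amalgams to two-sided ones is exactly how closure of $\V$ under products is exploited in the cited source; (c) the backward reduction---subdirectly decompose $\m{B}$, pull the congruences $\theta_k$ back to $\m{A}$, push them into $\m{C}$ via the congruence extension property so that $\m{A}/\alpha_k\to\m{C}/\beta_k$ stays injective, then pass to FSI quotients and to finitely generated subspans glued by an ultrafilter---is the heart of that argument as well. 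All of your verifications go through, including the two points you flag yourself: the CEP is indeed what makes the induced maps injective (the generated congruence $\beta_k$ restricts to $\phi_2(\alpha_k)$ because some extension does and the generated one is smallest), and it is harmless that the gluing and product steps only land in $\V$ rather than $\Vfsi$. One presentational remark: when you apply the maximal-congruence lemma to $\m{C}/\beta_k$, you need the embedded copy of $\m{A}/\alpha_k$ to be FSI; this holds because $\m{A}/\alpha_k$ embeds into the FSI algebra $\m{B}_k$ and $\sub(\Vfsi)=\Vfsi$, but you invoke sub-closure only a sentence later for the finitely generated subspans, so this use of the hypothesis should be stated explicitly at the earlier point where it is actually needed.
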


An extension $\m{A} \leq \m{B}$ is called \emph{essential}  if for every congruence $\theta \in \con(\m{B})$, whenever $\theta \neq \Delta_B$, also $\theta {\restriction}_A \neq \Delta_A$. An \emph{essential embedding} is an embedding $\phi\colon \m{A} \to \m{B}$ such that $\phi[\m{A}] \leq \m{B}$ is  essential. Note that an embedding $\phi \colon \m{A} \to \m{B}$ is essential if and only if for each homomorphism $\psi \colon \m{B} \to \m{C}$, if $\psi\circ \phi$ is injective, then $\psi$ is also injective.
Abusing the notation slightly we will also say that $\m{A} \leq \m{B}$ is an essential extension if there exists an essential embedding from $\m{A}$ into $\m{B}$.
We call a span $\pair{i_1 \colon \m{A} \to \m{B}, i_2\colon \m{A} \to \m{C}}$ \emph{essential} if $i_2$ is an essential embedding. We say that a class of algebras $\K$ has the \emph{essential amalgamation property} (or \emph{essential AP}) if every essential span in $\K$ has an amalgam in $\K$.

\begin{lemma}[\cite{Graetzer1971}]\label{l:essential-cong}
If $\m{A} \leq \m{B}$, then among the congruences $\theta \in \con(\m{B})$ with $\theta{\restriction}_A = \Delta_A$, there exists a maximal one, $\theta_0$, and the extension $\m{A} \leq \m{B}/\theta_0$ is essential.
\end{lemma}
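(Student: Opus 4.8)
The plan is to establish Grätzer–Lakser's classical result on the existence of a maximal congruence collapsing nothing on $\m{A}$, then verify that the quotient yields an essential extension. The statement has two parts: existence of the maximal $\theta_0$, and the essentiality of $\m{A} \leq \m{B}/\theta_0$. I will treat them in turn.

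For existence, let $\Theta = \{\theta \in \con(\m{B}) \mid \theta{\restriction}_A = \Delta_A\}$. First I would note that $\Theta \neq \emptyset$, since $\Delta_B \in \Theta$. To apply Zorn's Lemma, I would take a chain $\{\theta_k\}_{k\in K}$ in $\Theta$ and consider its join $\theta = \bigvee_{k\in K} \theta_k$, which in the congruence lattice of a chain is simply the union $\bigcup_{k\in K}\theta_k$ (a directed union of congruences is a congruence). The key verification is that $\theta{\restriction}_A = \Delta_A$: if $(a,a') \in \theta{\restriction}_A$ with $a,a' \in A$, then $(a,a') \in \theta_k$ for some $k$, whence $(a,a') \in \theta_k{\restriction}_A = \Delta_A$, so $a = a'$. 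Thus $\theta \in \Theta$ is an upper bound for the chain, and Zorn's Lemma furnishes a maximal element $\theta_0$.

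For essentiality, consider the quotient map $q \colon \m{B} \to \m{B}/\theta_0$. Since $\theta_0{\restriction}_A = \Delta_A$, the restriction $q{\restriction}_A$ is injective, so $\m{A}$ embeds into $\m{B}/\theta_0$ via $q$. To show this embedding is essential, I would use the characterization recorded just before the lemma: an embedding is essential if and only if every homomorphism out of the codomain that is injective on the image of $\m{A}$ is itself injective. So let $\psi \colon \m{B}/\theta_0 \to \m{C}$ be a homomorphism with $\psi \circ (q{\restriction}_A)$ injective. By the correspondence theorem, $\ker(\psi \circ q)$ is a congruence $\theta_1 \in \con(\m{B})$ with $\theta_1 \supseteq \theta_0$; and injectivity of $\psi \circ q$ on $A$ means exactly $\theta_1{\restriction}_A = \Delta_A$, so $\theta_1 \in \Theta$. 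By maximality of $\theta_0$ we get $\theta_1 = \theta_0$, hence $\ker(\psi \circ q) = \ker(q)$, which forces $\psi$ to be injective.

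The main obstacle, such as it is, lies in being careful about the two notions of essentiality in play: the congruence-theoretic definition ($\theta \neq \Delta_B \Rightarrow \theta{\restriction}_A \neq \Delta_A$, reformulated for the codomain $\m{B}/\theta_0$) and the homomorphism-lifting characterization. One must check that maximality of $\theta_0$ transfers correctly to the quotient, i.e.\ that congruences of $\m{B}/\theta_0$ meeting $\m{A}$ trivially correspond to congruences of $\m{B}$ above $\theta_0$ meeting $\m{A}$ trivially, of which $\theta_0$ itself is the bottom. Since the relevant algebras here are commutative residuated lattices (with the congruence extension property and congruence distributivity already noted in the preliminaries), all lattice-theoretic facts about $\con(\m{B})$ that I invoke—directed unions of congruences are congruences, the correspondence theorem—hold without complication, so the argument is essentially Zorn's Lemma followed by a diagram chase.
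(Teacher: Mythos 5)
The paper offers no proof of this lemma at all---it is cited from Gr\"atzer (1971)---and your argument (Zorn's Lemma applied to the family $\Theta$, which is closed under unions of chains, followed by the correspondence theorem and the homomorphism-lifting characterization of essentiality recorded just before the lemma) is exactly the classical proof that citation points to; it is correct. One cosmetic remark: the lemma is a fact about arbitrary algebras, so your closing appeal to the congruence extension property and congruence distributivity of commutative residuated lattices is superfluous---nothing in your argument uses them, and the phrase ``the congruence lattice of a chain'' should read ``the join of a chain in the congruence lattice.''
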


\begin{lemma}\label{l:ess-AP}
 Let $\K$ be a class of algebras such that $\hm(\K) = \K$.
Then $\K$ has the one-sided amalgamation property if and only if $\K$ has the  essential amalgamation property.
\end{lemma}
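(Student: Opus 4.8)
The plan is to prove the two directions of the equivalence separately, using Lemma~\ref{l:essential-cong} as the key tool to convert arbitrary spans into essential ones.

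The forward direction is almost immediate: assume $\K$ has the one-sided amalgamation property and let $\pair{i_1\colon\m{A}\to\m{B}, i_2\colon\m{A}\to\m{C}}$ be an \emph{essential} span in $\K$. Applying the one-sided AP yields a pair $\pair{\psi_1\colon\m{B}\to\m{D}, \psi_2\colon\m{C}\to\m{D}}$ with $\psi_1$ an embedding, $\psi_2$ a homomorphism, and $\psi_1\circ i_1 = \psi_2\circ i_2$. Since $i_2$ is essential and $\psi_2\circ i_2 = \psi_1\circ i_1$ is injective (as a composite of embeddings), the characterization of essential embeddings recalled just before the statement forces $\psi_2$ to be injective as well. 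Hence both maps are embeddings and we obtain a genuine amalgam, establishing the essential AP.

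For the reverse direction, assume $\K$ has the essential AP and let $\pair{i_1\colon\m{A}\to\m{B}, i_2\colon\m{A}\to\m{C}}$ be an \emph{arbitrary} span in $\K$; the goal is a one-sided amalgam. The idea is to replace $\m{C}$ by a quotient on which $i_2$ becomes essential. By Lemma~\ref{l:essential-cong} applied to $i_2[\m{A}]\leq\m{C}$, there is a maximal congruence $\theta_0\in\con(\m{C})$ with $\theta_0\restriction_{i_2[A]} = \Delta$, and the induced extension $i_2[\m{A}]\leq\m{C}/\theta_0$ is essential. Writing $q\colon\m{C}\to\m{C}/\theta_0$ for the quotient map, the composite $q\circ i_2\colon\m{A}\to\m{C}/\theta_0$ is an essential embedding (it is injective precisely because $\theta_0$ collapses nothing in $i_2[A]$), and crucially $\m{C}/\theta_0\in\K$ since $\hm(\K)=\K$. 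Thus $\pair{i_1\colon\m{A}\to\m{B},\, q\circ i_2\colon\m{A}\to\m{C}/\theta_0}$ is an essential span in $\K$, and the essential AP supplies embeddings $\psi_1\colon\m{B}\to\m{D}$ and $\psi_2'\colon\m{C}/\theta_0\to\m{D}$ with $\psi_1\circ i_1 = \psi_2'\circ q\circ i_2$. Setting $\psi_2 := \psi_2'\circ q\colon\m{C}\to\m{D}$, which is a homomorphism, gives $\psi_1\circ i_1 = \psi_2\circ i_2$ with $\psi_1$ an embedding, i.e.\ a one-sided amalgam in $\K$.

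The only place requiring genuine care---and what I expect to be the main subtlety rather than a true obstacle---is verifying that $q\circ i_2$ is indeed essential as an embedding of $\m{A}$ (and not merely that $i_2[\m{A}]\leq\m{C}/\theta_0$ is an essential \emph{extension} of the image). This follows because essentiality is preserved under composition with the isomorphism $\m{A}\cong i_2[\m{A}]$, using the maximality of $\theta_0$ to guarantee that every nontrivial congruence of $\m{C}/\theta_0$ restricts nontrivially to the image of $\m{A}$. The hypothesis $\hm(\K)=\K$ is exactly what is needed to keep $\m{C}/\theta_0$ inside the class, so both ingredients of the construction stay within $\K$ throughout.
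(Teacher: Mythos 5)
Your proof is correct and takes essentially the same route as the paper: the forward direction observes that a one-sided amalgam of an essential span is automatically an amalgam (essentiality forces $\psi_2$ to be injective), and the reverse direction uses Lemma~\ref{l:essential-cong} to pass to the quotient $\m{C}/\theta_0$, invoking $\hm(\K)=\K$ to stay in the class and composing the resulting amalgam with the projection. The paper's proof is just a terser version of exactly this argument.
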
 
\begin{proof}
The left-to-right direction is immediate from the fact that clearly a one-sided amalgam of an essential span is an amalgam. For the right-to-left direction let $\pair{i_1 \colon \m{A} \to \m{B}, i_2\colon \m{A} \to \m{C}}$ be a span in $\K$. By Lemma~\ref{l:essential-cong}, there exists a congruence $\theta_0 \in \con(\m{C})$ with natural projection $\pi \colon \m{C} \to \m{C}/\theta_0$ such that $\pi\circ i_2$ is an essential embedding. Since $\hm(\K) = \K$, $\m{C}/\theta_0 \in \K$.  Hence $\pair{i_1 \colon \m{A} \to \m{B}, \pi\circ i_2\colon \m{A} \to \m{C}/\theta_0}$ is an essential span in $\K$ which, by assumption, has an amalgam $\pair{j_1 \colon \m{B} \to \m{D}, j_2 \colon \m{C}/\theta_0 \to \m{D}}$ in $\K$. But then clearly  $\pair{j_1 \colon \m{B} \to \m{D}, j_2 \circ \pi \colon \m{C}  \to \m{D}}$ is a one-sided amalgam of the original span.
\end{proof}

\begin{prop}\label{p:ess-AP}
Let $\V$ be a variety with the congruence extension property such that $\hm\sub(\Vfsi) = \Vfsi$. Then $\V$ has the amalgamation property if and only if $\Vfsi$ has the essential amalgamation property.

\end{prop}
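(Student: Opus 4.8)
The plan is to obtain this as a formal consequence of the two preceding lemmas, Lemma~\ref{lem:Fussner2023} and Lemma~\ref{l:ess-AP}. The single hypothesis $\hm\sub(\Vfsi) = \Vfsi$ is precisely engineered to supply the separate closure conditions that those two lemmas require, so the bulk of the argument is simply verifying that both hypotheses are met and then chaining the resulting equivalences.

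First I would unpack the hypothesis into its two consequences. Using that $\K \subseteq \sub(\K)$ and $\K \subseteq \hm(\K)$ for any class $\K$ (as the identity map is an injective homomorphism), we get $\sub(\Vfsi) \subseteq \hm\sub(\Vfsi) = \Vfsi$, and the reverse inclusion is trivial, so $\sub(\Vfsi) = \Vfsi$. This is exactly the standing assumption of Lemma~\ref{lem:Fussner2023}. Likewise, $\hm(\Vfsi) \subseteq \hm\sub(\Vfsi) = \Vfsi$ (applying $\hm$ to $\Vfsi \subseteq \sub(\Vfsi)$) and again the reverse inclusion is trivial, so $\hm(\Vfsi) = \Vfsi$; this is the hypothesis needed to invoke Lemma~\ref{l:ess-AP} with $\K = \Vfsi$.

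With these in hand, the conclusion follows by composing two equivalences. By Lemma~\ref{lem:Fussner2023}, since $\V$ has the congruence extension property and $\sub(\Vfsi) = \Vfsi$, the variety $\V$ has the amalgamation property if and only if $\Vfsi$ has the one-sided amalgamation property. By Lemma~\ref{l:ess-AP} applied to the class $\Vfsi$, which satisfies $\hm(\Vfsi)=\Vfsi$, the class $\Vfsi$ has the one-sided amalgamation property if and only if it has the essential amalgamation property. Stringing these two biconditionals together yields that $\V$ has the amalgamation property if and only if $\Vfsi$ has the essential amalgamation property, as desired.

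There is no substantive obstacle here: the proposition is a bookkeeping result that repackages Lemma~\ref{lem:Fussner2023} and Lemma~\ref{l:ess-AP} into a single criterion phrased via essential amalgamation. The only point demanding any care is the derivation of the two closure conditions from the one hypothesis, and this is the routine inclusion-chasing sketched above; one should merely be attentive that the version of Lemma~\ref{lem:Fussner2023} being invoked is the one phrased for all of $\Vfsi$ rather than only its finitely generated members.
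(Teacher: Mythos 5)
Your proof is correct and is essentially identical to the paper's own argument: both chain Lemma~\ref{lem:Fussner2023} (AP for $\V$ iff one-sided AP for $\Vfsi$) with Lemma~\ref{l:ess-AP} (one-sided AP iff essential AP, given $\hm$-closure). Your explicit extraction of $\sub(\Vfsi)=\Vfsi$ and $\hm(\Vfsi)=\Vfsi$ from the single hypothesis $\hm\sub(\Vfsi)=\Vfsi$ is a detail the paper leaves implicit, and it is carried out correctly.
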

\begin{proof}
By Lemma~\ref{lem:Fussner2023}, $\V$ has the AP if and only if $\Vfsi$ has the one-sided amalgamation property which, by Lemma~\ref{l:ess-AP}, is the case if and only if $\Vfsi$ has the essential AP.
\end{proof}

\begin{cor}\label{c:lin-ess-AP}
Let $\V$ be a variety of semilinear commutative (pointed) residuated lattices. Then $\V$ has the amalgamation property if and only if $\Vfsi$ has the essential amalgamation property.
\end{cor}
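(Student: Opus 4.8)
The plan is to deduce this directly from Proposition~\ref{p:ess-AP} by checking that its two hypotheses hold for an arbitrary variety $\V$ of semilinear commutative (pointed) residuated lattices: namely, that $\V$ has the congruence extension property and that $\hm\sub(\Vfsi) = \Vfsi$. Since Proposition~\ref{p:ess-AP} already delivers exactly the desired equivalence between the amalgamation property of $\V$ and the essential amalgamation property of $\Vfsi$, once both hypotheses are verified the corollary follows at once. Thus the entire task reduces to confirming these two background conditions.

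First I would dispatch the congruence extension property, which is automatic here. As recalled in the preliminaries, the variety of all commutative residuated lattices and the variety of all such algebras with designated bounds both enjoy the congruence extension property. This property is inherited by every subvariety: if $\m{B}\leq\m{A}$ with $\m{A},\m{B}\in\V$, then both also lie in the ambient variety, so any congruence on $\m{B}$ already extends to $\m{A}$ there. Hence any variety $\V$ of commutative (pointed) residuated lattices has the congruence extension property.

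Next I would pin down $\Vfsi$ explicitly using semilinearity. Since $\V$ is semilinear, it is generated by the class $\K$ of its totally ordered members, so $\V = \vr(\K)$. Invoking the J\'onsson-type fact recorded in the preliminaries---that for a class $\K$ of totally ordered commutative residuated lattices the finitely subdirectly irreducible members of $\vr(\K)$ coincide with the totally ordered members of $\vr(\K)$---we conclude that $\Vfsi$ is precisely the class of totally ordered members of $\V$. With this identification in hand, the remaining task is to show $\hm\sub(\Vfsi) = \Vfsi$. The inclusion $\Vfsi \subseteq \hm\sub(\Vfsi)$ is trivial, so only the reverse inclusion requires attention.

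The one point that calls for any argument---and which I expect to be the only mild obstacle---is that being totally ordered is preserved under both subalgebras and surjective homomorphisms. A subalgebra of a chain is clearly a chain. For homomorphic images, if $\phi\colon\m{A}\to\m{B}$ is surjective with $\m{A}$ totally ordered, then given $b_1,b_2\in B$ with preimages $a_1,a_2$, the comparability of $a_1,a_2$ in $\m{A}$ transfers through $\phi$ (using that $\phi$ preserves $\meet$: if $a_1\meet a_2 = a_1$, then $b_1\meet b_2 = b_1$) to comparability of $b_1,b_2$ in $\m{B}$. Since $\V$ is a variety, it is closed under $\sub$ and $\hm$, so every member of $\hm\sub(\Vfsi)$ is a totally ordered algebra in $\V$, hence lies in $\Vfsi$. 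This yields $\hm\sub(\Vfsi)=\Vfsi$, so both hypotheses of Proposition~\ref{p:ess-AP} are met and the corollary is proved. Apart from the elementary transfer of totality across homomorphic images, every step is a direct appeal to results already established.
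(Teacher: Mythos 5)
Your proof is correct and follows essentially the same route as the paper: the paper likewise derives the corollary from Proposition~\ref{p:ess-AP} by observing that $\Vfsi$ is the class of totally ordered members of $\V$ (hence closed under $\hm\sub$) and that every variety of commutative (pointed) residuated lattices has the congruence extension property. You merely spell out details the paper dismisses as ``clear''---the inheritance of the CEP by subvarieties and the preservation of total order under subalgebras and homomorphic images---both of which you verify correctly.
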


\begin{proof}
Note that $\Vfsi$ is the class of totally ordered members of $\V$. Hence it is clearly closed under $\hm\sub$. Moreover, any variety of commutative (pointed) residuated lattices has the congruence extension property. Thus the claim follows from Proposition~\ref{p:ess-AP}.
\end{proof}

\begin{cor}\label{c:Vfc-AP}
A variety $\V$ of basic hoops (or BL-algebras) has the amalgamation property if and only if $\Vfc$ has the essential amalgamation property.
\end{cor}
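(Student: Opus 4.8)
The plan is to reduce the statement, using the machinery already developed, to two short restriction arguments: Corollary~\ref{c:lin-ess-AP} handles one implication and the finitely generated version of Lemma~\ref{lem:Fussner2023} handles the other. First I would record that both $\Vfsi$ (the totally ordered members of $\V$) and $\Vfc$ are closed under $\hm$: a homomorphic image of a chain is a chain, and by Lemma~\ref{l:ISPU-ordinalsum}(ii) a homomorphic image of a finite-index chain is again a finite-index chain. Hence Lemma~\ref{l:ess-AP} applies to each, and for both classes the essential amalgamation property coincides with the one-sided amalgamation property. It therefore suffices to prove that $\V$ has the amalgamation property if and only if $\Vfc$ has the one-sided amalgamation property.

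For the forward direction I would start from Corollary~\ref{c:lin-ess-AP}, which gives that $\V$ has the AP if and only if $\Vfsi$ has the essential AP, equivalently (by the previous paragraph) the one-sided AP. Given a span in $\Vfc$, view it as a span in $\Vfsi$, take a one-sided amalgam $\pair{j_1\colon\m B\to\m D, j_2\colon\m C\to\m D}$ with $\m D\in\Vfsi$, and replace $\m D$ by the subalgebra $\m D'$ generated by $j_1[B]\cup j_2[C]$. Restriction to $\m D'$ preserves the one-sided amalgam, since a corestriction of an embedding is an embedding and $j_2[C]\subseteq D'$. The point to verify is that $\m D'$ again has finite index: as $\m B$ and $\m C$ have finite index, Lemma~\ref{l:component-equation} (together with Lemma~\ref{l:BL-ordsum-embed}) shows that $j_1[B]$ and $j_2[C]$ meet only finitely many ordinal-sum components of $\m D$, and the ordinal-sum operations keep the subalgebra generated by elements lying in finitely many components inside those components; hence $\m D'\in\Vfc$.

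For the converse I would invoke the finitely generated version of Lemma~\ref{lem:Fussner2023}: $\V$ has the AP if and only if the class of finitely generated members of $\Vfsi$ has the one-sided AP (the hypotheses hold, since commutative (pointed) residuated lattices have the congruence extension property and subalgebras of chains are chains). Assuming $\Vfc$ has the one-sided AP, take a span of finitely generated chains; these lie in $\Vfc$, because $\Vfc$ contains all finitely generated members, so the span has a one-sided amalgam in $\Vfc$, and restricting that amalgam to the subalgebra generated by the two images yields a finitely generated one-sided amalgam inside $\Vfsi$. This establishes the one-sided AP for the finitely generated members of $\Vfsi$, whence Lemma~\ref{lem:Fussner2023} delivers the AP for $\V$.

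The main obstacle is conceptual rather than computational: one must apply Corollary~\ref{c:lin-ess-AP} and Lemma~\ref{lem:Fussner2023} asymmetrically. Corollary~\ref{c:lin-ess-AP} is needed in the forward direction because there an amalgam must be produced for \emph{every} finite-index chain, including those that are not finitely generated, whereas the finitely generated strengthening of Lemma~\ref{lem:Fussner2023} is precisely what allows the converse to sidestep any compactness or direct-limit construction, reducing it to amalgamating finitely generated chains only. The sole genuine verification is that the subalgebra of a chain generated by two finite-index subalgebras again has finite index, which is immediate from the ordinal-sum description of the operations.
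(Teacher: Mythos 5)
Your proof is correct and takes essentially the same route as the paper's: Corollary~\ref{c:lin-ess-AP} plus a restriction-of-amalgams argument gives the forward direction, and Lemma~\ref{l:ess-AP} combined with the finitely generated reading of Lemma~\ref{lem:Fussner2023} gives the converse. The only difference is cosmetic: you spell out, via Lemma~\ref{l:component-equation}, the finite-index verification that the paper dismisses as ``not hard to see.''
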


\begin{proof}
For the left-to-right direction suppose that $\V$ has the AP. Then, by Corollary~\ref{c:lin-ess-AP}, $\Vfsi$, has the essential AP. So every essential span in $\Vfc \subseteq \Vfsi$ has an amalgam in $\Vfsi$ and it is not hard to see that such an amalgam can restricted to an amalgam in $\Vfc$. 

For the right-to-left direction suppose that $\Vfc$ has the essential AP. Then, since $\hm\sub(\Vfc) =\Vfc$, by Lemma~\ref{l:ess-AP}, $\Vfc$ has the one-sided AP, i.e., every span in $\Vfc$ has an amalgam in $\Vfc$. So in particular every span of finitely generated members of $\Vfsi$ has a amalgam in $\V$ and, by Lemma~\ref{lem:Fussner2023}, $\V$ has the AP.
\end{proof}

Let $\K_1$ and $\K_2$ be classes of algebras similar algebras such that $\K_1\subseteq \K_2$. We say that $\K_1$ is \emph{essentially closed in} $\K_2$ if for any $\m{A} \in \K_1$ and essential embedding $\phi \colon \m{A} \to \m{B}$  with $\m{B} \in \K_2$, also $\m{B} \in \K_1$. 

\begin{remark}
Let $\K_1,\K_2,\K_3$ be classes of similar algebras.
\begin{enumerate}[label = \textup{(\roman*)}]
\item If $\K_1$ is essentially closed in $\K_2$ and $\K_3$, then $\K_1$ is essentially closed in $\K_2 \cup \K_3$.
\item If $\K_1$ and $\K_2$ are essentially closed in $\K_3$, then $\K_1 \cap \K_2$ and $\K_1 \cup \K_2$ are essentially closed in $\K_3$.
\item If $\K_1$ is essentially closed in $\K_2$ and $\K_2$ is essentially closed in  $\K_3$, then $\K_1$ is essentially closed in $\K_3$. 
\end{enumerate}
\end{remark}

\begin{lemma}\label{l:ess-AP-union}
Let $\K$ be a class of algebras such that $\K = \K_1 \cup \K_2$, $\iso\sub(\K_i) = \K_i$ for $i=1,2$, and $\K_1 \cap \K_2$  is essentially closed in $\K$. 
\begin{enumerate}[label = \textup{(\roman*)}]
\item If $\K_1$ and $\K_2$ have the essential amalgamation property, then $\K$ has the essential amalgamation property.
\item If $\K$ and $\K_1 \cap \K_2$ have the essential amalgamation property, then $\K_1$ and $\K_2$ have the essential amalgamation property. 
\end{enumerate}
\end{lemma}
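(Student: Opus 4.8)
The plan is to prove both directions by the same device: given an essential span, I locate all three of its algebras $\m{A},\m{B},\m{C}$ inside a single subclass that admits the essential AP, and then amalgamate there. Two closure principles drive the bookkeeping. First, $\iso\sub(\K_i) = \K_i$ lets me push membership \emph{downward} along any embedding: if the codomain of an embedding lies in $\K_i$, then so does its domain (up to isomorphism). Second, essential closure of $\K_1 \cap \K_2$ in $\K$ lets me push membership \emph{upward} along the \emph{essential} leg $i_2$: if the domain of an essential embedding lies in $\K_1 \cap \K_2$ and its codomain lies in $\K$, then the codomain lies in $\K_1 \cap \K_2$ too.

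For (i), let $\pair{i_1 \colon \m{A} \to \m{B}, i_2 \colon \m{A} \to \m{C}}$ be an essential span in $\K$, so $i_2$ is essential. Since $\m{B} \in \K = \K_1 \cup \K_2$, I may assume $\m{B} \in \K_1$ (the hypotheses are symmetric in $\K_1,\K_2$). Downward closure along $i_1$ then gives $\m{A} \in \K_1$. I claim $\m{C} \in \K_1$ as well: if not, then $\m{C} \in \K_2$, so downward closure along $i_2$ gives $\m{A} \in \K_2$ and hence $\m{A} \in \K_1 \cap \K_2$; now upward closure along the essential embedding $i_2$ forces $\m{C} \in \K_1 \cap \K_2 \subseteq \K_1$, contradicting $\m{C} \notin \K_1$. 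Thus the whole span lies in $\K_1$, and the essential AP of $\K_1$ supplies an amalgam $\m{D} \in \K_1 \subseteq \K$.

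For (ii), by symmetry it suffices to treat $\K_1$. Let $\pair{i_1 \colon \m{A} \to \m{B}, i_2 \colon \m{A} \to \m{C}}$ be an essential span in $\K_1 \subseteq \K$. Since $\K$ has the essential AP, there is an amalgam $\pair{\psi_1 \colon \m{B} \to \m{D}, \psi_2 \colon \m{C} \to \m{D}}$ with $\m{D} \in \K$. If $\m{D} \in \K_1$, this is already an amalgam in $\K_1$. Otherwise $\m{D} \in \K_2$, and since \emph{both} legs of an amalgam are embeddings, downward closure along $\psi_1$ and $\psi_2$ places $\m{B}$ and $\m{C}$ in $\K_2$; together with $\m{B},\m{C} \in \K_1$ this yields $\m{B},\m{C} \in \K_1 \cap \K_2$, and $\m{A} \in \K_1 \cap \K_2$ follows by downward closure along $i_1$. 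Hence the original span is an essential span in $\K_1 \cap \K_2$, and the essential AP of $\K_1 \cap \K_2$ produces an amalgam $\m{D}' \in \K_1 \cap \K_2 \subseteq \K_1$.

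The only genuinely delicate point is the claim $\m{C} \in \K_1$ in part (i): ruling out the ``split'' configuration in which $\m{B}$ and $\m{C}$ land in different subclasses is exactly where essentiality of $i_2$, via essential closure of $\K_1 \cap \K_2$, is indispensable, and it is what makes the essential AP the right notion for this gluing lemma. By contrast, part (ii) is comparatively routine and does not even invoke the essential closure hypothesis; its one subtlety is remembering that in a (two-sided) amalgam both $\psi_1$ and $\psi_2$ are embeddings, so that subalgebra closure applies to each.
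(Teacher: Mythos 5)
Your proof is correct and follows essentially the same route as the paper's: in (i) you use $\iso\sub$-closure to place $\m{A}$ in $\K_1\cap\K_2$ and essential closedness to pull $\m{C}$ into the intersection, reducing to amalgamation in a single class; in (ii) you amalgamate in $\K$ and, if the amalgam lands in the other class, retreat to $\K_1\cap\K_2$ exactly as the paper does. The only cosmetic difference is that you phrase the split case of (i) as a contradiction under a WLOG assumption rather than as the paper's explicit symmetric case analysis, which changes nothing of substance.
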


\begin{proof}
(i) Let  $\pair{i_1 \colon \m{A} \to \m{B}, i_2\colon \m{A} \to \m{C}}$ be an essential span in $\K$.  If $\m{B}, \m{C} \in \K_i$ for $i=1,2$, then by assumption the span has an amalgam.  If $\m{B} \in \K_1$ and $\m{C} \in \K_2$, then $\m{A} \in \K_1 \cap \K_2$, since $\K_1$ and $\K_2$ are closed under $\iso\sub$. But, since $\K_1\cap \K_2$ is essentially closed in $\K$, we get $\m{C} \in \K_1\cap \K_2 \subseteq \K_1$ and, since $\K_1$ has the essential AP, the span has an amalgam in $\K_1$. Symmetrically it follows that if $\m{B} \in \K_2$ and $\m{C} \in \K_1$ then the span has an amalgam. 

(ii) Let $\pair{i_1 \colon \m{A} \to \m{B}, i_2\colon \m{A} \to \m{C}}$ be an essential span in $\K_1$. Then by assumption the span has an amalgam $\pair{j_1 \colon \m{B} \to \m{D}, j_2 \colon \m{C} \to \m{D}}$ in $\K$. If $\m{D} \in \K_2$, then $\m{A},\m{B },\m{C} \in \K_1 \cap \K_2$, since $\iso\sub(\K_2) = \K_2$. So, since $\K_1 \cap \K_2$ has the essential AP the span  has an amalgam  $\pair{j_1' \colon \m{B} \to \m{D}', j_2' \colon \m{C} \to \m{D}'}$ with  $ \m{D}' \in \K_1 \cap \K_2 \subseteq \K_1$. Symmetrically it follows that $\K_2$ has the essential AP.
\end{proof}

To apply the general results just announced, we will need some technical results regarding essential closedness and essential extensions in some relevant classes of algebras.

\begin{lemma}\label{l:rad-ess}
\hfill
\begin{enumerate}[label = \textup{(\roman*)}]
\item If $\m{A}$ is a non-trivial bounded Wajsberg chain with $\rad(\m{A}) \neq \{1\}$, then the inclusion homomorphism $\iota\colon \Rad(\m{A}) \hookrightarrow \m{A}$ is essential. 
\item If $\m{A}$ is a non-trivial totally ordered MV-algebra with $\rad(\m{A}) \neq \{1\}$, then the inclusion homomorphism $\rot{\iota}\colon \rot{\Rad(\m{A})} \hookrightarrow \m{A}$ is essential.
\end{enumerate}
\end{lemma}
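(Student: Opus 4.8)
The plan is to prove both parts directly from the definition of essentiality: I must show that every congruence $\theta \in \con(\m{A})$ on the larger algebra with $\theta \neq \Delta_A$ restricts non-trivially to the relevant subalgebra. The key observation is that this reduces to a single statement about the radical. In (i) the subalgebra is \emph{literally} $\Rad(\m{A})$, with universe $\rad(\m{A})$. In (ii), by Lemma~\ref{l:rotation}(iii) the map $\rot{\iota}$ extends the inclusion $\iota\colon\Rad(\m{A})\hookrightarrow\m{A}$, i.e.\ it restricts to $\iota$ on the copy of $\Rad(\m{A})$ sitting inside $\rot{\Rad(\m{A})}$; hence $\rad(\m{A})\subseteq\im(\rot{\iota})$. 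Therefore, in both cases it suffices to exhibit two \emph{distinct} elements of $\rad(\m{A})$ that are identified by $\theta$, since such a pair lies in the subalgebra and witnesses $\theta{\restriction}\neq\Delta$. In particular the behaviour of the \emph{negative} part of the rotation in (ii) never enters the argument.

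To produce such a pair I would use that $\m{A}$ is totally ordered. Given $\theta\neq\Delta_A$, choose $a<b$ with $a\mathrel\theta b$ and put $d:=b\to a$. Then $d<1$ because $b\not\leq a$, while $d=(b\to a)\mathrel\theta(b\to b)=1$ by compatibility of $\theta$ with $\to$. Now I split on whether $d$ is radical, invoking the description $\rad(\m{A})=\{x : x^n\neq 0 \text{ for all } n\}$ recalled in Section~\ref{sec:preliminaries}. If $d\in\rad(\m{A})$, then $d$ and $1$ are two distinct radical elements with $d\mathrel\theta 1$, and we are done. If instead $d\notin\rad(\m{A})$, then $d^n=0$ for some $n$, so $0=d^n\mathrel\theta 1^n=1$; multiplying the pair $0\mathrel\theta1$ by an arbitrary $x$ and using that $0$ is absorbing yields $0\mathrel\theta x$ for every $x$, whence $\theta=\nabla_A$. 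In this case the hypothesis $\rad(\m{A})\neq\{1\}$ furnishes a radical $r<1$, and $r\mathrel{\nabla_A}1$ again gives two distinct radical elements identified by $\theta$.

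I do not anticipate a serious obstacle here; the argument is elementary once the reduction to radical elements is in place. The two points that warrant care are: (a) confirming that $0$ is absorbing, which is immediate from residuation since $0\cdot x\leq 0\iff 0\leq x\to 0$, so that $0\mathrel\theta1$ indeed forces $\theta=\nabla_A$; and (b) the bookkeeping in (ii) that $\rot{\iota}$ genuinely restricts to $\iota$ on the copy of $\Rad(\m{A})$, which is precisely the extension property recorded in Lemma~\ref{l:rotation}(iii). Conceptually, the whole proof rests on the fact that a proper filter of a bounded chain cannot contain a non-radical element (as that element's powers reach $0$), so any nontrivial congruence either already collapses two radical elements or collapses everything---and in the latter case the assumption $\rad(\m{A})\neq\{1\}$ keeps the radical large enough to supply a witness.
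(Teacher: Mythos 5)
Your proposal is correct and takes essentially the same route as the paper: prove (i) directly, then reduce (ii) to (i) by observing, via Lemma~\ref{l:rotation}(iii), that the image of $\rot{\iota}$ contains $\rad(\m{A})$. The paper disposes of (i) with the one-line remark that it is clear because $\rad(\m{A})$ is a filter of the chain $\m{A}$; your case split on whether $b\to a$ lies in $\rad(\m{A})$ (using that non-radical elements of a bounded Wajsberg chain are nilpotent, so that otherwise $\theta=\nabla_A$ and the hypothesis $\rad(\m{A})\neq\{1\}$ supplies a witness) is a correct elaboration of exactly that point.
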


\begin{proof}
(i) Since $\rad(\m{A})$ is a filter of $\m{A}$ and $\m{A}$ is totally ordered, it is clear that $\iota\colon \Rad(\m{A}) \hookrightarrow \m{A}$ is essential. Moreover, (ii) is immediate from (i).
\end{proof}

\begin{lemma}\label{l:bounded-ess-ext}
Let $\m{A}$ be a non-trivial bounded Wajsberg chain. Then every $\m{B} \in \hspu(\m{A})$ has a bounded essential extension in $\hspu(\m{A})$.
\end{lemma}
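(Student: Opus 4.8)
The plan is to split on the standard dichotomy for Wajsberg hoops: every $\m{B} \in \hspu(\m{A})$ is a Wajsberg chain, hence is either bounded or a non-trivial cancellative hoop. If $\m{B}$ is bounded, there is nothing to do, since the identity embedding $\m{B} \to \m{B}$ is (trivially) essential and $\m{B}$ itself is a bounded member of $\hspu(\m{A})$. The entire content therefore lies in the case where $\m{B}$ is a non-trivial cancellative chain, and here I would take the disconnected rotation $\rot{\m{B}}$ as the candidate bounded essential extension: by Lemma~\ref{l:rotation}(i) it is a totally ordered MV-algebra, so its $0$-free reduct is a bounded Wajsberg chain, and by Lemma~\ref{l:rotation}(ii) $\m{B}$ embeds into it.

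Two things then need checking. First, that the embedding $\m{B} \hookrightarrow \rot{\m{B}}$ of Lemma~\ref{l:rotation}(ii) is essential. I would verify this by identifying its image with the radical of $\rot{\m{B}}$: writing $0 = (0,1)$ for the bottom, every lower element satisfies $(0,x)^2 = (0,1) = 0$, whereas every upper element has $(1,x)^n = (1,x^n) \neq 0$; hence $\rad(\rot{\m{B}})$ is exactly the upper copy $\{1\}\times B$, which is the image of $\m{B}$ under the embedding of Lemma~\ref{l:rotation}(ii). Since $\m{B}$ is non-trivial, $\rad(\rot{\m{B}}) \neq \{1\}$, so Lemma~\ref{l:rad-ess}(i) applies to the bounded Wajsberg chain $\rot{\m{B}}$ and yields that $\Rad(\rot{\m{B}}) \cong \m{B} \hookrightarrow \rot{\m{B}}$ is essential.

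Second, and this is where the real work lies, one must show $\rot{\m{B}} \in \hspu(\m{A})$; I would use throughout that $\hspu(\m{A})$ is the class of totally ordered members of $\vr(\m{A})$ and is thus closed under $\iso\sub\pu$, and that all $\iso\sub\pu$-computations may be run in the MV signature (designating the bottom of $\m{A}$) and reduced to the $0$-free signature at the end. If $\rad(\m{A}) \neq \{1\}$, then applying Corollary~\ref{c:ISPU-rotation} to the totally ordered MV-algebra associated to $\m{A}$ and to the cancellative chain $\m{B}$ gives $\rot{\m{B}} \in \iso\sub\pu(\m{A}) \subseteq \hspu(\m{A})$ at once. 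If instead $\rad(\m{A}) = \{1\}$, then $\m{A}$ is simple, hence a subalgebra of $[0,1]_{\WH}$ by Lemma~\ref{l:radical}; moreover $\m{A}$ is infinite, since by Lemma~\ref{l:ISPU-cancellative} the one-generated infinite hoop $\Cw$ lies in $\hspu(\m{A}) \subseteq \vr(\m{A})$, which excludes $\m{A} \cong \Sm{n}$ (these generate locally finite varieties). The aim is then to place Chang's algebra $\Smw{1} = \rot{\Cw}$ inside $\hspu(\m{A})$; as $\rad(\Smw{1}) \neq \{1\}$, a second application of Corollary~\ref{c:ISPU-rotation}, now with $\Smw{1}$ in place of $\m{A}$, delivers $\rot{\m{B}} \in \iso\sub\pu(\Smw{1}) \subseteq \hspu(\m{A})$.

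The main obstacle is therefore the claim that $\Smw{1} \in \hspu(\m{A})$ for an infinite simple chain $\m{A}$. I would establish it by an ultrapower argument: representing $\m{A}$ as $\m{\Gamma}(G,1)$ for a dense subgroup $G \leq \mathbb{R}$, a suitable ultrapower has the form $\m{\Gamma}(G^\ast,1)$ with $G^\ast$ non-Archimedean, and density of $G$ guarantees a positive infinitesimal $\epsilon \in G^\ast$; the assignment $(m,n) \mapsto m + n\epsilon$ then embeds $\mathbb{Z}\times_{\mathrm{lex}}\mathbb{Z}$ into $G^\ast$ as unital ordered groups, whence $\Smw{1}$ embeds into the ultrapower and $\Smw{1} \in \iso\sub\pu(\m{A}) \subseteq \hspu(\m{A})$ after passing to $0$-free reducts. (Alternatively, one may invoke the classical fact that every infinite simple MV-chain generates all of $\MV$, so that $\hspu(\m{A}) = \WH_{\mathrm{FSI}}$ already contains $\rot{\m{B}}$ directly.) Assembling the bounded case with the two cancellative sub-cases then completes the proof.
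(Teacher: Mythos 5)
Your proof is correct, and its skeleton --- the bounded/cancellative dichotomy followed by taking the disconnected rotation $\rot{\m{B}}$ as the candidate extension --- is exactly the paper's. The difference lies in how the two key facts are discharged. The paper's proof is essentially a one-liner: for cancellative $\m{B}$ it cites \cite[Lemma 5]{AB21} for the membership of the $0$-free reduct of $\rot{\m{B}}$ in $\hspu(\m{A})$, and declares the essentiality of $\m{B} \leq \rot{\m{B}}$ to be clear. You instead prove both facts from the paper's internal toolkit: essentiality via the identification $\im(\m{B}) = \rad(\rot{\m{B}})$ together with Lemma~\ref{l:rad-ess}(i) (which is precisely why the extension is ``clearly'' essential), and membership via a case split on $\rad(\m{A})$ --- Corollary~\ref{c:ISPU-rotation} when $\rad(\m{A}) \neq \{1\}$, and, when $\m{A}$ is simple, the observation that $\m{A}$ must then be infinite (since the one-generated infinite chain $\Cw$ lies in $\vr(\m{A})$), followed by either your ultrapower/infinitesimal construction placing $\Smw{1}$ in $\iso\sub\pu$ of the associated MV-algebra, or the fact, already invoked by the paper in the proof of Lemma~\ref{l:ess-closed-BL}(i), that an infinite simple MV-chain generates all of $\MV$. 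What your route buys is self-containedness: the external citation is replaced by arguments assembled from results stated in the paper, with the signature bookkeeping between MV-algebras and their $0$-free reducts handled explicitly. What the paper's route buys is brevity. Both are sound; your only appeal to folklore is the local finiteness of $\vr(\Cn{n})$ (used to rule out finite $\m{A}$), which can alternatively be obtained from the fact that $\hspu(\Cn{n}) = \iso\sub(\Cn{n})$ contains only finite chains, so it cannot contain $\Cw$.
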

\begin{proof}
If $\m{B}$ is bounded, there is nothing to prove. If $\m{B}$ is cancellative, then, by \cite[Lemma 5]{AB21}, the $0$-free reduct of $\rot{\m{B}}$ is also contained in $\hspu(\m{A})$ which is clearly bounded and an essential extension of $\m{B}$.
\end{proof}

The following lemma is immediate from Lemma~\ref{l:ISPU-ordinalsum}(ii).

\begin{lemma}\label{l:ess-ordinalsum}
Let $\m{A} = \bigoplus_{i=0}^n \m{A}_i$ and $\m{B} = \bigoplus_{j=0}^m \m{B}_j$ be totally ordered basic hoops (or totally ordered BL-algebras) with $\m{A}_i, \m{B}_j$ non-trivial Wajsberg chains (or totally ordered MV-algebras). An embedding $\phi \colon \m{A} \to \m{B}$ is essential if and only if $\phi[\m{A}_n] \leq \m{B}_m$ is an essential extension.
\end{lemma}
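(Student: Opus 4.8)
The plan is to combine the congruence-theoretic reformulation of essentiality with the description of homomorphic images of finite ordinal sums in Lemma~\ref{l:ISPU-ordinalsum}(ii). Recall that an embedding is essential exactly when every non-trivial congruence of its codomain restricts non-trivially to the image, so throughout I would work inside the congruence lattice of $\m{B}=\bigoplus_{j=0}^m\m{B}_j$.

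First I would translate Lemma~\ref{l:ISPU-ordinalsum}(ii) into a description of $\con(\m{B})$. Since every homomorphic image of $\m{B}$ has the form $(\bigoplus_{j=0}^{i-1}\m{B}_j)\oplus\m{B}_i'$ with $\m{B}_i'\in\hm(\m{B}_i)$, each congruence $\theta$ of $\m{B}$ is given by a pair $(i,\eta)$ with $i\in\{0,\dots,m\}$ and $\eta\in\con(\m{B}_i)$: it is the identity on $\bigoplus_{j<i}\m{B}_j$, equals $\eta$ on $\m{B}_i$, and collapses $\m{B}_{i+1},\dots,\m{B}_m$ to the top element. The consequence I would extract is twofold: $\theta\neq\Delta_B$ iff ($i<m$) or ($i=m$ and $\eta\neq\Delta_{B_m}$); and every such non-trivial $\theta$ restricts non-trivially to $\m{B}_m$, collapsing all of $\m{B}_m$ when $i<m$ and restricting to $\eta$ when $i=m$.

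Next, using Lemma~\ref{l:BL-ordsum-embed}, I would note that $\phi$ respects the ordinal-sum decomposition via an order-embedding $f$ with $\phi[\m{A}_i]\leq\m{B}_{f(i)}$, so that the statement $\phi[\m{A}_n]\leq\m{B}_m$ is meaningful precisely when $f(n)=m$. If $f(n)<m$, the congruence $(f(n),\Delta)$ is non-trivial but is the identity on each $\m{B}_{f(i)}$ (as $f(i)\leq f(n)<m$ for all $i\leq n$), hence restricts trivially to $\phi[A]=\bigoplus_i\phi[\m{A}_i]$; thus $\phi$ is not essential, matching the failure of the right-hand side. When $f(n)=m$ we have $\phi[A]\cap B_m=\phi[\m{A}_n]$, and the equivalence reads off the congruence description directly: a congruence of index $i<m$ collapses the non-trivial algebra $\phi[\m{A}_n]\subseteq\m{B}_m$ and so always restricts non-trivially, while a congruence of index $m$ equals $\eta$ on $\m{B}_m$ and the identity below it, hence restricts non-trivially to $\phi[A]$ exactly when $\eta$ restricts non-trivially to $\phi[\m{A}_n]$. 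Quantifying over all non-trivial congruences then yields that $\phi$ is essential iff every non-trivial $\eta\in\con(\m{B}_m)$ restricts non-trivially to $\phi[\m{A}_n]$, i.e.\ iff $\phi[\m{A}_n]\leq\m{B}_m$ is essential.

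The one point needing care, rather than real difficulty, is checking that an index-$m$ congruence cannot identify two distinct elements of $\phi[A]$ outside $\m{B}_m$: such elements would lie in components $\m{B}_{f(i)}$ with $f(i)<m$, where the congruence is the identity, and distinct components of an ordinal sum share only the top element, so any non-trivial collapse must occur within $\phi[\m{A}_n]$. With this observation the equivalence is immediate from the congruence picture, which is why the authors present the lemma as a direct corollary of Lemma~\ref{l:ISPU-ordinalsum}(ii).
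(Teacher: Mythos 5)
Your proof is correct and follows exactly the route the paper intends: the paper gives no argument beyond declaring the lemma ``immediate from Lemma~\ref{l:ISPU-ordinalsum}(ii)'', and your write-up is precisely the elaboration of that claim, translating the description of homomorphic images of finite ordinal sums into the congruence picture (identity below an index $i$, a congruence $\eta$ on $\m{B}_i$, collapse above) and reading off both directions, including the necessary observations that $f(n)=m$ is forced and that an index-$m$ congruence can only identify elements inside $\m{B}_m$.
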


Our study of the amalgamation property in varieties of BL-algebras and basic hoops will reduce to the study of amalgamation in associated classes of MV-algebras and Wajsberg hoops. For a class $\K$ of BL-algebras we denote by $\luk(\K)$ the class consisting of the totally ordered MV-algebras in $\hspu(\K)$ or, equivalently, the first components of ordinal sum decompositions of chains in $\hspu(\K)$. In particular, if $\V$ is a variety of BL-algebras, then $\luk(\V)$ is just the class of totally ordered MV-algebras in $\Vfc$.

\begin{lemma}\label{l:ess-closed-BL}
\hfill
\begin{enumerate}[label = \textup{(\roman*)}]
\item For any $n\in \mathbb{N}$, $\hspu(\Sm{n})$ (resp. $\hspu(\Cn{n})$) is essentially closed in $\hspu(\Smw{n})$ (resp. $\hspu(\Cnw{n})$).
\item For any variety $\V$ of BL-algebras  $\luk(\V)$ is  essentially closed in $\Vfsi$.
\item For any class $\K$ of BL-algebras such that $\hspu(\Sm{m}) \subseteq \K$ and $\luk(\K) = \hspu(\Smw{m})$ for some $m\in \mathbb{N}$, $\hspu(\Sm{m})$ is essentially closed in $\K$.
\end{enumerate}
\end{lemma}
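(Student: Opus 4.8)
The plan is to prove (i) and (ii) independently and then obtain (iii) as a formal consequence of the two. For (i), I would argue in two steps: first that an essential extension of a simple algebra is simple, and then that the simple members of $\hspu(\Smw{n})$ already lie in $\hspu(\Sm{n})$. For the first step, observe that since $\Sm{n}$ is finite and simple, every non-trivial member of $\hspu(\Sm{n})$ is one of the simple chains $\Sm{d}$ with $d\mid n$; so a non-trivial $\m{A}\in\hspu(\Sm{n})$ is simple. Let $\phi\colon\m{A}\to\m{B}$ be essential with $\m{B}\in\hspu(\Smw{n})$; as $\Smw{n}$ is a chain, $\m{B}$ is a non-trivial MV-chain. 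Writing $\theta_{\rad}$ for the kernel of $\m{B}\to\m{B}/\Rad(\m{B})$, Lemma~\ref{l:radical} gives $\theta_{\rad}=\Delta_B$ exactly when $\m{B}$ is simple. If $\rad(\m{B})\neq\{1\}$ then $\theta_{\rad}\neq\Delta_B$, so essentiality forces $\theta_{\rad}{\restriction}_{\phi[A]}\neq\Delta_{\phi[A]}$; but $\m{A}$ is simple, so the composite $\m{A}\to\m{B}\to\m{B}/\Rad(\m{B})$ is non-injective and hence constant, yielding $0_B=\phi(0_A)\equiv\phi(1_A)=1_B$ modulo $\rad(\m{B})$. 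This is impossible because $0_B\notin\rad(\m{B})$, so $\m{B}$ is simple.

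For the second step, realize a simple $\m{B}\in\hspu(\Smw{n})$ as a surjection $h\colon\m{S}\twoheadrightarrow\m{B}$ with $\m{S}\leq\m{U}:=\Smw{n}^{I}/U$. The radical quotient $\m{U}/\Rad(\m{U})$ is a non-trivial quotient of the ultrapower surjection $\m{U}\to\Sm{n}^{I}/U\cong\Sm{n}$, and since $\Sm{n}$ is simple it is isomorphic to $\Sm{n}$; restricting this quotient to $\m{S}$ realizes $\m{S}/\Rad(\m{S})$ as a subalgebra $\Sm{d}$ of $\Sm{n}$, using $\rad(\m{S})=S\cap\rad(\m{U})$. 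Because $\m{B}$ is simple, the maximal filter inducing $h$ contains $\rad(\m{S})$, so $h$ factors through $\m{S}\to\Sm{d}$ and $\m{B}\cong\Sm{d}\in\hspu(\Sm{n})$. The respective hoop statement runs identically via $\Cnw{n}\twoheadrightarrow\Cn{n}$; the only additional point is that a non-trivial $\Cn{d}$ has a non-unit idempotent least element and so cannot embed into any non-trivial cancellative hoop, whence the essential extension $\m{B}$ is bounded and the radical apparatus applies.

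For (ii), let $\m{A}\in\luk(\V)$ and let $\phi\colon\m{A}\to\m{B}$ be essential with $\m{B}=\bigoplus_{i\in I}\m{B}_i\in\Vfsi$ and $i_0=\min I$. Since $\m{A}$ is a totally ordered MV-algebra it satisfies $(x\to y)\to y\approx(y\to x)\to x$, so by Lemma~\ref{l:component-equation} the image $\phi[A]$ lies in a single component of $\m{B}$; as $\phi(0_A)=0_B\in\m{B}_{i_0}$, that component is $\m{B}_{i_0}$. The canonical surjection $\sigma\colon\m{B}\to\m{B}_{i_0}$ collapsing every higher component to $1$ is the identity on $\m{B}_{i_0}\supseteq\phi[A]$, so $\ker\sigma{\restriction}_{\phi[A]}=\Delta_{\phi[A]}$; essentiality then gives $\ker\sigma=\Delta_B$, forcing $I=\{i_0\}$, i.e.\ $\m{B}=\m{B}_{i_0}$ is an MV-chain and $\m{B}\in\luk(\V)$.

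Finally, (iii) follows formally. Given a non-trivial $\m{A}\in\hspu(\Sm{m})$ (hence simple) and an essential $\phi\colon\m{A}\to\m{B}$ with $\m{B}\in\K$, the simplicity of $\m{A}$ makes $\Delta_B$ meet-irreducible: if $\Delta_B=\theta_1\cap\theta_2$ with both factors distinct from $\Delta_B$, essentiality and simplicity would force $\theta_i{\restriction}_{\phi[A]}$ to be total, hence $\Delta_A$ total, contradicting $\m{A}$ non-trivial. Thus $\m{B}$ is finitely subdirectly irreducible, and therefore a BL-chain in $\vr(\K)$. Applying (ii) with $\V=\vr(\K)$ shows $\m{B}$ is an MV-chain; since $\m{B}\in\K\subseteq\hspu(\K)$ this places $\m{B}\in\luk(\K)=\hspu(\Smw{m})$, and then (i) yields $\m{B}\in\hspu(\Sm{m})$. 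I expect the main obstacle to be part (i), and within it the control of the simple members of $\hspu(\Smw{n})$ through the radical quotient of an ultrapower, together with the boundedness subtlety in the hoop case; once these are settled, (ii) is a direct ordinal-sum computation and (iii) is a purely formal combination of the two.
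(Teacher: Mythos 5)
Your proposal is correct, and its overall architecture coincides with the paper's: essential extensions of simple chains are simple, the simple members of $\hspu(\Smw{n})$ must be located inside $\hspu(\Sm{n})$, part (ii) is an ordinal-sum argument, and (iii) is a formal combination of (i) and (ii) --- the paper's proof of (ii) and (iii) is literally the statement that they follow from Lemma~\ref{l:ess-ordinalsum} and from (i) and (ii), respectively. Where you genuinely diverge is in the second step of (i): the paper disposes of it by citation --- any infinite simple MV-algebra generates $\MV$ (so $\hspu(\Smw{n})$ contains only finite simple algebras), and Komori's classification places all finite simple members of $\vr(\Smw{n})$ inside $\hspu(\Sm{n})$ --- whereas you prove it from scratch by analyzing the radical of an ultrapower $\m{U}=\Smw{n}^I/U$, showing that $\m{U}/\Rad(\m{U})\cong\Sm{n}$ and factoring any simple quotient of a subalgebra $\m{S}\leq\m{U}$ through $\m{S}/\Rad(\m{S})\leq\Sm{n}$. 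Your route is self-contained: it replaces two external structure theorems by the single observation that the non-radical elements of $\Smw{n}$ have nilpotence degree uniformly bounded by $n+1$, which is what makes the radical of the ultrapower {\L}o\'{s}-definable. The paper's route is shorter. Your (ii) is in fact slightly more general than the paper's one-line appeal to Lemma~\ref{l:ess-ordinalsum}, since that lemma is stated only for finite-index sums while your argument via Lemma~\ref{l:component-equation} and the collapse congruence handles chains of arbitrary index; and your meet-irreducibility observation in (iii) (an essential extension of a simple algebra is FSI, hence a chain) supplies exactly the glue the paper leaves implicit, which is needed because $\K$ there need not consist of chains.

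One small repair is needed in your hoop case. A subalgebra $\m{S}$ of the ultrapower $\Cnw{n}^I/U$ is taken in the $0$-free signature, so it need not contain the bottom element: it can be cancellative, e.g.\ contained in the radical. Indeed $\hspu(\Cnw{n})$ contains non-trivial simple cancellative hoops (the radical of $\Smw{n}$ is a subalgebra of $\Cnw{n}$ isomorphic to $\Cw$, which is simple), so the unqualified hoop analogue of your second step --- that all simple members of $\hspu(\Cnw{n})$ lie in $\hspu(\Cn{n})$ --- is false; it holds only for the bounded ones. Your idempotent argument correctly forces the essential extension $\m{B}$ to be bounded, but you must also rule out cancellativity of the presenting subalgebra $\m{S}$: since cancellative hoops form a variety and the only bounded cancellative hoop is trivial, a cancellative $\m{S}$ cannot map onto the bounded non-trivial $\m{B}$; hence $\m{S}$ contains a non-radical element, therefore (taking a suitable power) the bottom of the ultrapower, and your radical computation then goes through verbatim.
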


\begin{proof}
(i) We prove the claim for $\Sm{n}$ the claim for $\Cn{n}$ then easily follows. First note that $\hspu(\Sm{n}) = \iso\sub(\Sm{n})$, so it contains only finite simple or trivial algebras. Hence, since essential extensions of simple totally ordered MV-algebras are simple because every non-trivial congruence identifies $0$ and $1$, it is enough to show that  $\hspu(\Smw{n})$ does not contain any simple algebra that is not contained in $\hspu(\Sm{n})$. Now, by \cite[Proposition 8.1.1]{Cignoli2000}, any infinite simple MV-algebra generates the variety $\MV$, so $\hspu(\Smw{n})$ contains only finite simple algebras. But, by \cite[Theorem 2.1]{Komori1981}, all of these finite simple algebras are already contained in $\hspu(\Sm{n})$. 

Part (ii) is  immediate from Lemma~\ref{l:ess-ordinalsum} and (iii) follows from (i) and (ii).

\end{proof}

Varieties of MV-algebras with the amalgamation property have been fully characterized by Di Nola and Lettieri, as summarized in the next proposition.

\begin{prop}[{\cite[Theorem 13]{DiNola2000}}]\label{p:AP-MV}
A variety of MV-algebras has the amalgamation property if and only if it is one-chain generated. 
\end{prop}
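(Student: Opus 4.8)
The plan is to combine Komori's description of the subvariety lattice (Proposition~\ref{p:MV-char}) with the essential-amalgamation reduction of Corollary~\ref{c:lin-ess-AP}, after first making ``one-chain generated'' combinatorially explicit. Writing a proper subvariety in Komori normal form and deleting redundant generators leaves an irredundant set $\{G_1,\dots,G_p\}$ of chains, each $\Sm{m}$ or $\Smw{t}$, that is an antichain for the order $\vr(G)\subseteq\vr(G')$ (which is governed by divisibility, together with $\Smw{}\not\subseteq\vr(\Sm{})$). The structural fact I would prove first is a \emph{collapse lemma}: every chain of $\vr(G_1,\dots,G_p)$ already lies in a single $\vr(G_l)$. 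Indeed the chains of the variety are exactly $\hspu(G_1,\dots,G_p)$, and since an ultraproduct of finitely many fixed algebras is isomorphic to an ultrapower of just one of them, $\hspu(\{G_l\})=\bigcup_l\hspu(G_l)$. Applying this to a chain $\m{C}$ shows $\vr(\m{C})=\vr(G_l)$ for a single $l$; hence the one-chain generated proper subvarieties are exactly the $\vr(\Sm{m})$ and $\vr(\Smw{t})$, with $\MV=\vr([0,1]_{\MV})$ the only improper one.

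For the backward implication I would use Corollary~\ref{c:lin-ess-AP} to reduce the amalgamation property of $\V=\vr(\m{C})$ to the essential amalgamation property of its chains $\Vfsi$. For $\V=\vr(\Sm{m})$ this is routine: the chains are the $\Sm{d}$ with $d\mid m$, all simple by Lemma~\ref{l:radical}, so every span is essential and a span $\pair{\Sm{a}\to\Sm{b},\Sm{a}\to\Sm{c}}$ is amalgamated by the canonical rational inclusions into $\Sm{\mathrm{lcm}(b,c)}$, which lies in $\V$ since $b,c\mid m$ forces $\mathrm{lcm}(b,c)\mid m$. For $\V=\MV$ the essential amalgamation of MV-chains is classical, and I would import it from the amalgamation property of abelian $\ell$-groups through the $\Gamma$ construction.

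The genuinely technical case, and the point where I expect the main obstacle, is $\V=\vr(\Smw{t})$. Here I would split each chain $\m{A}$ into its radical $\Rad(\m{A})$, a cancellative hoop, and its simple quotient $\m{A}/\rad(\m{A})\cong\Sm{d}$ with $d\mid t$, and reduce the amalgamation of an essential span to two independent subproblems: amalgamating the radicals and amalgamating the finite quotients. By Lemma~\ref{l:ISPU-cancellative} every nontrivial cancellative radical generates the single class $\iso\sub\pu(\Cw)$, so the radicals are as controllable as in the cancellative-hoop case, and the finite quotients amalgamate exactly as for $\vr(\Sm{m})$. Via Lemma~\ref{l:rad-ess} the essential leg of the span is pinned down on the rotation $\rot{\Rad(\cdot)}$, and I would reassemble the two partial amalgams into a single chain of $\vr(\Smw{t})$ using the disconnected rotation together with Lemma~\ref{l:rotation} and Corollary~\ref{c:ISPU-rotation}, which ensure the glued algebra embeds the given chains and stays in the variety. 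Making this gluing respect \emph{both} legs of the span simultaneously while remaining inside $\vr(\Smw{t})$ (rather than a larger variety) is the delicate step.

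For the forward implication I would argue contrapositively: if the irredundant antichain has $p\geq 2$ members, choose two of them, $G_i$ and $G_j$, and build an essential span with no chain amalgam, so that essential amalgamation---and hence, by Corollary~\ref{c:lin-ess-AP}, amalgamation---fails. In each case a chain amalgam $\m{D}\in\V$ would, by the collapse lemma, force $G_i,G_j\in\vr(G_l)$ for a single $l$, contradicting irredundancy; so it remains only to arrange an essential apex. When at least one of $G_i,G_j$ is finite I take it as the distinguished target and $\m{A}=\m{2}$, since an embedding into a simple chain is automatically essential and $\m{2}$ embeds into every nontrivial chain; when both are infinite, say $\Smw{s}$ and $\Smw{t}$, the embedding $\m{2}\to\Smw{t}$ is no longer essential, so I instead take $\m{A}=\rot{\Rad(\Smw{t})}\cong\Smw{1}$ and invoke Lemma~\ref{l:rad-ess}(ii). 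This settles the finite/finite, finite/infinite, and infinite/infinite cases uniformly and completes the equivalence.
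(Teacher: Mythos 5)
First, a contextual note: the paper does not prove this proposition at all---it imports it verbatim from Di Nola and Lettieri \cite{DiNola2000}---so your attempt is being measured against that external proof and against the paper's toolkit. Much of your architecture is sound: the collapse lemma (via J\'onsson's Lemma and the fact that an ultraproduct of finitely many algebras is an ultrapower of one of them) correctly identifies the one-chain generated varieties; the forward direction (building essential spans with apex $\m{2}$ or $\rot{\Rad(\Smw{t})}$ and deriving a contradiction from the collapse lemma) is correct; and the backward direction for $\vr(\Sm{m})$ and for $\MV$ is fine.

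The genuine gap is the backward direction for $\V = \vr(\Smw{t})$ when $t\geq 2$, and it is structural, not merely ``delicate.'' Any MV-chain embedding $\m{B}\to\m{D}$ induces an embedding $\m{B}/\rad(\m{B})\to\m{D}/\rad(\m{D})$, because the radical consists exactly of the elements all of whose powers are nonzero and therefore pulls back along embeddings. Consequently, an amalgam of a span whose legs land in chains with radical quotient $\Sm{t}$ (e.g.\ $\Smw{t}$ itself) must have radical quotient containing a copy of $\Sm{t}$. But every disconnected rotation $\rot{\m{B}}$ of a cancellative chain has radical quotient $\m{2}=\Sm{1}$, and Lemma~\ref{l:rotation} and Corollary~\ref{c:ISPU-rotation} produce only such rank-one (perfect) chains; so no chain assembled by your gluing can receive either leg of such a span when $t\geq 2$. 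A second, independent problem is that your reduction to ``amalgamate the radicals and the finite quotients separately'' presupposes that a chain in $\vr(\Smw{t})$ is determined by the pair (radical, quotient), which is false: the chain $\m{\Gamma}(\mathbb{Z}\times\mathbb{Z},(t,1))$ (lexicographic order) lies in $\vr(\Smw{t})$---embed it into an ultrapower of $\Smw{t}$ by $(1,0)\mapsto(1,-n)$ and $(0,1)\mapsto(0,tn)$ for an infinite hypernatural $n$---and has the same radical $\Cw$ and the same quotient $\Sm{t}$ as $\Smw{t}$, yet contains no subalgebra isomorphic to $\Sm{t}$ and hence is not isomorphic to $\Smw{t}$. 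So the two ``independent subproblems'' lose precisely the extension data that distinguishes the chains, and the reassembly step cannot recover it. A correct treatment needs a rank-$d$ analogue of the rotation, i.e.\ chains of the form $\m{\Gamma}(\mathbb{Z}\times G,(d,g))$ with $G$ totally ordered and $d\mid t$, and an amalgamation argument for totally ordered abelian groups with strong unit that tracks $d$ and $g$; this is essentially Di Nola--Lettieri's original proof. Alternatively, within the paper's own toolkit this direction is exactly Lemma~\ref{l:one-chain}(i) (quoted from \cite{AB21}) combined with Corollary~\ref{c:lin-ess-AP}, so the hole can be patched by citation, but not by the rotation-based construction you propose.
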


\begin{cor}\label{c:AP-MV}
A variety of MV-algebras has the amalgamation property if and only if it is of the form $\vr(\Sm{m})$ or $\vr(\Smw{m})$ for some $m\in \mathbb{N}$ or it is the variety $\MV = \vr([0,1]_{\MV})$.
\end{cor}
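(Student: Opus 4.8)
The plan is to derive Corollary~\ref{c:AP-MV} from Proposition~\ref{p:AP-MV} by simply making the term ``one-chain generated'' explicit using the classification of subvarieties of $\MV$ from Proposition~\ref{p:MV-char}. A variety is \emph{one-chain generated} precisely when it is generated by a single totally ordered MV-algebra, so the task is to determine exactly which subvarieties of $\MV$ have this form.

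First I would handle the improper case: $\MV$ itself is one-chain generated, since $\MV = \vr([0,1]_{\MV})$ and $[0,1]_{\MV}$ is a chain. So among all varieties of MV-algebras, $\MV$ is one-chain generated, and I claim the only proper subvarieties that are one-chain generated are exactly those of the form $\vr(\Sm{m})$ or $\vr(\Smw{m})$. For the ``if'' direction this is immediate, since $\Sm{m}$ and $\Smw{m}$ are themselves totally ordered MV-algebras, so both $\vr(\Sm{m})$ and $\vr(\Smw{m})$ are visibly one-chain generated. For the ``only if'' direction, I would invoke Proposition~\ref{p:MV-char}: any proper subvariety $\V$ of $\MV$ takes one of the two listed forms. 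If $\V$ is one-chain generated, say $\V = \vr(\m{C})$ for a single chain $\m{C}$, then by the uniqueness of generators in the Komori classification (or by a direct argument using that the $\Sm{m_i}$ and $\Smw{t_j}$ are the join-irreducible building blocks), the presentation must collapse to a single generator, forcing $r=1,s=0$ (giving $\vr(\Sm{m})$) or $r=0,s=1$ (giving $\vr(\Smw{m})$).

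The one genuine point to argue is that a one-chain generated proper subvariety cannot require two or more distinct generators. Here I would use that $\V = \vr(\m{C})$ implies every totally ordered member of $\V$ lies in $\hspu(\m{C})$ by J\'onsson's Lemma, and that $\Sm{m}, \Smw{m}$ satisfy $\Sm{m'} \in \vr(\Sm{m})$ iff $m' \mid m$, with the analogous divisibility relations for the $\Smw{m}$ and cross-comparisons between the two families. Since the Komori normal form lists the $\leq$-maximal incomparable generators, a single chain $\m{C}$ can dominate them all only if there is exactly one such generator; concretely, $\vr(\m{C})$ with $\m{C}$ a chain has a unique largest simple or largest non-simple generating block, matching either a single $\Sm{m}$ or a single $\Smw{m}$.

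I expect the main obstacle to be precisely this uniqueness/irredundancy step: making rigorous that the two normal forms in Proposition~\ref{p:MV-char} with $r+s \geq 2$ are never one-chain generated. The cleanest route is to observe that if $\m{C}$ is a chain generating $\V$, then by Lemma~\ref{lem:Agl} (or directly, since $\m{C}$ is a totally ordered MV-algebra and hence sum-indecomposable in the relevant sense) $\m{C}$ is built from a single Wajsberg block, so $\hspu(\m{C})$ contains a unique maximal chain, leaving no room for two incomparable generators $\Sm{m_1}, \Sm{m_2}$ or a mixed pair $\Sm{m}, \Smw{t}$. Since Proposition~\ref{p:AP-MV} already equates the amalgamation property with one-chain generation, the corollary follows at once by combining this enumeration of one-chain generated varieties with the improper case $\MV$.
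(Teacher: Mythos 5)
Your overall strategy---deriving the corollary by combining Proposition~\ref{p:AP-MV} with Komori's classification (Proposition~\ref{p:MV-char})---is exactly the intended one; the paper in fact states the corollary without any separate proof, treating this combination as immediate. The ``if'' direction and the case $\V=\MV$ are fine. The genuine gap lies in the step you yourself flag as the crux: showing that a \emph{proper} one-chain generated subvariety must equal a single $\vr(\Sm{m})$ or $\vr(\Smw{m})$. Your proposed resolution of that step does not work. The appeal to Lemma~\ref{lem:Agl} and sum-indecomposability is a non sequitur: every MV-chain is a single Wajsberg block, and this says nothing about the subvariety lattice of $\vr(\m{C})$. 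The claim that ``$\hspu(\m{C})$ contains a unique maximal chain, leaving no room for two incomparable generators'' is false: $\hspu(\m{C})$ contains arbitrarily large ultrapowers of $\m{C}$, so it has no maximal member under embeddability, and it can perfectly well contain chains generating incomparable varieties---for instance $\Sm{2},\Sm{3}\in\hspu(\Smw{6})$. (What is true is that $\vr(\Sm{2},\Sm{3})\subsetneq\vr(\Smw{6})$, but that inequality is precisely the kind of fact needing proof.) Structurally, you apply J\'onsson's Lemma in the unhelpful direction: knowing that all the Komori generators of $\V$ lie in $\hspu(\m{C})$ yields no contradiction at all, as the same example shows.

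The gap closes if you run J\'onsson's Lemma the other way. Suppose $\V$ has the AP and is proper, so $\V=\vr(\m{C})$ for a chain $\m{C}$ by Proposition~\ref{p:AP-MV}, while $\V=\vr(\m{B}_1,\dots,\m{B}_k)$ with each $\m{B}_i\in\{\Sm{m},\Smw{t}\mid m,t\in\mathbb{N}\}$ by Proposition~\ref{p:MV-char}. If $\m{C}$ is trivial, then $\V$ is the trivial variety $\vr(\Sm{0})$. Otherwise $\m{C}$ is a nontrivial totally ordered member of $\V$, and the preliminaries of the paper state that for a class $\K$ of chains the totally ordered members of $\vr(\K)$ are exactly $\hspu(\K)$; moreover, since an ultraproduct of a finite family of algebras localizes to an ultrapower of a single member, $\hspu(\m{B}_1,\dots,\m{B}_k)=\bigcup_{i=1}^{k}\hspu(\m{B}_i)$. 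Hence $\m{C}\in\hspu(\m{B}_i)$ for some $i$, giving $\V=\vr(\m{C})\subseteq\vr(\m{B}_i)\subseteq\V$, i.e.\ $\V=\vr(\m{B}_i)$, which is of the required form. This argument needs no uniqueness or irredundancy properties of Komori presentations, no divisibility relations between the generators, and no rank analysis of arbitrary MV-chains---only the two quoted propositions and J\'onsson's Lemma, all of which are available in the paper.
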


Varieties of Wajsberg hoops with the amalgamation property have also been fully described by Metcalfe, Montagna, and Tsinakis \cite{Metcalfe2014}. There, Wajsberg hoops are called \emph{commutative integral GMV-algebras}. The characterization of varieties of these with the AP is summarized as follows.

\begin{prop}[cf.~{\cite[Theorem 63]{Metcalfe2014}}]\label{p:AP-WH}
A variety of Wajsberg hoops has the amalgamation property if and only if it is of the form $\vr(\Cn{n})$, $\vr(\Cnw{n})$, $\vr(\Cw)$, or $\vr(\Cn{n}, \Cw)$ for some $n\in \mathbb{N}$, or it is the variety $\WH = \vr([0,1]_{\WH})$.
\end{prop}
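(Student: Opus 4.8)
\section*{Proof proposal}

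The plan is to invoke Corollary~\ref{c:lin-ess-AP}, which reduces the amalgamation property of a variety $\V$ of Wajsberg hoops to the essential amalgamation property of $\Vfsi$, the class of Wajsberg chains in $\V$. Every such chain is either bounded---hence the $0$-free reduct of a totally ordered MV-algebra---or cancellative, and by Proposition~\ref{p:WH-char} every proper subvariety is generated by finitely many of the chains $\Cn{m_i}$, $\Cnw{t_j}$, and $\Cw$. The guiding idea is to transport each amalgamation problem for Wajsberg chains into the MV setting, where Corollary~\ref{c:AP-MV} already settles the question, using the disconnected rotation (Lemma~\ref{l:rotation}, Corollary~\ref{c:ISPU-rotation}) to turn cancellative chains into MV-chains and the radical (Lemma~\ref{l:rad-ess}) to pass back.

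For sufficiency I would verify the essential AP for each variety on the list. For $\vr(\Cn{n})$, the class $\Vfsi = \iso\sub(\Cn{n})$ consists of the finite simple chains $\Cn{d}$ with $d\mid n$; an essential span $\pair{\Cn{d}\to\m{B},\Cn{d}\to\m{C}}$ has $\m{B},\m{C}\in\iso\sub(\Cn{n})$ and amalgamates into the chain $\Cn{e}\leq\Cn{n}$ whose index $e$ is the least common multiple of those of $\m{B}$ and $\m{C}$, the square commuting because these chains embed rigidly. For $\vr(\Cnw{n})$ and for $\WH$, I would lift an essential span to the MV level by viewing its bounded members as MV-algebras and rotating its cancellative members; by Corollary~\ref{c:ISPU-rotation} these all land in the MV-variety $\vr(\Smw{n})$, respectively $\MV$, which has the AP by Corollary~\ref{c:AP-MV}, and the $0$-free reduct of the resulting totally ordered MV-amalgam then lies in $\V$ and is the desired amalgam. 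For $\vr(\Cw)$ the same rotation argument places the span inside $\vr(\Smw{1})$, and one recovers a cancellative amalgam by passing to the radical of the MV-amalgam, which is a cancellative chain and hence lies in $\vr(\Cw)$ by Lemma~\ref{l:ISPU-cancellative}. Finally, $\vr(\Cn{n},\Cw)$ is handled by Lemma~\ref{l:ess-AP-union}: writing $\Vfsi = \hspu(\Cn{n})\cup\hspu(\Cw)$, both pieces are closed under $\iso\sub$, their intersection is the trivial algebra (the unique finite cancellative chain), and the trivial algebra is essentially closed in $\Vfsi$ since any essential embedding out of it forces a trivial codomain; thus the essential AP for $\vr(\Cn{n})$ and $\vr(\Cw)$ combine.

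For necessity I would assume $\V$ is proper with the AP and eliminate every variety not on the list via Proposition~\ref{p:WH-char}. In each of the three forms, the finite chains of $\Vfsi$ are exactly those $\Cn{d}$ with $d$ dividing some index $m_i$ or $t_j$ (the latter arising as $0$-free reducts of the finite quotients of $\Cnw{t_j}$); this class is closed under divisors but not under least common multiples. Hence, as soon as two indices $a,b$ are incomparable under divisibility, the essential span formed by the two inclusions of $\Cn{d}$ (with $d=\gcd(a,b)$) into $\Cn{a}$ and $\Cn{b}$ has no amalgam in $\Vfsi$, since any chain amalgam would contain a copy of $\Cn{e}$ with $e$ a common multiple of $a$ and $b$, and no such finite chain lies in $\V$. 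This forces all indices to be comparable, collapsing the finite part to a single $\Cn{n}$ and allowing at most one $\Cnw{t}$; since an $\Cn{m}$ with $m\mid t$ is absorbed into $\vr(\Cnw{t})$ and $\Cw$ itself lies in $\vr(\Cnw{t})$, the only survivors are $\vr(\Cn{n})$, $\vr(\Cnw{n})$, $\vr(\Cw)$, and $\vr(\Cn{n},\Cw)$.

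The main obstacle lies in the sufficiency arguments for $\vr(\Cnw{n})$ and $\WH$: one must keep careful track of which chains in a span are bounded and which are cancellative---noting that a nontrivial bounded chain cannot embed into a cancellative one, so the type of the essential leg constrains the others---and verify that viewing bounded chains as MV-algebras and rotating cancellative ones lifts the span to a \emph{commuting} MV-span inside the correct MV-variety, with the $0$-free reduct (or radical) of its amalgam returning to $\Vfsi$. This interplay between bounded and cancellative chains, mediated by the radical and the disconnected rotation, is precisely what makes the Wajsberg hoop classification strictly richer than its MV counterpart, witnessed by the genuinely mixed variety $\vr(\Cn{n},\Cw)$, which has no analogue among MV-algebras.
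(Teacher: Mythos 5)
First, a contextual note: the paper does not prove this proposition at all---it is imported from Metcalfe, Montagna, and Tsinakis \cite{Metcalfe2014} (Theorem 63), so your proposal is not paralleling a proof in the paper but attempting a self-contained one. Your sufficiency direction is essentially workable, and could in fact be shortened: $\vr(\Cn{n})$, $\vr(\Cnw{n})$, $\vr(\Cw)$, and $\WH$ are one-chain generated, so Lemma~\ref{l:one-chain}(ii) directly gives the amalgamation property for their classes of chains, hence the essential AP, and Corollary~\ref{c:lin-ess-AP} concludes; only $\vr(\Cn{n},\Cw)$ needs your union argument via Lemma~\ref{l:ess-AP-union}, which is correct. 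Two details in your rotation route need repair, though: you must amalgamate inside the class of MV-\emph{chains} (Lemma~\ref{l:one-chain}(i)), not merely inside the MV-variety (Corollary~\ref{c:AP-MV} only produces an amalgam that need not be totally ordered, and then its $0$-free reduct need not lie in $\Vfsi$); and the claim that a bounded chain of $\vr(\Cnw{n})$, viewed as an MV-algebra, lies in $\vr(\Smw{n})$ is true but requires an argument (e.g., the minimum of a bounded subhoop of an MV-chain is idempotent, hence is the true $0$), which Corollary~\ref{c:ISPU-rotation} does not supply.

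The genuine gap is in the necessity direction. Your span of finite chains only proves that the set of indices $d$ with $\Cn{d}\in\Vfsi$ is closed under least common multiples. Since this set is a finite union of divisor sets, lcm-closure forces all generator indices to divide a single maximal index $N$---it does \emph{not} force them to be pairwise comparable, and it does not collapse the generators to a single chain. Concretely, consider $\vr(\Cn{6},\Cnw{2})$ or $\vr(\Cn{6},\Cnw{2},\Cnw{3})$: the finite chains in these varieties are exactly the $\Cn{d}$ with $d\mid 6$, a set closed under lcm, so your argument eliminates neither; yet neither variety is on the list (note $\Cn{6}\notin\vr(\Cnw{2},\Cnw{3})$, so no absorption occurs), and neither has the AP. Ruling these out requires an essential span mixing a finite chain with an infinite one, for instance $\pair{\Cn{2}\to\Cnw{2},\ \Cn{2}\to\Cn{6}}$, whose second leg is essential because $\Cn{6}$ is simple. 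Any chain amalgam $\m{D}$ would contain copies of both $\Cnw{2}$ and $\Cn{6}$; but $\m{D}$ must lie in $\hspu(\Cn{6})\cup\hspu(\Cnw{2})$, the first class consists of finite algebras, and no chain in $\hspu(\Cnw{2})$ contains $\Cn{6}$ (its quotient by the radical congruence would embed $\Sm{6}$ into a simple chain of $\vr(\Smw{2})$, i.e., into $\Sm{2}$ or $\Sm{1}$). This interaction between simple chains and the perfect-type chains $\Cnw{t}$---absent from your case analysis---is exactly what is needed to finish; without it, your final sentence ``the only survivors are \dots'' does not follow.
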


From Lemma~\ref{lem:Fussner2023}, $\Vfsi$ has the one-sided AP for any variety $\V$ with the AP. For varieties of MV-algebras and Wajsberg hoops generated by a single chain, much more is true.

\begin{lemma}[{\cite[Theorem 5]{AB21}}] \label{l:one-chain}
\hfill
\begin{enumerate}[label = \textup{(\roman*)}]
\item For any one-chain generated variety $\V$ of MV-algebras the class  $\Vfsi$ of its totally ordered members has the amalgamation property.
\item For any one-chain generated variety of Wajsberg hoops the class of its totally ordered members has the amalgamation property.
\end{enumerate}
\end{lemma}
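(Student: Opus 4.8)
The plan is to translate amalgamation of chains into amalgamation of totally ordered abelian groups through the $\m{\Gamma}$-representation, organizing the argument by the classification of one-chain generated varieties. By Proposition~\ref{p:MV-char}, a one-chain generated variety of MV-algebras is one of $\vr(\Sm{m})$, $\vr(\Smw{m})$, or $\MV = \vr([0,1]_{\MV})$, and in each case $\Vfsi$ is exactly the class of totally ordered members of $\vr(\m{C})$. So I fix a span $\pair{i_1 \colon \m{A} \to \m{B}, i_2 \colon \m{A} \to \m{C}}$ of chains in $\Vfsi$ and seek a chain $\m{D} \in \Vfsi$ receiving embeddings of $\m{B}$ and $\m{C}$ that agree on $\m{A}$. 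Writing $\m{A} \cong \m{\Gamma}(\m{G}_A, u_A)$, and similarly for $\m{B}$ and $\m{C}$, I would exploit that under the $\m{\Gamma}$-representation an embedding of MV-chains corresponds exactly to an embedding of totally ordered abelian groups carrying strong unit to strong unit.

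For the generating variety $\MV$, I would amalgamate $\m{G}_B \leftarrow \m{G}_A \to \m{G}_C$ using the classical amalgamation property of totally ordered abelian groups, obtaining a chain $\m{G}_D$, and then pass to the subgroup generated by the images of $\m{G}_B$ and $\m{G}_C$. In that subgroup the common image $u$ of $u_A$ is again a strong unit, since every element is a $\mathbb{Z}$-combination of elements each bounded by a multiple of $u$; hence $\m{\Gamma}(\m{G}_D, u)$ is an MV-chain amalgamating the span, and it lies in $\Vfsi$ because every MV-chain does. The case $\vr(\Sm{m})$ is purely combinatorial: here $\Vfsi = \iso\sub(\Sm{m})$ consists of the chains $\Sm{d}$ with $d \mid m$, embeddings $\Sm{d} \to \Sm{e}$ exist (and are unique) exactly when $d \mid e$, so the span forces $d_B, d_C \mid m$, and $\m{D} = \Sm{\mathrm{lcm}(d_B, d_C)}$ is the required amalgam since $\mathrm{lcm}(d_B, d_C) \mid m$.

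For $\vr(\Smw{m})$ I would decompose each chain through its radical (Lemma~\ref{l:radical}): $\Rad(\m{A})$ is a cancellative hoop in $\hspu(\Cw)$, while the simple quotient $\m{A}/\rad(\m{A})$ is some $\Sm{d}$ with $d \mid m$. Amalgamating the cancellative radicals (again by ordered-group amalgamation) and the finite simple quotients (by the $\mathrm{lcm}$ argument) separately, I would reassemble the two layers into a single chain, using the disconnected rotation and Corollary~\ref{c:ISPU-rotation} to keep the result inside $\hspu(\Smw{m})$. Part (ii) follows the same template (its one-chain generated varieties being $\vr(\Cn{n})$, $\vr(\Cnw{n})$, $\vr(\Cw)$, and $\WH$ by Proposition~\ref{p:WH-char}): a Wajsberg chain is cancellative or bounded, the bounded cases reproduce the MV cases after adding and then forgetting the designated bound, and the cancellative cases reduce to ordered-group amalgamation of negative cones with no unit to preserve. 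For spans within $\WH$ that mix the two kinds, I would first replace each cancellative chain by a bounded essential extension via its rotation (Lemma~\ref{l:rotation}, Lemma~\ref{l:bounded-ess-ext}) and then take the $0$-free reduct of the bounded amalgam.

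The main obstacle is the membership verification in the two-scale cases $\vr(\Smw{m})$ and $\vr(\Cnw{n})$: there the amalgam must simultaneously respect the finite scale recorded by the simple quotient and the infinitesimal structure recorded by the radical, and one must check that gluing the two separately built amalgams produces a chain genuinely in $\hspu(\Smw{m})$ (resp. $\hspu(\Cnw{n})$) rather than in a strictly larger variety. By comparison, the preservation of the strong unit in the $\MV$ case is routine once one restricts to the subgroup generated by the images.
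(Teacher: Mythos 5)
The paper does not actually prove this lemma: it is imported wholesale from Aguzzoli--Bianchi as \cite[Theorem~5]{AB21}, so there is no internal proof to compare against, and your argument has to stand on its own. The parts of your proposal covering $\MV$, $\vr(\Sm{m})$, and their hoop analogues $\WH$, $\vr(\Cn{n})$, $\vr(\Cw)$ are sound and are essentially the classical arguments: amalgamation of totally ordered abelian groups transported through $\m{\Gamma}$ (with the strong unit preserved by passing to the subgroup generated by the two images), the lcm argument for the finite chains, and rotation to reduce cancellative/bounded mixtures to the MV case.

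The genuine gap is exactly where you place it, in the cases $\vr(\Smw{m})$ and $\vr(\Cnw{n})$, and it is worse than an unverified membership check: the repair you sketch would fail for $m\geq 2$. First, your plan to amalgamate radicals and simple quotients separately and then ``reassemble'' is underdetermined, because a chain in $\hspu(\Smw{m})$ is not determined by the pair (radical, quotient); the relevant group extension need not split. Concretely, $\m{\Gamma}(\mathbb{Z}\times\mathbb{Z},\pair{2,1})$ (lexicographic order) embeds into $\Smw{2}$ via $(a,b)\mapsto (a,2b-a)$, so it lies in $\hspu(\Smw{2})$, and it has the same radical (a copy of $\Cw$) and the same quotient $\Sm{2}$ as $\Smw{2}$, yet it is not isomorphic to $\Smw{2}$: it contains no element $x$ with $x+x$ equal to the unit at the group level. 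So ``gluing the two layer-amalgams'' does not specify the amalgam; one must amalgamate the lexicographic-product groups together with their (not always normalizable) units. Second, the tool you invoke for the reassembly, the disconnected rotation together with Corollary~\ref{c:ISPU-rotation}, cannot produce the required amalgam when $m\geq 2$: a disconnected rotation $\rot{\m{B}}$ satisfies $\rot{\m{B}}/\rad(\rot{\m{B}})\cong\m{2}$, and since any embedding of MV-chains $\m{A}\to\m{D}$ satisfies $\rad(\m{A})=A\cap\rad(\m{D})$ and hence induces an embedding $\m{A}/\rad(\m{A})\to\m{D}/\rad(\m{D})$, no chain of rank $d\geq 2$ (e.g.\ $\Smw{m}$ itself) embeds into any rotation. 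So for $m\geq 2$ the amalgam must be built as some $\m{\Gamma}(\mathbb{Z}\times\m{H},\pair{d,h})$ with $d=\mathrm{lcm}(d_B,d_C)$ dividing $m$, with the unit-matching and divisibility issues in $\m{H}$ handled explicitly, and this---the heart of \cite[Theorem~5]{AB21} in these cases---is missing from your proposal.
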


\section{Amalgamation in basic hoops}\label{sec:hoops}

Henceforth, for each variety $\V$ we will denote by $\Omega(\V)$ the collection of all subvarieties of $\V$ that have the AP. We will always consider $\Omega(\V)$ as a poset ordered by inclusion, i.e., as a subposet of the subvariety lattice of $\V$. Our aim in this section is to describe $\Omega(\BH)$. For a variety $\V$ of basic hoops, we denote by $\wajs(\V)$ the class of Wajsberg chains in $\V$. Note that $\wajs(\V) \subseteq \Vfc$.

\begin{lemma}
Let $\V \in \Omega(\BH)$. Then $\vr(\wajs(\V)) \in \Omega(\WH)$.
\end{lemma}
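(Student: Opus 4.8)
The plan is to convert both the hypothesis and the goal into statements about the \emph{essential} amalgamation property via Corollary~\ref{c:Vfc-AP}, and then to use the ordinal-sum structure to push an amalgam living in $\Vfc$ down into a single Wajsberg component. Since $\V \in \Omega(\BH)$ has the AP, Corollary~\ref{c:Vfc-AP} gives that $\Vfc$ has the essential AP. On the other side, $\vr(\wajs(\V))$ is a variety of Wajsberg hoops, hence of basic hoops, so by Corollary~\ref{c:Vfc-AP} again it suffices to show that $(\vr(\wajs(\V)))_\mathrm{fc}$ has the essential AP.

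The first step is the identification $(\vr(\wajs(\V)))_\mathrm{fc} = \wajs(\V)$. Every totally ordered Wajsberg hoop is sum-indecomposable (index $1$), so the finite-index totally ordered members of $\vr(\wajs(\V))$ are exactly its Wajsberg chains; and since $\wajs(\V) \subseteq \vr(\wajs(\V)) \subseteq \V$, these coincide with the Wajsberg chains of $\V$, namely $\wajs(\V)$. Thus the whole problem reduces to showing that $\wajs(\V)$ has the essential AP.

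To this end I would take an essential span $\pair{i_1\colon \m A \to \m B, i_2\colon \m A \to \m C}$ in $\wajs(\V)$. As $\wajs(\V) \subseteq \Vfc$, this is also an essential span in $\Vfc$, which therefore admits an amalgam $\pair{j_1\colon \m B \to \m D, j_2\colon \m C \to \m D}$ with $\m D = \bigoplus_{i=0}^n \m D_i \in \Vfc$. The difficulty is that $\m D$ need not be a Wajsberg chain, and the task is to replace it by one. If $\m A$ is nontrivial, then $\m B$ and $\m C$ are nontrivial Wajsberg chains, so by Lemma~\ref{l:BL-ordsum-embed}(i) each of $j_1[\m B]$ and $j_2[\m C]$ lies inside a single component of $\m D$; since they share the nontrivial image $j_1 i_1[\m A] = j_2 i_2[\m A]$ and distinct components meet only in $\{1\}$, both lie in one common component $\m D_p$. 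I would then verify $\m D_p \in \wajs(\V)$: it is a Wajsberg chain, and by Lemma~\ref{l:ISPU-ordinalsum}(ii) it is a homomorphic image of the subalgebra $\bigoplus_{j=p}^n \m D_j$ of $\m D$, so $\m D_p \in \hm\sub(\m D) \subseteq \V$. Corestricting $j_1, j_2$ to $\m D_p$ gives an amalgam in $\wajs(\V)$. If instead $\m A$ is trivial, then essentiality of $i_2$ forces $\m C$ to be trivial (the full congruence of a nontrivial $\m C$ restricts trivially to $\m A$), and $\m D = \m B$ with $j_1 = \mathrm{id}_{\m B}$ and $j_2$ the unique embedding of the trivial algebra amalgamates the span inside $\wajs(\V)$.

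The main obstacle is the step showing that the common component $\m D_p$ again belongs to $\wajs(\V)$, i.e.\ that a component of an ordinal sum lying in $\V$ is itself in $\V$; this is exactly where checking that $\bigoplus_{j=p}^n \m D_j$ is a subalgebra of $\m D$ and invoking the $\hm\sub$-description of ordinal sums in Lemma~\ref{l:ISPU-ordinalsum} carries the real weight. The trivial-$\m A$ case is a minor but necessary wrinkle, since without essentiality one could not force $\m B$ and $\m C$ into a common component of $\m D$.
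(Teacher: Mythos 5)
Your proof is correct and follows essentially the same route as the paper's: reduce both sides to the essential amalgamation property of $\wajs(\V)$ via the Section~\ref{sec:essential} machinery, amalgamate the essential span inside $\Vfc$, and corestrict to the common Wajsberg component, with the trivial-$\m{A}$ case handled by essentiality of $i_2$. The only (immaterial) difference is that you invoke Corollary~\ref{c:Vfc-AP} together with the identification $(\vr(\wajs(\V)))_{\mathrm{fc}} = \wajs(\V)$, whereas the paper invokes Corollary~\ref{c:lin-ess-AP} via $\wajs(\V) = \vr(\wajs(\V))_{\mathrm{FSI}}$, and you spell out explicitly (via Lemma~\ref{l:ISPU-ordinalsum}(ii)) why the common component $\m{D}_p$ lies in $\wajs(\V)$, a point the paper leaves implicit.
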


\begin{proof}
First note that, since $\V$ has the AP, by Corollary~\ref{c:Vfc-AP}, $\Vfc$ has the essential AP. Moreover, since $\wajs(\V) = \vr(\wajs(\V))_\mathrm{FSI}$, by Corollary~\ref{c:lin-ess-AP}, it is enough to show that $\wajs(\V)$ has the essential AP. Let $\pair{i_1 \colon \m{A} \to \m{B}, i_2 \colon \m{A} \to \m{C}}$ be an essential span in $\wajs(\V)$. If $\m{A}$ is trivial, then $\m{C}$ is trivial and clearly the span has an amalgam in $\wajs(\V)$. Otherwise, $\m{A}$ is non-trivial and it has an amalgam $\pair{j_1\colon \m{B} \to \m{D}, j_2 \colon \m{C} \to \m{D}}$ in $\Vfc$. Since $\m{A}$ is non-trivial, $\m{B}$ and $\m{C}$ are mapped into the same Wajsberg component $\m{D}' \in \wajs(\V)$ of $\m{D}$ yielding that $\pair{j_1\colon \m{B} \to \m{D}', j_2 \colon \m{C} \to \m{D}'}$ is an amalgam of the span in $\wajs(\V)$.
\end{proof}

So Proposition~\ref{p:AP-WH} yields the following result:
\begin{cor}\label{c:basic-wajscomp}
Let $\V \in \Omega(\BH)$ be non-trivial. Then  $\wajs(\V) = \hspu(\m{B})$ for some $\m{B} \in \{\Cn{n}, \Cnw{n} \mid n \in \mathbb{N}\} \cup \{ \Cw, [0,1]_\WH \}$ or $\wajs(\V) = \hspu(\Cn{n},\Cw)$ for some $n\geq 1$.
\end{cor}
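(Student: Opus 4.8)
The plan is to deduce this almost immediately from the lemma just proved together with Proposition~\ref{p:AP-WH}; the only genuine work is to translate a statement about the \emph{variety} $\vr(\wajs(\V))$ into one about the \emph{class} $\wajs(\V)$ of Wajsberg chains itself.

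First I would invoke the preceding lemma: since $\V \in \Omega(\BH)$ has the amalgamation property, $\vr(\wajs(\V)) \in \Omega(\WH)$, so $\vr(\wajs(\V))$ is a variety of Wajsberg hoops with the AP. Proposition~\ref{p:AP-WH} then forces $\vr(\wajs(\V))$ to be one of $\vr(\Cn{n})$, $\vr(\Cnw{n})$, $\vr(\Cw)$, $\vr(\Cn{n}, \Cw)$, or $\WH = \vr([0,1]_\WH)$.

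The key step is to observe that $\wajs(\V)$ is already closed under $\hspu$, i.e.\ $\wajs(\V) = \hspu(\wajs(\V))$. The inclusion from left to right is trivial. For the reverse, I would use that $\V$, being a variety, is closed under $\hm$, $\sub$, $\pr$ and $\pu$, so $\hspu(\wajs(\V)) \subseteq \V$; and since $\wajs(\V)$ consists of Wajsberg chains, the semilinearity fact recorded in Section~\ref{sec:preliminaries} (a consequence of J\'onsson's Lemma) guarantees that every member of $\hspu(\wajs(\V))$ is itself a totally ordered Wajsberg hoop, hence a Wajsberg chain lying in $\V$, hence in $\wajs(\V)$. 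This is exactly the equality $\wajs(\V) = \vr(\wajs(\V))_{\mathrm{FSI}}$ already noted in the proof of the preceding lemma. Combining it with the same semilinearity fact---that the totally ordered members of a variety generated by a class of chains are precisely $\hspu$ of that class---I obtain $\wajs(\V) = \hspu(\wajs(\V)) = \hspu(G)$ for the generating family $G$ supplied by Proposition~\ref{p:AP-WH}. Reading off the five cases yields exactly the asserted forms, with non-triviality of $\V$ (and the triviality of $\Cn{0}$) allowing $n \geq 1$ in the $\hspu(\Cn{n}, \Cw)$ case.

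I expect the only real obstacle to be this bridging step: Proposition~\ref{p:AP-WH} classifies the generated variety, whereas the corollary concerns the class $\wajs(\V)$ of chains, and one must verify that passing to $\hspu$ neither leaves $\V$ nor destroys total order. Both points are routine given the closure of $\V$ under ultraproducts and the semilinearity of these varieties.
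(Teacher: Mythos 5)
Your proposal is correct and takes essentially the same route as the paper, which states no separate proof at all: it treats the corollary as immediate from the preceding lemma plus Proposition~\ref{p:AP-WH}, with the bridging identity $\wajs(\V) = \hspu(\wajs(\V)) = \vr(\wajs(\V))_{\mathrm{FSI}}$ left implicit (it is asserted without proof inside the lemma's own proof). Your explicit verification of that closure step, and of the passage from the generated variety back to the class of chains via the semilinearity fact, is exactly the content the paper suppresses.
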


In what follows, it will be convenient to use a more streamlined notation for the classes generating varieties of basic hoops (or BL-algebras). We introduce a number of conventions. First, we will sometimes abbreviate the ordinal  $\m{A}\oplus\m{B}$ by juxtaposition, writing $\m{A}\m{B}$ in its place. Second, we will denote the class generated by the componentwise $\hspu$ closure of an ordinal sum by enclosing the corresponding ordinal sum in bracket $[$, $]$, so that, for example, $[\m{A}\m{B}]$ denotes the class of all ordinal sums $\m{A}'\oplus\m{B}'$ where $\m{A}'\in\hspu(\m{A})$ and $\m{B}'\in\hspu(\m{B})$; and $[\m{A}] = \hspu(\m{A})$. In the event that the first component is a totally ordered MV-algebra, by convention we will only consider non-trivial members of the $\hspu$ closure; that is, we will consider the $\hm^+\sub\pu$ closure, where $\hm^+$ denotes the operator that closes under non-trivial homomorphic images. Thus, the resulting algebras will always be BL-algebras.
Thirdly, inspired by notation from regular expressions, we will use $^\ast$ to denote the repetition of one or more instances of a summand in a given ordinal sum, so that, for example, $[\m{A}\m{B}^\ast]$ abbreviates the class consisting of all ordinal sums of the form $\m{A}'\oplus\m{B}_1\oplus\cdots\oplus\m{B}_n$, where  $\m{A}' \in \hspu(\m{A})$, $n$ is a positive integer, and $\m{B}_1,\ldots,\m{B}_n\in\hspu(\m{B})$. We further adopt the convention that the Kleene star $^\ast$ has priority over $\oplus$, so that $[\m{A}\m{B}\m{C}^\ast]$ abbreviates $[(\m{A}\oplus\m{B})\oplus\m{C}^\ast]$. Thus, for instance, $[\m{A}(\Cn{n}\Cw)^\ast]$ is the class of all ordinal sums of the form $\m{A}'\oplus\m{B}_1\oplus\cdots\oplus\m{B}_n$, where  $\m{A}' \in \hspu(\m{A})$, $n$ is a positive integer, and $\m{B}_1,\m{B}_2,\ldots,\m{B}_n\in\hspu(\Cn{n}\Cw)$. Note that if $\m{A}$ and $\m{B}$ are Wajsberg chains, then $[(\m{A}\m{B})^\ast]$ is the class of all ordinal sums of the form $\m{C}_1\oplus \dots \oplus \m{C}_n$, where $n$ is a positive integer and $\m{C}_1,\dots, \m{C}_n \in \hspu(\m{A}, \m{B})$.

Recall that, for a variety $\V$ of basic hoops (or BL-algebras), $\Vfc$ is the class totally ordered members of $\V$ that have finite index and, by Corollary~\ref{c:Vfc-AP}, $\V\in\Omega(\BH)$ if and only if $\Vfc$ has the essential amalgamation property. The next lemma is important in describing the classes of finite index chains of varieties of basic hoops with the amalgamation property.

\begin{lemma}\label{l:basic-Vfc-regular-expr}
Let $\m{A}$ and $\m{B}$ be  non-trivial Wajsberg chains and 
\[
\K \in \{ [\m{A}], [\m{A}^\ast], [\m{A}\m{B}], [\m{A}^\ast\m{B}], [\m{A}\m{B}^\ast], [\m{A}^\ast\m{B}^\ast], [(\m{A}\m{B})^\ast] \}.
\] Then $\vr(\K)_{\mathrm{fc}} = \K$.
\end{lemma}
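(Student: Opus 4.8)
The plan is to reduce the statement to a computation of $\hspu(\K)$ and then read off its finite-index members. Since each member of $\K$ is a finite-index chain lying in $\vr(\K)$, the inclusion $\K \subseteq \vr(\K)_{\mathrm{fc}}$ is immediate, so the content is the reverse inclusion. As $\K$ is a class of totally ordered basic hoops, the totally ordered members of $\vr(\K)$ coincide with $\hspu(\K)$; hence $\vr(\K)_{\mathrm{fc}}$ is precisely the class of finite-index chains in $\hspu(\K)$, and it suffices to show that every such chain already belongs to $\K$.

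For the classes whose components all come from a single variety I would invoke Lemma~\ref{l:ordsum-component-var} directly. For $[\m{A}^\ast]$ set $\V = \vr(\m{A})$, and for $[(\m{A}\m{B})^\ast]$ set $\V = \vr(\m{A},\m{B})$, so that $\Vfsi = \hspu(\m{A})$ respectively $\hspu(\m{A},\m{B})$. That lemma shows the class $\hat{\K}$ of \emph{all} ordinal sums $\bigoplus_{i\in I}\m{C}_i$ with $\m{C}_i \in \Vfsi$ and $I$ an arbitrary totally ordered index set satisfies $\hspu(\hat{\K}) = \hat{\K}$. Since $\K \subseteq \hat{\K}$, monotonicity gives $\hspu(\K) \subseteq \hat{\K}$, and as the finite-index members of $\hat{\K}$ are exactly $\K$, we conclude $\hspu(\K)_{\mathrm{fc}} \subseteq \K$. (The case $[\m{A}]$ is immediate, since $[\m{A}] = \hspu(\m{A})$ is already closed under $\hspu$ and consists only of chains of index at most $1$.)

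The remaining four classes $[\m{A}\m{B}]$, $[\m{A}^\ast\m{B}]$, $[\m{A}\m{B}^\ast]$, $[\m{A}^\ast\m{B}^\ast]$ have an ordered two-block shape, and here I would argue via the normal form $\hspu(\K) = \hm\sub\pu(\K)$. Let $\m{F} = \prod_{k} \m{D}_k / U$ with each $\m{D}_k \in \K$, written as an ordinal sum of Wajsberg chains whose components split into a lower block in $\hspu(\m{A})$ and an upper block in $\hspu(\m{B})$. Using Lemma~\ref{l:component-equation}, which detects membership in a common summand by the equation $(x\to y)\to y = (y\to x)\to x$, two elements of $\m{F}$ lie in the same component if and only if their representatives lie in a common component of $\m{D}_k$ for $U$-almost all $k$; since that common component is of $\m{A}$-type on a set in $U$ or of $\m{B}$-type on a set in $U$, every component of $\m{F}$ is \emph{pure}, lying in $\hspu(\m{A})$ or in $\hspu(\m{B})$ (each closed under $\pu$), and the preserved order forces all $\m{A}$-type components below all $\m{B}$-type components. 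Thus $\m{F}$—and, by the componentwise description of subalgebras, any $\m{E} \le \m{F}$—is an ordinal sum of $\hspu(\m{A})$-chains followed by $\hspu(\m{B})$-chains, possibly of infinite index. A finite-index homomorphic image of such an $\m{E}$ retains a finite initial segment of its summands together with a quotient of the topmost retained one (the infinite-index analogue of Lemma~\ref{l:ISPU-ordinalsum}(ii)); this prefix consists of finitely many $\hspu(\m{A})$-chains followed by finitely many $\hspu(\m{B})$-chains, which—after absorbing trivial summands—is exactly a member of $\K$. For $[\m{A}^\ast\m{B}]$ and $[\m{A}\m{B}^\ast]$ one notes in addition that the distinguished summand arises as a single ultraproduct component, there being exactly one $\m{B}$- respectively $\m{A}$-component in each $\m{D}_k$, while for $[\m{A}\m{B}]$ both blocks are single and the index stays bounded by $2$, with Lemma~\ref{l:ultraprod-distrib-ordsum} handling the bounded-index ultraproducts componentwise.

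The main obstacle is the two-block argument: one must rule out that $\pu$ or $\sub$ \emph{reorders} or \emph{merges} an $\m{A}$-summand with a $\m{B}$-summand, or creates a genuinely mixed $\hspu(\m{A},\m{B})$-summand that would place the chain in $[(\m{A}\m{B})^\ast]$ but outside the more rigid block classes. This is exactly what Lemma~\ref{l:component-equation} secures, since the equation that cuts a chain into its Wajsberg components is first-order and hence preserved and reflected by ultraproducts and subalgebras; the bookkeeping of degenerate trivial summands, needed to see that prefixes landing in a single block still count as members of the relevant class, is routine.
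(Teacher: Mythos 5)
Your proposal is correct, and for $[\m{A}]$, $[\m{A}^\ast]$, $[\m{A}\m{B}]$, and $[(\m{A}\m{B})^\ast]$ it coincides with the paper's proof: reduce $\vr(\K)_{\mathrm{fc}}$ to the finite-index members of $\hspu(\K)$ and quote Lemma~\ref{l:ordsum-component-var} (resp.\ Lemma~\ref{l:ISPU-ordinalsum} for $[\m{A}\m{B}]$). Where you genuinely diverge is the two-block classes. The paper also passes to the infinite-index closure (your $\hat{\K}$, its $\K'=\K'_{\m{A}}\oplus\K'_{\m{B}}$), but it verifies closure under $\hspu$ by viewing each member as a \emph{binary} ordinal sum whose two summands are the (possibly infinite-index) blocks: closure under $\pu$ then falls out of Lemma~\ref{l:ultraprod-distrib-ordsum} applied to two summands together with Lemma~\ref{l:ordsum-component-var} applied blockwise, and closure under $\hm$ out of Lemma~\ref{l:ISPU-ordinalsum}(ii) applied to the binary sum---legitimately, since that lemma does not require the summands to be Wajsberg chains. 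Your route instead re-derives the ultraproduct step by hand via \L{}o\'s's theorem and Lemma~\ref{l:component-equation} (showing every component of the ultraproduct is pure and that the block order is preserved), and then describes subalgebras and finite-index quotients componentwise. This is sound---your purity argument is essentially the proof hiding inside Lemmas~\ref{l:ultraprod-distrib-ordsum} and~\ref{l:ordsum-component-var}---but it obliges you to appeal to infinite-index analogues of the subalgebra decomposition (Lemma~\ref{l:BL-ordsum-embed}) and of Lemma~\ref{l:ISPU-ordinalsum}(ii), which are true and routine (congruences of hoop chains are given by filters) but nowhere stated in the paper. The binary-sum packaging is precisely what the paper's proof buys: all infinite-index bookkeeping for $\pu$ and $\hm$ is absorbed into already-proved finite-sum lemmas, at the price of being less explicit about what the ultraproducts actually look like; your version is more self-contained and more informative about the structure of $\hspu(\K)$, at the price of re-proving parts of the cited lemmas.
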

\begin{proof}
For $\K = [\m{A}]$ the claim is trivial, since $\hspu([\m{A}]) = [\m{A}]$, for $\K = [\m{A}^\ast]$ the claim follows from Lemma~\ref{l:ordsum-component-var}, and for $\K=  [\m{A}\m{B}]$ the claim follows from Lemma~\ref{l:ISPU-ordinalsum}. 
Moreover, for $\K = [(\m{A}\m{B})^\ast]$ we have $\K = \{\bigoplus_{i=1}^n \m{A}_i \mid \m{A}_i \in \hspu(\m{A},\m{B})\}$, so the claim also follows from Lemma~\ref{l:ordsum-component-var}.  
For the remaining cases we will only show the claim for $\K = [\m{A}^\ast \m{B}^\ast]$, since the other cases are very similar.  We define 
\begin{align*}
\K'_{\m{A}} &= \{( \bigoplus_{i\in I} \m{A}_i) \mid \m{A}_i \in \hspu(\m{A})\}, \\
\K'_{\m{B}} &= \{( \bigoplus_{j\in J} \m{B}_j) \mid \m{B}_j \in \hspu(\m{B})\}, \\
\K' &= \K'_{\m{A}} \oplus \K'_{\m{B}}.
\end{align*}
Note that $\K'_{\mathrm{fc}} = \K$ and it is enough to show that $\K'$ is closed under $\hspu$, since then $\K \subseteq \vr(\K)_\mathrm{fc} \subseteq \K'_\mathrm{fc} = \K$.  For closure under $\pu$ let $X$ be a set and $\{\m{A}^x \oplus \m{B}^x\}_{x \in X} \subseteq \K'$ with $\m{A}^x  \in \K'_{\m{A}}$ and $\m{B}^x \in \K'_{\m{B}}$. For any ultra filter $U$ over $X$ we have, by Lemma~\ref{l:ultraprod-distrib-ordsum}, $\prod_{x\in X} (\m{A}^x \oplus \m{B}^x)/U  \cong (\prod_{x\in X} \m{A}^x/U)\oplus (\prod_{x\in X} \m{B}^x/U)$. Moreover, by Lemma~\ref{l:ordsum-component-var}, we get $\prod_{x\in X} \m{A}^x/U \in \K'_{\m{A}}$ and  $\prod_{x\in X} \m{B}^x/U \in \K'_{\m{B}}$, yielding $\prod_{x\in X} (\m{A}^x \oplus \m{B}^x)/U \in \K'$. But also it is clear that $\sub(\K') = \K'$, so it remains to check that $\hm(\K') = \K'$, but this easily follows from Lemma~\ref{l:ISPU-ordinalsum}(ii) and the fact that $\K'_{\m{A}}$ and $\K'_{\m{B}}$ are closed under $\hm$, by Lemma~\ref{l:ordsum-component-var}.
\end{proof}

We now describe numerous families of varieties in $\Omega(\BH)$. The next several lemmas are devoted to this. Subsequently, we will show that every member of $\Omega(\BH)$ is contained in one of these families, and describe the structure of these families in more detail to give a complete description of $\Omega(\BH)$.

\begin{lemma}\label{l:AP-2comp}
Let $\m{A}$ be a non-trivial Wajsberg chain. Then the classes $[\m{A}]$ and $[\m{A}^\ast]$ have the amalgamation property.
\end{lemma}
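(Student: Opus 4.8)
By Corollary~\ref{c:lin-ess-AP} (or, in its role as applied to basic hoops, Corollary~\ref{c:Vfc-AP}), establishing the amalgamation property for the variety $\vr(\K)$ reduces to establishing the \emph{essential} amalgamation property for the class of finite-index totally ordered members. By Lemma~\ref{l:basic-Vfc-regular-expr}, we have $\vr([\m{A}])_{\mathrm{fc}} = [\m{A}]$ and $\vr([\m{A}^\ast])_{\mathrm{fc}} = [\m{A}^\ast]$. So the plan is: to show $[\m{A}]$ and $[\m{A}^\ast]$ have the AP, it suffices to show each has the essential AP directly.

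**The case $[\m{A}]$.** Here $[\m{A}] = \hspu(\m{A})$, which is exactly $\wajs(\vr(\m{A}))_{\mathrm{FSI}}$ for the one-chain generated Wajsberg-hoop variety $\vr(\m{A})$. Thus the essential AP for $[\m{A}]$ follows immediately from the already-available strong amalgamation result Lemma~\ref{l:one-chain}(ii): since $\vr(\m{A})$ is one-chain generated, its class of totally ordered members has the (full, unconditional) AP, and an essential span is in particular a span, so it has an amalgam. This case requires essentially no new work.

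**The case $[\m{A}^\ast]$ — the main step.** This is where the real content lies. Let $\pair{i_1\colon \m{C}\to\m{D},\, i_2\colon \m{C}\to\m{E}}$ be an essential span in $[\m{A}^\ast]$, so $\m{C} = \bigoplus_{i=1}^{k}\m{C}_i$, $\m{D}=\bigoplus_{j=1}^m\m{D}_j$, $\m{E}=\bigoplus_{\ell=1}^n\m{E}_\ell$ with all components in $\hspu(\m{A})$. By Lemma~\ref{l:BL-ordsum-embed}(i), each embedding is governed by an order-embedding of index sets together with componentwise embeddings of the matched Wajsberg summands, and by Lemma~\ref{l:ess-ordinalsum} the essentiality of $i_2$ means precisely that the \emph{top} component of $\m{C}$ maps into the top component $\m{E}_n$ via an essential extension. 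The plan is to build the amalgam $\m{F}$ as an ordinal sum whose index set is an amalgam of the two index order-embeddings over the shared index set of $\m{C}$: roughly, I would interleave the components of $\m{D}$ and $\m{E}$ along a common linear order that respects both order-embeddings, amalgamating componentwise those summands that sit over a common summand of $\m{C}$ and simply inserting the remaining summands of $\m{D}$ and $\m{E}$ in between. For each matched triple $\pair{\m{C}_i\to\m{D}_{f(i)},\, \m{C}_i\to\m{E}_{g(i)}}$ I invoke Lemma~\ref{l:one-chain}(ii) again to amalgamate within $\hspu(\m{A})$, obtaining a component in $\hspu(\m{A})$; the unmatched components are already in $\hspu(\m{A})$. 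The resulting ordinal sum lies in $[\m{A}^\ast]$, and one checks that the two maps into it commute with the span. The only subtlety is ensuring that the amalgamating linear order of indices can be chosen to make both legs order-embeddings simultaneously while keeping the two top components correctly aligned; here essentiality is what forces the top summands of $\m{D}$ and $\m{E}$ to be amalgamated \emph{together} (rather than placed in sequence), so that the top of the amalgam is a single component, preserving essentiality and membership in $[\m{A}^\ast]$.

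**The expected obstacle.** The delicate point is the combinatorial bookkeeping of the index sets: one must verify that a common linear extension of the two order-embeddings exists realizing an honest amalgam of the index diagram, and that essentiality (top-to-top) is respected so the constructed sum genuinely amalgamates the original span rather than some quotient. Once the index-level amalgam is fixed, the componentwise amalgamation is handed off wholesale to Lemma~\ref{l:one-chain}(ii), so no genuinely new algebra is built by hand.
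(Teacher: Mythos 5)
Your core construction for $[\m{A}^\ast]$---interleave the summands of the two outer algebras along an amalgam of their index orders, amalgamate the matched components via Lemma~\ref{l:one-chain}(ii), and insert the unmatched components as they are---is essentially the paper's own argument (the paper packages the index bookkeeping differently, by padding all three ordinal sums with trivial summands so that the span restricts componentwise; your case $[\m{A}]$ is also handled exactly as in the paper, by citing Lemma~\ref{l:one-chain}). However, your opening reduction introduces a genuine logical gap: the lemma asserts that the \emph{classes} $[\m{A}]$ and $[\m{A}^\ast]$ have the full amalgamation property, whereas Corollary~\ref{c:Vfc-AP} together with Lemma~\ref{l:basic-Vfc-regular-expr} only converts essential AP of these classes into AP of the generated \emph{varieties} $\vr([\m{A}])$ and $\vr([\m{A}^\ast])$. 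Essential AP of a class does not imply AP of the class (at best, via Lemma~\ref{l:ess-AP}, it gives the one-sided AP, using that $[\m{A}^\ast]$ is closed under homomorphic images), and the full class-level AP is what is actually needed downstream: in Lemmas~\ref{l:AP-independent} and~\ref{l:AP-last} the spans obtained by restricting to components are not essential in general, yet they are amalgamated inside $[\m{A}^\ast]$ and $[\m{A}]$ precisely by appeal to this lemma. So, as structured, your proof establishes a strictly weaker statement than the one claimed.

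The good news is that the gap lies in the packaging, not in the mathematics: essentiality plays no role in your main step. Your worry that essentiality is needed ``to keep the top summands aligned'' is misplaced on two counts: an amalgam of an essential span is not required to be essential, and the interleaved ordinal sum lies in $[\m{A}^\ast]$ no matter which components end up on top. So the fix is simply to delete the reductive first paragraph, take an \emph{arbitrary} span in $[\m{A}^\ast]$, and run your construction verbatim---the index-level amalgam always exists because finite linear orders amalgamate (equivalently, pad with trivial summands as the paper does, which makes both legs componentwise), and each matched component span is amalgamated in $[\m{A}]$ by Lemma~\ref{l:one-chain}(ii). This yields the full amalgamation property exactly as claimed.
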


\begin{proof}
Note that $[\m{A}]$ has the AP, by Lemma~\ref{l:one-chain}. To show that $[\m{A}^\ast]$ has the AP let $\pair{\phi_1 \colon \m{B} \to \m{C}, \phi_2 \colon \m{B} \to \m{D}}$ be a span in $[\m{A}^\ast]$. Then, by possibly introducing trivial algebras in the ordinal sum decomposition, we may assume that $\m{B} = \bigoplus_{i=1}^n \m{B}_i$, $\m{C} = \bigoplus_{i=1}^n \m{C}_i$, and $\m{D} = \bigoplus_{i=1}^n \m{D}_i$ with $\m{B}_i,\m{C}_i,\m{D}_i \in [\m{A}]$ and the span $\pair{\phi_1, \phi_2}$ restricts to spans $\pair{\phi_1^i \colon \m{B}_i \to \m{C}_i, \phi_2^i \colon \m{B}_i \to \m{D}_i}$ in $[\m{A}]$. Since $[\m{A}]$ has the AP, each of these spans has an amalgam $\pair{\psi_1^i \colon \m{C}_i \to \m{E}_i, \psi_1^i \colon \m{D}_i \to \m{E}_i}$ in $[\m{A}]$ giving rise to an amalgam $\pair{\psi_1 \colon \m{C} \to \m{E}, \psi_2 \colon \m{D} \to \m{E}}$ of $\pair{\phi_1,\phi_2}$ with $\m{E} = \bigoplus_{i=1}^n \m{E}_i \in [\m{A}^\ast]$. Thus $[\m{A}^\ast]$ has the AP.
\end{proof}

We call two algebras $\m{A}$ and $\m{B}$ of the same type \emph{independent} if $\vr(\m{A})$ and $\vr(\m{B})$ are independent, i.e., $\vr(\m{A}) \cap \vr(\m{B})$ only contains trivial algebras.

\begin{lemma}\label{l:AP-independent}
Let $\m{A}$ and $\m{B}$ be two independent Wajsberg chains. 
Then the classes $[\m{A}\m{B}]$, $[\m{A}  \m{B}^\ast]$, $[\m{A}^\ast\m{B}]$, $[ \m{A}^\ast  \m{B}^\ast]$,  and $[(\m{A} \m{B})^\ast]$ have the amalgamation property.
\end{lemma}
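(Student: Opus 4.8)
The plan is to reduce the amalgamation property for each of the five composite classes to amalgamation in the simpler classes $[\m{A}]$, $[\m{A}^\ast]$, $[\m{B}]$, and $[\m{B}^\ast]$, which are already known to have the AP by Lemma~\ref{l:AP-2comp}. The key structural input is the independence of $\m{A}$ and $\m{B}$: because $\vr(\m{A}) \cap \vr(\m{B})$ contains only trivial algebras, the Wajsberg chains in $\hspu(\m{A})$ and $\hspu(\m{B})$ cannot be embedded into one another nontrivially. Via Lemma~\ref{l:BL-ordsum-embed}(i), this forces any embedding between ordinal sums built from $\hspu(\m{A})$-components and $\hspu(\m{B})$-components to respect the ``type'' of each component, so that $\m{A}$-components map to $\m{A}$-components and $\m{B}$-components map to $\m{B}$-components. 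This rigidity is what lets the amalgamation problem decompose componentwise.

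Concretely, I would proceed as follows. First I would establish the componentwise embedding lemma described above: given a span $\pair{\phi_1 \colon \m{E} \to \m{F}, \phi_2 \colon \m{E} \to \m{G}}$ in one of the composite classes, I decompose each algebra as an ordinal sum, an $\m{A}$-block followed by a $\m{B}$-block (for the $[\m{A}\m{B}]$-type cases) or the appropriate repeated blocks (for the starred cases). Using independence together with Lemma~\ref{l:BL-ordsum-embed}(i), I argue that the order-embeddings $f$ witnessing $\phi_1$ and $\phi_2$ must send indices of $\m{A}$-components to indices of $\m{A}$-components and likewise for $\m{B}$. Thus the original span splits into an ``$\m{A}$-part'' span lying in $[\m{A}]$ or $[\m{A}^\ast]$ and a ``$\m{B}$-part'' span lying in $[\m{B}]$ or $[\m{B}^\ast]$. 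I amalgamate each part separately using Lemma~\ref{l:AP-2comp}, and then reassemble the two amalgams by taking their ordinal sum, which by Lemma~\ref{l:basic-Vfc-regular-expr} lands back in the desired class. The commutativity of the reassembled diagram follows because $\psi_1 \circ \phi_1$ and $\psi_2 \circ \phi_2$ agree on each block by construction.

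For the $[(\m{A}\m{B})^\ast]$ case the bookkeeping is a little different: here each algebra is an ordinal sum of chains from $\hspu(\m{A},\m{B})$, but since $\m{A}$ and $\m{B}$ are independent, each such chain is itself an ordinal sum of at most one $\m{A}$-piece and one $\m{B}$-piece, so the same type-preservation argument applies within and across the paired blocks, and I amalgamate paired block by paired block. I would note that, as with the proof of Lemma~\ref{l:AP-2comp}, one may first pad the ordinal sum decompositions with trivial summands so that all three algebras in a span have matching index structure, which makes the componentwise restriction of the span well-defined.

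The main obstacle is the componentwise embedding lemma, specifically making rigorous the claim that independence forces embeddings to preserve component type. The subtlety is that an embedding of ordinal sums need not be surjective on components, so a priori an $\m{A}$-component of the source could in principle embed into a $\m{B}$-component of the target; I must rule this out. The argument is that a nontrivial embedding of one Wajsberg chain into another forces the generated varieties to overlap nontrivially (an $\m{A}$-component sitting inside a $\m{B}$-component would produce a common nontrivial subalgebra, contradicting $\vr(\m{A}) \cap \vr(\m{B})$ being trivial), together with Lemma~\ref{l:component-equation} to detect which elements lie in a common component. Once this type-rigidity is pinned down, the remaining steps—splitting the span, invoking Lemma~\ref{l:AP-2comp}, and reassembling via the ordinal sum—are routine.
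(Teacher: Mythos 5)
Your proposal is correct and follows essentially the same route as the paper: pad the ordinal sum decompositions with trivial summands, use independence to force the span to restrict to component spans in $[\m{A}]$/$[\m{A}^\ast]$ and $[\m{B}]$/$[\m{B}^\ast]$, amalgamate each part via Lemma~\ref{l:AP-2comp}, and reassemble the amalgams as an ordinal sum---your contribution is merely to spell out the type-rigidity argument that the paper leaves implicit in the phrase ``since $\m{A}$ and $\m{B}$ are independent, the span restricts.'' One small imprecision: in the $[(\m{A}\m{B})^\ast]$ case the members of $\hspu(\m{A},\m{B})$ are sum-indecomposable Wajsberg chains lying in $[\m{A}]\cup[\m{B}]$ (not ordinal sums of an $\m{A}$-piece and a $\m{B}$-piece), but this does not affect your argument, since each component still has a well-defined type and the componentwise amalgamation goes through.
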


\begin{proof}
We prove that $[\m{A}^\ast\m{B}^\ast]$ has the AP. The proofs for the other cases are very similar. So let  $\pair{\phi_1 \colon \m{C} \to \m{D}, \phi_2 \colon \m{C} \to \m{E}}$ be a span in $[\m{A}^\ast\m{B}^\ast]$. Then, possibly with the introduction of trivial algebras in the ordinal sum decomposition, we may assume that $\m{C} = \m{C}_1 \oplus \m{C}_2$, $\m{D} = \m{D}_1 \oplus \m{D}_2$, $\m{E} = \m{E}_1 \oplus \m{E}_2$ with $\m{C}_1,\m{D}_1, \m{E}_1 \in [\m{A}^\ast]$ and $\m{C}_2,\m{D}_2, \m{E}_2 \in [\m{B}^\ast]$.  
Note that, since $\m{A}$ and $\m{B}$ are independent, also $[\m{A}^\ast]$ and $[\m{B}^\ast]$ are independent. Thus the span $\pair{\phi_1,\phi_2}$ restricts to spans $\pair{\phi_1^1 \colon \m{C}_1 \to \m{D}_1, \phi_2^1 \colon \m{C}_1 \to \m{E}_1}$ in $[\m{A}^\ast]$ and $\pair{\phi_1^2 \colon \m{C}_2 \to \m{D}_2, \phi_2^2 \colon \m{C}_2 \to \m{E}_2}$ in $[\m{B}^\ast]$. 
But, since $[\m{A}^\ast]$ and $[\m{B}^\ast]$ have the AP, by Lemma~\ref{l:AP-2comp}, these spans have amalgams $\pair{\psi_1^1 \colon \m{D}_1 \to \m{F}_1, \psi_2^1 \colon \m{E}_1 \to \m{F}_1}$ in $[\m{A}^\ast]$ and $\pair{\psi_1^2 \colon \m{D}_2 \to \m{F}_2, \psi_2^2 \colon \m{E}_2 \to \m{F}_2}$ in $[\m{B}^\ast]$, respectively. Hence the span $\pair{\phi_1,\phi_2}$ has an amalgam $\pair{\psi_1 \colon \m{D} \to \m{F}, \psi_2 \colon \m{E} \to \m{F}}$ in $[\m{A}^\ast\m{B}^\ast]$ with $\m{F} = \m{F}_1 \oplus \m{F}_2$.
\end{proof}

\begin{lemma}\label{l:AP-independent-cup}
Let $\m{A}$ and $\m{B}$ be two independent Wajsberg chains. 
Then the classes  $[\m{A}] \cup [\m{B}]$, $[\m{A}] \cup [\m{B}^\ast]$, and $[\m{A}^\ast] \cup [\m{B}^\ast]$ have the essential amalgamation property.
\end{lemma}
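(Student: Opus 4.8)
The plan is to apply Lemma~\ref{l:ess-AP-union}(i), which reduces the essential amalgamation property of a union $\K_1\cup\K_2$ to three ingredients: that each $\K_i$ is closed under $\iso\sub$, that each $\K_i$ has the essential AP, and that $\K_1\cap\K_2$ is essentially closed in the union. For the three classes in question I would take $\K_1\cup\K_2$ to be, respectively, $[\m{A}]\cup[\m{B}]$, $[\m{A}]\cup[\m{B}^\ast]$, and $[\m{A}^\ast]\cup[\m{B}^\ast]$, with $\K_1$ the $\m{A}$-part and $\K_2$ the $\m{B}$-part in each case. The essential AP for each individual piece is already in hand: $[\m{A}]$ and $[\m{A}^\ast]$ (and likewise for $\m{B}$) have the full AP by Lemma~\ref{l:AP-2comp}, hence a fortiori the essential AP, since an amalgam of an essential span is in particular an amalgam. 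Closure under $\iso\sub$ of each of $[\m{A}]$, $[\m{A}^\ast]$, $[\m{B}]$, $[\m{B}^\ast]$ follows from their definitions via $\hspu$ (for the starred classes this is Lemma~\ref{l:basic-Vfc-regular-expr}, and the $\iso\sub$-closure is noted inside its proof).

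The crux, then, is identifying $\K_1\cap\K_2$ and showing it is essentially closed in $\K_1\cup\K_2$. Here I would use independence of $\m{A}$ and $\m{B}$ decisively. Since $\vr(\m{A})\cap\vr(\m{B})$ contains only trivial algebras, the same holds for the starred variants, so the only algebras lying in both an $\m{A}$-class and a $\m{B}$-class are the trivial ones. Concretely, a chain in $[\m{A}]\cap[\m{B}]$, $[\m{A}]\cap[\m{B}^\ast]$, or $[\m{A}^\ast]\cap[\m{B}^\ast]$ must have all its Wajsberg components simultaneously in $\hspu(\m{A})$ and in $\hspu(\m{B})$, forcing each component trivial, hence the whole chain trivial. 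Thus $\K_1\cap\K_2$ is (up to isomorphism) just the trivial algebra.

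It remains to verify that the class $\{\text{trivial}\}$ is essentially closed in each union. This is where I expect the only genuine (though still mild) obstacle to lie, and independence is again what makes it work. Suppose $\phi\colon\m{T}\to\m{C}$ is an essential embedding with $\m{T}$ trivial and $\m{C}\in\K_1\cup\K_2$; I must show $\m{C}$ is trivial. Essentiality of an embedding from the trivial algebra means that every nontrivial congruence of $\m{C}$ restricts nontrivially to the image of $\m{T}$—but the only congruence on a one-element subalgebra is $\Delta$, so essentiality forces $\m{C}$ to have no nontrivial congruences collapsing to $\Delta_T$; since $\m{T}$ is trivial this means $\m{C}$ can have no nontrivial congruence at all, i.e.\ $\m{C}$ is simple, and combined with the ordinal-sum structure (a nontrivial chain of index $\geq 1$ always admits a proper nontrivial congruence via Lemma~\ref{l:ISPU-ordinalsum}(ii) unless it is a single simple Wajsberg component) this forces $\m{C}$ trivial. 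I would phrase this cleanly using the characterization of essential embeddings recorded just before Lemma~\ref{l:essential-cong}: $\phi$ is essential iff for every $\psi\colon\m{C}\to\m{E}$, injectivity of $\psi\circ\phi$ implies injectivity of $\psi$; taking $\psi$ to be any homomorphism, $\psi\circ\phi$ is automatically injective on the trivial $\m{T}$, so essentiality forces every homomorphism out of $\m{C}$ to be injective, i.e.\ $\m{C}$ is simple, whence trivial in these classes of ordinal sums of nontrivial Wajsberg chains.

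Having assembled the three hypotheses of Lemma~\ref{l:ess-AP-union}(i)—$\iso\sub$-closure of the pieces, essential AP of each piece from Lemma~\ref{l:AP-2comp}, and essential closedness of the trivial intersection just argued—the conclusion that $[\m{A}]\cup[\m{B}]$, $[\m{A}]\cup[\m{B}^\ast]$, and $[\m{A}^\ast]\cup[\m{B}^\ast]$ each have the essential amalgamation property follows immediately. I would write the proof uniformly for all three cases, remarking that the arguments differ only in which of Lemma~\ref{l:AP-2comp}'s two conclusions supplies the essential AP of the individual summands.
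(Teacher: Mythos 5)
Your proposal is correct and takes essentially the same route as the paper's proof: independence of $\m{A}$ and $\m{B}$ makes each intersection consist only of trivial algebras, hence essentially closed in the union, and Lemma~\ref{l:ess-AP-union}(i) together with Lemma~\ref{l:AP-2comp} then yields the essential AP, exactly as the paper argues (the paper treats the essential closedness of the trivial intersection as immediate, which your argument justifies). One small remark: your detour through ``$\m{C}$ is simple'' and Lemma~\ref{l:ISPU-ordinalsum}(ii) is unnecessary (and ``simple'' is not the right word for the property you derived), since what you already established---every homomorphism out of $\m{C}$ is injective, equivalently $\m{C}$ has no congruence other than $\Delta_C$---forces $\m{C}$ to be trivial outright, for instance by applying it to the homomorphism from $\m{C}$ onto the trivial algebra.
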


\begin{proof}
We prove that $[\m{A}^\ast] \cup [\m{B}^\ast]$ has the AP. The proofs for the other cases are very similar. Note that,  since $\m{A}$ and $\m{B}$ are independent, also $[\m{A}^\ast]$ and $[\m{B}^\ast]$ are independent. Thus $[\m{A}^\ast]\cap [\m{B}^\ast]$ contains only trivial algebras and in particular it is essentially closed in $[\m{A}^\ast]$ and $[\m{B}^\ast]$. Thus, since $[\m{A}^\ast]$ and $[\m{B}^\ast]$ have the AP, by Lemma~\ref{l:AP-2comp}, it follows from Lemma~\ref{l:ess-AP-union} that  $[\m{A}^\ast] \cup [\m{B}^\ast]$  has the essential AP.
\end{proof}

\begin{lemma}\label{l:AP-last}
Let $\m{A}$ and  $\m{B}$ be  Wajsberg chains such that $[\m{A}] \subseteq [\m{B}]$ is essentially closed in $[\m{B}]$. Then $[\m{A}^\ast  \m{B}]$ has the essential amalgamation property.
\end{lemma}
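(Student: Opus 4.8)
The plan is to verify the essential amalgamation property directly, by taking an essential span, decomposing all three algebras into their (unique) Wajsberg-chain ordinal summands via Lemma~\ref{lem:Agl}, and amalgamating the initial ``$\m{A}$-block'' and the terminal ``$\m{B}$-component'' separately before stacking the results. Concretely, let $\pair{i_1 \colon \m{C} \to \m{D}, i_2 \colon \m{C} \to \m{E}}$ be an essential span in $[\m{A}^\ast \m{B}]$, so $i_2$ is essential. Write $\m{C} = \m{C}_1 \oplus \cdots \oplus \m{C}_k \oplus \m{C}_B$, $\m{D} = \m{D}_1 \oplus \cdots \oplus \m{D}_l \oplus \m{D}_B$, and $\m{E} = \m{E}_1 \oplus \cdots \oplus \m{E}_m \oplus \m{E}_B$, where in each decomposition the initial summands lie in $\hspu(\m{A})$ and the last lies in $\hspu(\m{B})$. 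By Lemma~\ref{l:BL-ordsum-embed}(i) the embeddings $i_1, i_2$ are governed by order-embeddings $f$ and $g$ of the index sets together with component-wise embeddings, and by Lemma~\ref{l:ess-ordinalsum} the essentiality of $i_2$ means precisely that $g$ carries the top index to the top index and that $\m{C}_B \hookrightarrow \m{E}_B$ is essential.

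First I would dispose of the degenerate case in which $\m{C}$ is trivial: then essentiality forces $\m{E}$ trivial, and $\pair{\mathrm{id}_{\m{D}}, \m{E} \hookrightarrow \m{D}}$ is an amalgam. Otherwise the argument splits on the value $f(k+1)$, i.e.\ on where $i_1$ sends the top component $\m{C}_B$. If $f(k+1) = l+1$, then $i_1$ carries $\m{C}_B$ into $\m{D}_B$ and, being order-preserving, carries the block $\m{C}' := \m{C}_1 \oplus \cdots \oplus \m{C}_k$ into $\m{D}' := \m{D}_1 \oplus \cdots \oplus \m{D}_l$; the same holds for $i_2$ since $g(k+1)=m+1$. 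Thus the span restricts to a span $\pair{\m{C}' \to \m{D}', \m{C}' \to \m{E}'}$ in $[\m{A}^\ast]$ and a span $\pair{\m{C}_B \to \m{D}_B, \m{C}_B \to \m{E}_B}$ in $[\m{B}]$. Both classes have the AP, by Lemma~\ref{l:AP-2comp} and Lemma~\ref{l:one-chain} respectively, so I obtain amalgams $\m{F}' \in [\m{A}^\ast]$ and $\m{F}_B \in [\m{B}]$, and their ordinal sum $\m{F}' \oplus \m{F}_B \in [\m{A}^\ast \m{B}]$ reassembles into an amalgam of the original span via Lemma~\ref{l:BL-ordsum-embed}(i).

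The remaining case, $f(k+1) \leq l$, is where essential closedness does the work. Here $i_1$ carries $\m{C}_B$ into a summand $\m{D}_{f(k+1)} \in \hspu(\m{A})$, and since $\hspu(\m{A}) = [\m{A}]$ is closed under subalgebras it follows that $\m{C}_B \in \hspu(\m{A})$, whence $\m{C} \in [\m{A}^\ast]$. Now the essential embedding $\m{C}_B \hookrightarrow \m{E}_B$ has $\m{C}_B \in [\m{A}]$ and $\m{E}_B \in [\m{B}]$, so essential closedness of $[\m{A}]$ in $[\m{B}]$ yields $\m{E}_B \in [\m{A}]$, i.e.\ $\m{E} \in [\m{A}^\ast]$. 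Moreover, since $f$ is order-preserving, $i_1$ factors through the block $\m{D}' := \m{D}_1 \oplus \cdots \oplus \m{D}_l$, giving a span $\pair{\m{C} \to \m{D}', \m{C} \to \m{E}}$ in $[\m{A}^\ast]$. Amalgamating this span (again by Lemma~\ref{l:AP-2comp}) produces $\m{F}' \in [\m{A}^\ast]$, and I would recover an amalgam of the original span by forming $\m{F}' \oplus \m{D}_B \in [\m{A}^\ast \m{B}]$, extending the map out of $\m{D}$ by the identity on the detached tail $\m{D}_B$ and extending the map out of $\m{E}$ along the inclusion $\m{F}' \hookrightarrow \m{F}' \oplus \m{D}_B$; commutativity is inherited from the amalgam in $[\m{A}^\ast]$.

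I expect the main obstacle to be conceptual rather than computational, and to lie entirely in the second case: because $i_1$ is not assumed essential, it may fail to align the $\m{B}$-tails, and the key realization is that this failure can only occur when the top component of $\m{C}$ is already an $\m{A}$-algebra. The single genuinely essential use of the hypothesis is then the application of essential closedness that promotes $\m{E}$ back into $[\m{A}^\ast]$; without it the $\m{A}$-block amalgamation would be unavailable, and the whole reduction would collapse. The residual work---checking that $i_1$ really factors through the $\m{A}$-block of $\m{D}$ and that the reassembled maps are embeddings making the square commute---is routine bookkeeping via Lemma~\ref{l:BL-ordsum-embed}(i).
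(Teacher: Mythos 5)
Your proof is correct and takes essentially the same route as the paper's: decompose the span via Lemma~\ref{l:ess-ordinalsum}, invoke essential closedness of $[\m{A}]$ in $[\m{B}]$ exactly once to resolve the possible mismatch between where the two embeddings send the top component, and reduce to amalgamation in $[\m{A}^\ast]$ and $[\m{B}]$ via Lemma~\ref{l:AP-2comp} before reassembling with ordinal sums. The only difference is organizational: the paper splits cases on whether the top component of $\m{E}$ lies in $[\m{B}]\setminus[\m{A}]$ (using essential closedness contrapositively to force $i_1$ to align the tops), whereas you split on whether $i_1$ aligns the tops (using essential closedness directly to pull $\m{E}$ into $[\m{A}^\ast]$); the two case analyses cover the same ground and yield the same amalgams.
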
 

\begin{proof}
Let $\pair{\phi_1 \colon \m{C} \to \m{D}, \phi_2 \colon \m{C} \to \m{E}}$ be an essential span in $[\m{A}^\ast\m{B}]$. 
Then, by Lemma~\ref{l:ess-ordinalsum}, $\m{C} = \m{C}_1 \oplus \m{C}_2$ and $\m{E} = \m{E}_1 \oplus \m{E}_2$ with $\m{C}_1, \m{E}_1 \in [\m{A}^\ast]$ and $\m{C}_2, \m{E}_2 \in [\m{B}]$ non-trivial such that $\phi_2$ restricts to embeddings $\phi_2^1 \colon \m{C}_1 \to \m{E}_1$, and $\phi_2^2 \colon \m{C}_2 \to \m{E}_2$ with $\phi_2^2$ essential. 

Suppose that  $\m{E}_2 \in [\m{B}]\setminus [\m{A}]$. Then, since $[\m{A}]$ is essentially closed in $[\m{B}]$, also $\m{C}_2 \in [\m{B}] \setminus [\m{A}]$, yielding that $\m{D}= \m{D}_1 \oplus \m{D}_2$ with $\m{D}_1 \in [\m{A}^\ast]$ and $\m{D}_2 \in [\m{B}] \setminus [\m{A}]$. Thus also $\phi_1$ restricts to embeddings $\phi_1^1 \colon \m{C}_1 \to \m{D}_1$, and $\phi_1^2 \colon \m{D}_2 \to \m{E}_2$. So $\pair{\phi_1,\phi_2}$ restricts to spans $\pair{\phi_1^1, \phi_2^1}$ in $[\m{A}^\ast]$ and $\pair{\phi_1^2,\phi_2^2}$ in $[\m{B}]$, which, by Lemma~\ref{l:AP-2comp}, have amalgams $\pair{\psi_1^1 \colon \m{D}_1 \to \m{F}_1, \psi_2^1 \colon \m{E}_1 \to \m{F}_1}$ in $[\m{A}^\ast]$ and $\pair{\psi_1^2 \colon \m{D}_2 \to \m{F}_2, \psi_2^2 \colon \m{E}_2 \to \m{F}_2}$ in $[\m{B}]$, respectively.
Hence the span $\pair{\phi_1,\phi_2}$ has an amalgam $\pair{\psi_1 \colon \m{D} \to \m{F}, \psi_2 \colon \m{E} \to \m{F}}$ in $[\m{A}^\ast\m{B}]$ with $\m{F} = \m{F}_1 \oplus \m{F}_2$.

Otherwise suppose that $\m{E}_2 \in [\m{A}]$, then also $\m{C}_2 \in [\m{A}]$. Thus, by possibly introducing extra trivial summands and redefining the summands $\m{C}_i, \m{E}_i$, we can assume that $\m{D} = \m{D}_1 \oplus \m{D}_2$ with $\m{D}_1 \in [\m{A}^\ast]$ and $\m{D}_2 \in [\m{B}]$ such that the span $\pair{\phi_1,\phi_2}$ restricts to spans $\pair{\phi_1^1 \colon \m{C}_1 \to \m{D}_1, \phi_2^1 \colon \m{C}_1 \to \m{E}_1}$ in $[\m{A}^\ast]$ and $\pair{\phi_1^2 \colon \m{C}_2 \to \m{D}_2, \phi_2^2 \colon \m{C}_2 \to \m{E}_2}$ in $[\m{B}]$. 
But, since $[\m{A}^\ast]$ and $[\m{B}]$ have the AP, by Lemma~\ref{l:AP-2comp}, these spans have amalgams $\pair{\psi_1^1 \colon \m{D}_1 \to \m{F}_1, \psi_2^1 \colon \m{E}_1 \to \m{F}_1}$ in $[\m{A}^\ast]$ and $\pair{\psi_1^2 \colon \m{D}_2 \to \m{F}_2, \psi_2^2 \colon \m{E}_2 \to \m{F}_2}$ in $[\m{B}]$, respectively. Hence the span $\pair{\phi_1,\phi_2}$ has an amalgam $\pair{\psi_1 \colon \m{D} \to \m{F}, \psi_2 \colon \m{E} \to \m{F}}$ in $[\m{A}^\ast\m{B}]$ with $\m{F} = \m{F}_1 \oplus \m{F}_2$.
\end{proof}

Having identified several varieties in $\Omega(\BH)$, we turn to proving that \emph{all} members of $\Omega(\BH)$ are among those just identified. For this, we will rely on several `closure properties', as exhibited, for example, in the next lemma.

\begin{lemma}\label{l:basic-ord-closure}
Let $\V \in \Omega(\BH)$.
\begin{enumerate}[label = \textup{(\roman*)}]
\item If $\bigoplus_{i=1}^l \m{A}_i \in \Vfc$ and for $1 \leq k \leq l$, $\m{A}_k \leq \m{B} \in \wajs(\V)$ is an essential extension, then $ (\bigoplus_{i=1}^{k-1} \m{A}_i) \oplus \m{B} \oplus (\bigoplus_{i=k+1}^l \m{A}_i) \in \Vfc$.
\item If $\bigoplus_{i=1}^l \m{A}_i \in \Vfc$, $\m{B} \in \wajs(\V)$, is simple, and for $1 \leq k \leq l$, $\m{A}_k \in [\m{B}]$ is non-trivial, then $ (\bigoplus_{i=1}^{k-1} \m{A}_i) \oplus \m{B} \oplus (\bigoplus_{i=k+1}^l \m{A}_i) \in \Vfc$.
\item If $\bigoplus_{i=1}^l \m{A}_i \in \Vfc$, $\Cnw{n}\in \Vfc$, and for $1 \leq k \leq l$, $\m{A}_k \in [\Cnw{n}]\setminus [\Cn{n}]$, then $ (\bigoplus_{i=1}^{k-1} \m{A}_i) \oplus \Cnw{n} \oplus (\bigoplus_{i=k+1}^l \m{A}_i) \in \Vfc$.
\item If $\bigoplus_{i=1}^l \m{A}_i, \bigoplus_{j=1}^m \m{B}_j \in \Vfc$ with $\m{A}_l$ non-trivial and $\m{A}_l \leq \m{B}_1 \in \wajs(\V)$, then $(\bigoplus_{i=1}^{l-1} \m{A}_i) \oplus (\bigoplus_{j=1}^m \m{B}_j) \in \Vfc$.
\end{enumerate}
\end{lemma}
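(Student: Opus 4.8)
The plan is to derive all four parts from the essential amalgamation property of $\Vfc$, which is available since $\V\in\Omega(\BH)$ has the amalgamation property (Corollary~\ref{c:Vfc-AP}). The common mechanism is this: I set up an essential span in $\Vfc$ whose amalgam $\m{W}$ is, by the essential AP, again a finite-index chain in $\Vfc$; I then locate inside $\m{W}$ a sub-ordinal-sum isomorphic to the target chain, and conclude it lies in $\Vfc$ because $\Vfc$ is closed under subalgebras (a sub-ordinal-sum, obtained by choosing a subalgebra in each of a selection of components, is always a subalgebra of the whole ordinal sum). Throughout, the ordinal-sum structure of $\m{W}$ and the behaviour of the two legs are read off from Lemma~\ref{l:BL-ordsum-embed}, and essentiality of ordinal-sum embeddings from Lemma~\ref{l:ess-ordinalsum}.

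For part (i) I take the span with apex $\m{A}_k$, one leg $\m{A}_k\to\bigoplus_{i=1}^l\m{A}_i$ sending $\m{A}_k$ identically onto the $k$-th summand, and essential leg $\m{A}_k\to\m{B}$ the given essential extension; note $\m{B}\in\wajs(\V)\subseteq\Vfc$, so this is an essential span in $\Vfc$. In the resulting amalgam $\m{W}=\bigoplus_t\m{W}_t$, Lemma~\ref{l:BL-ordsum-embed} places each $\m{A}_i$ into a distinct component, and since $\m{A}_k$ is non-trivial and the two legs agree on it, the image of $\m{B}$ lands in the same component as the image of $\m{A}_k$ and contains it. Selecting the image of $\m{A}_i$ in its component for $i\neq k$ and the larger image of $\m{B}$ in the shared component yields a sub-ordinal-sum of $\m{W}$ isomorphic to the target. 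Part (iv) is analogous, with apex $\m{A}_l$: the essential leg is the inclusion of $\m{A}_l$ as the last summand of $\bigoplus_{i=1}^l\m{A}_i$ (essential by Lemma~\ref{l:ess-ordinalsum}), while the other leg embeds $\m{A}_l$ into $\bigoplus_{j=1}^m\m{B}_j$ via $\m{A}_l\leq\m{B}_1$ as the first summand; the shared component of the amalgam receives both $\m{A}_l$ and $\m{B}_1$, the summands $\m{A}_1,\dots,\m{A}_{l-1}$ lie strictly below it and $\m{B}_2,\dots,\m{B}_m$ strictly above, and the evident sub-ordinal-sum is isomorphic to $(\bigoplus_{i=1}^{l-1}\m{A}_i)\oplus(\bigoplus_{j=1}^m\m{B}_j)$.

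Parts (ii) and (iii) I reduce to part (i) by exhibiting the target summand as (a subalgebra of) an essential extension of $\m{A}_k$ inside $\wajs(\V)$. For (iii) I claim $\m{A}_k\leq\Cnw{n}$ is already such an extension: membership of $\m{A}_k$ in $[\Cnw{n}]\setminus[\Cn{n}]$ forces $\rad(\m{A}_k)\neq\{1\}$, and because the only non-trivial congruences of $\Cnw{n}$ are the radical congruence and the total one, both restrict non-trivially to any subalgebra with non-trivial radical; hence the embedding is essential (cf. Lemma~\ref{l:rad-ess}) and part (i) applies directly. For (ii), when $\m{B}$ is finite and simple one has $\hspu(\m{B})=\iso\sub(\m{B})$, so $\m{A}_k\leq\m{B}$, and simplicity of $\m{B}$ makes this embedding essential, so again (i) applies. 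When $\m{B}$ is infinite simple, $\m{A}_k$ need not embed in $\m{B}$, and here I construct an auxiliary chain $\m{B}'\in\wajs(\V)$ with $\m{A}_k\leq\m{B}'$ essential and $\m{B}\leq\m{B}'$, obtained by enlarging the simple (Archimedean) quotient of $\m{A}_k$ so that it accommodates $\m{B}$ while leaving $\rad(\m{A}_k)$ intact; preservation of the radical guarantees essentiality, part (i) gives $(\bigoplus_{i<k}\m{A}_i)\oplus\m{B}'\oplus(\bigoplus_{i>k}\m{A}_i)\in\Vfc$, and passing to the subalgebra with $\m{B}\leq\m{B}'$ in the $k$-th slot yields the desired chain.

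The component-tracking in (i) and (iv) is routine once Lemmas~\ref{l:BL-ordsum-embed} and~\ref{l:ess-ordinalsum} are in hand, so I expect the main obstacle to be the essential-extension arguments underlying (ii) and (iii): verifying that the relevant embeddings exist and are essential by pinning down the congruence lattices of $\Cnw{n}$ and of the enlarged chain $\m{B}'$ through their radicals, and---most delicately---carrying out the radical-preserving enlargement of the simple quotient in the infinite-simple case of (ii).
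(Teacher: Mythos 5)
Your parts (i) and (iv) are correct and coincide with the paper's own proofs (same essential spans, same component-tracking via Lemmas~\ref{l:BL-ordsum-embed} and~\ref{l:ess-ordinalsum}), and your treatment of (ii) for \emph{finite} simple $\m{B}$ is fine. The remaining pieces contain genuine gaps. In (iii), the opening claim that $\m{A}_k\in[\Cnw{n}]\setminus[\Cn{n}]$ yields an embedding $\m{A}_k\leq\Cnw{n}$ is false: $[\Cnw{n}]=\hspu(\Cnw{n})$ contains subalgebras of ultrapowers of $\Cnw{n}$, hence chains of arbitrarily large cardinality, and these cannot embed into the countable chain $\Cnw{n}$. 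Moreover such an $\m{A}_k$ may be cancellative (e.g.\ $\m{A}_k\cong\Rad(\Cnw{n})\cong\Cw$), so ``$\rad(\m{A}_k)\neq\{1\}$'' is not even the right dichotomy. The paper's argument runs in the opposite direction: since $\m{A}_k$ is infinite, it is cancellative or has non-trivial radical, so $\Rad(\Cnw{n})\in\iso\sub\pu(\m{A}_k)$ by Lemma~\ref{l:ISPU-cancellative}; Lemma~\ref{l:ISPU-ordinalsum}(i) then lets one replace $\m{A}_k$ by $\Rad(\Cnw{n})$ \emph{downwards}, inside $\iso\sub\pu$ of the given chain, and only afterwards does one go \emph{up} to $\Cnw{n}$ using part (i) together with the essential extension $\Rad(\Cnw{n})\leq\Cnw{n}$ of Lemma~\ref{l:rad-ess}.

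In (ii), the infinite-simple case is exactly where your argument stops being a proof: the auxiliary chain $\m{B}'$ is never constructed, and the recipe you give does not make sense in general. Infinite simple Wajsberg chains include cancellative ones ($\Cw$ itself is simple), and then $\m{A}_k\in[\m{B}]=\hspu(\Cw)$ is cancellative too, so there is no radical or Archimedean quotient to ``enlarge''; a cancellative chain need not have any Archimedean top component at all. In the bounded case, keeping $\rad(\m{A}_k)$ intact while enlarging the quotient is a nontrivial group-extension problem: writing $\m{A}_k$ as the $0$-free reduct of $\m{\Gamma}(\m{G}_A,u)$ with radical convex subgroup $C$, you need an extension $\m{G}'\supseteq\m{G}_A$ with $\m{G}'/C$ containing the group of $\m{B}$ \emph{and} with the quotient map splitting over that subgroup---otherwise $\m{B}$ embeds only into a quotient of $\m{B}'$, not into $\m{B}'$---and nothing in your sketch produces such a splitting; you would also still owe the verification that $\m{B}'\in\wajs(\V)$. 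The paper shows no such construction is needed: for cancellative $\m{B}$ one has $\iso\sub\pu(\m{A}_k)=\iso\sub\pu(\m{B})$ by Lemma~\ref{l:ISPU-cancellative}, and Lemma~\ref{l:ISPU-ordinalsum}(i) performs the replacement outright; for bounded $\m{B}$ one first replaces $\m{A}_k$ by a bounded essential extension $\m{C}\in[\m{B}]$ (Lemma~\ref{l:bounded-ess-ext} plus part (i)), then passes \emph{down} to the subalgebra $\m{2}\leq\m{C}$, and finally goes \emph{up} via the essential extension $\m{2}\leq\m{B}$ (essential because $\m{B}$ is simple), using part (i) once more. This down-to-$\m{2}$-then-up detour is what your missing construction would have to replace, and it is also why the paper's case split is cancellative/bounded rather than your finite/infinite.
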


\begin{proof}
Note first that, since $\V$ has the AP, by Corollary~\ref{c:Vfc-AP}, $\Vfc$ has the essential AP.

(i) If $\m{A}_k$ is trivial, then also $\m{B}$ is trivial and the claim is clear. Otherwise note that, by assumption, the span $\pair{\phi_1 \colon \m{A}_k \to \bigoplus_{i=1}^l \m{A}_i , \phi_2 \colon \m{A}_k \to \m{B}}$ with the obvious inclusion embeddings is essential. Hence it has an amalgam $\pair{\psi_1 \colon \bigoplus_{i=1}^l \m{A}_i \to \m{D}, \psi_2 \colon \m{B} \to \m{D}}$ in $\Vfc$. Since $\m{A}_k$ is non-trivial, $\psi_1(\m{A}_k)$ and $\psi_2(\m{B})$ are in the same Wajsberg component of $\m{D}$. Hence  $\m{D}$ contains an isomorphic copy of $(\bigoplus_{i=1}^{k-1} \m{A}_i) \oplus \m{B} \oplus (\bigoplus_{i=k+1}^l \m{A}_i)$ as a subalgebra and the claim follows.

(ii) If $\m{B}$ is cancellative, then, by Lemma~\ref{l:ISPU-cancellative}, $\iso\sub\pu(\m{A}_k) = \iso\sub\pu(\m{B})$ and the claim follows from Lemma~\ref{l:ISPU-ordinalsum}(i). Otherwise $\m{B}$ is bounded, so $\m{A}_k$ has a bounded essential extension $\m{C} \in [\m{B}] \subseteq \wajs(\V)$, by Lemma~\ref{l:bounded-ess-ext}. Thus, by part (i), $(\bigoplus_{i=1}^{k-1} \m{A}_i) \oplus \m{C} \oplus (\bigoplus_{i=k+1}^l \m{A}_i) \in \Vfc$. But also, since $\m{2} \leq \m{C}$, $(\bigoplus_{i=1}^{k-1} \m{A}_i) \oplus \m{2} \oplus (\bigoplus_{i=k+1}^l \m{A}_i) \in \Vfc$, yielding $(\bigoplus_{i=1}^{k-1} \m{A}_i) \oplus \m{B} \oplus (\bigoplus_{i=k+1}^l \m{A}_i) \in \Vfc$, by part (i), because $\m{2} \leq \m{B}$ is essential as $\m{B}$ is simple.

(iii) Note that, since $\m{A}_k \in [\Cnw{n}]\setminus [\Cn{n}]$, by \cite[Proposition 2.8]{Agliano2002}, $\m{A}_k$ is infinite, i.e., either $\m{A}_k$ is non-trivial and cancellative or $\m{A}_k$ is bounded and $\rad(\m{A}) \neq \{1\}$. Thus, by Lemma~\ref{l:ISPU-cancellative}, $\Rad(\Cnw{n}) \in \iso\sub\pu(\m{A}_k)$ and Lemma~\ref{l:ISPU-ordinalsum}(i) yields $(\bigoplus_{i=1}^{k-1} \m{A}_i) \oplus \Rad(\Cnw{n}) \oplus (\bigoplus_{i=k+1}^l \m{A}_i) \in \Vfc$. Hence, since $ \Rad(\Cnw{n})\leq \Cnw{n}$ is essential, by part (i), $(\bigoplus_{i=1}^{k-1} \m{A}_i) \oplus \Cnw{n} \oplus (\bigoplus_{i=k+1}^l \m{A}_i) \in \Vfc$.

(iv) Note first that the span $\pair{\phi_1 \colon \m{A}_l \to \bigoplus_{j=1}^m \m{B}_j, \phi_2 \colon \m{A}_l \to \bigoplus_{i=1}^l \m{A}_i}$ with the obvious inclusion embeddings is essential, by Lemma~\ref{l:ess-ordinalsum},   since $\m{A}_k$ is non-trivial. Thus it has an amalgam $\pair{\psi_1 \colon \bigoplus_{j=1}^m \m{B}_j \to \m{D}, \psi_2 \colon \bigoplus_{i=1}^l \m{A}_i \to \m{D}}$. Moreover, since $\m{A}_l$ is non-trivial, $\psi_1(\m{B}_1)$ and $\psi_2(\m{A}_l)$ are in the same Wajsberg component. Hence $(\bigoplus_{i=1}^{l-1} \m{A}_i) \oplus (\bigoplus_{j=1}^m \m{B}_j)$ is isomorphic to a subalgebra of $\m{D}$ an the claim follows.
\end{proof}

We will further establish several more closure properties that will be used in our characterization of varieties of basic hoops with the amalgamation property.

\begin{lemma}\label{l:basic-pw-hom}
Let $\V \in \Omega(\BH)$. If $\bigoplus_{i=1}^l \m{A}_i \in \Vfc$ with $\m{A}_i \in \wajs(\V)$, then $\bigoplus_{i=1}^l \hm(\m{A}_i) \subseteq \Vfc$
\end{lemma}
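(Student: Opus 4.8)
The plan is to reduce the statement to modifying a single ordinal summand at a time, and then, for each summand, to realize the prescribed homomorphic image by first placing a suitable essential subalgebra in that slot and subsequently enlarging it.

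First I would carry out the reduction. Write $\m{A} = \bigoplus_{i=1}^l \m{A}_i$ and fix $\m{B}_i \in \hm(\m{A}_i)$ for each $i$. Since homomorphic images of Wajsberg chains in $\V$ are again Wajsberg chains in $\V$, each $\m{B}_i \in \wajs(\V)$. Hence it suffices to prove the following single-step claim and iterate it for $k = 1, \dots, l$: if $\bigoplus_{i=1}^l \m{C}_i \in \Vfc$ with each $\m{C}_i \in \wajs(\V)$ and $\m{C}_k = \m{A}_k$, then replacing the $k$-th summand by any $\m{B}_k \in \hm(\m{A}_k)$ again yields a member of $\Vfc$. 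After the $k$-th step the first $k$ summands are the $\m{B}_i$ and the remaining ones are the $\m{A}_i$, all lying in $\wajs(\V)$, so the hypotheses persist through the iteration.

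For the single-step claim the strategy is to find a Wajsberg chain $\m{E}$ admitting an essential embedding $\m{E} \hookrightarrow \m{B}_k$ with $\m{E} \in \iso\sub\pu(\m{A}_k)$. Granting this, Lemma~\ref{l:ISPU-ordinalsum}(i) lets me replace $\m{A}_k$ by $\m{E}$ in the $k$-th slot while remaining inside $\iso\sub\pu(\bigoplus_i \m{C}_i) \subseteq \Vfc$ (using that ultraproducts of a single finite-index chain stay of finite index); then Lemma~\ref{l:basic-ord-closure}(i), applied to the essential extension $\m{E} \hookrightarrow \m{B}_k \in \wajs(\V)$, enlarges that slot to $\m{B}_k$, yielding the modified ordinal sum in $\Vfc$. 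The chain $\m{E}$ is produced by cases on $\m{B}_k$. If $\m{B}_k$ is trivial, the modified sum is a subalgebra of $\bigoplus_i \m{C}_i$ by Lemma~\ref{l:BL-ordsum-embed}(i). If $\m{A}_k$, hence $\m{B}_k$, is cancellative, then $\iso\sub\pu(\m{A}_k) = \hspu(\Cw) = \iso\sub\pu(\m{B}_k)$ by Lemma~\ref{l:ISPU-cancellative}, so one simply takes $\m{E} = \m{B}_k$ with no enlargement. If $\m{A}_k$ is bounded, then so is $\m{B}_k$, and I would take $\m{E} = \rot{\Rad(\m{B}_k)}$: its $0$-free reduct embeds essentially into $\m{B}_k$ by Lemma~\ref{l:rad-ess}(ii), while Corollary~\ref{c:ISPU-rotation} gives $\rot{\Rad(\m{B}_k)} \in \iso\sub\pu(\m{A}_k)$, using that $\rad(\m{A}_k) \neq \{1\}$ because the quotient map carries $\rad(\m{A}_k)$ onto $\rad(\m{B}_k)$. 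In the degenerate subcase $\rad(\m{B}_k) = \{1\}$, where $\m{B}_k$ is simple, one instead takes $\m{E} = \m{2}$, which embeds into $\m{A}_k$ as a subalgebra and into the simple chain $\m{B}_k$ essentially.

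The main obstacle is exactly the bounded, non-simple case: a homomorphic image $\m{B}_k$ of $\m{A}_k$ need not embed into $\m{A}_k$ (for instance $\Cn{2}$ is a quotient of the $0$-free reduct of $\m{\Gamma}(\mathbb{Z}\times\mathbb{Z},\pair{2,1})$ but does not embed into it), so a naive subalgebra argument for $\bigoplus_i \m{B}_i \hookrightarrow \bigoplus_i \m{A}_i$ cannot work. The device that resolves this is the disconnected rotation of the radical: although $\m{B}_k$ itself may be too large to lie in $\iso\sub\pu(\m{A}_k)$, its essential subalgebra $\rot{\Rad(\m{B}_k)}$ does lie there by Corollary~\ref{c:ISPU-rotation}, and recovering $\m{B}_k$ by enlarging along Lemma~\ref{l:basic-ord-closure}(i) is precisely where the amalgamation hypothesis on $\V$ is used.
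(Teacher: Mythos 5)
Your proposal is correct and its skeleton is the same as the paper's: reduce to replacing one Wajsberg summand at a time, and in each case realize the image $\m{B}_k$ by first putting an algebra from $\iso\sub\pu(\m{A}_k)$ into that slot (via Lemma~\ref{l:ISPU-ordinalsum}(i)) and then enlarging along an essential extension (via Lemma~\ref{l:basic-ord-closure}(i)). Your trivial, cancellative, and simple cases coincide with the paper's (in the simple case the paper cites Lemma~\ref{l:basic-ord-closure}(ii), whose proof is exactly your essentiality observation for $\m{2}\leq\m{B}_k$). The one real divergence is the bounded non-simple case: you take $\m{E}=\rot{\Rad(\m{B}_k)}$, importing the disconnected-rotation device that the paper reserves for the BL-algebra analogue (Lemma~\ref{l:BL-pw-hom}), whereas the paper takes $\m{E}=\Rad(\m{B}_k)$ itself. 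In the hoop setting the rotation is unnecessary: with no designated $0$, the radical $\Rad(\m{B}_k)$ is already a subalgebra of $\m{B}_k$, the inclusion is essential by Lemma~\ref{l:rad-ess}(i), and $\Rad(\m{B}_k)\in\hspu(\Cw)=\iso\sub\pu(\Rad(\m{A}_k))\subseteq\iso\sub\pu(\m{A}_k)$ by Lemma~\ref{l:ISPU-cancellative}. Your route also requires a patch you do not supply: Corollary~\ref{c:ISPU-rotation} and Lemma~\ref{l:rad-ess}(ii) are stated for totally ordered MV-algebras, and $\rot{\Rad(\m{B}_k)}$ is an MV-algebra, while your $\m{A}_k$ and $\m{B}_k$ are hoops; to apply them one must pass to the MV-algebras whose $0$-free reducts are $\m{A}_k$ (and its ultrapowers, using that boundedness is first-order) and $\m{B}_k$, and use that congruences of a bounded Wajsberg chain coincide with those of the corresponding MV-algebra. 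This is routine but not automatic, and it is precisely the overhead the paper's choice of $\Rad(\m{B}_k)$ avoids; the rotation earns its keep only where the radical fails to be a subalgebra because it omits $0$, namely for the MV first components treated in Section~\ref{sec:BL-algebras}.
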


\begin{proof}
It is enough to show for any  $k \in \{1,\dots, l\}$, that for any homomorphic image $\m{B}$ of $\m{A}_k$, $ (\bigoplus_{i=1}^{k-1} \m{A}_i) \oplus \m{B} \oplus (\bigoplus_{i=k+1}^l \m{A}_i) \in \Vfc$. If $\m{B}$ is trivial, then clearly $ (\bigoplus_{i=1}^{k-1} \m{A}_i) \oplus \m{B} \oplus (\bigoplus_{i=k+1}^l \m{A}_i) =  (\bigoplus_{i=1}^{k-1} \m{A}_i) \oplus (\bigoplus_{i=k+1}^l \m{A}_i)  \in \Vfc$. Otherwise $\m{B}$ is non-trivial.
If $\m{B}$ is cancellative, then also $\m{A}_k$ is cancellative and the claim follows from Lemma~\ref{l:ISPU-cancellative} and Lemma~\ref{l:ISPU-ordinalsum}(i). So we may assume that $\m{B}$ and $\m{A}_k$ are bounded. Thus, by Lemma~\ref{l:radical}, either $\m{B}$ is simple or $\rad(\m{B}) \neq \{1 \}$.

If $\m{B}$ is simple, then, since $\m{A}_k$ is bounded and $\m{2} \leq \m{A}_k$,  $(\bigoplus_{i=1}^{k-1} \m{A}_i) \oplus \m{2} \oplus (\bigoplus_{i=k+1}^l \m{A}_i) \in \Vfc$. But also $\m{B}$ is simple and  $\m{2} \in  [\m{B}]$, so Lemma~\ref{l:basic-ord-closure}(ii) yields that  $ (\bigoplus_{i=1}^{k-1} \m{A}_i) \oplus \m{B} \oplus (\bigoplus_{i=k+1}^l \m{A}_i) \in \Vfc$. 

Otherwise  $\rad(\m{B}) \neq \{1 \}$. Hence also $\rad(\m{A}_k) \neq \{1 \}$ and, by Lemma~\ref{l:ISPU-cancellative}, $\Rad(\m{B}) \in \iso\sub\pu(\m{A}_k)$. So, by Lemma~\ref{l:ISPU-ordinalsum}(i), we obtain $ (\bigoplus_{i=1}^{k-1} \m{A}_i) \oplus \Rad(\m{B}) \oplus (\bigoplus_{i=k+1}^l \m{A}_i) \in \Vfc$. Moreover, by Lemma~\ref{l:rad-ess}, $\Rad(\m{B}) \leq \m{B}$ is essential. Hence, Lemma~\ref{l:basic-ord-closure}(i) yields that  $ (\bigoplus_{i=1}^{k-1} \m{A}_i) \oplus \m{B} \oplus (\bigoplus_{i=k+1}^l \m{A}_i) \in \Vfc$. 
\end{proof}

Lemma~\ref{l:ISPU-ordinalsum} and Lemma~\ref{l:basic-pw-hom} yield:

\begin{prop}\label{p:basic-pw-hspu}
Let $\V \in \Omega(\BH)$. If $\bigoplus_{i=1}^l \m{A}_i \in \Vfc$, then $[\m{A}_1 \cdots \m{A}_l] \subseteq \Vfc$.
\end{prop}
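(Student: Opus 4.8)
The plan is to prove the inclusion element-by-element, factoring the closure operator $\hspu$ as $\hm\sub\pu$ and dispatching the two layers with the two cited lemmas in turn. Fix an arbitrary member $\m{B} = \m{B}_1 \oplus \cdots \oplus \m{B}_l$ of $[\m{A}_1 \cdots \m{A}_l]$, so that $\m{B}_i \in \hspu(\m{A}_i)$ for each $i$. Since $\hspu = \hm\sub\pu$, for each $i$ there is some $\m{C}_i \in \sub\pu(\m{A}_i)$ with $\m{B}_i \in \hm(\m{C}_i)$.

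First I would handle the $\sub\pu$-layer. By Lemma~\ref{l:ISPU-ordinalsum}(i), $\bigoplus_{i=1}^l \m{C}_i \in \iso(\bigoplus_{i=1}^l \sub\pu(\m{A}_i)) = \iso\sub\pu(\bigoplus_{i=1}^l \m{A}_i)$. Now $\bigoplus_{i=1}^l \m{A}_i$ is a chain in $\V$, hence lies in $\Vfsi$; and $\Vfsi$, being the class of totally ordered members of $\V$, is closed under $\iso\sub\pu$ (subalgebras and ultraproducts of chains are again chains, and these operators preserve membership in $\V$). Therefore $\bigoplus_{i=1}^l \m{C}_i \in \Vfsi$, and since it has at most $l$ nontrivial summands it has finite index, so in fact $\bigoplus_{i=1}^l \m{C}_i \in \Vfc$.

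Next I would handle the $\hm$-layer. Each $\m{C}_i$ lies in $\sub\pu(\m{A}_i)$ and is therefore a Wajsberg chain; moreover $\m{C}_i$ embeds into $\bigoplus_{i=1}^l \m{C}_i \in \Vfc \subseteq \V$, so $\m{C}_i \in \V$ and hence $\m{C}_i \in \wajs(\V)$. Thus Lemma~\ref{l:basic-pw-hom} applies to $\bigoplus_{i=1}^l \m{C}_i \in \Vfc$ and yields $\bigoplus_{i=1}^l \hm(\m{C}_i) \subseteq \Vfc$. In particular, since $\m{B}_i \in \hm(\m{C}_i)$ for every $i$, we conclude $\m{B} = \bigoplus_{i=1}^l \m{B}_i \in \Vfc$, as desired.

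The only genuine subtlety, and the step to state with care, is keeping the index finite: the passage through $\iso\sub\pu$ via Lemma~\ref{l:ISPU-ordinalsum}(i) must land in $\Vfc$ and not merely in $\Vfsi$. This works precisely because the number of summands is pinned at $l$ throughout, so no unbounded growth of the index can occur, as it otherwise could for ultraproducts of chains of unbounded index. The remaining content is bookkeeping: confirming that the factored representatives $\m{C}_i$ really are Wajsberg chains belonging to $\V$, so that the hypotheses of Lemma~\ref{l:basic-pw-hom} are met.
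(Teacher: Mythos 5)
Your proof is correct and follows exactly the paper's route: the paper derives this proposition directly from Lemma~\ref{l:ISPU-ordinalsum} and Lemma~\ref{l:basic-pw-hom} without further comment, and your argument is precisely the spelled-out version of that derivation (factor $\hspu$ as $\hm\sub\pu$, use Lemma~\ref{l:ISPU-ordinalsum}(i) to keep the $\sub\pu$-layer inside $\Vfc$, then finish with Lemma~\ref{l:basic-pw-hom}). Your bookkeeping---that the index stays bounded by $l$ and that the intermediate components $\m{C}_i$ are Wajsberg chains in $\wajs(\V)$---is exactly what makes the two cited lemmas applicable, so nothing is missing.
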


\begin{lemma}\label{l:basic-properties-classes}
Let $\V \in \Omega(\BH)$ and $\m{A},\m{B},\m{C}$ Wajsberg chains.
\begin{enumerate}[label = \textup{(\arabic*)}]
\item If $\m{A} \oplus \m{B} \oplus \m{B} \oplus \m{C} \in \Vfc$, then $[\m{A}\m{B}^\ast\m{C}] \subseteq \Vfc$.
\item If $\Cn{n}\oplus \Cnw{n} \in \Vfc$, then $[\Cn{n}^\ast\Cnw{n}]\subseteq \Vfc$.
\item  If $\m{A} \oplus \m{A} \oplus \m{B} \oplus \m{B} \in \Vfc$ or $\m{A} \oplus \m{A}, \m{A} \oplus \m{B}, \m{B} \oplus \m{B} \in \Vfc$, then $[\m{A}^\ast \m{B}^\ast] \subseteq \Vfc$.
\item If $\m{A} \oplus \m{B}, \m{B} \oplus \m{A} \in \Vfc$, then $[(\m{A}\m{B})^\ast] \subseteq \Vfc$.
\end{enumerate}
\end{lemma}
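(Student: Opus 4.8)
The plan is to handle all four parts with a common three–move template. Since $\V$ has the AP, $\Vfc$ has the essential AP by Corollary~\ref{c:Vfc-AP}, which is exactly what powers the closure operations of Lemma~\ref{l:basic-ord-closure}. The first move is to extract small ordinal-sum building blocks from the hypothesis using that $\Vfc$ is closed under subalgebras (collapsing a summand to the trivial algebra $\{1\}$ deletes it from an ordinal sum). The second move is to glue such blocks into arbitrarily long ordinal sums with prescribed literal summands by repeatedly applying Lemma~\ref{l:basic-ord-closure}(iv); note that each relevant summand ($\m{A}$, $\m{B}$, $\Cn{n}$, etc.) lies in $\wajs(\V)$, being a Wajsberg subalgebra of a member of $\Vfc$. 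The final move is to pass to the full regular-expression class by closing each such literal sum componentwise under $\hspu$ via Proposition~\ref{p:basic-pw-hspu}, then taking the union.

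For (1) I would prove $\m{A}\oplus\m{B}^n\oplus\m{C}\in\Vfc$ for all $n\geq 1$ by induction. The base case $n=1$ is a subalgebra of the hypothesis (collapse one $\m{B}$). For the step, from $\m{A}\oplus\m{B}^n\oplus\m{C}$ take the subalgebra $\m{A}\oplus\m{B}^n$ (collapse $\m{C}$) and glue it, along its last $\m{B}$-component, to the block $\m{B}\oplus\m{B}\oplus\m{C}$ (a subalgebra of the hypothesis) by Lemma~\ref{l:basic-ord-closure}(iv), producing $\m{A}\oplus\m{B}^{n+1}\oplus\m{C}$. Proposition~\ref{p:basic-pw-hspu} gives $[\m{A}\m{B}^n\m{C}]\subseteq\Vfc$ for each $n$, and the union over $n$ is $[\m{A}\m{B}^\ast\m{C}]$. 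Part (2) runs identically, the only new ingredient being that $\Cn{n}$ is a homomorphic image of $\Cnw{n}$, so $\Cn{n}\in\hspu(\Cnw{n})$ and hence $\Cn{n}\oplus\Cn{n}\in[\Cn{n}\Cnw{n}]\subseteq\Vfc$; gluing copies of $\Cn{n}\oplus\Cn{n}$ builds $\Cn{n}^k$, and one final glue with $\Cn{n}\oplus\Cnw{n}$ gives $\Cn{n}^k\oplus\Cnw{n}$. For (3), observe that $\m{A}\oplus\m{A}\oplus\m{B}\oplus\m{B}\in\Vfc$ yields $\m{A}\oplus\m{A}$, $\m{A}\oplus\m{B}$, $\m{B}\oplus\m{B}\in\Vfc$ as subalgebras, so the two hypotheses coincide; then gluing copies of $\m{A}\oplus\m{A}$ builds $\m{A}^k$, a glue with $\m{A}\oplus\m{B}$ appends a $\m{B}$, and further glues with $\m{B}\oplus\m{B}$ grow the $\m{B}$-block, giving $\m{A}^k\oplus\m{B}^l\in\Vfc$ for all $k,l\geq 1$; conclude with Proposition~\ref{p:basic-pw-hspu}.

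The main obstacle is part (4), where gluing alone produces only \emph{alternating} words, whereas $[(\m{A}\m{B})^\ast]$ also contains sums with repeated letters such as $\m{A}\oplus\m{A}$. The trick I would use is to first build the alternating sum $\m{A}\oplus\m{B}\oplus\m{A}$ (glue $\m{A}\oplus\m{B}$ to $\m{B}\oplus\m{A}$) and then recover $\m{A}\oplus\m{A}$ as the subalgebra obtained by collapsing the middle $\m{B}$-component; symmetrically one obtains $\m{B}\oplus\m{B}$. With all four two-letter sums $\m{A}\oplus\m{A}$, $\m{A}\oplus\m{B}$, $\m{B}\oplus\m{A}$, $\m{B}\oplus\m{B}$ in $\Vfc$, an induction via Lemma~\ref{l:basic-ord-closure}(iv)—appending one letter at a time—shows that every finite ordinal sum whose summands are each $\m{A}$ or $\m{B}$ lies in $\Vfc$. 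The remaining point is that $\hspu(\m{A},\m{B})=\hspu(\m{A})\cup\hspu(\m{B})$: an ultraproduct of copies of $\m{A}$ and $\m{B}$ is an ultrapower of $\m{A}$ or of $\m{B}$ according as the ultrafilter concentrates on the $\m{A}$- or $\m{B}$-indices, whence $\pu(\{\m{A},\m{B}\})\subseteq\iso(\pu(\m{A})\cup\pu(\m{B}))$ and the claim follows. Thus each component $\m{C}_i\in\hspu(\m{A},\m{B})$ of a member of $[(\m{A}\m{B})^\ast]$ lies in $\hspu(\m{A})$ or in $\hspu(\m{B})$; choosing a matching letter $\m{D}_i\in\{\m{A},\m{B}\}$ and applying Proposition~\ref{p:basic-pw-hspu} to the word $\m{D}_1\oplus\cdots\oplus\m{D}_n$ places the sum in $\Vfc$, giving $[(\m{A}\m{B})^\ast]\subseteq\Vfc$.
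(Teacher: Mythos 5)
Your proposal is correct and follows essentially the same route as the paper: extract shorter sums as subalgebras, grow ordinal sums by induction via the gluing Lemma~\ref{l:basic-ord-closure}(iv), and finish by closing componentwise with Proposition~\ref{p:basic-pw-hspu} (the paper writes out exactly this induction for (1), reduces (2) to (1) after producing $\Cn{n}\oplus\Cn{n}\oplus\Cnw{n}$, and dismisses (3) and (4) as ``very similar'', so your explicit treatment of (4)---including the correct observation that $\hspu(\m{A},\m{B})=\hspu(\m{A})\cup\hspu(\m{B})$---merely fills in details the paper leaves implicit). The only point the paper covers that you omit is the degenerate case of a trivial summand (e.g.\ trivial $\m{B}$ in (1)), where Lemma~\ref{l:basic-ord-closure}(iv) is not applicable but the conclusion is immediate from Proposition~\ref{p:basic-pw-hspu}; this is harmless, since the lemma is only ever invoked for non-trivial chains.
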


\begin{proof}
(1) If $\m{B}$ is trivial, then the claim is immediate from Proposition~\ref{p:basic-pw-hspu}. Otherwise $\m{B}$ is non-trivial and, by Proposition~\ref{p:basic-pw-hspu}, it is enough to show that $\m{A} \oplus (\bigoplus_{i=1}^k \m{B}) \oplus \m{C} \in \Vfc$ for every $k\geq 1$. We show the claim by induction on $k$. The base case $k = 1$ is clear since $\m{A} \oplus \m{B} \oplus \m{C}  \leq \m{A} \oplus \m{B} \oplus \m{B} \oplus \m{C}$. Suppose that  $\m{A} \oplus (\bigoplus_{i=1}^k \m{B}) \oplus \m{C} \in \Vfc$ for $k\geq 1$. Then also $\m{A} \oplus (\bigoplus_{i=1}^k \m{B}) \in \Vfc$ and $\m{B} \oplus \m{B} \oplus \m{C} \in \Vfc$. Thus, by Lemma~\ref{l:basic-ord-closure}(iv), $\m{A} \oplus (\bigoplus_{i=1}^{k-1} \m{B}) \oplus  \m{B} \oplus \m{B} \oplus \m{C} \in \Vfc$.

(2) Note that, since $\Cn{n}\oplus \Cnw{n}  \in \Vfc$, also $\Cn{n}\oplus \Cn{n}  \in \Vfc$. So, by Lemma~\ref{l:basic-ord-closure}(iv), $\Cn{n}\oplus \Cn{n} \oplus  \Cnw{n}  \in \Vfc$ and the claim follows  from (1).

The proofs of (3) and (4) are very similar to the proof of (1).
\end{proof}

We define the collection  
\[
\BHAP = \{ \Vfc \mid \V\in\Omega(\BH)\},
\]
and we will consider it being ordered by $\subseteq$. Note that for varieties of basic hoops $\V$ and $\W$, $\V \subseteq \W$ iff $\Vfc \subseteq \Wfc$. Thus $\alg{\BHAP,\subseteq}$ is isomorphic to the poset $\Omega(\BH)$ of varieties of basic hoops with the amalgamation property.

\begin{prop}\label{p:intervals-basic}
The poset $\alg{\BHAP, \subseteq}$ can be partitioned into countably infinitely many closed intervals: for any variety $\V$ of basic hoops with the amalgamation property one of the following holds:
\begin{enumerate}[label = \textup{(\arabic*)}]
\item $\V$ is trivial.
\item $\wajs(\V) = [\m{A}]$ for $\m{A} \in \{\Cn{n} \mid n \geq 1\} \cup \{\Cw, [0,1]_\WH\}$, and $[\m{A}] \subseteq \Vfc \subseteq [\m{A}^\ast]$.
\item $\wajs(\V) = [\Cnw{n}]$ for some $n\geq 1$, and $[\Cnw{n}] \subseteq \Vfc \subseteq [\Cnw{n}^\ast]$.
\item $\wajs(\V) = [\Cn{n}] \cup [\Cw]$ for some $n\geq 1$, and $[\Cn{n}] \cup [\Cw] \subseteq \Vfc \subseteq [(\Cn{n}\Cw)^\ast]$.
\end{enumerate}
\end{prop}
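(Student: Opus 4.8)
The plan is to sort $\Omega(\BH)$ by the value of $\wajs(\V)$ and to show that each fiber of this assignment is exactly one of the claimed closed intervals. If $\V$ is trivial we are in case~(1). For non-trivial $\V\in\Omega(\BH)$, Corollary~\ref{c:basic-wajscomp} guarantees that $\wajs(\V)$ is one of the pairwise distinct classes $[\Cn{n}]$, $[\Cw]$, $[[0,1]_\WH]$ (jointly of the form $[\m{A}]$, case~(2)), $[\Cnw{n}]$ (case~(3)), or $[\Cn{n}]\cup[\Cw]$ (case~(4)). This assigns each $\V$ unambiguously to one case and shows the list of cases is exhaustive.

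Next I would establish the sandwiching. The lower bound is immediate: every Wajsberg chain of $\V$ has index $1$, so $\wajs(\V)\subseteq\Vfc$, and in each case the stated lower bound is precisely $\wajs(\V)$. For the upper bound I would decompose an arbitrary $\m{C}\in\Vfc$ by Lemma~\ref{lem:Agl} as a finite ordinal sum $\bigoplus_i\m{C}_i$; each component $\m{C}_i$ is a Wajsberg chain lying in $\hm\sub(\m{C})\subseteq\V$, hence in $\wajs(\V)$. In cases~(2) and~(3) this puts $\m{C}$ in $[\m{A}^\ast]$, respectively $[\Cnw{n}^\ast]$, straight from the definitions of these classes; in case~(4) it puts $\m{C}$ in $[(\Cn{n}\Cw)^\ast]$, using that $\hspu(\Cn{n},\Cw)=[\Cn{n}]\cup[\Cw]$ by the independence of $\Cn{n}$ and $\Cw$.

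It remains to check that the bottom $L$ and top $U$ of each case lie in $\BHAP$ and that the fiber equals the closed interval $[L,U]$ of $\BHAP$. The single-chain bottoms $[\m{A}]$, $[\Cnw{n}]$ and the tops $[\m{A}^\ast]$, $[\Cnw{n}^\ast]$ are in $\BHAP$ by Lemma~\ref{l:AP-2comp}; the top $[(\Cn{n}\Cw)^\ast]$ by Lemma~\ref{l:AP-independent}; and the bottom $[\Cn{n}]\cup[\Cw]$ by Lemma~\ref{l:AP-independent-cup}. In each case Corollary~\ref{c:Vfc-AP} (together with Lemma~\ref{l:basic-Vfc-regular-expr}, or, for the union $[\Cn{n}]\cup[\Cw]$, semilinearity and independence) confirms that the class really is the $\Vfc$ of an AP-variety. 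Since the sum-indecomposable members of each bottom and each top coincide with the common $\wajs$-value of that case, monotonicity of $\wajs$ gives a squeeze: any $X\in\BHAP$ with $L\subseteq X\subseteq U$ is forced to share that $\wajs$-value and so lies in the fiber, while conversely every fiber member is sandwiched between $L$ and $U$. Hence each fiber is the closed interval $[L,U]$ of $\BHAP$; the fibers are pairwise disjoint and countably infinite in number (one for each $n\geq 1$ in the $\Cn{n}$, $\Cnw{n}$, and $\Cn{n}\oplus\Cw$ families, plus $\Cw$, $[0,1]_\WH$, and the trivial variety), yielding the partition.

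The step I expect to be most delicate is the endpoint bookkeeping in case~(4): one must confirm that both the single-component bottom $[\Cn{n}]\cup[\Cw]$ and the freely interleaved top $[(\Cn{n}\Cw)^\ast]$ lie in $\BHAP$ with the correct finite-index chains, and that they share the $\wajs$-value $[\Cn{n}]\cup[\Cw]$ so that the squeeze applies. The independence of $\Cn{n}$ and $\Cw$---a finite bounded chain versus an infinite cancellative one---is exactly what makes Lemmas~\ref{l:AP-independent} and~\ref{l:AP-independent-cup} available and is the crux of fitting case~(4) into the interval framework.
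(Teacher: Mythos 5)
Your proposal is correct and takes essentially the same route as the paper's proof: the case split via Corollary~\ref{c:basic-wajscomp}, the sandwich $\wajs(\V) \subseteq \Vfc \subseteq$ the corresponding starred class, and membership of both endpoints in $\BHAP$ via Lemmas~\ref{l:AP-2comp}, \ref{l:AP-independent}, \ref{l:AP-independent-cup}, and \ref{l:basic-Vfc-regular-expr} combined with Corollary~\ref{c:Vfc-AP}. The extra detail you supply---the componentwise ordinal-sum decomposition justifying the upper bound and the squeeze argument showing each fiber is exactly a closed interval---is precisely what the paper treats as immediate ("clearly"), so this is a faithful, slightly more explicit rendering of the same argument.
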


\begin{proof}
First note that, by Corollary~\ref{c:basic-wajscomp}, that either $\V$ is trivial,  $\wajs(\V) = [\m{A}]$ for $\{\Cn{n},\Cnw{n} \mid n \geq 1\} \cup \{\Cw, [0,1]_\WH\}$, or $\wajs(\V) = [\Cn{n}] \cup [\Cw]$ for some $n\geq 1$. 

(2) and (3): If $\wajs(\V) = [\m{A}]$ for a non-trivial Wajsberg chain $\m{A}$, then clearly $[\m{A}] \subseteq \Vfc \subseteq [\m{A}^\ast]$ and, by Lemma~\ref{l:basic-Vfc-regular-expr} and Lemma~\ref{l:AP-2comp}, $[\m{A}], [\m{A}^\ast] \in \BHAP$.

(4) If  $\wajs(\V) = [\Cn{n}] \cup [\Cw]$ for some $n\geq 1$, then clearly $[\Cn{n}] \cup [\Cw] \subseteq \Vfc \subseteq [(\Cn{n}\Cw)^\ast]$. 
Moreover, by Lemma~\ref{l:AP-independent-cup}, Lemma~\ref{l:AP-independent}, and Lemma~\ref{l:basic-Vfc-regular-expr}, $[\Cn{n}] \cup [\Cw], [(\Cn{n}\Cw)^\ast]  \in \BHAP$.
\end{proof}

We denote the intervals corresponding to the cases (2), (3), and (4) by $\bvarint{\m{A}}$, $\bvarint{\Cnw{n}}$, and $\bvarint{\Cn{n},\Cw}$, respectively. We will show that each of these intervals is finite, and give explicit descriptions of them.

\begin{figure}
\centering
\small
\begin{tabular}{ccc}

\begin{tikzpicture}
\node (1) at (0,0) {$[\m{A}]$};
\node (2) at (0,1) {$[\m{A}^\ast]$};
\draw (1) -- (2);
\end{tikzpicture}
&
\quad\quad\quad
&
\begin{tikzpicture}
\node (1) at (0,0) {$[\Cnw{n}]$};
\node (2) at (0,1) {$[\Cn{n}^\ast\Cnw{n}]$};
\node (3) at (0,2) {$[\Cnw{n}^\ast]$};
\draw (1) -- (2) -- (3);
\end{tikzpicture}
\\
(i) & &(ii) 
\end{tabular}
\caption{The intervals $\bvarint{\m{A}}$ and $\bvarint{\Cnw{n}}$}
\label{fig:basic1}
\end{figure}

\begin{lemma}\label{l:AP-basic-inteval1}
Let $\m{A} \in \{\Cn{n} \mid n \geq 1\} \cup \{\Cw, [0,1]_\WH\}$. Then the order of the interval $\bvarint{\m{A}}$ is as depicted in Figure~\ref{fig:basic1}(i).
\end{lemma}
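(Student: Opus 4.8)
The plan is to show that the interval $\bvarint{\m{A}}$ contains no variety strictly between its two endpoints $[\m{A}]$ and $[\m{A}^\ast]$. By Proposition~\ref{p:intervals-basic}(2), every member of this interval is a class $\Vfc$ with $[\m{A}] \subseteq \Vfc \subseteq [\m{A}^\ast]$, and both endpoints already belong to $\BHAP$; moreover they are distinct, since $\m{A} \oplus \m{A}$ lies in $[\m{A}^\ast] \setminus [\m{A}]$ (recall that $\m{A}$ is non-trivial). Hence it suffices to prove that every $\V \in \Omega(\BH)$ with $\wajs(\V) = [\m{A}]$ and $\Vfc \neq [\m{A}]$ in fact satisfies $\Vfc = [\m{A}^\ast]$; this gives exactly the two-element chain depicted in Figure~\ref{fig:basic1}(i).

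First I would extract a two-summand witness. Since $[\m{A}] \subseteq \Vfc$ but $\Vfc \neq [\m{A}]$, the class $\Vfc$ contains a chain of index at least $2$. Applying Lemma~\ref{l:BL-ordsum-embed}(i) to the order-embedding that picks out its first two (non-trivial) summands exhibits $\m{A}_1 \oplus \m{A}_2$ as a subalgebra of that chain, so $\m{A}_1 \oplus \m{A}_2 \in \Vfc$ with $\m{A}_1, \m{A}_2 \in \hspu(\m{A})$ both non-trivial. The goal is then to upgrade both summands back up to $\m{A}$, i.e.\ to establish $\m{A} \oplus \m{A} \in \Vfc$. Once this is achieved, Lemma~\ref{l:basic-properties-classes}(3), applied with both of its chains taken equal to $\m{A}$, yields $[\m{A}^\ast\m{A}^\ast] \subseteq \Vfc$; combined with $[\m{A}] \subseteq \Vfc$ this gives $[\m{A}^\ast] \subseteq \Vfc$, and hence $\Vfc = [\m{A}^\ast]$, as required.

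The heart of the argument---and the main obstacle---is this upgrade step, which splits according to the nature of $\m{A}$ and cannot be carried out uniformly. When $\m{A}$ is simple (the cases $\m{A} = \Cn{n}$ and $\m{A} = [0,1]_\WH$), I would apply Lemma~\ref{l:basic-ord-closure}(ii) twice: since $\m{A}$ is a simple member of $\wajs(\V)$ and each $\m{A}_i \in [\m{A}]$ is non-trivial, one may replace $\m{A}_1$, and then $\m{A}_2$, by $\m{A}$, landing on $\m{A} \oplus \m{A} \in \Vfc$. When $\m{A} = \Cw$ is cancellative this route is unavailable, because $\Cw$ is not simple; instead I would use that every non-trivial $\m{A}_i \in \hspu(\Cw)$ is a non-trivial cancellative Wajsberg chain, so Lemma~\ref{l:ISPU-cancellative} gives $\Cw \in \iso\sub\pu(\m{A}_i)$, and then Lemma~\ref{l:ISPU-ordinalsum}(i) yields $\Cw \oplus \Cw \in \iso\sub\pu(\m{A}_1 \oplus \m{A}_2) \subseteq \V$, whence $\Cw \oplus \Cw \in \Vfc$. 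A single argument does not cover both cases precisely because of this tension: the $\iso\sub\pu$-route requires every non-trivial summand to regenerate $\m{A}$ (true for $\Cw$ by Lemma~\ref{l:ISPU-cancellative}, but false for $\Cn{n}$, whose proper subalgebras $\Cn{d}$ do not recover $\Cn{n}$ under $\iso\sub\pu$), whereas the replacement route of Lemma~\ref{l:basic-ord-closure}(ii) requires $\m{A}$ to be simple (which $\Cw$ is not). Since the simple and cancellative cases exhaust the list $\{\Cn{n} \mid n \geq 1\} \cup \{\Cw, [0,1]_\WH\}$, this completes the description of the interval.
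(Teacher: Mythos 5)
Your derivation is correct and follows the paper's strategy almost exactly: extract an index-$2$ witness $\m{A}_1\oplus\m{A}_2\in\Vfc$ with both summands non-trivial in $\hspu(\m{A})$, upgrade it to $\m{A}\oplus\m{A}\in\Vfc$, and conclude $[\m{A}^\ast]\subseteq\Vfc$ via Lemma~\ref{l:basic-properties-classes} (the paper invokes part~(1) with trivial outer summands where you invoke part~(3); both work). However, the claim on which you base your case split is false: $\Cw$ \emph{is} simple. Its congruences correspond to its filters, and any filter of $\Cw$ containing an element $a<1$ contains every power $a^n$ and, by upward closure, every element above some $a^n$, hence all of $\Cw$; equivalently, congruences of $\Cw$ correspond to convex subgroups of $\mathbb{Z}$, of which there are only two. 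So the ``tension'' you describe does not exist, and the replacement route via Lemma~\ref{l:basic-ord-closure}(ii) is available for $\Cw$ as well. Indeed, the paper's proof is completely uniform: it observes that every $\m{A}\in\{\Cn{n}\mid n\geq 1\}\cup\{\Cw,[0,1]_\WH\}$ is simple and applies Lemma~\ref{l:basic-ord-closure}(ii) twice, with no case distinction.

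That said, this error does not break your proof. Your separate treatment of $\Cw$ uses only cancellativity, not non-simplicity: Lemma~\ref{l:ISPU-cancellative} gives $\Cw\in\iso\sub\pu(\m{A}_i)$ for each non-trivial $\m{A}_i\in\hspu(\Cw)$, and Lemma~\ref{l:ISPU-ordinalsum}(i) then puts $\Cw\oplus\Cw$ in $\iso\sub\pu(\m{A}_1\oplus\m{A}_2)\subseteq\Vfc$. This is valid---in fact it reproduces exactly the cancellative case \emph{inside} the paper's proof of Lemma~\ref{l:basic-ord-closure}(ii)---so you have merely inlined a case analysis that the paper has already encapsulated in that lemma. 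The net effect is a correct but unnecessarily bifurcated argument, together with a mistaken assertion about $\Cw$ that you should remove, since it contradicts a fact the surrounding results (notably the uniform use of simplicity for $\Cw$ in Lemmas~\ref{l:basic-ord-closure}(ii) and \ref{l:AP-basic-interval3}) depend on.
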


\begin{proof}
Suppose that $\Vfc \in \bvarint{\m{A}}$ and $[\m{A}]\subsetneq \Vfc$. 
Then, since $[\m{A}]$ contains all the members of $[\m{A}^\ast]$ with index $1$, there is a $\m{B} \in \Vfc$ such that $\m{B} = \m{B}_1 \oplus \m{B}_2$ and $\m{B}_1,\m{B}_2 \in [\m{A}]$ are non-trivial. But then, since $\m{A}$ is simple, by Lemma~\ref{l:basic-ord-closure}(ii), $\m{A}\oplus \m{A} \in \Vfc$. Hence Lemma~\ref{l:basic-properties-classes}(1) yields $[\m{A}^\ast]\subseteq \Vfc$.
\end{proof}

\begin{lemma}\label{l:AP-basic-interval2}
Let $n\geq 1$. Then the order of the interval $\bvarint{\Cnw{n}}$ is as depicted in Figure~\ref{fig:basic1}(ii).
\end{lemma}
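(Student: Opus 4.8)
The plan is to show that $\bvarint{\Cnw{n}}$ consists of exactly the three classes $[\Cnw{n}]$, $[\Cn{n}^\ast\Cnw{n}]$, and $[\Cnw{n}^\ast]$, and that these are linearly ordered by inclusion as in Figure~\ref{fig:basic1}(ii). The two endpoints $[\Cnw{n}]$ and $[\Cnw{n}^\ast]$ already lie in $\BHAP$ by the proof of Proposition~\ref{p:intervals-basic}(3), so I would begin by placing the middle class there as well. Since $[\Cn{n}^\ast\Cnw{n}]$ has the shape $[\m{A}^\ast\m{B}]$ with $\m{A} = \Cn{n}$ and $\m{B} = \Cnw{n}$, and Lemma~\ref{l:ess-closed-BL}(i) tells us that $[\Cn{n}] = \hspu(\Cn{n})$ is essentially closed in $[\Cnw{n}] = \hspu(\Cnw{n})$, Lemma~\ref{l:AP-last} yields that $[\Cn{n}^\ast\Cnw{n}]$ has the essential amalgamation property; combining Lemma~\ref{l:basic-Vfc-regular-expr} with Corollary~\ref{c:Vfc-AP} then gives $[\Cn{n}^\ast\Cnw{n}] = \vr([\Cn{n}^\ast\Cnw{n}])_\mathrm{fc} \in \BHAP$. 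The strict inclusions $[\Cnw{n}]\subsetneq[\Cn{n}^\ast\Cnw{n}]\subsetneq[\Cnw{n}^\ast]$ are witnessed, via the uniqueness of ordinal-sum decompositions in Lemma~\ref{lem:Agl}, by $\Cn{n}\oplus\Cnw{n}$ (of index $2$, hence not in $[\Cnw{n}]$) and $\Cnw{n}\oplus\Cnw{n}$ (whose first component is infinite, hence not in $\hspu(\Cn{n})$, so it is not in $[\Cn{n}^\ast\Cnw{n}]$).

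The core of the argument is that there are no other members. I would fix an arbitrary $\Vfc\in\bvarint{\Cnw{n}}$, so that $[\Cnw{n}]\subseteq\Vfc\subseteq[\Cnw{n}^\ast]$ and $\wajs(\V) = [\Cnw{n}]$; in particular $\Cn{n},\Cnw{n}\in\Vfc$ as index-$1$ chains. It then suffices to prove two implications: (B) if $[\Cnw{n}]\subsetneq\Vfc$ then $[\Cn{n}^\ast\Cnw{n}]\subseteq\Vfc$, and (C) if $[\Cn{n}^\ast\Cnw{n}]\subsetneq\Vfc$ then $\Vfc = [\Cnw{n}^\ast]$. Together with $\Vfc\subseteq[\Cnw{n}^\ast]$ these pin $\Vfc$ down to one of the three classes and give the linear order. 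Both implications run through the ``boosting'' clauses of Lemma~\ref{l:basic-ord-closure}, so I would first recall that every nontrivial Wajsberg chain in $[\Cnw{n}]$ is either bounded (hence contains $\m{2}$ as a subalgebra, and lies in $[\Cn{n}]$ precisely when it is finite and simple) or cancellative, and that the chains in $[\Cnw{n}]\setminus[\Cn{n}]$ are exactly the infinite ones.

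For (B), since $[\Cnw{n}]$ contains all index-$1$ chains, the hypothesis produces an index-$2$ chain $\m{B}_1\oplus\m{B}_2\in\Vfc$ with both summands nontrivial. I would first upgrade the last summand to $\Cnw{n}$: when $\m{B}_2$ is finite simple, Lemma~\ref{l:basic-ord-closure}(ii) raises it to $\Cn{n}$ and then part (iv), applied with the index-$1$ chain $\Cnw{n}$ and the embedding $\Cn{n}\leq\Cnw{n}$, yields $\m{B}_1\oplus\Cnw{n}\in\Vfc$; when $\m{B}_2$ is infinite, part (iii) gives the same directly. Now if $\m{B}_1$ is bounded, then $\m{2}\oplus\Cnw{n}\leq\m{B}_1\oplus\Cnw{n}$ lies in $\Vfc$ (which is closed under subalgebras), and Lemma~\ref{l:basic-ord-closure}(ii) raises $\m{2}$ to $\Cn{n}$, so $\Cn{n}\oplus\Cnw{n}\in\Vfc$ and Lemma~\ref{l:basic-properties-classes}(2) gives $[\Cn{n}^\ast\Cnw{n}]\subseteq\Vfc$. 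If instead $\m{B}_1$ is cancellative, then $\m{B}_1\in[\Cnw{n}]\setminus[\Cn{n}]$ and Lemma~\ref{l:basic-ord-closure}(iii) raises it to $\Cnw{n}$, producing $\Cnw{n}\oplus\Cnw{n}\in\Vfc$ and hence, via Lemma~\ref{l:basic-properties-classes}(1) with trivial padding, the stronger conclusion $[\Cnw{n}^\ast]\subseteq\Vfc$. In both cases $[\Cn{n}^\ast\Cnw{n}]\subseteq\Vfc$.

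For (C), a chain in $\Vfc\setminus[\Cn{n}^\ast\Cnw{n}]$ must, by the defining shape of $[\Cn{n}^\ast\Cnw{n}]$ and uniqueness of decompositions, have some \emph{non-last} component $\m{C}_k\in[\Cnw{n}]\setminus[\Cn{n}]$, i.e.\ an infinite summand with a further summand to its right. Using Lemma~\ref{l:BL-ordsum-embed}(i) to pass to the subalgebra carried by $\m{C}_k$ and its successor gives an index-$2$ chain in $\Vfc$ whose first component is infinite; Lemma~\ref{l:basic-ord-closure}(iii) raises that first component to $\Cnw{n}$, and raising the second component to $\Cnw{n}$ exactly as in (B) yields $\Cnw{n}\oplus\Cnw{n}\in\Vfc$, whence $[\Cnw{n}^\ast]\subseteq\Vfc$ and so $\Vfc = [\Cnw{n}^\ast]$. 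I expect the main obstacle to be the bookkeeping in (B) and (C): one must track which of the three kinds of component (finite simple, bounded with nontrivial radical, or cancellative) sits in each position and apply precisely the correct clause of Lemma~\ref{l:basic-ord-closure}, keeping in mind that cancellative summands cannot be raised to $\Cn{n}$ but only to $\Cnw{n}$, which is exactly the feature separating the middle class from the top one.
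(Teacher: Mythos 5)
Your proposal is correct and follows essentially the same route as the paper: the middle class $[\Cn{n}^\ast\Cnw{n}]$ is placed in $\BHAP$ via Lemma~\ref{l:ess-closed-BL}(i), Lemma~\ref{l:AP-last}, and Lemma~\ref{l:basic-Vfc-regular-expr}, and the two covering implications are established with Lemma~\ref{l:basic-ord-closure} and Lemma~\ref{l:basic-properties-classes}, exactly as in the paper. The only (harmless) divergence is in step (B), where the paper uniformly reduces to $\m{2}\oplus\m{2}$ via bounded essential extensions (Lemma~\ref{l:bounded-ess-ext}) instead of your case split on bounded versus cancellative summands; both arguments use the same toolkit and reach the same conclusion.
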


\begin{proof}
First note that, by Lemma~\ref{l:ess-closed-BL}(i), $[\Cn{n}]$ is essentially closed in $[\Cnw{n}]$. Thus, by Lemma~\ref{l:AP-last} and Lemma~\ref{l:basic-Vfc-regular-expr}, $[\Cn{n}^\ast\Cnw{n}] \in \BHAP$,

Suppose that $\Vfc \in \bvarint{\Cnw{n}}$ and $[\Cnw{n}]\subsetneq \Vfc$.  Then, since $[\Cnw{n}]$ contains all the members of $[\Cnw{n}^\ast]$ with index $1$, there is a $\m{B} \in \Vfc$ such that $\m{B} = \m{B}_1 \oplus \m{B}_2$ and $\m{B}_1,\m{B}_2 \in [\Cnw{n}]$ are non-trivial. But then, by Lemma~\ref{l:bounded-ess-ext}, $\m{B}_1$ and $\m{B}_2$ have bounded essential extensions $\m{C}_1$ and $\m{C}_2$ in $[\Cnw{n}]$, respectively. So Lemma~\ref{l:basic-ord-closure}(i) yields $\m{C}_1 \oplus \m{C}_2 \in \Vfc$, and, since $\m{C}_1$ and $\m{C}_2$ are bounded, $\m{2} \oplus \m{2} \in \Vfc$. Thus, by Lemma~\ref{l:basic-ord-closure}(ii), $\Cn{n} \oplus \m{2} \in \Vfc$, and by Lemma~\ref{l:basic-ord-closure}(iv), $\Cn{n} \oplus \Cnw{n} \in  \Vfc$. Hence  Lemma~\ref{l:basic-properties-classes}(2) yields $[\Cn{n}^\ast\Cnw{n}] \subseteq \Vfc$.

Suppose that $\Vfc \in \bvarint{\Cnw{n}}$ and $[\Cn{n}^\ast\Cnw{n}]\subsetneq \Vfc$.  Then, since $[\Cn{n}^\ast\Cnw{n}]$ contains all the members of $[\Cnw{n}^\ast]$ with all but possibly the last Wajsberg component in $[\Cn{n}]$, there is a $\m{B} \in \Vfc$ such that $\m{B} = \m{B}_1 \oplus \m{B}_2$ and $\m{B}_1 \in [\Cnw{n}]\setminus [\Cn{n}]$ and $\m{B}_2 \in [\Cnw{n}]$ non-trivial. But then, by Lemma~\ref{l:basic-ord-closure}(iii), $\Cnw{n}\oplus \m{B}_1 \in \Vfc$ and arguing as above also $\Cnw{n}\oplus \Cnw{n} \in \Vfc$. Hence  Lemma~\ref{l:basic-properties-classes}(2) yields $[\Cnw{n}^\ast] \subseteq \Vfc$.
\end{proof}

\begin{figure}
\small
\centering
\begin{tikzpicture}
\node (0) at (0,0) {\wuz};
\node (1) at (-2,2) {\wsuz};
\node (2) at (2,2) {\wuzs};
\node (3) at (5,2) {\wz};
\node (4) at (-5,2) {\zw};
\node (5) at (0,4) {\wsuzs};
\node (6) at (3,4) {\wsz};
\node (7) at (5,4) {\wzs};
\node (8) at (-3,4) {\zsw};
\node (9) at (-5,4) {\zws};
\node (10) at (3,6) {\wszs};
\node (11) at (-3,6) {\zsws};
\node (12) at (0,8) {\wzd};

\draw (0) -- (1) -- (5) -- (2) -- (0);
\draw (0) -- (4) -- (8) -- (2) -- (7);
\draw (0) -- (3) -- (6) -- (1) -- (9);
\draw (4) -- (9) -- (11) -- (8);
\draw (3) -- (7) -- (10) -- (6);
\draw (5) -- (10) -- (12) -- (11) -- (5);
\end{tikzpicture}
\caption{The interval $\bvarint{\{ \Cn{n},\Cw \}}$}
\label{fig:basic2}
\end{figure}

\begin{lemma}\label{l:AP-basic-interval3}
Let $n\geq1$. Then the order of the interval $\bvarint{\Cn{n}, \Cw}$ is as depicted in Figure~\ref{fig:basic2}.
\end{lemma}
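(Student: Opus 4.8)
The plan is to split the argument into an \emph{existence} part, showing that each of the thirteen classes displayed in Figure~\ref{fig:basic2} belongs to $\BHAP$, and a \emph{completeness} part, showing that these exhaust the interval $\bvarint{\Cn{n},\Cw}$ and are ordered as shown. For existence I would first record that $\Cn{n}$ and $\Cw$ are independent: the finitely subdirectly irreducible members of $\vr(\Cn{n})$ are finite bounded chains while those of $\vr(\Cw)$ are cancellative, so the two varieties share only trivial algebras. Then the nine classes $[\Cn{n}\Cw]$, $[\Cw\Cn{n}]$, $[\Cn{n}^\ast\Cw]$, $[\Cn{n}\Cw^\ast]$, $[\Cw^\ast\Cn{n}]$, $[\Cw\Cn{n}^\ast]$, $[\Cn{n}^\ast\Cw^\ast]$, $[\Cw^\ast\Cn{n}^\ast]$, and $[(\Cn{n}\Cw)^\ast]$ have the amalgamation property by Lemma~\ref{l:AP-independent}, and the four classes $[\Cn{n}]\cup[\Cw]$, $[\Cn{n}^\ast]\cup[\Cw]$, $[\Cn{n}]\cup[\Cw^\ast]$, and $[\Cn{n}^\ast]\cup[\Cw^\ast]$ have the essential amalgamation property by Lemma~\ref{l:AP-independent-cup}. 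In each case Lemma~\ref{l:basic-Vfc-regular-expr} (supplemented by $(\V\join\W)_{\mathrm{fc}}=\Vfc\cup\Wfc$ for the unions) identifies the class with $\vr(\K)_{\mathrm{fc}}$, so Corollary~\ref{c:Vfc-AP} places it in $\BHAP$.

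For completeness I would encode each finite-index chain of $\Vfc$ as a word over the alphabet $\{W,Z\}$, with $W$ marking a nontrivial component in $[\Cn{n}]$ and $Z$ a nontrivial component in $[\Cw]$ (legitimate since $\wajs(\V)=[\Cn{n}]\cup[\Cw]$). Using that $\Cn{n}$ is simple (Lemma~\ref{l:radical}) with Lemma~\ref{l:basic-ord-closure}(ii), and that $\Cw\in\iso\sub\pu(\m{A})$ for every nontrivial cancellative chain $\m{A}$ (Lemma~\ref{l:ISPU-cancellative}) with Lemma~\ref{l:ISPU-ordinalsum}(i), I can replace each component by its generator without leaving $\Vfc$; hence $\Vfc$ is determined by the set $L$ of canonical words it realizes. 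Since deleting a component of an ordinal sum produces a subalgebra, $L$ is closed under deleting letters. The heart of the argument is that $L$ is already determined by its two-letter members $S_2:=L\cap\{WW,WZ,ZW,ZZ\}$: the inclusion $\Vfc\subseteq\K$ follows from this subword closure (for instance $ZW\notin S_2$ forces every word to carry all its $W$'s before its $Z$'s), while the reverse inclusion is produced by gluing two words along a shared generator (Lemma~\ref{l:basic-ord-closure}(iv)) and then starring a repeated component (Lemma~\ref{l:basic-properties-classes}(1),(3),(4)).

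The one interaction that survives is that $\{WZ,ZW\}\subseteq S_2$ forces $\Vfc=[(\Cn{n}\Cw)^\ast]$ by Lemma~\ref{l:basic-properties-classes}(4); thus the four subsets of $\{WW,WZ,ZW,ZZ\}$ that contain both mixed letters all collapse to the top class, leaving exactly $16-4+1=13$ admissible values of $S_2$, one per node. Carrying out the case analysis then matches each admissible $S_2$ to its class --- e.g.\ $S_2=\{WW,WZ,ZZ\}$ yields $[\Cn{n}^\ast\Cw^\ast]$ directly from Lemma~\ref{l:basic-properties-classes}(3), $S_2=\{WW,WZ\}$ yields $[\Cn{n}^\ast\Cw]$ by a glue-then-pump, $S_2=\{WW\}$ yields $[\Cn{n}^\ast]\cup[\Cw]$, and $S_2=\emptyset$ yields $[\Cn{n}]\cup[\Cw]$ --- with the remaining cases handled symmetrically in the letter $Z$.

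For the order, $\Vfc\subseteq\Wfc$ holds if and only if $L(\Vfc)\subseteq L(\Wfc)$, which by the previous step holds if and only if $S_2(\Vfc)\subseteq S_2(\Wfc)$. Hence $\bvarint{\Cn{n},\Cw}$ is the Boolean lattice on $\{WW,WZ,ZW,ZZ\}$ with the four subsets containing $\{WZ,ZW\}$ identified to a single top; computing the covering relations of this quotient reproduces Figure~\ref{fig:basic2}, the lower covers of the top being exactly $[\Cn{n}^\ast\Cw^\ast]$ and $[\Cw^\ast\Cn{n}^\ast]$. I expect the main obstacle to be the bookkeeping in the completeness part: justifying the simultaneous canonicalization of all components, and verifying in each of the thirteen cases both that the expected words are forced into $L$ and that none beyond them appear, with particular care near the top so that no spurious variety slips in between the two three-letter classes and $[(\Cn{n}\Cw)^\ast]$.
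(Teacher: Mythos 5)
Your proposal is correct; its existence half coincides with the paper's (both derive membership of all thirteen classes in $\BHAP$ from Lemmas~\ref{l:AP-2comp}, \ref{l:AP-independent}, \ref{l:AP-independent-cup}, and \ref{l:basic-Vfc-regular-expr} together with Corollary~\ref{c:Vfc-AP}), but your completeness half follows a genuinely different route. The paper works node-by-node through the Hasse diagram: in cases (a)--(k) of its proof it shows that any $\Vfc$ in the interval strictly containing a given node must contain one of that node's depicted covers, and iterating upward from the bottom element pins every member of the interval to one of the thirteen nodes. You instead classify by the invariant $S_2$ of realized two-letter canonical words and identify the interval with the quotient of the Boolean lattice $2^{\{WW,WZ,ZW,ZZ\}}$ obtained by collapsing the four supersets of $\{WZ,ZW\}$ to a point; I checked that your assignment of the thirteen admissible values of $S_2$ to the nodes is consistent and that the covering relations of this quotient do reproduce Figure~\ref{fig:basic2}. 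The underlying engine is the same in both proofs---canonicalization of components using simplicity of $\Cn{n}$ via Lemma~\ref{l:basic-ord-closure}(ii) and Lemma~\ref{l:ISPU-cancellative} with Lemma~\ref{l:ISPU-ordinalsum}(i) for $\Cw$, gluing via Lemma~\ref{l:basic-ord-closure}(iv), pumping via Lemma~\ref{l:basic-properties-classes}---so the bookkeeping burden is comparable, just sliced differently (your thirteen values of $S_2$ versus the paper's node-by-node cover computations). What your route buys is a sharper structural statement: the interval is a Boolean-lattice quotient, which makes both the count of thirteen and the order transparent. What the paper's route buys is that the covers of the figure are certified directly, with no need for the separate facts that $S_2$ determines the word language $L$ and that this determination is monotone. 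Two points to make explicit in a write-up: the direction ``a canonical word in $L$ forces the whole componentwise $\hspu$-class into $\Vfc$'' rests on Proposition~\ref{p:basic-pw-hspu} (your phrase ``$\Vfc$ is determined by $L$'' uses it), and the pairwise distinctness of the thirteen classes, needed for your order-isomorphism claim, should be noted, though it is immediate from their word languages.
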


\begin{proof}
First note that, by Lemma~\ref{l:AP-2comp}, Lemma~\ref{l:AP-independent}, and Lemma~\ref{l:AP-independent-cup} together with Lemma~\ref{l:basic-Vfc-regular-expr}, all the classes in the figure are contained in $\BHAP$ and thus in particular in $\bvarint{\Cn{n}, \Cw}$. Moreover, they are clearly ordered as depicted in the figure. We prove the claim by establishing the covers of all the classes in $\bvarint{\Cn{n}, \Cw}$ starting from the least element $\mwuz$.

\begin{enumerate}[label = (\alph*)]
\item Suppose that $\Vfc \in \bvarint{\Cn{n}, \Cw}$ and $\mwuz \subsetneq \Vfc$. Then, since $\mwuz$ contains all the members of $\mwzd$ with index $1$, there exists a $\m{A} \in \Vfc$ with $\m{A} = \m{A}_1 \oplus \m{A}_2$ such that $\m{A}_1,\m{A}_2 \in \mwuz$ are non-trivial. There are four cases. 

If $\m{A}_1, \m{A}_2 \in [\Cn{n}]$, then, by Lemma~\ref{l:basic-ord-closure}(ii), $\Cn{n}\oplus \Cn{n} \in \Vfc$, so, by Lemma~\ref{l:basic-properties-classes}(ii), $\mwsuz \subseteq \Vfc$. 

Similarly,  if $\m{A}_1, \m{A}_2 \in [\Cw]$, then $\mwuzs \subseteq \Vfc$.   

If $\m{A}_1 \in [\Cn{n}]$ and $\m{A}_2 \in [\Cw]$, then, by Lemma~\ref{l:basic-ord-closure}(ii), $\Cn{n}\oplus \Cw \in \Vfc$, so, by Lemma~\ref{l:basic-pw-hom}, $\mwz \subseteq \Vfc$. 

Similarly, if $\m{A}_1 \in [\Cw]$ and $\m{A}_2 \in [\Cn{n}]$, then $\mzw \subseteq \Vfc$.

\item Suppose that $\Vfc \in \bvarint{\Cn{n}, \Cw}$ and $\mwz \subsetneq \Vfc$. Then, since  $\mwz$ contains all members of $\mwzd$ of index $1$ and of index $2$ with first component in $[\Cn{n}]$ and second component in $[\Cw]$, there are three cases: either $\m{A}_1 \oplus \m{A}_2 \in \Vfc$ with $\m{A}_1 \in [\Cw]$ and $\m{A}_2 \in [\Cn{n}]$ non-trivial; or $\m{B}_1 \oplus \m{B}_2 \oplus \m{B}_3 \in \Vfc$ with   $\m{B}_1,\m{B}_2 \in [\Cn{n}]$ and $\m{B}_3 \in [\Cw]$ non-trivial; or $\m{C}_1 \oplus \m{C}_2 \oplus \m{C}_3 \in \Vfc$ with   $\m{C}_1\in [\Cn{n}]$ and $\m{C}_2 ,\m{C}_3 \in [\Cw]$ non-trivial. 

In the first case it follows from Lemma~\ref{l:basic-ord-closure}(ii) that $\Cw\oplus \Cn{n} \in \Vfc$ and, since $\Cn{n}\oplus \Cw\in \Vfc$, Lemma~\ref{l:basic-properties-classes}(4) yields $\mwzd \subseteq \Vfc$. So, in particular, $\mwsz \subseteq \Vfc$ and $\mwzs \subseteq \Vfc$. 

In the second case it follows from Lemma~\ref{l:basic-ord-closure}(ii) that $\Cn{n}\oplus \Cn{n} \oplus \Cw \in \Vfc$. Thus, by Lemma~\ref{l:basic-properties-classes}(1), $\mwsz \subseteq \Vfc$. 
Similarly it follows in the third case that $\mwzs \subseteq \Vfc$.

\item Suppose that $\Vfc \in \bvarint{\Cn{n}, \Cw}$ and $\mzw \subsetneq \Vfc$. Then it follows similarly as in (b) that either $\mzws \subseteq \Vfc$ or $\mzsw \subseteq \Vfc$.

\item Suppose that  $\Vfc \in \bvarint{\Cn{n}, \Cw}$ and $\mwuzs \subsetneq \Vfc$. Then, since $\mwuzs$ contains all members of $\mwzd$ with index $1$ or with all components in $[\Cw]$, there is a $\m{A} \in \Vfc$ of index $2$ that has a component in $[\Cn{n}]$. There are three cases:

If $\m{A} = \m{A}_1 \oplus \m{A}_2$ with $\m{A}_1,\m{A}_2 \in [\Cn{n}]$ non-trivial, then, by Lemma~\ref{l:basic-ord-closure}(ii), $\Cn{n}\oplus \Cn{n} \in \Vfc$. Hence Lemma~\ref{l:basic-properties-classes}(1) yields $\mwsuzs\subseteq \Vfc$.

If $\m{A} = \m{A}_1 \oplus \m{A}_2$ with $\m{A}_1 \in [\Cn{n}]$ and $\m{A}_2 \in [\Cw]$ non-trivial, then, by Lemma~\ref{l:basic-ord-closure}(ii), $\Cn{n}\oplus \Cw \in \Vfc$. But, since $\Cw\oplus \Cw \in \Vfc$, by Lemma~\ref{l:basic-ord-closure}(iv), $\Cn{n} \oplus \Cw\oplus \Cw \in \Vfc$.
Hence Lemma~\ref{l:basic-properties-classes}(1) yields $\mwzs \subseteq \Vfc$.

If $\m{A} = \m{A}_1 \oplus \m{A}_2$ with $\m{A}_1 \in [\Cw]$ and $\m{A}_2 \in [\Cn{n}]$ non-trivial, then, by Lemma~\ref{l:basic-ord-closure}(ii), $\Cw\oplus \Cn{n} \in \Vfc$. But, since $\Cw\oplus \Cw \in \Vfc$, by Lemma~\ref{l:basic-ord-closure}(iv), $\Cw\oplus \Cw \oplus \Cn{n} \in \Vfc$.
Hence Lemma~\ref{l:basic-properties-classes}(1) yields $\mzsw \subseteq \Vfc$.

\item Suppose that  $\Vfc \in \bvarint{\Cn{n}, \Cw}$ and $\mwsuz \subsetneq \Vfc$. Then it follows similarly as in (d) that either $\mwsuzs \subseteq \Vfc$, $\mwsz \subseteq \Vfc$, or $\mzws \subseteq \Vfc$.

\item Suppose that  $\Vfc \in \bvarint{\Cn{n}, \Cw}$ and $\mwzs \subsetneq \Vfc$. Then, since $\mwzs$ contains all the members of $\mwzd$ with all components in $[\Cw]$ except possibly the first component in $[\Cn{n}]$, there is an $\m{A} \in \Vfc$ with $\m{A} = \m{A}_1 \oplus \m{A}_2$ such that $\m{A}_1 \in \mwuz$ and $\m{A}_2 \in [\Cn{n}]$ are non-trivial. Thus, by Lemma~\ref{l:basic-ord-closure}(ii), $\m{A}_1 \oplus \Cn{n} \in \Vfc$. There are two cases:

If $\m{A}_1 \in [\Cn{n}]$, then, by Lemma~\ref{l:basic-ord-closure}(ii), $\Cn{n} \oplus \Cn{n} \in \Vfc$. Moreover, since $\Cn{n} \oplus \Cw \oplus \Cw \in \Vfc$, Lemma~\ref{l:basic-ord-closure}(iv) yields $\Cn{n} \oplus \Cn{n} \oplus \Cw \oplus \Cw \in \Vfc$, i.e., by Lemma~\ref{l:basic-properties-classes}(3) $\mwszs\subseteq \Vfc$.

If $\m{A}_1 \in [\Cw]$, then, by Lemma~\ref{l:basic-ord-closure}(ii), $\Cw \oplus \Cn{n} \in \Vfc$. Moreover, since $\Cn{n} \oplus \Cw \in \Vfc$, Lemma~\ref{l:basic-properties-classes}(4) yields $\mwszs \subseteq \mwzd \subseteq \Vfc$.

\item Suppose that  $\Vfc \in \bvarint{\Cn{n}, \Cw}$ and $\mwsz \subsetneq \Vfc$. Then it follows similarly as in (f) that $\mwszs \subseteq \Vfc$.

\item Suppose that  $\Vfc \in \bvarint{\Cn{n}, \Cw}$ and $\mzsw \subsetneq \Vfc$ or $\mzws \subsetneq \Vfc$. Then it follows similarly as in (f) that $\mzsws \subseteq \Vfc$.

\item Suppose that  $\Vfc \in \bvarint{\Cn{n}, \Cw}$ and $\mwsuzs \subsetneq \Vfc$. Then, since the class $\mwsuzs$ contains all the members of $\mwzd$ with components only in $[\Cn{n}]$ or only in $[\Cw]$, there exists an $\m{A} \in \Vfc$ with $\m{A} = \m{A}_1 \oplus \m{A}_2$, where $\m{A}_1$ and $\m{A}_2$ are non-trivial, such that either $\m{A}_1 \in [\Cn{n}]$ and $\m{A}_2 \in [\Cw]$, or  $\m{A}_1 \in [\Cw]$ and $\m{A}_2 \in [\Cn{n}]$. 

In the first case, it follows from Lemma~\ref{l:basic-ord-closure}(ii) that $\Cn{n}\oplus \Cw \in \Vfc$. Moreover, since $\Cn{n}\oplus \Cn{n}, \Cw\oplus \Cw \in \Vfc$,  Lemma~\ref{l:basic-properties-classes}(3) yields $\mwszs\subseteq \Vfc$.

Similarly, it follows in the second case that $\mzsws \subseteq \Vfc$.

\item Suppose that  $\Vfc \in \bvarint{\Cn{n}, \Cw}$ and $\mwszs \subsetneq \Vfc$. Then there is an $\m{A} \in \Vfc$ such that $\m{A} = \m{A}_1 \oplus \m{A}_2$ with $\m{A}_1 \in [\Cw]$ and $\m{A}_2 \in [\Cn{n}]$ non-trivial. Thus, by Lemma~\ref{l:basic-ord-closure}(ii), $\Cw\oplus \Cn{n} \in \Vfc$. But also $\Cn{n}\oplus \Cw \in \Vfc$. So, by Lemma~\ref{l:basic-properties-classes}(4), $\mwzd \subseteq \Vfc$.

\item Suppose that  $\Vfc \in \bvarint{\Cn{n}, \Cw}$ and $\mzsws \subsetneq \Vfc$.  Then it follows similarly as in (j) that $\mwzd \subseteq \Vfc$.\qedhere
\end{enumerate}
\end{proof}

Because $\alg{\BHAP,\subseteq}$ and $\Omega(\BH)$ are isomorphic posets, the decomposition of $\alg{\BHAP,\subseteq}$ into intervals, as witnessed in the previous results, may also be understood as a decomposition of $\Omega(\BH)$ into intervals. Thus, we obtain the main result of this section.

\begin{thm}\label{t:BH-AP-char}
Every variety of basic hoops with the amalgamation property is either trivial or generated by one of the classes of totally ordered BL-algebras labeling one of the nodes in the intervals characterized by Lemmas~\ref{l:AP-basic-inteval1}--\ref{l:AP-basic-interval3}. In particular, $\Omega(\BH)$ is a union of countably many finite intervals.
\end{thm}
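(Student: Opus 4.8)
The plan is to assemble this statement directly from the interval decomposition already established, so that the proof becomes a bookkeeping exercise synthesizing Proposition~\ref{p:intervals-basic} together with Lemmas~\ref{l:AP-basic-inteval1}--\ref{l:AP-basic-interval3}. First I would recall that the assignment $\V \mapsto \Vfc$ witnesses an isomorphism between $\Omega(\BH)$ and $\alg{\BHAP, \subseteq}$, so it suffices to exhibit $\BHAP$ as a union of countably many finite intervals and to identify the generating class sitting at each node.

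Next I would invoke Proposition~\ref{p:intervals-basic} to partition $\BHAP$ into the closed intervals named there: the singleton consisting of the trivial variety, the intervals $\bvarint{\m{A}}$ for $\m{A} \in \{\Cn{n} \mid n\geq 1\} \cup \{\Cw, [0,1]_\WH\}$, the intervals $\bvarint{\Cnw{n}}$ for $n\geq 1$, and the intervals $\bvarint{\Cn{n},\Cw}$ for $n\geq 1$. This really is a partition, since $\wajs(\V)$ is an invariant that is constant on each interval and takes pairwise distinct values across them, and Proposition~\ref{p:intervals-basic} guarantees the list is exhaustive. As the parameter $n$ ranges over $\mathbb{N}$ and there are only finitely many possible shapes of $\wajs(\V)$, the indexing family is countable. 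I would then cite Lemma~\ref{l:AP-basic-inteval1}, Lemma~\ref{l:AP-basic-interval2}, and Lemma~\ref{l:AP-basic-interval3} to conclude that each interval is finite---respectively a two-element chain, a three-element chain, and the thirteen-element poset of Figure~\ref{fig:basic2}---and, in particular, that every member of $\BHAP$ occurs as a node in one of the depicted Hasse diagrams.

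Finally, to justify the ``generated by'' formulation, I would observe that each labeling class $\K$ is either one of the regular-expression classes treated in Lemma~\ref{l:basic-Vfc-regular-expr} or a finite union of such, the latter handled via the identity $(\V \join \W)_\mathrm{fc} = \Vfc \cup \Wfc$ recorded in the preliminaries. In either case $\vr(\K)_\mathrm{fc} = \K$, so the variety occupying that node is precisely $\vr(\K)$ and is therefore generated by its labeling class. Combining these observations gives that every non-trivial variety in $\Omega(\BH)$ is generated by one of the labeling classes, and that $\Omega(\BH)$ is a union of countably many finite intervals.

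I do not anticipate a genuine obstacle, since all the substantive content has been discharged in the preceding lemmas: the three interval lemmas together already establish both that every labeled class belongs to $\BHAP$ and that the covers are exactly as drawn, so that no further node can appear. The only points deserving minor care are the verification that the enumerated intervals are pairwise disjoint and exhaustive---which is the content of Proposition~\ref{p:intervals-basic}---and the bookkeeping that the labeling classes fall under Lemma~\ref{l:basic-Vfc-regular-expr}, so that each node is indeed generated by the class labeling it.
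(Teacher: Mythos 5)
Your proposal is correct and takes essentially the same route as the paper, which derives the theorem immediately from the isomorphism $\Omega(\BH)\cong\alg{\BHAP,\subseteq}$ together with the partition in Proposition~\ref{p:intervals-basic} and the finiteness of each interval established in Lemmas~\ref{l:AP-basic-inteval1}--\ref{l:AP-basic-interval3}. Your additional care over the ``generated by'' clause---via $\vr(\K)_{\mathrm{fc}}=\K$ from Lemma~\ref{l:basic-Vfc-regular-expr} and the identity $(\V\join\W)_{\mathrm{fc}}=\Vfc\cup\Wfc$ for the union-labeled nodes---merely makes explicit what the paper leaves implicit.
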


\begin{cor}
There are only countably infinitely many varieties of basic hoops with the amalgamation property.
\end{cor}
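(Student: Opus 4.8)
The plan is to deduce the statement directly from Theorem~\ref{t:BH-AP-char}, which already does all the substantive work. That theorem tells us that $\Omega(\BH)$ decomposes as a union of the finite intervals characterized in Lemmas~\ref{l:AP-basic-inteval1}--\ref{l:AP-basic-interval3}, so the task reduces to two bookkeeping observations: there are only countably many such intervals, and each is finite. Together these give countability, and a separate easy argument yields that the total is genuinely infinite.

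First I would recall the explicit list of intervals. Apart from the single interval consisting of the trivial variety, they are the intervals $\bvarint{\m{A}}$ for $\m{A} \in \{\Cn{n} \mid n \geq 1\} \cup \{\Cw, [0,1]_\WH\}$, the intervals $\bvarint{\Cnw{n}}$ for $n \geq 1$, and the intervals $\bvarint{\Cn{n},\Cw}$ for $n\geq 1$. Each of these families is indexed by a countable set (essentially $\mathbb{N}$, together with the two extra parameters $\Cw$ and $[0,1]_\WH$ in the first family), so altogether there are only countably many intervals. Next I would observe that each interval is finite: by Figure~\ref{fig:basic1} the interval $\bvarint{\m{A}}$ has two elements and $\bvarint{\Cnw{n}}$ has three, and by Figure~\ref{fig:basic2} the interval $\bvarint{\Cn{n},\Cw}$ has thirteen. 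Since a countable union of finite sets is countable, $\Omega(\BH)$ is at most countable.

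Finally, for infinitude, I would note that by Lemma~\ref{l:AP-2comp} each class $[\Cn{n}]$ has the amalgamation property, and the varieties $\vr(\Cn{n})$ are pairwise distinct for distinct $n \geq 1$ (they generate the varieties $\vr(\Sm{n})$ of $0$-free reducts, which are distinct). Hence infinitely many members of $\Omega(\BH)$ already arise this way, so $\Omega(\BH)$ is countably infinite. There is no genuine obstacle in this argument; the difficulty was entirely contained in establishing the interval decomposition of Theorem~\ref{t:BH-AP-char}, and this corollary is a routine consequence of it.
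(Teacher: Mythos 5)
Your proof is correct and takes essentially the same route as the paper, which states the corollary as an immediate consequence of Theorem~\ref{t:BH-AP-char} (equivalently, Proposition~\ref{p:intervals-basic} together with Lemmas~\ref{l:AP-basic-inteval1}--\ref{l:AP-basic-interval3}) without further argument: countably many finite intervals give at most countably many varieties, and infinitude is clear. One small repair to your last step: the distinctness of the varieties $\vr(\Cn{n})$ should not be justified by saying they ``generate'' the $\vr(\Sm{n})$ (the hoops $\Cn{n}$ are the $0$-free reducts of the $\Sm{n}$, and the two families live in different signatures); instead note, for example, that every subdirectly irreducible member of $\vr(\Cn{m})$ has at most $m+1$ elements, so $\Cn{n}\notin\vr(\Cn{m})$ for $n>m$, or simply that the countably infinitely many pairwise disjoint intervals of the theorem are each nonempty.
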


\section{Amalgamation in BL-algebras}\label{sec:BL-algebras}

We now consider $\Omega(\BL)$, using the previous section's work on basic hoops as a basis. Let $\V$ be a variety of BL-algebras. Recall that $\luk(\V)$ is the class of totally ordered MV-algebras in $\Vfsi$.
We denote by $\basic(\V)$ the class of totally ordered basic hoops $\m{A}_1$ such that $\m{A}_0 \oplus \m{A}_1 \in \Vfsi$ with $\m{A}_0$ a totally ordered MV-algebra. We define $\basicfc(\V) := \basic(\V)_\mathrm{fc}$ and note that $\Vfc \subseteq \luk(\V) \oplus \basicfc(\V)$.

Similarly to the previous section, our description of $\Omega(\BL)$ relies on the existing description of $\Omega(\MV)$, in tandem with our preceding classification of varieties of basic hoops with the amalgamation property. The next trio of results indicate how our reduction proceeds.

\begin{lemma}[{\cite[Proposition 1]{AB23}}]
If $\V\in\Omega(\BL)$, then $\vr(\luk(\V))\in\Omega(\MV)$.
%Let $\V$ be a variety of BL-algebras with the amalgamation property. Then  $\vr(\luk(\V))$ has the amalgamation property.
\end{lemma}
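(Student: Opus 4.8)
The plan is to follow the template set by the analogous result for basic hoops (the lemma just established showing $\vr(\wajs(\V)) \in \Omega(\WH)$), with the role of an arbitrary Wajsberg component now played by the \emph{first}, MV-algebra component of an ordinal-sum decomposition. Since $\vr(\luk(\V))$ is a variety of MV-algebras, hence of semilinear commutative pointed residuated lattices, Corollary~\ref{c:lin-ess-AP} reduces the goal $\vr(\luk(\V)) \in \Omega(\MV)$ to showing that $\vr(\luk(\V))_{\mathrm{FSI}}$ has the essential amalgamation property. The first step is therefore to pin down this class, and I claim that $\vr(\luk(\V))_{\mathrm{FSI}} = \luk(\V)$.

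To see this, recall the fact recorded in the preliminaries that, for a class of chains, the finitely subdirectly irreducible members of the generated variety coincide with its totally ordered members and with the $\hspu$-closure of the class; applied to $\luk(\V)$ this gives $\vr(\luk(\V))_{\mathrm{FSI}} = \hspu(\luk(\V))$. Now $\hspu$ of a class of chains again consists solely of chains, since only ultraproducts, subalgebras, and homomorphic images are taken (no direct products), and each of these operations sends a totally ordered algebra to a totally ordered one. As $\luk(\V)$ consists precisely of the totally ordered MV-algebras lying in $\V$, its $\hspu$-closure again consists of totally ordered MV-algebras in $\V$, i.e.\ of members of $\luk(\V)$. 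The reverse inclusion being immediate, equality follows.

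It then remains to prove that $\luk(\V)$ has the essential amalgamation property. Given an essential span $\pair{i_1 \colon \m{A} \to \m{B}, i_2 \colon \m{A} \to \m{C}}$ in $\luk(\V)$, I would first note that, because MV-algebras and BL-algebras have the same congruences, this is simultaneously an essential span in $\Vfc$, each totally ordered MV-algebra being a BL-chain of index $1$. Since $\V \in \Omega(\BL)$, Corollary~\ref{c:Vfc-AP} gives that $\Vfc$ has the essential AP, so the span admits an amalgam $\pair{j_1 \colon \m{B} \to \m{D}, j_2 \colon \m{C} \to \m{D}}$ with $\m{D} = \bigoplus_{i=0}^{k} \m{D}_i \in \Vfc$. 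The crux of the argument---and the step I expect to be the main obstacle---is then to push this amalgam back into $\luk(\V)$. Here I would invoke Lemma~\ref{l:BL-ordsum-embed}(ii): since $\m{B}$ and $\m{C}$ have index $1$, any BL-embedding of either into $\m{D}$ must carry its single component into $\m{D}_{f(0)} = \m{D}_0$, because the associated order-embedding satisfies $f(0) = 0$. Consequently both $j_1$ and $j_2$ factor through the first MV-component $\m{D}_0$, which lies in $\luk(\V)$ as the first component of an ordinal-sum decomposition of a chain in $\V$. Restricting the codomains, $\pair{j_1 \colon \m{B} \to \m{D}_0, j_2 \colon \m{C} \to \m{D}_0}$ is an amalgam of the original span inside $\luk(\V)$, which completes the proof. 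The only points demanding care are the observation that essentiality is insensitive to whether the algebras are viewed as MV- or BL-algebras, and the verification that the constraint $f(0)=0$ genuinely confines \emph{both} images to the single component $\m{D}_0$; notably, unlike the basic-hoop case, no non-triviality hypothesis on $\m{A}$ is needed, since the target component is forced to be the first one.
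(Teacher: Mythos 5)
Your proof is correct, but note that the paper itself offers no proof of this lemma to compare against: it is imported verbatim from the literature as \cite[Proposition~1]{AB23}. What you have written is, in effect, a self-contained replacement for that citation, and it follows precisely the template the paper uses for the two analogous results it \emph{does} prove, namely $\vr(\wajs(\V))\in\Omega(\WH)$ at the start of Section~\ref{sec:hoops} and $\vr(\basic(\V))\in\Omega(\BH)$ in Section~\ref{sec:BL-algebras}: reduce to the essential amalgamation property of the class of chains via Corollary~\ref{c:lin-ess-AP}, amalgamate inside $\Vfc$ using Corollary~\ref{c:Vfc-AP}, then cut the amalgam down to the intended class. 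Your supporting claims all check out: $\hspu(\luk(\V))=\luk(\V)$ because $\hm$, $\sub$, and $\pu$ preserve total order and the MV identity and stay inside the variety $\V$; essentiality transfers when the span is read in $\Vfc$ because the algebras, and hence their congruence lattices, are literally the same; and $\m{D}_0$ is a subalgebra of $\m{D}$, hence a totally ordered MV-algebra in $\Vfc$, i.e., a member of $\luk(\V)$. The one genuine refinement over the hoop-case template is your use of the constraint $f(0)=0$ in Lemma~\ref{l:BL-ordsum-embed}(ii) to force both $j_1$ and $j_2$ into the first component: this is what makes the case split on triviality of $\m{A}$ (needed in the paper's proofs for $\wajs$ and $\basic$, where a trivial $\m{A}$ lets $\m{B}$ and $\m{C}$ land in different Wajsberg components) unnecessary here. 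Indeed, in the designated-bounds signature a trivial algebra admits an embedding only into trivial algebras, so the degenerate case collapses on its own---your parenthetical remark to this effect is accurate, and the argument is complete as written.
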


\begin{lemma}
Let $\V$ be a variety of BL-algebras.
\begin{enumerate}[label = \textup{(\roman*)}]
\item $\hspu(\basic(\V)) = \basic(\V)$. 
\item If $\V\in\Omega(\BL)$, then $\vr(\basic(\V))\in\Omega(\BH)$.
\end{enumerate}
\end{lemma}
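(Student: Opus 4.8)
The plan is to treat the two parts in turn, using part (i) as the engine for part (ii).

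For part (i), the key preliminary observation I would establish is a uniform description of $\basic(\V)$: a totally ordered basic hoop $\m{A}_1$ lies in $\basic(\V)$ if and only if $\m{2}\oplus\m{A}_1\in\Vfsi$. The forward implication is the definition with first summand $\m{A}_0=\m{2}$; for the converse, if $\m{A}_0\oplus\m{A}_1\in\Vfsi$ with $\m{A}_0$ a nontrivial MV-chain, then $\m{2}\leq\m{A}_0$ yields an embedding $\m{2}\oplus\m{A}_1\hookrightarrow\m{A}_0\oplus\m{A}_1$ via Lemma~\ref{l:BL-ordsum-embed}(ii) (the associated order-embedding being the identity, so $f(0)=0$), and $\Vfsi$ is closed under subalgebras. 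Granting this, closure of $\basic(\V)$ under $\sub$, $\pu$, and $\hm$ reduces to transporting each operator across the fixed summand $\m{2}$: subalgebras via $\m{2}\oplus\m{B}_1\leq\m{2}\oplus\m{A}_1$; ultraproducts via Lemma~\ref{l:ultraprod-distrib-ordsum} together with the fact that an ultrapower of $\m{2}$ is again $\m{2}$; and homomorphic images via Lemma~\ref{l:ISPU-ordinalsum}(ii), which places $\m{2}\oplus\m{B}_1$ inside $\hm(\m{2}\oplus\m{A}_1)$ whenever $\m{B}_1\in\hm(\m{A}_1)$. Since $\Vfsi$, the class of chains of $\V$, is closed under all three operators, each image returns to $\basic(\V)$, giving $\hspu(\basic(\V))=\basic(\V)$.

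For part (ii), I would first record that, by part (i) and the semilinearity of basic hoops, $\vr(\basic(\V))_{\mathrm{FSI}}=\hspu(\basic(\V))=\basic(\V)$, whence $\vr(\basic(\V))_{\mathrm{fc}}=\basicfc(\V)$. By Corollary~\ref{c:Vfc-AP} it then suffices to show that $\basicfc(\V)$ has the essential amalgamation property. Given an essential span $\pair{i_1\colon\m{A}\to\m{B},i_2\colon\m{A}\to\m{C}}$ in $\basicfc(\V)$ (the case of trivial $\m{A}$ being immediate, since an essential extension of a trivial algebra is trivial), I would prepend the MV-chain $\m{2}$ to obtain the span $\pair{\mathrm{id}_{\m{2}}\oplus i_1,\mathrm{id}_{\m{2}}\oplus i_2}$ of BL-chains in $\Vfc$, using part (i) to guarantee $\m{2}\oplus\m{A},\m{2}\oplus\m{B},\m{2}\oplus\m{C}\in\Vfc$. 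This prepended span is again essential: by Lemma~\ref{l:ess-ordinalsum}, essentiality of $i_2$ says precisely that its restriction to the top Wajsberg component of $\m{A}$ is an essential extension, with that component landing in the top component of $\m{C}$, and the very same restriction witnesses essentiality of $\mathrm{id}_{\m{2}}\oplus i_2$. Since $\V\in\Omega(\BL)$, Corollary~\ref{c:Vfc-AP} provides that $\Vfc$ has the essential AP, so this span has an amalgam $\pair{j_1,j_2}$ with apex $\m{D}\in\Vfc$.

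Finally, writing $\m{D}=\m{D}_0\oplus\m{D}'$ with $\m{D}_0$ the totally ordered MV first component and $\m{D}'$ the remaining basic-hoop part, I would verify that $\m{D}'$ serves as the desired amalgam in $\basicfc(\V)$. The decisive input is the clause $f(0)=0$ of Lemma~\ref{l:BL-ordsum-embed}(ii): it forces every Wajsberg component of $\m{B}$ and of $\m{C}$ to be carried by $j_1$ and $j_2$ strictly above $\m{D}_0$, hence into $\m{D}'$, so that $j_1,j_2$ restrict to basic-hoop embeddings $\m{B}\to\m{D}'$ and $\m{C}\to\m{D}'$ (again by Lemma~\ref{l:BL-ordsum-embed}(i)); commutativity on $\m{A}$ is inherited by restricting the BL-commutativity $j_1\circ(\mathrm{id}_{\m{2}}\oplus i_1)=j_2\circ(\mathrm{id}_{\m{2}}\oplus i_2)$ to $\m{A}\subseteq\m{2}\oplus\m{A}$; and $\m{D}'\in\basic(\V)$ because $\m{D}_0\oplus\m{D}'=\m{D}\in\Vfsi$, with $\m{D}'$ of finite index. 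I expect the main obstacle to be exactly this last step: organizing the index-shifting so that the auxiliary summand $\m{2}$ is correctly absorbed into $\m{D}_0$ while the basic-hoop data survives the passage to $\m{D}'$ with the amalgam diagram still commuting. By contrast, the essentiality-preservation step, though delicate, should be comparatively routine once Lemma~\ref{l:ess-ordinalsum} is in hand.
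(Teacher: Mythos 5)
Your proposal is correct and follows essentially the same route as the paper's proof: part (i) rests on the same key observation that $\m{A}\in\basic(\V)$ if and only if $\m{2}\oplus\m{A}\in\V$, and part (ii) uses the same device of prepending $\m{2}$ to an essential span, transferring essentiality via Lemma~\ref{l:ess-ordinalsum}, amalgamating in $\V$, and restricting the resulting amalgam to the basic-hoop part of the apex. The only immaterial difference is that you reduce to finite-index chains via Corollary~\ref{c:Vfc-AP}, whereas the paper works with $\Vfsi$ and Corollary~\ref{c:lin-ess-AP}.
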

\begin{proof}
For (i) note that $\m{A} \in \basic(\V)$ if and only if $\m{2} \oplus \m{A} \in \V$. Thus, by Lemma~\ref{l:ISPU-ordinalsum}, $\hspu(\basic(\V)) = \basic(\V)$.

For (ii) Let  $\pair{i_1 \colon \m{A} \to \m{B},i_2\colon \m{A} \to \m{C}}$  be an essential span in $\basic(\V)$. If $\m{A}$ is trivial, then, since the span is essential, also  $\m{C}$ is trivial and clearly the span has an amalgam in $\basic(\V)$. Otherwise, note that, by Lemma~\ref{l:ess-ordinalsum}, the span  $\pair{i_1\colon \colon \m{A} \to \m{B}, i_2\colon \m{A} \to \m{C}}$ corresponds to an essential span of the form $\pair{\hat{i}_1\colon \m{2} \oplus\m{A} \to \m{2} \oplus\m{B},\hat{i}_2\colon \m{2} \oplus\m{A} \to \m{2} \oplus\m{C}}$ in $\Vfsi$. Since $\V$ has the AP, by Corollary~\ref{c:lin-ess-AP}, the span $\pair{\hat{i}_1,\hat{i}_2}$ has an amalgam $\pair{\hat{j}_1\colon \m{2} \oplus\m{B} \to \m{D}_0 \oplus\m{D}_1,\hat{j}_2\colon \m{2} \oplus\m{C} \to \m{D}_0 \oplus\m{D}_1}$ in $\Vfsi$, where
 $\m{D} = \m{D}_0 \oplus \m{D}_1$ with $\m{D}_0 \in \luk(\V)$ and $\m{D}_1 \in \basic(\V)$.  Since $\m{A}$ is non-trivial, $\pair{\hat{j}_1,\hat{j}_2}$ restricts to an amalgam $\pair{j_1 \colon \m{B} \to \m{D}_1, j_2 \colon \m{C} \to \m{D}_1}$ in $\basic(\V)$. Thus $\vr(\basic(\V))$ has the AP, by Corollary~\ref{c:lin-ess-AP}.
\end{proof}

\begin{cor}\label{c:AP-wajs-luk}
Let $\V \in \Omega(\BL)$. Then 
\begin{enumerate}[label = \textup{(\roman*)}]
\item $\luk(V) = [\m{A}]$ for some $\m{A} \in \{\Sm{m}, \Smw{m} \mid m \in \mathbb{N}\} \cup \{[0,1]_\MV \}$.
\item $\basicfc(\V)$ is in one of the intervals classified in Section~4. 
\end{enumerate}  
\end{cor}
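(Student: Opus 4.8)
The plan is to read both parts off the two preceding lemmas, combined with the classifications already in hand: Corollary~\ref{c:AP-MV} for MV-algebras and Theorem~\ref{t:BH-AP-char} for basic hoops. The recurring tool is the fact, recorded in the preliminaries, that for a semilinear variety the totally ordered members coincide with the $\hspu$-closure of any generating class of chains.

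For (i), I would begin from the lemma asserting $\vr(\luk(\V))\in\Omega(\MV)$ and apply Corollary~\ref{c:AP-MV} to obtain $\vr(\luk(\V))=\vr(\m{A})$ for some $\m{A}\in\{\Sm{m},\Smw{m}\mid m\in\mathbb{N}\}\cup\{[0,1]_\MV\}$. The remaining work is to promote this equality of \emph{generated varieties} to the equality $\luk(\V)=[\m{A}]=\hspu(\m{A})$ of \emph{classes of chains}. The key point is that $\luk(\V)$, being the totally ordered MV-algebras in $\Vfsi$, is $\hspu$-closed: $\Vfsi$ is closed under $\hm\sub\pu$, and these operators preserve being a totally ordered MV-algebra. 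Hence $\luk(\V)=\hspu(\luk(\V))$ is exactly the class of chains of $\vr(\luk(\V))=\vr(\m{A})$, namely $\hspu(\m{A})=[\m{A}]$. Concretely, since $\m{A}$ is a chain generating $\vr(\luk(\V))$ we have $\m{A}\in\hspu(\luk(\V))=\luk(\V)$, so $[\m{A}]\subseteq\luk(\V)$, while the reverse inclusion holds because every member of $\luk(\V)$ is a chain of $\vr(\m{A})$.

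For (ii), I would argue analogously. By the preceding lemma $\vr(\basic(\V))\in\Omega(\BH)$, and by part (i) of that same lemma $\hspu(\basic(\V))=\basic(\V)$, so $\basic(\V)$ is precisely the class of chains (equivalently, the FSI members) of $\vr(\basic(\V))$. Restricting to finite index gives $\basicfc(\V)=\basic(\V)_\mathrm{fc}=\vr(\basic(\V))_\mathrm{fc}$. Since $\vr(\basic(\V))\in\Omega(\BH)$, Theorem~\ref{t:BH-AP-char}---together with the isomorphism $\alg{\BHAP,\subseteq}\cong\Omega(\BH)$---shows that $\vr(\basic(\V))_\mathrm{fc}$ is one of the classes labelling a node of the intervals $\bvarint{\m{A}}$, $\bvarint{\Cnw{n}}$, or $\bvarint{\Cn{n},\Cw}$ described in Lemmas~\ref{l:AP-basic-inteval1}--\ref{l:AP-basic-interval3}. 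Thus $\basicfc(\V)$ lies in one of those intervals.

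No step is genuinely hard; the only care needed is bookkeeping around the ``non-trivial members'' convention for $[\m{A}]$ when the first summand is an MV-algebra, and checking that $\basic(\V)$ really is the full class of chains of $\vr(\basic(\V))$, so that restricting to finite index commutes with passing to the generated variety. Both reduce to the $\hspu$-closure of $\luk(\V)$ and $\basic(\V)$ noted above.
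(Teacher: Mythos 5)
Your proof is correct and takes essentially the same route as the paper: the paper states this corollary without proof, treating it as immediate from the two preceding lemmas together with Corollary~\ref{c:AP-MV} and the Section~4 classification, and your argument fills in exactly that derivation. In particular, your key promotion step---using that $\luk(\V)$ and $\basic(\V)$ are $\hspu$-closed classes of chains, so that they coincide with the totally ordered members of the varieties they generate---is precisely the bookkeeping the authors leave implicit.
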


As in the previous section, we first establish needed results regarding the classes of finite index chains of varieties in $\Omega(\BL)$.

\begin{lemma}\label{l:Vfc-regular-expr}
Let $\m{A}$ be a non-trivial totally ordered MV-algebra and $\V$ be a variety of basic hoops. Then $\vr([\m{A}] \oplus \Vfc)_\mathrm{fc} = [\m{A}] \oplus \Vfc$.
\end{lemma}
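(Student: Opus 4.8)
The plan is to mimic the proof of Lemma~\ref{l:basic-Vfc-regular-expr}: enlarge the target class to one that is patently closed under $\hspu$ and whose finite-index members are exactly $[\m{A}]\oplus\Vfc$. Write $\K = [\m{A}]\oplus\Vfc$ and let $\V' = \Vfsi$ denote the class of all totally ordered members of $\V$, now with no restriction on the index, and set $\K' = [\m{A}]\oplus\V'$. The inclusion $\K\subseteq\vr(\K)_\mathrm{fc}$ is immediate, since every member of $\K$ is a finite-index totally ordered BL-algebra lying in $\vr(\K)$. For the reverse inclusion, recall from the preliminaries that, by semilinearity and J\'onsson's Lemma, the totally ordered members of $\vr(\K)$ are exactly $\hspu(\K)$; hence $\vr(\K)_\mathrm{fc}$ is the class of finite-index members of $\hspu(\K)$. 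Since a member $\m{A}'\oplus\m{C}$ of $\K'$ has finite index precisely when $\m{C}$ does (the totally ordered MV-algebra $\m{A}'$ being always sum-indecomposable), we have $\K'_\mathrm{fc} = [\m{A}]\oplus\Vfc = \K$, and moreover $\K\subseteq\K'$. It therefore suffices to prove that $\hspu(\K') = \K'$: this gives $\hspu(\K)\subseteq\hspu(\K') = \K'$, and restricting to finite index yields $\vr(\K)_\mathrm{fc}\subseteq\K'_\mathrm{fc} = \K$.

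First I would check closure under $\sub$ and $\pu$. For ultraproducts, given a family $\{\m{A}^x\oplus\m{C}^x\}_{x\in X}\subseteq\K'$ and an ultrafilter $U$ over $X$, the bounded analogue of Lemma~\ref{l:ultraprod-distrib-ordsum} gives $\prod_{x}(\m{A}^x\oplus\m{C}^x)/U\cong(\prod_x\m{A}^x/U)\oplus(\prod_x\m{C}^x/U)$, where the first factor lies in $\pu([\m{A}])\subseteq[\m{A}]$ and the second in $\pu(\V')\subseteq\V'$, so the quotient is again in $\K'$. For subalgebras, Lemma~\ref{l:ISPU-ordinalsum}(i) applied to the binary decomposition $\m{A}'\oplus\m{C}$ shows that any subalgebra is isomorphic to some $\m{B}_0\oplus\m{B}_1$ with $\m{B}_0\in\sub\pu(\m{A}')$ and $\m{B}_1\in\sub\pu(\m{C})$; because we work in the category of BL-algebras, the bottom block $\m{B}_0$ contains the designated bounds $0\neq 1$ and is thus a non-trivial totally ordered MV-algebra in $[\m{A}]$, while $\m{B}_1\in\sub\pu(\m{C})\subseteq\V'$ as $\V'$ is closed under $\sub$ and $\pu$. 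Hence $\sub\pu(\K')\subseteq\K'$.

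Next I would check closure under $\hm$ using Lemma~\ref{l:ISPU-ordinalsum}(ii), which for the binary sum reads $\hm(\m{A}'\oplus\m{C}) = \hm(\m{A}')\cup(\m{A}'\oplus\hm(\m{C}))$. A non-trivial homomorphic image of the MV-chain $\m{A}'$ again belongs to $[\m{A}]$, hence to $\K'$ when regarded as having trivial basic-hoop tail, and $\hm(\m{C})\subseteq\V'$ since the totally ordered members of the variety $\V$ are closed under homomorphic images; therefore $\m{A}'\oplus\hm(\m{C})\subseteq[\m{A}]\oplus\V' = \K'$. The only homomorphic image falling outside this description is the trivial algebra, which is absorbed by the usual convention of counting it among the degenerate members of every such class. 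Combining the three closures yields $\hspu(\K') = \K'$, completing the argument.

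The main obstacle is not any single computation but rather keeping the bottom MV-component under control: one must ensure that forming subalgebras and homomorphic images never degenerates the first summand out of $[\m{A}]$ nor promotes a basic-hoop summand into the MV-position. This is exactly where the designated-bounds convention (forcing the bottom block to contain $0\neq 1$) and the precise statements of Lemma~\ref{l:ISPU-ordinalsum} are indispensable; a secondary point requiring care is that Lemma~\ref{l:ultraprod-distrib-ordsum}, though stated for basic hoops, must be invoked in its bounded BL-chain form.
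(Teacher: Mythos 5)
Your proposal is correct and follows exactly the paper's argument: pass to the enlarged class $[\m{A}]\oplus\Vfsi$, observe that its finite-index part is $[\m{A}]\oplus\Vfc$ and that it is closed under $\hspu$, and conclude by the sandwich $[\m{A}]\oplus\Vfc \subseteq \vr([\m{A}]\oplus\Vfc)_\mathrm{fc} \subseteq ([\m{A}]\oplus\Vfsi)_\mathrm{fc} = [\m{A}]\oplus\Vfc$. The only difference is that you spell out the $\hspu$-closure verification (and the trivial-algebra corner case) that the paper dismisses as ``straightforward to check,'' which is a harmless elaboration.
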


\begin{proof}
Note that $([\m{A}] \oplus \Vfsi)_\mathrm{fc} = [\m{A}] \oplus \Vfc$ and it is straightforward to check that $\hspu([\m{A}] \oplus \Vfsi) = [\m{A}] \oplus \Vfsi$. So we get $ [\m{A}] \oplus \Vfc \subseteq \vr([\m{A}] \oplus \Vfc)_\mathrm{fc}  \subseteq ([\m{A}] \oplus \Vfsi)_\mathrm{fc} = [\m{A}] \oplus \Vfc$.
\end{proof}

\begin{lemma}\label{l:AP-regularexpr-BL}
If $\m{A}$ is a non-trivial totally ordered MV-algebra and $\K$ is a class of totally ordered basic hoops with the (essential) amalgamation property, then $[\m{A}]\oplus \K$ has the (essential) amalgamation property.
\end{lemma}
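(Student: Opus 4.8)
The plan is to reduce an amalgamation problem in $[\m{A}]\oplus\K$ to two independent problems: one in the bottom MV-component, where I can use the full amalgamation property of $[\m{A}]$, and one in the basic-hoop tail, where I use the hypothesis on $\K$. First I would take a span $\pair{\phi_1\colon\m{C}\to\m{D},\phi_2\colon\m{C}\to\m{E}}$ in $[\m{A}]\oplus\K$ and split each chain along its unique bottom-MV/tail decomposition from Lemma~\ref{lem:Agl}(iii): write $\m{C}=\m{C}_0\oplus\m{C}_1$, $\m{D}=\m{D}_0\oplus\m{D}_1$, $\m{E}=\m{E}_0\oplus\m{E}_1$ with $\m{C}_0,\m{D}_0,\m{E}_0\in[\m{A}]$ and $\m{C}_1,\m{D}_1,\m{E}_1\in\K$.

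The first key step is that the embeddings respect this splitting. The bottom component of a BL-chain is precisely the sum-component containing $0$, i.e. the subset cut out by the condition $(x\to 0)\to 0=(0\to x)\to x$ of Lemma~\ref{l:component-equation}; since this condition is expressed purely in the BL-signature, any embedding both preserves and reflects it, so $\phi_1,\phi_2$ restrict to embeddings $\phi_k^0\colon\m{C}_0\to(\cdot)_0$ on the bottom components and $\phi_k^1\colon\m{C}_1\to(\cdot)_1$ on the tails (alternatively one invokes Lemma~\ref{l:BL-ordsum-embed}(ii)). This decomposes the original span into a span $\pair{\phi_1^0,\phi_2^0}$ in $[\m{A}]$ and a span $\pair{\phi_1^1,\phi_2^1}$ in $\K$.

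For the full AP version I would then amalgamate the two pieces separately and recombine: $[\m{A}]=\vr(\m{A})_{\mathrm{FSI}}$ has the amalgamation property by Lemma~\ref{l:one-chain}(i), and $\K$ has it by hypothesis, yielding amalgams $\m{F}_0\in[\m{A}]$ and $\m{F}_1\in\K$ with embeddings $\psi_k^0,\psi_k^1$. Setting $\m{F}=\m{F}_0\oplus\m{F}_1\in[\m{A}]\oplus\K$ and $\psi_k=\psi_k^0\oplus\psi_k^1$, the ordinal sum of embeddings is again an embedding (Lemma~\ref{l:BL-ordsum-embed}) and the square commutes componentwise, so $\pair{\psi_1,\psi_2}$ is the desired amalgam. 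For the essential version the only extra point, when the tail $\m{C}_1$ is non-trivial, is that the tail span is itself essential: by Lemma~\ref{l:ess-ordinalsum} essentiality of $\phi_2$ is detected on the last Wajsberg component, which then lies in the tail, so $\phi_2^1$ is essential and the essential AP of $\K$ applies, while the bottom span still needs only the \emph{full} AP of $[\m{A}]$.

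The part requiring the most care, and the main obstacle, is the degenerate case in the essential version where $\m{C}_1$ is trivial, so that $\m{C}=\m{C}_0$ is a single MV-chain whose image lies entirely in $\m{E}_0$. Here I would argue that essentiality of $\phi_2$ forces $\m{E}_1$ to be trivial as well: if $\m{E}_1$ had a non-top element $e<1$, the deductive filter generated by $e$ is contained in $\m{E}_1$ and its associated congruence is non-trivial yet restricts to the identity on $\m{E}_0\supseteq\phi_2[\m{C}]$, contradicting essentiality. Thus $\m{E}=\m{E}_0\in[\m{A}]$ and the tail span collapses to the essential span $\pair{\phi_1^1\colon\m{C}_1\to\m{D}_1,\ \phi_2^1\colon\m{C}_1\to\m{E}_1}$ between trivial algebras mapping into $\m{D}_1$, which the essential AP of $\K$ handles directly. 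Verifying that these boundary cases assemble correctly—and that all constructed maps remain embeddings making the square commute—is the only genuinely fiddly bookkeeping; the substantive content is carried entirely by Lemma~\ref{l:one-chain}(i), Lemma~\ref{l:ess-ordinalsum}, and the assumed (essential) amalgamation property of $\K$.
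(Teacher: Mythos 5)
Your proof is correct and follows essentially the same route as the paper's: decompose each chain into its MV-bottom and basic-hoop tail, restrict the span componentwise, amalgamate the bottoms in $[\m{A}]$ via Lemma~\ref{l:one-chain} and the tails in $\K$ by hypothesis, then recombine by ordinal sum. Your explicit treatment of the degenerate case (trivial source tail forcing the target tail to be trivial under essentiality) is a detail the paper's proof glosses over, and your filter-congruence argument for it is sound.
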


\begin{proof}
Let $\pair{\phi_1 \colon \m{B} \to \m{C}, \phi_2 \colon \m{B} \to \m{D}}$ be an (essential) span in $[\m{A}]\oplus \K$. Then $\m{B} = \m{B}_0 \oplus \m{B}_1$,  $\m{C} = \m{C}_0 \oplus \m{C}_1$, and  $\m{D} = \m{D}_0 \oplus \m{D}_1$ with $\m{B}_0,\m{C}_0,\m{D}_0 \in [\m{A}]$ and $\m{B}_1,\m{C}_1,\m{D}_1 \in \K$. So the span $\pair{\phi_1,\phi_2}$ restricts to a span $\pair{\phi_1^0 \colon \m{B}_0 \to \m{C}_0, \phi_2^0 \colon \m{B}_0 \to \m{D}_0}$ in  $[\m{A}]$ and an (essential) span $\pair{\phi_1^1 \colon \m{B}_1 \to \m{C}_1, \phi_2^1 \colon \m{B}_1 \to \m{D}_1}$ in  $\K$. Since $[\m{A}]$ has the AP, by Lemma~\ref{l:one-chain}, and $\K$ has the (essential) AP by assumption, the spans have  amalgams  $\pair{\psi_1^0 \colon \m{C}_0 \to \m{E}_0, \psi_2^0 \colon \m{D}_0 \to \m{E}_0}$ in $[\m{A}]$ and $\pair{\psi_1^1 \colon \m{C}_1 \to \m{E}_1, \psi_2^1 \colon \m{D}_1 \to \m{E}_1}$ in $\K$, respectively. Thus the span $\pair{\phi_1,\phi_2}$ has an amalgam $\pair{\psi_1 \colon \m{C} \to \m{E}, \psi_2 \colon \m{D} \to \m{E}}$ in $[\m{A}] \oplus \K$ with $\m{E} = \m{E}_0 \oplus \m{E}_1$.
\end{proof}

\begin{lemma}\label{l:AP-union-Smw}
Let $\K$ be a class of totally ordered BL-algebras with the essential amalgamation property such that $\iso\sub(\K) = \K$ and  $[\Sm{m}]  = \luk(\K)\subseteq \K$ for some $m\geq 1$. Then $\K \cup [\Smw{m}]$ has the essential amalgamation property.
\end{lemma}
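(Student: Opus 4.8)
The plan is to exhibit $\K \cup [\Smw{m}]$ as a union $\K_1 \cup \K_2$ with $\K_1 = \K$ and $\K_2 = [\Smw{m}]$ and to invoke Lemma~\ref{l:ess-AP-union}(i) (with the class denoted $\K$ in that lemma being our $\K \cup [\Smw{m}]$). This requires checking its three hypotheses: that $\K_1$ and $\K_2$ are closed under $\iso\sub$, that both have the essential amalgamation property, and that $\K_1 \cap \K_2$ is essentially closed in $\K_1 \cup \K_2$.

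The first two conditions are quick. I have $\iso\sub(\K) = \K$ by hypothesis, and $\iso\sub([\Smw{m}]) = [\Smw{m}]$ because a subalgebra of a non-trivial totally ordered MV-algebra in $\vr(\Smw{m})$ is again such an algebra (any subalgebra contains $0 \neq 1$ and so is non-trivial). For the essential AP: $\K$ has it by hypothesis, while $[\Smw{m}]$ is the class of non-trivial totally ordered members of the one-chain generated variety $\vr(\Smw{m})$, which has the amalgamation property by Lemma~\ref{l:one-chain}(i), and hence the essential AP.

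The heart of the argument is the third condition. First I would identify the intersection as $\K \cap [\Smw{m}] = [\Sm{m}]$: any member of $[\Smw{m}]$ is a totally ordered MV-algebra, so if it also lies in $\K$ it is its own first component and hence lies in $\luk(\K) = [\Sm{m}]$; conversely $[\Sm{m}] = \luk(\K) \subseteq \K$ by hypothesis, and $[\Sm{m}] \subseteq [\Smw{m}]$ since $\Sm{m}$ is a homomorphic image of $\Smw{m}$, giving $\hspu(\Sm{m}) \subseteq \hspu(\Smw{m})$. It then remains to show $[\Sm{m}]$ is essentially closed in $\K \cup [\Smw{m}]$, and since the codomain of any essential embedding lands in one of the two pieces, it suffices to show $[\Sm{m}]$ is essentially closed in each of $\K$ and $[\Smw{m}]$ separately. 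Essential closedness in $[\Smw{m}]$ is precisely Lemma~\ref{l:ess-closed-BL}(i). For essential closedness in $\K$, I would pass to the variety $\V = \vr(\K)$: since $\K \subseteq \Vfsi$ and $[\Sm{m}] = \luk(\K) \subseteq \luk(\V)$, an essential embedding of a member of $[\Sm{m}]$ into some $\m{B} \in \K \subseteq \Vfsi$ has codomain in $\luk(\V)$ by Lemma~\ref{l:ess-closed-BL}(ii), so $\m{B}$ is a totally ordered MV-algebra; being in $\K$, it is its own first component and hence lies in $\luk(\K) = [\Sm{m}]$.

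The main obstacle is exactly this last point: that an essential extension within the BL-chains of $\K$ of a totally ordered MV-algebra must again have index $1$. Intuitively, if $\m{B} = \bigoplus_{j=0}^{k} \m{B}_j$ had a component above $\m{B}_0$, then collapsing everything above $\m{B}_0$ is a non-trivial congruence whose restriction to the MV-subalgebra sitting inside $\m{B}_0$ is trivial, contradicting essentialness; this is the content of Lemma~\ref{l:ess-ordinalsum} underlying Lemma~\ref{l:ess-closed-BL}(ii), which I invoke rather than reprove. With all three hypotheses of Lemma~\ref{l:ess-AP-union}(i) verified, that lemma yields that $\K \cup [\Smw{m}]$ has the essential amalgamation property.
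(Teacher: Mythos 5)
Your proof is correct and follows essentially the same route as the paper: both decompose $\K \cup [\Smw{m}]$ as a union, verify the hypotheses of Lemma~\ref{l:ess-AP-union}(i), and reduce the key point---that $\K \cap [\Smw{m}] = [\Sm{m}]$ is essentially closed in the union---to Lemma~\ref{l:ess-closed-BL}. The only cosmetic difference is that the paper cites Lemma~\ref{l:ess-closed-BL}(iii) directly, whereas you re-derive that statement from parts (i) and (ii) (which is exactly how the paper proves (iii) anyway), and you spell out the routine verifications (the $\iso\sub$-closure and the identification of the intersection) that the paper leaves implicit.
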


\begin{proof}
Note that $\K \cap [\Smw{m}] = [\Sm{m}]$ is essentially closed in $\K \cup [\Smw{m}]$, by Lemma~\ref{l:ess-closed-BL}(iii). Moreover, $\K$ and $[\Smw{m}]$ have the essential AP. Hence the claim follows from Lemma~\ref{l:ess-AP-union}.
\end{proof}

\begin{lemma}\label{l:AP-union-regexpr}
Let $\m{A}_1$ and $\m{A}_2$ be totally ordered MV-algebras such that $[\m{A}_1] \subseteq [\m{A}_2]$ is essentially closed in $[\m{A}_2]$, let $\m{B}_1$ and $\m{B}_2$ be independent Wajsberg chains, and $\K_i \in \set{[\m{B}_i], [\m{B}_i^\ast]}$ for $i=1,2$. Then $([\m{A}_1] \oplus \K_1) \cup ([\m{A}_2]\oplus \K_2)$ has the essential AP.
\end{lemma}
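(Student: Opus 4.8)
The plan is to apply Lemma~\ref{l:ess-AP-union}(i) to the two classes $\mathcal{L}_1 := [\m{A}_1]\oplus\K_1$ and $\mathcal{L}_2 := [\m{A}_2]\oplus\K_2$, whose union is exactly the class in the statement. Three conditions must then be verified: that each $\mathcal{L}_i$ is closed under $\iso\sub$; that each $\mathcal{L}_i$ has the essential AP; and that $\mathcal{L}_1\cap\mathcal{L}_2$ is essentially closed in $\mathcal{L}_1\cup\mathcal{L}_2$.

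The first two conditions are routine. Each $\K_i\in\{[\m{B}_i],[\m{B}_i^\ast]\}$ has the AP by Lemma~\ref{l:AP-2comp}, hence the essential AP, so $\mathcal{L}_i=[\m{A}_i]\oplus\K_i$ has the essential AP by Lemma~\ref{l:AP-regularexpr-BL}. Closure of $\mathcal{L}_i$ under $\iso\sub$ follows from the corresponding closure of $[\m{A}_i]$ and $\K_i$ together with the fact that a subalgebra of a finite ordinal sum is itself an ordinal sum of subalgebras of the components (Lemma~\ref{l:ISPU-ordinalsum}(i)), exactly as already exploited in the proof of Lemma~\ref{l:Vfc-regular-expr}. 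Before addressing essential closedness I would first identify the intersection: if $\m{X}\in\mathcal{L}_1\cap\mathcal{L}_2$, writing $\m{X}=\m{M}_1\oplus\m{H}_1=\m{M}_2\oplus\m{H}_2$ with $\m{M}_i\in[\m{A}_i]$ and $\m{H}_i\in\K_i$, the uniqueness of the ordinal sum decomposition (Lemma~\ref{lem:Agl}(iii)) gives $\m{H}_1\cong\m{H}_2\in\K_1\cap\K_2$. Since $\m{B}_1$ and $\m{B}_2$ are independent, each Wajsberg component of such a hoop lies in $\hspu(\m{B}_1)\cap\hspu(\m{B}_2)\subseteq\vr(\m{B}_1)\cap\vr(\m{B}_2)$, which contains only trivial algebras; hence $\m{H}_1$ is trivial and $\m{X}=\m{M}_1\in[\m{A}_1]$. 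Using $[\m{A}_1]\subseteq[\m{A}_2]$ for the reverse inclusion, this shows $\mathcal{L}_1\cap\mathcal{L}_2=[\m{A}_1]$.

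The hard part will be verifying that $[\m{A}_1]$ is essentially closed in $\mathcal{L}_1\cup\mathcal{L}_2$. I would argue as follows. Let $\m{X}\in[\m{A}_1]$ and let $\phi\colon\m{X}\to\m{Y}$ be an essential embedding with $\m{Y}\in\mathcal{L}_i$, say $\m{Y}=\m{M}\oplus\m{H}$ where $\m{M}\in[\m{A}_i]$ and $\m{H}\in\K_i$. The key observation is that $\m{X}$ is a single (MV) component, so by Lemma~\ref{l:BL-ordsum-embed}(ii) the embedding $\phi$ sends $\m{X}$ into the first component $\m{M}$ of $\m{Y}$. If $\m{H}$ were non-trivial, then the top component of $\m{Y}$ would be a component of $\m{H}$, distinct from $\m{M}$, so $\phi[\m{X}]$ would fail to lie in the top component of $\m{Y}$, contradicting essentiality by Lemma~\ref{l:ess-ordinalsum}. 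Hence $\m{H}$ is trivial and $\m{Y}=\m{M}\in[\m{A}_i]$.

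It then remains to close the loop. If $i=1$ we are already done. If $i=2$, then $\phi\colon\m{X}\to\m{Y}$ is an essential embedding with $\m{X}\in[\m{A}_1]$ and $\m{Y}\in[\m{A}_2]$, so $\m{Y}\in[\m{A}_1]$ by the hypothesis that $[\m{A}_1]$ is essentially closed in $[\m{A}_2]$. (The degenerate case where $\m{X}$ is trivial forces $\m{Y}$ trivial by essentiality, so it lies in $[\m{A}_1]$ as well.) In every case $\m{Y}\in[\m{A}_1]=\mathcal{L}_1\cap\mathcal{L}_2$, establishing the third condition. With all three hypotheses in hand, Lemma~\ref{l:ess-AP-union}(i) yields that $\mathcal{L}_1\cup\mathcal{L}_2=([\m{A}_1]\oplus\K_1)\cup([\m{A}_2]\oplus\K_2)$ has the essential AP.
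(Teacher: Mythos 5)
Your proof is correct and follows essentially the same route as the paper: identify the intersection as $[\m{A}_1]$ using independence, establish that $[\m{A}_1]$ is essentially closed in the union via the ordinal-sum structure of essential embeddings (Lemma~\ref{l:ess-ordinalsum}), obtain the essential AP of each $[\m{A}_i]\oplus\K_i$ from Lemmas~\ref{l:AP-2comp} and~\ref{l:AP-regularexpr-BL}, and conclude with Lemma~\ref{l:ess-AP-union}(i). The only difference is that you spell out details the paper leaves implicit, namely the $\iso\sub$-closure of the two classes and the component-tracking argument (via Lemma~\ref{l:BL-ordsum-embed}(ii)) behind the essential-closedness step.
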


\begin{proof}
Note that, since $\m{B}_1$ and $\m{B}_2$ are independent, $([\m{A}_1] \oplus \K_1) \cap ([\m{A}_2]  \oplus \K_2)= [\m{A}_1]$ which is essentially closed in $[\m{A}_2]$ by assumption. Hence it is also essentially closed in $([\m{A}_1] \oplus \K_1) \cup ([\m{A}_2]\oplus \K_2)$, by Lemma~\ref{l:ess-ordinalsum}. Moreover, by Lemma~\ref{l:AP-2comp},  $\K_1$ and $\K_2$ have the essential AP. Hence Lemma~\ref{l:AP-regularexpr-BL} yields that $([\m{A}_1] \oplus \K_1)$ and $ ([\m{A}_2]\oplus \K_2)$ have the essential AP. But also, by Lemma~\ref{l:one-chain}, $[\m{A}_1]$ has the AP. Thus Lemma~\ref{l:ess-AP-union} yields that $([\m{A}_1] \oplus \K_1) \cup ([\m{A}_2]\oplus \K_2)$ has the essential AP.
\end{proof}

As for the basic hoop case, we will again need some closure properties to complete our classification. 

\begin{lemma}\label{l:ordsum-part-embed}
Let $\V\in\Omega(\BL)$ be such that $\m{A}_0 \oplus \m{A}_1 \in \Vfc$, where  $\m{A}_0$ is a non-trivial totally ordered MV-algebra and $\m{A}_1$ a totally ordered basic hoop. 
\begin{enumerate}[label = \textup{(\roman*)}]
\item If $\m{B}_0 \in \luk(\V)$ and $\m{A}_0 \leq \m{B}_0$ is an essential extension, then $\m{B}_0 \oplus \m{A}_1 \in \Vfc$.
\item If $\m{B}_0 \in \luk(\V)$ is simple, then $\m{B}_0 \oplus \m{A}_1 \in \Vfc$.
\item  If for $m\geq 1$, $\Smw{m} \in \luk(\V)$ and $\m{A}_0 \in [\Smw{m}] \setminus [\Sm{m}]$, then $\Smw{m} \oplus \m{A}_1 \in \Vfc$.
\item If $\m{B}_0 \oplus \m{B}_1 \in \Vfc$ such that $\m{A}_0 \leq \m{B}_0$, $\m{B}_1 \leq \m{A}_1$, and $\m{B}_1$ is non-trivial, then $\m{B}_0 \oplus \m{A}_1 \in \Vfc$.
\item If $\m{B}_0 \oplus \m{B}_1 \in \Vfc$ such that $\m{A}_1 \leq \m{B}_1$ is an essential extension, then $\m{A}_0 \oplus \m{B}_1 \in \Vfc$.
\end{enumerate}
\end{lemma}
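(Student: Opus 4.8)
The unifying tool is Corollary~\ref{c:Vfc-AP}, which guarantees that $\Vfc$ has the essential amalgamation property since $\V \in \Omega(\BL)$. Throughout I would also use that $\Vfc$ is closed under $\iso\sub$ (subalgebras of finite-index BL-chains are again finite-index BL-chains by Lemma~\ref{l:BL-ordsum-embed}), the structural description of embeddings between finite ordinal sums in Lemma~\ref{l:BL-ordsum-embed}(ii), and the essentiality criterion of Lemma~\ref{l:ess-ordinalsum}. The common pattern for (i), (iv), and (v) is to exhibit an auxiliary chain in $\Vfc$ that maps into both $\m{A}_0 \oplus \m{A}_1$ and one of the given chains, with the $i_2$-side an essential embedding, apply the essential AP to obtain an amalgam $\m{D} \in \Vfc$, and then read off the desired ordinal sum as a subalgebra of $\m{D}$. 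Since $f(0) = 0$ for every BL-chain embedding and the relevant overlapping component is non-trivial, the image of the first MV-component and the image of the tail land in the correct components of $\m{D}$, so the subalgebra they generate is exactly the claimed ordinal sum.

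For (i), I would use the span $\pair{\m{A}_0 \hookrightarrow \m{A}_0 \oplus \m{A}_1, \ \m{A}_0 \hookrightarrow \m{B}_0}$, essential on the right by hypothesis; as $\m{A}_0$ is non-trivial, the amalgam $\m{D}$ contains $\m{B}_0 \oplus \m{A}_1$ as a subalgebra. Part (ii) then follows immediately: since $\m{A}_0$ is a non-trivial MV-algebra we have $\m{2} \oplus \m{A}_1 \leq \m{A}_0 \oplus \m{A}_1$, so $\m{2} \oplus \m{A}_1 \in \Vfc$, and $\m{2} \leq \m{B}_0$ is essential because $\m{B}_0$ is simple; applying (i) with $\m{2}$ in the role of $\m{A}_0$ yields $\m{B}_0 \oplus \m{A}_1 \in \Vfc$.

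For (iv), I would take the apex $\m{A}_0 \oplus \m{B}_1 \leq \m{A}_0 \oplus \m{A}_1$ (hence in $\Vfc$), with $i_1\colon \m{A}_0 \oplus \m{B}_1 \to \m{A}_0 \oplus \m{A}_1$ and $i_2\colon \m{A}_0 \oplus \m{B}_1 \to \m{B}_0 \oplus \m{B}_1$. Because $\m{B}_1$ is non-trivial, $i_2$ restricts to the identity on the last component and is therefore essential by Lemma~\ref{l:ess-ordinalsum}, and the amalgam yields $\m{B}_0 \oplus \m{A}_1$ as a subalgebra. Part (v) is analogous but uses the apex $\m{2} \oplus \m{A}_1$ (again in $\Vfc$ since $\m{2} \leq \m{A}_0$), with essential side $\m{2} \oplus \m{A}_1 \to \m{B}_0 \oplus \m{B}_1$; this is essential because $\m{A}_1 \leq \m{B}_1$ is, so the induced last-component embedding is essential by Lemma~\ref{l:ess-ordinalsum}. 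Here one first dispatches the trivial case $\m{A}_1 = \{1\}$, where essentiality forces $\m{B}_1$ trivial and the claim reduces to $\m{A}_0 \in \Vfc$.

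The genuinely BL-specific step, and the main obstacle, is (iii). The temptation is to mimic Lemma~\ref{l:basic-ord-closure}(iii) and replace $\m{A}_0$ by its radical via $\iso\sub\pu$; but $\Rad(\Smw{m}) = \Cw$ is cancellative, so $\Cw \oplus \m{A}_1$ is merely a basic hoop and cannot lie in $\Vfc$. The remedy is to pass through the disconnected rotation. Since $\m{A}_0 \in [\Smw{m}]\setminus[\Sm{m}]$ is infinite, $\rad(\m{A}_0) \neq \{1\}$, so Corollary~\ref{c:ISPU-rotation} gives $\rot{\Cw} = \rot{\Rad(\Smw{m})} \in \iso\sub\pu(\m{A}_0)$, and crucially $\rot{\Cw}$ is a totally ordered MV-algebra by Lemma~\ref{l:rotation}(i), so $\rot{\Cw} \oplus \m{A}_1$ is a genuine BL-chain. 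Lemma~\ref{l:ISPU-ordinalsum}(i) then yields $\rot{\Cw} \oplus \m{A}_1 \in \iso\sub\pu(\m{A}_0 \oplus \m{A}_1) \subseteq \Vfc$ (of finite index, since $\rot{\Cw}$ has index $1$ and $\m{A}_1$ finite index). Finally, $\rot{\Cw} = \rot{\Rad(\Smw{m})} \hookrightarrow \Smw{m}$ is essential by Lemma~\ref{l:rad-ess}(ii), so applying part (i) with $\rot{\Cw}$ in the role of $\m{A}_0$ and $\Smw{m} \in \luk(\V)$ in the role of $\m{B}_0$ delivers $\Smw{m} \oplus \m{A}_1 \in \Vfc$.
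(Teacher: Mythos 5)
Your proof is correct and follows essentially the same route as the paper's: the same essential spans resolved via the essential amalgamation property (Corollary~\ref{c:Vfc-AP}) for (i), (iv), and (v), the same reduction of (ii) to (i) through $\m{2}$, and the same rotation-of-the-radical argument (Corollary~\ref{c:ISPU-rotation}, Lemma~\ref{l:rad-ess}, then part (i)) for (iii). The differences are only cosmetic, e.g.\ your explicit identification $\Rad(\Smw{m}) \cong \Cw$ and your separate treatment of the trivial case in (v).
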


\begin{proof}
First we note that, since $\V$ has the AP, $\Vfc$ has the essential AP, by Corollary~\ref{c:Vfc-AP}.

(i) Note that the span $\pair{\phi_1 \colon \m{A}_0 \to \m{A}_0 \oplus \m{A}_1, \phi_2 \colon \m{A}_0 \to \m{B}_0}$ with the obvious inclusion embeddings is essential. Thus it has an amalgam $\pair{\psi_1 \colon \m{A}_0 \oplus \m{A}_1 \to \m{C},\psi_2 \colon \m{B}_0 \to \m{C} }$ in $\Vfc$. Since $\psi_2(\m{B}_0)$ is  in the first component of $\m{C}$ it follows that $\m{C}$ contains a subalgebra that is isomorphic to $\m{B}_0 \oplus \m{A}_1$. Thus $\m{B}_0 \oplus \m{A}_1 \in \Vfc$.

(ii) Note that $\m{2} \leq \m{A}_0$, so $\m{2} \oplus \m{A}_1 \in \Vfc$. Moreover, $\m{2} \leq \m{B}_0$ is essential, since $\m{B}_0$ is simple. Hence, by part (i), $\m{B}_0 \oplus \m{A}_1 \in \Vfc$.

(iii) Since $\m{A}_0 \in [\Smw{m}] \setminus [\Sm{m}]$ it follows from \cite[Theorem 2.1]{Komori1981} that $\m{A}_0$ is infinite, so $\rad(\m{A}_0) \neq \{1\}$. Hence, $\rot{\Rad(\Smw{m})} \in \iso\sub\pu(\m{A}_0)$, by Corollary~\ref{c:ISPU-rotation}. But then, by Lemma~\ref{l:ISPU-ordinalsum}, $\rot{\Rad(\Smw{m})} \oplus \m{A}_1 \in \Vfc$. Now, since $\rot{\Rad(\Smw{m})} \leq \Smw{m}$ is essential, by Lemma~\ref{l:rad-ess}, it follows from part (i) that $\Smw{m} \oplus \m{A}_1 \in \Vfc$.

(iv) Note first that $\m{A}_0 \oplus \m{B}_1 \in \Vfc$, since $\m{B}_1 \leq \m{A}_1$. Moreover, since $\m{B}_1$ is non-trivial, the span $\pair{\phi_1 \colon \m{A}_0 \oplus \m{B}_1 \to \m{A}_0 \oplus \m{A}_1, \phi_2 \colon \m{A}_0 \oplus \m{B}_1 \to \m{B}_0 \oplus \m{B}_1}$ in $\Vfc$ with the obvious inclusion embeddings is essential. Thus it has an amalgam $\pair{\psi_1 \colon \m{A}_0 \oplus \m{A}_1 \to \m{C}, \psi_2 \colon  \m{B}_0 \oplus \m{B}_1 \to  \m{C}}$ in $\Vfc$. Since $\psi_2(\m{B}_0)$ is in the first component of $\m{C}$ it follows that $\m{C}$ contains a subalgebra that is isomorphic to $\m{B}_0 \oplus \m{A}_1$. Thus $\m{B}_0 \oplus \m{A}_1 \in \Vfc$.

(v) Note first that $\m{2} \oplus \m{A}_1 \in \Vfc$, since $\m{2} \leq \m{A}_0$. Moreover, by Lemma~\ref{l:ess-ordinalsum}, the span $\pair{\phi_1\colon \m{2} \oplus \m{A}_1 \to \m{A}_0 \oplus \m{A}_1,\phi_2 \colon \m{2} \oplus \m{A}_1 \to \m{B}_0 \oplus \m{B}_1}$ in $\Vfc$ is essential.  Thus it has an amalgam $\pair{\psi_1 \colon \m{A}_0 \oplus \m{A}_1 \to \m{C}, \psi_2 \colon  \m{B}_0 \oplus \m{B}_1 \to  \m{C}}$ in $\Vfc$. Since $\psi_1(\m{A}_0)$ is in the first component of $\m{C}$ it follows that $\m{C}$ contains a subalgebra that is isomorphic to $\m{A}_0 \oplus \m{B}_1$. Thus $\m{A}_0 \oplus \m{B}_1 \in \Vfc$.
\end{proof}

Recall that for a class of algebras $\K$ we denote by $\hm^+(\K)$ the class of non-trivial homomorphic images of members of $\K$.

\begin{lemma}\label{l:BL-pw-hom}
Let $\V\in\Omega(\BL)$. If $\bigoplus_{i=0}^n \m{A}_i \in \Vfc$, with $\m{A}_0$ a totally ordered MV-algebra and $\m{A}_i$ a Wajsberg chain for $i\geq 1$, then  $\hm^+(\m{A}_0) \oplus (\bigoplus_{i=1}^n \hm(\m{A}_i ))\subseteq \Vfc$.
\end{lemma}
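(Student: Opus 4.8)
The plan is to prove the statement by replacing the summands one at a time and composing the resulting replacements, using that $\Vfc$ has the essential amalgamation property (Corollary~\ref{c:Vfc-AP}) together with the closure principles of Lemma~\ref{l:ordsum-part-embed}. Writing $\m{B}_0\in\hm^+(\m{A}_0)$ and $\m{B}_i\in\hm(\m{A}_i)$ for the target components, the key device is to route each component through an \emph{essential predecessor} $\m{A}_i^\flat\in\iso\sub\pu(\m{A}_i)$ that embeds essentially into $\m{B}_i$. Splitting into cases via Lemma~\ref{l:radical}, I would take $\m{A}_i^\flat$ (for $i\geq 1$) to be: the trivial algebra if $\m{B}_i$ is trivial; $\m{B}_i$ itself if $\m{B}_i$ is non-trivial cancellative (here $\m{A}_i$ is cancellative and $\m{B}_i\in\iso\sub\pu(\m{A}_i)$ by Lemma~\ref{l:ISPU-cancellative}); $\m{2}$ if $\m{B}_i$ is simple; and $\Rad(\m{B}_i)$ if $\m{B}_i$ is bounded with $\rad(\m{B}_i)\neq\{1\}$ (so $\Rad(\m{B}_i)\in\iso\sub\pu(\m{A}_i)$, again by Lemma~\ref{l:ISPU-cancellative}). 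In every case $\m{A}_i^\flat\leq\m{B}_i$ is essential, via Lemma~\ref{l:rad-ess}(i) in the radical case and because any non-trivial congruence of the simple algebra $\m{B}_i$ collapses $0$ and $1$ in the simple case. For the head I would take $\m{A}_0^\flat=\m{2}$ when $\m{B}_0$ is simple and $\m{A}_0^\flat=\rot{\Rad(\m{B}_0)}$ when $\rad(\m{B}_0)\neq\{1\}$; the latter lies in $\iso\sub\pu(\m{A}_0)$ by Corollary~\ref{c:ISPU-rotation} (note $\rad(\m{A}_0)\neq\{1\}$, since otherwise $\m{A}_0$ would be simple and admit no image with non-trivial radical), and $\m{A}_0^\flat\leq\m{B}_0$ is essential by Lemma~\ref{l:rad-ess}(ii). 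In either sub-case $\m{A}_0^\flat$ is a non-trivial totally ordered MV-algebra.

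With these predecessors in hand, Lemma~\ref{l:ISPU-ordinalsum}(i) yields $\m{S}^\flat:=\m{A}_0^\flat\oplus\bigoplus_{i=1}^n\m{A}_i^\flat\in\iso\sub\pu(\bigoplus_{i=0}^n\m{A}_i)\subseteq\Vfc$, and since $\m{A}_0^\flat$ is a non-trivial MV-algebra this is a legitimate finite-index BL-chain. The next step is to grow the head: because $\m{A}_0^\flat\leq\m{B}_0$ is essential and $\m{B}_0\in\luk(\V)$ (a non-trivial image of $\m{A}_0\in\luk(\V)$), Lemma~\ref{l:ordsum-part-embed}(i) applied to the base $\m{S}^\flat=\m{A}_0^\flat\oplus\m{T}^\flat$, where $\m{T}^\flat:=\bigoplus_{i=1}^n\m{A}_i^\flat$, gives $\m{B}_0\oplus\m{T}^\flat\in\Vfc$.

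It then remains to grow the tail from $\m{T}^\flat$ to $\m{T}:=\bigoplus_{i=1}^n\m{B}_i$. First, $\m{T}^\flat\leq\m{T}$ is essential: each summand grows essentially, and by Lemma~\ref{l:ess-ordinalsum} essentiality is governed by the last non-trivial component. Second, I would certify that $\m{T}$ occurs after \emph{some} MV head, i.e.\ $\m{2}\oplus\m{T}\in\Vfc$: indeed $\bigoplus_{i=1}^n\m{A}_i\in\basicfc(\V)$, which coincides with $\vr(\basic(\V))_\mathrm{fc}$ because $\hspu(\basic(\V))=\basic(\V)$, and since $\vr(\basic(\V))\in\Omega(\BH)$ (shown above), Lemma~\ref{l:basic-pw-hom} gives $\m{T}=\bigoplus_{i=1}^n\m{B}_i\in\basicfc(\V)$, equivalently $\m{2}\oplus\m{T}\in\Vfc$. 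Now Lemma~\ref{l:ordsum-part-embed}(v), applied to the base $\m{B}_0\oplus\m{T}^\flat\in\Vfc$ together with $\m{2}\oplus\m{T}\in\Vfc$ and the essential extension $\m{T}^\flat\leq\m{T}$, delivers $\m{B}_0\oplus\m{T}=\m{B}_0\oplus\bigoplus_{i=1}^n\m{B}_i\in\Vfc$, as required.

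The main obstacle is the tension between two facts pulling in opposite directions: passing to homomorphic images makes every summand \emph{smaller}, whereas the closure principles of Lemma~\ref{l:ordsum-part-embed} that preserve the distinguished MV head only permit one to grow components \emph{essentially}. The resolution, and the delicate point to get right, is precisely the detour through the essential predecessors $\m{A}_i^\flat\in\iso\sub\pu(\m{A}_i)$: the descent to $\m{A}_i^\flat$ is handled componentwise and head-preservingly by Lemma~\ref{l:ISPU-ordinalsum}(i), while the subsequent ascent $\m{A}_i^\flat\leq\m{B}_i$ is an essential extension and hence accessible to Lemma~\ref{l:ordsum-part-embed}(i) and (v). One must ensure that the head predecessor is genuinely rich enough, which is exactly why the radical case uses $\rot{\Rad(\m{B}_0)}\in\iso\sub\pu(\m{A}_0)$ rather than the insufficient $\m{2}$, and that the $\basic(\V)$-membership needed to invoke Lemma~\ref{l:ordsum-part-embed}(v) is imported from the already-completed classification of basic hoops.
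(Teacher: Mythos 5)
Your proof is correct, and while it uses the same basic toolbox as the paper, the way you compose the pieces is genuinely different. The paper's proof replaces \emph{one component at a time}: for the head $\m{A}_0$ it splits via Lemma~\ref{l:radical} into the simple case (Lemma~\ref{l:ordsum-part-embed}(ii)) and the radical case (descend to $\rot{\Rad(\m{B})}\in\iso\sub\pu(\m{A}_0)$ by Corollary~\ref{c:ISPU-rotation}, swap in by Lemma~\ref{l:ISPU-ordinalsum}, ascend by Lemmas~\ref{l:rad-ess} and~\ref{l:ordsum-part-embed}(i)), and for the tail components it simply asserts that the argument of Lemma~\ref{l:basic-pw-hom} can be re-run inside $\Vfc$. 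Your treatment of the head is essentially identical (routing the simple case through $\m{2}$ and part (i) rather than part (ii) is immaterial, since part (ii) is proved exactly that way). Where you diverge is the tail: instead of repeating the basic-hoop argument in the BL setting, you invoke Lemma~\ref{l:basic-pw-hom} as a black box for the variety $\vr(\basic(\V))\in\Omega(\BH)$, using $\hspu(\basic(\V))=\basic(\V)$ to identify $\vr(\basic(\V))_{\mathrm{fc}}$ with $\basicfc(\V)$ and thereby certify $\m{2}\oplus\m{T}\in\Vfc$; you then stitch the new head onto the new tail in a single step via Lemma~\ref{l:ordsum-part-embed}(v), applied across the simultaneously-constructed essential-predecessor sum $\m{S}^\flat\in\iso\sub\pu(\bigoplus_{i=0}^n\m{A}_i)$. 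This buys a fully explicit proof of the part the paper leaves implicit (``essentially the same as Lemma~\ref{l:basic-pw-hom}''), and it replaces iteration over components with one descent and two ascents; the cost is the extra bookkeeping around $\m{S}^\flat$ and the reliance on part (v), which the paper's proof never needs.

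Two small points you should make explicit, though both are at the same level of informality as the paper itself: first, for tail components with $\rad(\m{B}_i)\neq\{1\}$ you need $\rad(\m{A}_i)\neq\{1\}$ (same argument you gave for the head: otherwise $\m{A}_i$ is simple and has no image with non-trivial radical) before Lemma~\ref{l:ISPU-cancellative} yields $\Rad(\m{B}_i)\in\iso\sub\pu(\m{A}_i)$; second, Lemma~\ref{l:ess-ordinalsum} is stated for sums of non-trivial components, so when some $\m{B}_i$ are trivial you should delete those summands (their positions agree in $\m{T}^\flat$ and $\m{T}$ by your construction) before concluding that $\m{T}^\flat\leq\m{T}$ is essential, and note separately the degenerate case where $\m{T}$ is trivial, in which the conclusion already follows from the head step.
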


\begin{proof}
We show that if $\m{B}$ is a non-trivial homomorphic image of $\m{A}_0$, then $\m{B} \oplus (\bigoplus_{i=1}^n \m{A}_i) \in \Vfc$ (for $\m{A}_i$ with $i\geq 1$ the proof is essentially the same as the proof of Lemma~\ref{l:basic-pw-hom}). By Lemma~\ref{l:radical} either $\m{B}$ is simple or $\rad(\m{B}) \neq \{1\}$. If $\m{B}$ is simple, then, by Lemma~\ref{l:ordsum-part-embed}(ii), $\m{B} \oplus (\bigoplus_{i=1}^n \m{A}_i) \in \Vfc$. Otherwise if $\rad(\m{B}) \neq \{ 1 \}$, then clearly also $\rad(\m{A}) \neq \{ 1 \}$. Thus, by Corollary~\ref{c:ISPU-rotation}, $\rot{\Rad(\m{B})}\in \iso\sub\pu(\m{A})$. But then, by Lemma~\ref{l:ISPU-ordinalsum}, $\rot{\Rad(\m{B})} \oplus (\bigoplus_{i=1}^n \m{A}_i) \in \Vfc$ and, since $\rot{\Rad(\m{B})} \leq \m{B}$ is essential, by Lemma~\ref{l:rad-ess}, it follows from Lemma~\ref{l:ordsum-part-embed}(i) that $\m{B} \oplus (\bigoplus_{i=1}^n \m{A}_i) \in \Vfc$.
\end{proof}

Thus Lemma~\ref{l:ISPU-ordinalsum} and Lemma~\ref{l:BL-pw-hom} yield the following result:

\begin{prop}\label{p:BL-pw-hspu}
Let $\V\in\Omega(\BL)$ and let $\m{A} \in \Vfc$ with $\m{A} = \bigoplus_{i=0}^n \m{A}_i$ of index $n+1$. Then $[\m{A}_0\m{A}_1 \cdots \m{A}_n] \subseteq \Vfc$.
\end{prop}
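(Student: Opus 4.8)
The plan is to obtain the result by composing the two componentwise closure principles supplied just above it: the closure under $\sub\pu$ distributed over ordinal sums (Lemma~\ref{l:ISPU-ordinalsum}(i)) and the closure under homomorphic images applied componentwise (Lemma~\ref{l:BL-pw-hom}). Concretely, an arbitrary member of $[\m{A}_0\m{A}_1\cdots\m{A}_n]$ has the form $\m{B}_0\oplus\m{B}_1\oplus\cdots\oplus\m{B}_n$, where $\m{B}_0\in\hm^+\sub\pu(\m{A}_0)$ is non-trivial and $\m{B}_i\in\hspu(\m{A}_i)=\hm\sub\pu(\m{A}_i)$ for $1\le i\le n$. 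Since $\hspu=\hm\sub\pu$, each $\m{B}_i$ (for $i\ge 1$) is a homomorphic image of some $\m{C}_i\in\sub\pu(\m{A}_i)$, and similarly $\m{B}_0$ is a non-trivial homomorphic image of some $\m{C}_0\in\sub\pu(\m{A}_0)$. It therefore suffices to place $\m{C}_0\oplus\cdots\oplus\m{C}_n$ inside $\Vfc$ and then pass to homomorphic images in each coordinate.

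First I would establish $\m{C}_0\oplus\cdots\oplus\m{C}_n\in\Vfc$. Starting from $\m{A}_0\oplus\cdots\oplus\m{A}_n\in\Vfc$, Lemma~\ref{l:ISPU-ordinalsum}(i) gives $\iso\sub\pu(\bigoplus_{i=0}^n\m{A}_i)=\iso(\bigoplus_{i=0}^n\sub\pu(\m{A}_i))$. Because $\V$ is a variety it is closed under $\sub$ and $\pu$, and because subalgebras and ultraproducts of finite-index totally ordered algebras are again finite-index chains (by Lemma~\ref{l:BL-ordsum-embed} and Lemma~\ref{l:ultraprod-distrib-ordsum}), we have $\iso\sub\pu(\m{A})\subseteq\Vfc$ for $\m{A}=\bigoplus_{i=0}^n\m{A}_i$. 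Hence every ordinal sum $\m{C}_0\oplus\cdots\oplus\m{C}_n$ with $\m{C}_i\in\sub\pu(\m{A}_i)$ lies in $\Vfc$; in particular the components chosen above do.

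Second, I would apply Lemma~\ref{l:BL-pw-hom} to $\m{C}_0\oplus\cdots\oplus\m{C}_n\in\Vfc$. Here I must verify the type hypotheses of that lemma: since $\m{A}_0$ is a totally ordered MV-algebra, $\m{C}_0\in\sub\pu(\m{A}_0)$ is again a totally ordered MV-algebra, and since each $\m{A}_i$ with $i\ge 1$ is a Wajsberg chain, each $\m{C}_i\in\sub\pu(\m{A}_i)$ is again a Wajsberg chain. Lemma~\ref{l:BL-pw-hom} then yields $\hm^+(\m{C}_0)\oplus(\bigoplus_{i=1}^n\hm(\m{C}_i))\subseteq\Vfc$, and this class contains the member $\m{B}_0\oplus\m{B}_1\oplus\cdots\oplus\m{B}_n$ we started with. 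As this was an arbitrary element of $[\m{A}_0\cdots\m{A}_n]$, we conclude $[\m{A}_0\cdots\m{A}_n]\subseteq\Vfc$.

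The one point requiring genuine care—and hence the step I expect to be the main obstacle—is matching the order of the class operators: the bracket notation $[\cdots]$ conceals an $\hm\sub\pu$ closure (with $\hm^+$ on the first, MV, coordinate), while the two available lemmas furnish the $\sub\pu$ part and the $\hm$ part \emph{separately}. One must therefore apply $\sub\pu$ first, componentwise via Lemma~\ref{l:ISPU-ordinalsum}(i), and only afterwards take homomorphic images via Lemma~\ref{l:BL-pw-hom}; carrying the operators in the reverse order would not be licensed by either lemma. Everything else reduces to the routine observation that $\Vfc$ is closed under $\iso\sub\pu$ together with straightforward bookkeeping of the decomposition.
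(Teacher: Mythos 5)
Your proposal is correct and follows exactly the paper's argument: the paper derives this proposition directly from Lemma~\ref{l:ISPU-ordinalsum} (componentwise $\sub\pu$ closure of ordinal sums, placing $\m{C}_0\oplus\cdots\oplus\m{C}_n$ in $\Vfc$) followed by Lemma~\ref{l:BL-pw-hom} (componentwise homomorphic images), in precisely the operator order you identify. Your write-up just makes explicit the bookkeeping that the paper leaves to the reader.
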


The following theorem provides an exhaustive description of varieties of BL-algebras with the amalgamation property, and is the main result of this study. It rests on the detailed description of $\Omega(\BH)$ given in Section~\ref{sec:hoops}.

\begin{thm}\label{t:AP-BL}
Let $\V$ be a variety of BL-algebras. Then $\V\in\Omega(\BL)$ if and only if 
$\basicfc(\V) \in \BHAP,$ and one of the following holds:
\begin{enumerate}[label = \textup{(\arabic*)}]
\item $\V$ is trivial.
\item $\luk(\V) = [\m{A}]$ for some $\m{A} \in \{\Sm{m}, \Smw{m} \mid m \in \mathbb{N}\} \cup \{[0,1]_\MV \}$  and $\Vfc = [\m{A}] \oplus \basicfc(\V)$.
\item $\luk(\V) = [\Smw{m}]$ for some $m\geq 1$ and $\Vfc = ([\Sm{m}] \oplus \basicfc(\V)) \cup [\Smw{m}]$.
\item $\luk(\V) = [\Smw{m}]$ for some $m\geq 1$, $\basicfc(\V) = \K_1 \cup \K_2$, where $\K_1 \in \{[\Cn{n}],[\Cn{n}^\ast]\}$ for some $n\geq 1$ and $\K_2 \in \{[\Cw], [\Cw^\ast]\}$; and either  $\Vfc = ([\Smw{m}] \oplus \K_1) \cup ([\Sm{m}] \oplus \K_2)$,
or  $\Vfc = ([\Sm{m}] \oplus \K_1) \cup ([\Smw{m}] \oplus \K_2)$.
\end{enumerate}
\end{thm}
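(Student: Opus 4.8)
The plan is to characterize $\Omega(\BL)$ by reducing, via Corollary~\ref{c:Vfc-AP}, the amalgamation property of $\V$ to the \emph{essential} amalgamation property of the class $\Vfc$ of its finite-index chains, and then analysing $\Vfc$ through the decomposition $\Vfc\subseteq\luk(\V)\oplus\basicfc(\V)$ in tandem with the classification of $\luk(\V)$ supplied by Corollary~\ref{c:AP-wajs-luk}. Throughout I use that $\basicfc(\V)$ has the essential AP whenever $\basicfc(\V)\in\BHAP$ (again by Corollary~\ref{c:Vfc-AP}), and that for $\V\in\Omega(\BL)$ Corollary~\ref{c:AP-wajs-luk} already forces $\luk(\V)=[\m{A}]$ with $\m{A}\in\{\Sm{m},\Smw{m}\}\cup\{[0,1]_\MV\}$ together with $\basicfc(\V)\in\BHAP$.

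For the backward direction I assume $\basicfc(\V)\in\BHAP$ and treat the four shapes in turn, in each case exhibiting $\Vfc$ as a class with the essential AP and as the finite-index chains of a genuine variety (the latter via Lemma~\ref{l:Vfc-regular-expr}). Case (1) is immediate. In case (2), $[\m{A}]\oplus\basicfc(\V)$ has the essential AP directly from Lemma~\ref{l:AP-regularexpr-BL}. In case (3), I apply Lemma~\ref{l:AP-union-Smw} to $\K=[\Sm{m}]\oplus\basicfc(\V)$, which has the essential AP by Lemma~\ref{l:AP-regularexpr-BL}, is closed under $\iso\sub$, and satisfies $\luk(\K)=[\Sm{m}]\subseteq\K$; the conclusion is exactly $\Vfc=([\Sm{m}]\oplus\basicfc(\V))\cup[\Smw{m}]$. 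In case (4), I invoke Lemma~\ref{l:AP-union-regexpr} with the essentially closed pair $[\Sm{m}]\subseteq[\Smw{m}]$ (Lemma~\ref{l:ess-closed-BL}(i)) and the independent Wajsberg chains $\Cn{n}$ and $\Cw$, matching the two summand-assignments to the two displayed possibilities for $\Vfc$.

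For the forward direction, suppose $\V\in\Omega(\BL)$. The conjunct $\basicfc(\V)\in\BHAP$ and the form of $\luk(\V)$ are given by Corollary~\ref{c:AP-wajs-luk}, so it remains to pin down $\Vfc$; the inclusion $\Vfc\subseteq\luk(\V)\oplus\basicfc(\V)$ holds in general, and I must establish the matching reverse inclusions. When $\m{A}=\Sm{m}$, every MV-component is finite and simple, so $\m{2}\oplus\m{B}_1\in\Vfc$ (which holds for each $\m{B}_1\in\basicfc(\V)$) can be promoted to $\m{B}_0\oplus\m{B}_1$ for any simple $\m{B}_0\in[\Sm{m}]$ via Lemma~\ref{l:ordsum-part-embed}(ii), giving case (2). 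When $\m{A}=[0,1]_\MV$, the class $\luk(\V)$ contains infinite simple chains; since $\m{2}$ embeds essentially into any simple chain, Lemma~\ref{l:ordsum-part-embed}(i) first yields $\m{S}\oplus\m{B}_1\in\Vfc$ for such an $\m{S}$, and then Proposition~\ref{p:BL-pw-hspu} spreads this over all MV-chains $\m{B}_0\in\hspu(\m{S})$, again giving case (2).

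The substantive case is $\m{A}=\Smw{m}$, where $\luk(\V)=[\Smw{m}]$ contains both finite simple chains and infinite chains but no infinite simple chains. Here I set $T=\{\m{B}_1\in\basicfc(\V):\Smw{m}\oplus\m{B}_1\in\Vfc\}$. As above $[\Sm{m}]\oplus\basicfc(\V)\subseteq\Vfc$, while Lemma~\ref{l:ordsum-part-embed}(iii) shows that any chain of $\Vfc$ whose MV-part lies in $[\Smw{m}]\setminus[\Sm{m}]$ has its tail in $T$; together with Proposition~\ref{p:BL-pw-hspu} this gives $\Vfc=([\Sm{m}]\oplus\basicfc(\V))\cup([\Smw{m}]\oplus T)$ with $T$ closed under $\hspu$. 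The crux, and the step I expect to be the main obstacle, is to show that the essential AP forces $T$ to take exactly one of three shapes: all of $\basicfc(\V)$ (case (2)), only the trivial algebra, so that $[\Smw{m}]\oplus T=[\Smw{m}]$ (case (3)), or a single independent piece of a decomposition $\basicfc(\V)=\K_1\cup\K_2$ into the $\Cn{n}$- and $\Cw$-parts (case (4)). The essential-closedness of $[\Sm{m}]$ in $[\Smw{m}]$ (Lemma~\ref{l:ess-closed-BL}) is what obstructs a finite MV-part from being essentially grown to $\Smw{m}$, and the argument must show that a proper, nontrivial $T$ cannot straddle a \emph{connected} class high in the interval $\bvarint{\Cn{n},\Cw}$: an essential span with finite MV-apex whose legs demand incompatible infinite/finite MV behaviour can be amalgamated only when $\basicfc(\V)$ splits independently along $\Cn{n}$ and $\Cw$, leaving $T$ as one of the two pieces.
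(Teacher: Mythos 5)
Your backward direction and the simple-generator cases of the forward direction are fine and follow the paper's route: sufficiency via Lemmas~\ref{l:AP-regularexpr-BL}--\ref{l:AP-union-regexpr} (with Lemma~\ref{l:Vfc-regular-expr} and Corollary~\ref{c:Vfc-AP}), and, for $\luk(\V)=[\Sm{m}]$ or $\luk(\V)=[\,[0,1]_\MV]$, necessity via Corollary~\ref{c:AP-wajs-luk}, Lemma~\ref{l:ordsum-part-embed}(i)/(ii), and Proposition~\ref{p:BL-pw-hspu}. The problem is the case $\luk(\V)=[\Smw{m}]$. You correctly reduce it to a trichotomy for $T=\{\m{B}_1\in\basicfc(\V) : \Smw{m}\oplus\m{B}_1\in\Vfc\}$ (trivial, all of $\basicfc(\V)$, or one piece of a disconnected decomposition $\K_1\cup\K_2$), but then you stop: you explicitly flag the trichotomy as ``the main obstacle'' and describe only what ``the argument must show.'' That trichotomy \emph{is} the content of the theorem in this case, so this is a genuine gap, not a routine verification left to the reader.

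Moreover, the route you gesture at---exhibiting essential spans ``whose legs demand incompatible infinite/finite MV behaviour'' and arguing they cannot be amalgamated---is not how the paper closes this, and it points in the wrong direction: no impossibility argument is needed. The paper's argument is a positive propagation. By Corollary~\ref{c:AP-wajs-luk}(ii), $\basicfc(\V)$ is one of the classes classified in Section~\ref{sec:hoops}, hence either ``connected,'' of the form $[\m{B}\m{C}^\ast\m{D}^\ast\m{E}]$, or a disconnected union $\K_1\cup\K_2$ with $\K_1\in\{[\Cn{n}],[\Cn{n}^\ast]\}$ and $\K_2\in\{[\Cw],[\Cw^\ast]\}$. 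If $T$ contains a non-trivial element, Lemma~\ref{l:ordsum-part-embed}(iii) upgrades the witness to one of the form $\Smw{m}\oplus\m{A}_1\in\Vfc$; then Lemma~\ref{l:ordsum-part-embed}(iv), applied to $\Sm{m}\oplus\m{C}_1\in\Vfc$ (available since $[\Sm{m}]\oplus\basicfc(\V)\subseteq\Vfc$) and to $\Smw{m}\oplus\m{B}_1\in\Vfc$ for a common non-trivial subchain $\m{B}_1\leq\m{C}_1$ (essentially $\m{2}$ or $\Cw$, found using connectedness, respectively membership in the same piece $\K_i$), yields $\Smw{m}\oplus\m{C}_1\in\Vfc$; Proposition~\ref{p:BL-pw-hspu} then spreads this over the whole $\hspu$-closed class. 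This forces $T=\basicfc(\V)$ in the connected case (your case (2) with $\m{A}=\Smw{m}$) and $T\in\{\K_1,\K_2,\K_1\cup\K_2\}$ in the disconnected case (cases (4) or (2)), with $T$ trivial giving case (3). Without carrying out this propagation---i.e., without Lemma~\ref{l:ordsum-part-embed}(iii),(iv) doing actual work on concrete ordinal sums---your proposal does not establish the classification.
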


\begin{proof}
The `only if' direction  follows from the Lemmas~\ref{l:AP-regularexpr-BL} -- \ref{l:AP-union-regexpr}.

For the other direction suppose that $\V$ is a variety of BL-algebras with the AP. Then, by Corollary~\ref{c:AP-wajs-luk}(i), there exists $\m{A} \in \{\Sm{m}, \Smw{m} \mid m \in \mathbb{N}\} \cup \{[0,1]_\MV \}$ such that $\luk(\V) = [\m{A}]$. If $\m{A} \in \{\Sm{m} \mid m \in \mathbb{N}\} \cup \{[0,1]_\MV \}$, then $\m{A}$ is simple and, by Lemma~\ref{l:ordsum-part-embed}(ii) together with Proposition~\ref{p:BL-pw-hspu}, $\Vfc = [\m{A}]\oplus \basicfc(\V)$. 
Otherwise, $\luk(\V) = [\Smw{m}]$ for some $m\geq 1$ and there are two cases:

Firstly, if the first component of each chain in $\Vfc$ with index greater than $1$ is in $[\Sm{m}]$ then it follows as above that $[\Sm{m}]\oplus \basicfc(\V) \subseteq \Vfc$ and  $([\Sm{m}]\oplus \basicfc(\V)) \cup [\Smw{m}] = \Vfc$.

Secondly, if  $\m{A}_0 \oplus \m{A}_1 \in \Vfc$ for $\m{A}_0 \in [\Smw{m}]\setminus [\Sm{m}]$ and $\m{A}_1$ non-trivial, then by Lemma~\ref{l:ordsum-part-embed}(iii), $\Smw{m}\oplus \m{A}_1$.  There are two subcases:
\begin{itemize}
\item If $\basicfc(\V) = [\m{B}\m{C}^\ast\m{D}^\ast\m{E}]$ (some of the component being possibly trivial), then, since $\m{A}_1$ is non-trivial, we may assume that $\m{A}_1$ is a non-trivial bounded or cancellative Wajsberg chain. Thus, it follows from Lemma~\ref{l:ordsum-part-embed}(iv) and Proposition~\ref{p:BL-pw-hspu} that $\Vfc = [\Smw{m}]\oplus \basicfc(\V) $.
\item Otherwise, by Corollary~\ref{c:AP-wajs-luk}(ii), it follows that $\basicfc(\V) = \K_1 \cup \K_2$ with $\K_1 \in \{[\Cn{n}], [\Cn{n}^\ast]\}$ and $\K_2 \in \{[\Cw], [\Cw^\ast]\}$. If $\m{A}_1 \in \K_1$, then, by Lemma~\ref{l:ordsum-part-embed}(iii) and Proposition~\ref{p:BL-pw-hspu}, $([\Smw{m}]\oplus \K_1) \cup ([\Sm{m}]\oplus \K_2) \subseteq \Vfc$. 
If there exists also a $\m{B}_0 \oplus \m{B}_1 \in \Vfc $ with $\m{B}_0 \in [\Smw{m}]\setminus [\Sm{m}]$ and $\m{B}_1 \in \K_2$ non-trivial, then, by Lemma~\ref{l:ordsum-part-embed}(iii), $[\Smw{m}]\oplus \m{A}_1 \in \Vfc$. So, by Lemma~\ref{l:ordsum-part-embed}(iii) and Proposition~\ref{p:BL-pw-hspu}, $([\Smw{m}]\oplus \K_1) \cup ([\Smw{m}]\oplus \K_2) = \Vfc$. If no such algebra is contained in $\Vfc$, then  $([\Smw{m}]\oplus \K_1) \cup ([\Sm{m}]\oplus \K_2) = \Vfc$.  

Similarly if $\m{A}_1 \in \K_2$ then either $([\Smw{m}]\oplus \K_1) \cup ([\Smw{m}]\oplus \K_2) = \Vfc$ or $([\Sm{m}]\oplus \K_1) \cup ([\Smw{m}]\oplus \K_2) = \Vfc$. \qedhere
\end{itemize}

\end{proof}

Note that from Theorem~\ref{t:AP-BL} together with the characterization of $\Omega(\BH)$ in Section~\ref{sec:hoops} it is straightforward to also obtain an explicit decomposition of $\Omega(\BL)$ into countably many finite intervals. However, some of these intervals are too large to nicely depict them in a figure.

\begin{cor}
There are only countably infinitely many varieties of BL-algebras with the amalgamation property.
\end{cor}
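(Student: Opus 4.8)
The plan is to read the count off directly from the classification in Theorem~\ref{t:AP-BL}, in combination with the analogous countability statement for basic hoops furnished by Theorem~\ref{t:BH-AP-char}. The guiding principle is that a variety $\V$ of BL-algebras is recovered from its class of finite-index chains via $\V = \vr(\Vfc)$, so it suffices to bound the number of classes $\Vfc$ that can arise from members of $\Omega(\BL)$.

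First I would note that, by Theorem~\ref{t:AP-BL}, each $\V \in \Omega(\BL)$ is completely determined by a finite tuple of data: the case (1)--(4) into which $\V$ falls; the chain $\m{A}$ generating $\luk(\V)$, drawn from the countable set $\{\Sm{m}, \Smw{m} \mid m \in \mathbb{N}\} \cup \{[0,1]_{\MV}\}$; the class $\basicfc(\V)$, which by Theorem~\ref{t:AP-BL} lies in $\BHAP$; and, in case (4), the choices of $\K_1 \in \{[\Cn{n}], [\Cn{n}^\ast]\}$, $\K_2 \in \{[\Cw], [\Cw^\ast]\}$ together with one of the two displayed forms for $\Vfc$. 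By Theorem~\ref{t:BH-AP-char}, the poset $\BHAP$---being a union of countably many finite intervals and isomorphic to $\Omega(\BH)$---is countable, and every remaining coordinate ranges over a countable, indeed often finite, index set. Hence the assignment $\V \mapsto \Vfc$ takes values in a countable set, and since $\V = \vr(\Vfc)$ this assignment is injective. Therefore $\Omega(\BL)$ is at most countable.

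For the lower bound I would exhibit an explicit infinite family. By Corollary~\ref{c:AP-MV}, each variety $\vr(\Sm{m})$ has the amalgamation property and hence belongs to $\Omega(\BL)$, every MV-algebra being a BL-algebra. These varieties are pairwise distinct: as noted in the proof of Lemma~\ref{l:ess-closed-BL}, one has $\hspu(\Sm{m}) = \iso\sub(\Sm{m})$, which consists only of finite simple or trivial algebras of cardinality at most $m+1$, so $\Sm{k} \notin \vr(\Sm{m})$ whenever $k > m$. Combining the two bounds gives that $\Omega(\BL)$ is countably infinite. I do not anticipate a genuine obstacle; the only point demanding a little care is confirming that the finite list of coordinates in Theorem~\ref{t:AP-BL} really does pin down $\Vfc$ uniquely---so that distinct varieties are never collapsed---which is immediate once one observes that cases (1)--(4) partition $\Omega(\BL)$ and that within each case the stated formula expresses $\Vfc$ explicitly in terms of the listed data.
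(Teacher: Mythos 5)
Your proof is correct and takes essentially the same approach as the paper, which likewise reads countability directly off the classification in Theorem~\ref{t:AP-BL} together with the countability of $\BHAP$ established in Section~\ref{sec:hoops}; your countable parametrization of the possible classes $\Vfc$ and the injectivity of $\V \mapsto \Vfc$ (via $\V = \vr(\Vfc)$) is exactly the implicit content of the paper's argument. Your explicit infinite family $\vr(\Sm{m})$, $m \in \mathbb{N}$, merely makes precise the ``infinitely many'' part that the paper leaves implicit.
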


\section{Logical consequences}\label{sec:logic}

We now briefly summarize the logical ramifications of the results obtained in the foregoing work. Recall that if $\vdash$ is a consequence relation associated to some logic $\m{L}$, then $\vdash$ has the \emph{deductive interpolation property} provided that
\begin{quote}
whenever $\varphi\vdash_\m{L}\psi$, there exists a formula $\chi$, whose variables appear in both $\varphi$ and $\psi$, such that both $\varphi\vdash_\m{L}\chi$ and $\chi\vdash_\m{L}\psi$.
\end{quote}
It is well known that every axiomatic extension $\m{L}$ of $\m{BL}$ is algebraizable in the sense of Blok and Pigozzi, and that the equivalent algebraic semantics of any such $\m{L}$ is a subvariety $\V_{\m{L}}$ of $\BL$. Likewise, if $\m{L}$ is an axiomatic extension of the negation-free fragment of $\m{BL}$, then $\m{L}$ is algebraized by a subvariety $\V_{\m{L}}$ of basic hoops.

H\'{a}jek's basic logic $\m{BL}$ and its negation-free fragment are substructural logics with exchange, and for a logic $\m{L}$ subsumed under this umbrella it is well known (see, e.g., \cite{Metcalfe2014}) that $\m{L}$ has the deductive interpolation property if and only if the variety $\V_{\m{L}}$ has the amalgamation property. Consequently, the results of Section~\ref{sec:hoops} immediately entail the following result:
\begin{prop}\label{prop:BH DIP}
There are only countably many axiomatic extensions of the negation-free fragment of $\m{BL}$ that have the deductive interpolation property. These are exactly the logics corresponding to the varieties of basic hoops with the amalgamation property given in Proposition~\ref{p:intervals-basic}.
\end{prop}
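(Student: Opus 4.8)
The plan is to derive this statement as a direct consequence of the classification of $\Omega(\BH)$ obtained in Section~\ref{sec:hoops}, transported to the logical side via the algebraization machinery recalled just above. First I would invoke the fact that the negation-free fragment of $\m{BL}$ is algebraizable with equivalent algebraic semantics $\BH$; by the Blok--Pigozzi correspondence this yields a dual lattice isomorphism between the axiomatic extensions $\m{L}$ of this fragment and the subvarieties $\V_{\m{L}}$ of $\BH$. In particular the assignment $\m{L} \mapsto \V_{\m{L}}$ is a bijection between axiomatic extensions of the fragment and subvarieties of $\BH$, so that cardinality questions on one side transfer verbatim to the other.

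Next I would apply the bridge theorem for substructural logics with exchange recorded above (cf.~\cite{Metcalfe2014}): an extension $\m{L}$ has the deductive interpolation property if and only if $\V_{\m{L}}$ has the amalgamation property, i.e.\ if and only if $\V_{\m{L}} \in \Omega(\BH)$. Combined with the bijection of the previous step, this identifies the axiomatic extensions of the negation-free fragment with the deductive interpolation property precisely with the members of $\Omega(\BH)$.

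It then remains only to count and to name. By Theorem~\ref{t:BH-AP-char} (equivalently, Proposition~\ref{p:intervals-basic}), $\Omega(\BH)$ decomposes as a union of countably many finite intervals and is therefore countable; moreover Proposition~\ref{p:intervals-basic} describes its members explicitly. Pulling this information back through the correspondence of the first two steps gives at once that there are only countably many axiomatic extensions with the deductive interpolation property, and that these are exactly the logics whose algebraic counterparts are the varieties of basic hoops enumerated in Proposition~\ref{p:intervals-basic}.

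Since the substantive work---namely the classification of $\Omega(\BH)$---is already complete, there is no genuine obstacle remaining here; the proposition is essentially a corollary. The only point requiring care is the faithful transfer across the two correspondences: confirming that $\m{L} \mapsto \V_{\m{L}}$ is bijective, so that countability of the AP-varieties forces countability of the DIP-logics, and that the interpolation--amalgamation equivalence is applicable to the negation-free fragment and to each of its axiomatic extensions. Both facts are standard consequences of the algebraizability of this fragment and require no computation beyond citing the relevant results.
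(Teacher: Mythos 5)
Your proposal is correct and follows exactly the same route as the paper: algebraizability of the negation-free fragment (Blok--Pigozzi), the bridge theorem from \cite{Metcalfe2014} identifying deductive interpolation with amalgamation of the corresponding variety, and then the classification of $\Omega(\BH)$ from Proposition~\ref{p:intervals-basic} and Theorem~\ref{t:BH-AP-char}. The paper treats the proposition as an immediate consequence of precisely these three ingredients, so nothing further is needed.
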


For $\m{BL}$ itself the result follows from the work of Section~\ref{sec:BL-algebras}.

\begin{prop}\label{prop:BL DIP}
There are only countably many axiomatic extensions of $\m{BL}$ with the deductive interpolation property. These are exactly the logics corresponding to the varieties of BL-algebras with the amalgamation property given in Theorem~\ref{t:AP-BL}.
\end{prop}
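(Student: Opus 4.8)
The plan is to transfer the algebraic classification of Theorem~\ref{t:AP-BL} to the logical setting by means of the algebraizability of $\m{BL}$, exactly as in the proof of Proposition~\ref{prop:BH DIP} for the negation-free fragment. First I would invoke the fact, recalled just before Proposition~\ref{prop:BH DIP}, that every axiomatic extension $\m{L}$ of $\m{BL}$ is algebraizable in the sense of Blok and Pigozzi, with equivalent algebraic semantics a subvariety $\V_\m{L}$ of $\BL$. Standard Blok--Pigozzi theory then supplies a dual lattice isomorphism between the axiomatic extensions of $\m{BL}$ and the subvarieties of $\BL$, under which $\m{L}\mapsto\V_\m{L}$ is a bijection. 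In particular, counting axiomatic extensions of $\m{BL}$ reduces to counting subvarieties of $\BL$.

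Next I would use the bridge theorem for substructural logics with exchange, of which $\m{BL}$ is an instance: for such a logic $\m{L}$, the consequence relation $\vdash_\m{L}$ has the deductive interpolation property if and only if $\V_\m{L}$ has the amalgamation property. This is precisely the equivalence cited in the paragraph preceding Proposition~\ref{prop:BH DIP}. Composing this equivalence with the bijection from the previous step, the axiomatic extensions of $\m{BL}$ enjoying deductive interpolation correspond exactly to the members of $\Omega(\BL)$, i.e.\ the subvarieties of $\BL$ with the amalgamation property.

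Finally I would appeal directly to Theorem~\ref{t:AP-BL} and its corollary, which assert that $\Omega(\BL)$ consists precisely of the trivial variety together with the varieties described in cases (2)--(4) of that theorem, and that $\Omega(\BL)$ is countable. Pulling this back along the correspondence yields that there are only countably many axiomatic extensions of $\m{BL}$ with deductive interpolation, and that these are exactly the logics corresponding to the varieties listed in Theorem~\ref{t:AP-BL}, as claimed.

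Since all of the substantive content has already been established in Theorem~\ref{t:AP-BL}, I do not anticipate any genuine obstacle in this final step: the only point requiring care is to verify that the correspondence $\m{L}\mapsto\V_\m{L}$ restricts to a bijection between interpolating extensions and amalgamable subvarieties, which is immediate once deductive interpolation and amalgamation have been matched under algebraization. The argument is thus a direct analogue of the one for Proposition~\ref{prop:BH DIP}, with $\BL$ and Theorem~\ref{t:AP-BL} playing the roles of the variety of basic hoops and Proposition~\ref{p:intervals-basic}.
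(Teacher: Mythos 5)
Your proposal is correct and follows exactly the route the paper takes: algebraizability of axiomatic extensions of $\m{BL}$ in the sense of Blok--Pigozzi, the bridge theorem matching deductive interpolation with amalgamation for substructural logics with exchange, and then a direct appeal to Theorem~\ref{t:AP-BL} and its corollary on the countability of $\Omega(\BL)$. No gaps; this is precisely how the paper deduces Proposition~\ref{prop:BL DIP} from Section~\ref{sec:BL-algebras}.
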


In fact, for every extension of $\m{BL}$ and its negation-free fragment, the deductive interpolation property is equivalent to the strong deductive interprolation property as well as the (strong) Robinson property; see \cite{KihahaOno2010} for relevant definitions and implications among these metalogical properties. Consequently, we also obtain the strong deductive interpolation property and the strong Robinson property for each of the logics identified in Propositions~\ref{prop:BH DIP} and \ref{prop:BL DIP}.

\section{Concluding remarks}\label{sec:conclusion}

H\'{a}jek's basic logic sits at the intersection of two prominent families of substructural logics: On the one hand, it is a commutative logic within the broader environment of substructural logics validating the exchange rule and, on the other hand, it is a semilinear logic that is characterized by its totally ordered algebraic models. Both of these families of logics are sites of active research on interpolation, and we think the present study contributes to our understanding in each case.

First, the role of commutativity/exchange in interpolation has recently been the subject of quite a lot of work; see \cite{FMS2024,FG1,FG2}. Among other things, it is known that there are continuum-many substructural logics with the exchange rule that have the deductive interpolation property, but it is open whether this remains true with the addition of weakening \cite{FSLinear}. A priori, basic logic seems like a plausible place to look if one wants to investigate the latter question. However, the results here show that one must look elsewhere to find continuum-many logics with exchange and weakening that have deductive interpolation.

Second, deductive interpolation is now rather well understood among substructural logics characterized by linearly ordered algebraic models; see \cite{FSSurvey,FG1,FG2} and the references therein. The techniques developed here represent a significant contribution to the now-impressive array of tools for tackling interpolation for semilinear logics, and we anticipate that the general strategy we have used in the present paper applies in many other cases. In particular, we expect the essential amalgamation property may prove useful in studying amalgamation/interpolation in adjacent contexts---e.g., for varieties of MTL-algebras with tractable ordinal sum representations. We hope that the methodology developed here will one day serve as a component in understanding interpolation in substructural logics writ large.

\bibliographystyle{elsarticle-harv} 
\bibliography{literatur} 

%% else use the following coding to input the bibitems directly in the
%% TeX file.

%\begin{thebibliography}{00}

%% \bibitem{label}
%% Text of bibliographic item

%\bibitem{}

%\end{thebibliography}
\end{document}